\renewcommand {\a}{ \alpha }
\renewcommand{\b}{\beta}
\newcommand{\e}{\epsilon}
\newcommand{\g}{\gamma}
\newcommand{\vark}{\varkappa}
\renewcommand{\d}{\delta}
\newcommand{\s}{\sigma}
\renewcommand{\l}{\lambda}
\renewcommand{\L}{\Lambda}
\newcommand{\z}{\zeta}
\renewcommand{\t}{\theta}
\newcommand{\T}{\Theta}
\newcommand{\om}{\omega}
\newcommand{\Om}{\Omega}
\newcommand{\oq}{\ {\raise 7pt\hbox{${\scriptstyle\circ}$}}
\kern -7pt{
\hbox{$Q$}}}
\newcommand{\R}{ \mathbb R}
\newcommand{\Rd}{ \mathbb R^d}
\newcommand{\Torus}{\mathbb T}
\newcommand{\Td}{\mathbb T^d}
\newcommand {\GA}{\mathfrak A}
\newcommand {\GG}{\mathfrak G}
\newcommand {\GH}{\mathfrak H}
\newcommand {\GV}{\mathfrak V}
\newcommand {\GW}{\mathfrak W}
\newcommand {\GU}{\mathfrak U}
\newcommand {\GX}{\mathfrak X}
\newcommand {\ba}{\mathbf a}
\newcommand {\bb}{\mathbf b}
\newcommand{\bv}{\mathbf v}
\newcommand {\BD}{\mathbf D}
\newcommand {\BS}{\mathbf S}
\newcommand {\bx}{\mathbf x}
\newcommand {\be}{\mathbf e}
\newcommand {\bk}{\mathbf k}
\newcommand {\bp}{\mathbf p}
\newcommand {\bm}{\mathbf m}
\newcommand {\bz}{\mathbf z}
\newcommand {\bt}{\mathbf t}
\newcommand {\bs}{\mathbf s}
\newcommand {\bn}{\mathbf n}
\newcommand{\SG}{{\sf{\Gamma}}}
\newcommand{\SN}{{\sf{N}}}
\newcommand {\bnu}{\boldsymbol\nu}
\newcommand {\bmu}{\boldsymbol\mu}
\newcommand {\bth}{\boldsymbol\theta}
\newcommand {\boldeta}{\boldsymbol\eta}
\newcommand {\bphi}{\boldsymbol\phi}
\newcommand {\bxi}{\boldsymbol\xi}
\newcommand {\bg}{\boldsymbol\gamma}
\newcommand{\bchi}{\boldsymbol\chi}
\newcommand {\normal}{\mathbf e}
\newcommand{\lra}{\leftrightarrow}
\newcommand{\lu}{\langle}
\newcommand{\ru}{\rangle}
\newcommand{\CV}{\mathcal V}
\newcommand{\CB}{\mathcal B}
\newcommand{\CT}{\mathcal T}
\newcommand{\CX}{\mathcal X}
\newcommand{\CH}{\mathcal H}
\newcommand{\CO}{\mathcal O}
\newcommand{\CP}{\mathcal P}
\newcommand{\CA}{\mathcal A}
\newcommand{\CC}{\mathcal C}
\newcommand{\CD}{\mathcal D}
\newcommand{\plainC}[1]{\textup{{\textsf{C}}}^{#1}}
\newcommand{\plainS}{\textup{{\textsf{S}}}}
\newcommand{\plainH}[1]{\textup{{\textsf{H}}}^{#1}}
\newcommand{\plainL}[1]{\textup{{\textsf{L}}}^{#1}}
\DeclareMathOperator{\card}{{card}}
\newcommand{\1}
{{\,\vrule depth3pt height9pt}{\vrule depth3pt height9pt}
{\vrule depth3pt height9pt}{\vrule depth3pt height9pt}\,}
\DeclareMathOperator {\dist} {{dist}}
\DeclareMathOperator \volume {{vol}}
\DeclareMathOperator \area {{vol}}
\DeclareMathOperator{\op}{{Op}}
\DeclareMathOperator {\ad}{{ad}}
\DeclareMathOperator{\supp}{{supp}}
\DeclareMathOperator{\w}{w}
\DeclareMathOperator{\dc}{d}
\newtheorem{thm}{Theorem}[section]
\newtheorem{cor}[thm]{Corollary}
\newtheorem{lem}[thm]{Lemma}
\newtheorem{prop}[thm]{Proposition}
\theoremstyle{definition}
\newtheorem{defn}[thm]{Definition}
\newtheorem{convention}[thm]{Convention}
\newtheorem{rem}[thm]{Remark}
\numberwithin{equation}{section}
\newcommand{\bee}{\begin{equation}}
\newcommand{\ene}{\end{equation}}
\newcommand{\bees}{\begin{equation*}}
\newcommand{\enes}{\end{equation*}}
\newcommand{\bes}{\begin{split}}
\newcommand{\ens}{\end{split}}
\newcommand{\bet}{\begin{thm}}
\newcommand{\ent}{\end{thm}}
\newcommand{\bel}{\begin{lem}}
\newcommand{\enl}{\end{lem}}
\newcommand{\bec}{\begin{cor}}
\newcommand{\enc}{\end{cor}}
\newcommand{\bep}{\begin{proof}}
\newcommand{\enp}{\end{proof}}
\newcommand{\ber}{\begin{rem}}
\newcommand{\enr}{\end{rem}}
\newcommand{\ep}{\varepsilon}
\newcommand{\la}{\lambda}
\newcommand{\de}{\delta}
\newcommand{\al}{\alpha}
\newcommand{\Z}{\mathbb Z}
\newcommand {\BUps}{\boldsymbol\Upsilon}
 \renewcommand{\uparrow}{{\mathcal {L F}}}
\renewcommand{\sharp}{{\mathcal {L E}}}
\renewcommand{\natural}{{\mathcal {NR}}}
\renewcommand{\flat}{{\mathcal R}}
\renewcommand{\downarrow}{{\mathcal {S E}}}
\begin{document}

\hoffset -4pc

\title[Bethe-Sommerfeld conjecture]
{Bethe-Sommerfeld conjecture for periodic operators with strong perturbations}
\author[L. Parnovski \& A.V. Sobolev]
{Leonid Parnovski \& Alexander V. Sobolev}
\address{Department of Mathematics\\ University College London\\
Gower Street\\ London\\ WC1E 6BT UK}
\email{Leonid@math.ucl.ac.uk}
\email{asobolev@math.ucl.ac.uk}

\keywords{ Bethe-Sommerfeld conjecture, periodic problems, pseudo-differential operators,
spectral gaps}
\subjclass[2000]{Primary 35P20, 47G30, 47A55; Secondary 81Q10}


\begin{abstract}
We consider a periodic self-adjoint pseudo-differential
operator $H=(-\Delta)^m+B$, $m>0$, in $\R^d$ which satisfies the
following conditions: (i) the symbol of $B$ is smooth in $\bx$, and (ii) the perturbation
$B$ has order less than $2m$. Under these assumptions, we prove that the spectrum
of $H$ contains a half-line. This, in particular implies the Bethe-Sommerfeld
Conjecture for the Schr\"odinger operator with a periodic
magnetic potential in all dimensions.
\end{abstract}

\maketitle
\vskip 0.5cm

 
\section{Introduction}

Under very broad  conditions, spectra of elliptic differential
operators with periodic coefficients in $\plainL2(\R^d)$, $d\ge 1$,
have a band structure, i.e.
they represent a union of closed intervals (\textsl{bands}),
possibly separated by spectrum-free intervals
(\textsl{gaps}) (see \cite{RS} and \cite{Kuch}).
Since the 30's it has been a general belief among the physicists
that the number of gaps in the spectrum of the
Schr\"odinger operator $H_V = -\Delta + V$ with a periodic electric
potential $V$ in dimension three must be finite. After the classical monograph \cite{BS}
this belief is known as the Bethe-Sommerfeld conjecture.
It is relatively straightforward
to see that this conjecture holds for potentials which admit a
partial separation of variables, as shown in \cite{E}, p.121.
For general potentials
this problem turned out to be quite difficult, and the first rigorous results
appeared only in the beginning  of the 80's.
We do not intend to discuss these and more recent results in details, but
refer to \cite{Sob1} for a more comprehensive survey and further references. Here
we content ourselves with a very short description.
 
In the case of the Schr\"odinger operator $H_V$
it is known that the number of gaps is generically infinite
if $d=1$ (see \cite{RS}). For $d\ge 2$  there has been a large number
of publications proving the
conjecture for $H_V$ under various conditions on the potential and
the periodicity lattice.
First rigorous results for the Schr\"odinger operator relied on number-theoretic ideas,
 and they appeared
in \cite{PopSkr}, \cite{DahlTru} ($d=2$) and
\cite{Skr0} -\cite{Skr2} ($d\ge 2$).
At that time it was found out that the complexity of the problem
increases together with the dimension: the validity of the conjecture for dimensions
$d\ge 4$ was established only for rational lattices, see \cite{Skr1}.
Later  the conjecture for arbitrary lattices was extended to $d=4$ in \cite{HelMoh}.
The definitive  result was obtained
in the recent paper \cite{P} where the Bethe-Sommerfeld
conjecture was proved for the Schr\"odinger operator for any periodicity lattice in
all dimensions $d\ge 2$,
with an arbitrary smooth potential $V$ (see \cite{Vel} for an alternative approach).

The study of the polyharmonic operator  $(-\Delta)^m + V$, $m >0$
in \cite{Skr1}, \cite{Kar} and \cite{PS1}, \cite{PS2}
revealed that large values of $m$ facilitate the overlap of the spectral bands.
Precisely, it was found that for $4m > d+1$ the Bethe-Sommerfeld conjecture holds for
arbitrary bounded perturbations $V$ (see \cite{PS1}), and if
$8m > d+3$, then it holds for arbitrary smooth potentials $V$ (see \cite{PS2}).

Returning to the case of the Schr\"odinger operator, we observe
that the complexity of the problem increases dramatically
when instead of the bounded potential perturbation one
introduces in the Schr\"odinger operator a periodic magnetic potential
$\ba = (a_1, a_2, \dots, a_d)$:
$(-i\nabla - \ba)^2 + V$.
Until recently the Bethe-Sommerfeld conjecture for this operator was known to hold only
for $d=2$, see \cite{Moh}, \cite{Karp2}.
A new step towards the study of higher order perturbations
was made in \cite{BarPar}, where the methods of \cite{P} were extended to
the operator
\begin{equation}\label{operator:eq}
H=(-\Delta)^m + B, \ \ m >0,
\end{equation}
with a pseudo-differential
perturbation $B$ of order  $a<2m-1$ and arbitrary $d\ge 2$.

In the present paper we prove the Bethe-Sommerfeld conjecture for the operator
\eqref{operator:eq} for arbitrary $B$ of order $a <2m$,
see Theorems \ref{main:thm} and \ref{main1:thm}.
In particular, our result covers the magnetic
Schr\"odinger operator with a smooth periodic vector potential in any dimension $d\ge 2$.

Our  proof is based on a subtle analysis of the Floquet eigenvalues of the operator $H$.
It is known that the Floquet eigenvalues are divided in two groups: stable (or non-resonant)
and unstable (or resonant).
>From the perturbation-theoretic point of view, the stable eigenvalues are generated by the
isolated non-degenerate Floquet eigenvalues of the free operator $H_0 = (-\Delta)^m$, and
hence they can be described
using standard methods of the theory.
For the Schr\"odinger operator in dimensions $d=2,3$ it was done in \cite{FKT0}.
The unstable eigenvalues, on the contrary, are produced by the clusters of close
(or even degenerate) eigenvalues of $H_0$, and their detailed
description is not that simple.
However, under appropriate conditions on the parameters of the problem, e.g.
the orders $m$, $a$ and dimension $d$,
only a crude estimate on the unstable eigenvalues suffices to
show that their contribution is negligible.
For example, as shown in papers \cite{PS1}, \cite{PS2}
for the polyharmonic operator $(-\Delta)^m +V$ (i.e. when $a = 0$) under
the condition $8m > d+3$
the contribution of the stable, ``controllable" eigenvalues is dominant, and
for the unstable ones it suffices to obtain an appropriate
upper bound on their quantity.
Another example is the result of \cite{Skr1} (see also \cite{SkrSob1})
where, under
the condition that the lattice is rational and $d\ge 4$,
an elementary estimate on the unstable eigenvalues
guarantees that their contribution can be ignored.
In the present paper we go beyond all these restrictions, and hence we are forced to study
the unstable eigenvalues in detail.

Associated with the partition of the eigenvalues into two groups,
is a partition of the phase space
into resonant and non-resonant zones (sets).
In fact, the main focus of the present
paper is the precise construction of these zones and
understanding of the eigenvalues associated with them.
Technically, our approach is a combination of methods of \cite{P} and \cite{Sob}.
Our construction of the resonant zones is a simplified variant of that suggested in
\cite{P}. However, in spite of the simplification, these are
rather complicated geometrical objects, and the study of their properties  is not
straightforward.
The reduction of the operator to the resonant and non-resonant parts is done
using the ``near-similarity" approach of \cite{Sob}. It consists in
finding a unitary operator $U$ such that $A=U^* H U$ is ``almost" an
operator with constant coefficients.
The operator $U$ is sought in the form $e^{i\Psi}$ where $\Psi$ is a self-adjoint periodic
PDO, and hence we sometimes call this similarity transformation
a ``gauge transformation".
For $d=1$ such a reduction to constant coefficients can be done
(see \cite{R}, \cite{Sob0}), but
for $d\ge 2$ only a partial reduction is possible.
Namely,
we explicitly describe the procedure of finding a pseudo-differential operator $\Psi$
with symbol $\psi(\bx, \bxi)$ such that the operator $A = e^{i\Psi} H e^{-i\Psi}$
has constant coefficients in the non-resonant zone.
Thus the Floquet eigenvalues in the non-resonant subspace
can be found explicitly, which leads
to relatively straightforward estimates for the band overlap.
As far as the resonant zones are concerned, our
construction ensures that on each of them the new operator
$A$ admits a partial separation of variables (see \cite{FKT} for a similar observation
for the Schr\"odinger operator in dimensions $d = 2, 3$). This fact enables us to
show that the volume (more precisely, the angular measure) of the resonant sets
is negligibly small compared to the non-resonant one. Having established this
fact, we apply the combinatorial-geometric argument of \cite{P}, which
allows us to deduce that the resonant zones do not destroy the band overlap obtained for the
non-resonant one.

To conclude the introduction, we give a brief outline of the paper.
In the next section, we introduce necessary notation, discuss the classes of
pseudo-differential operators we will be using throughout and formulate
the main result of the paper. In Section 3,
we provide necessary information about the classes of pseudo-differential
operators introduced in Section 2.
The ``gauge transformation" is studied in Section 4.
In Section 5, we describe the partition of the phase space
into resonant and non-resonant zones.
This section has a purely combinatorial-geometric character, and
can be read separately from the rest of the paper.
In Section 6, we construct the decomposition of
$A$ into an orthogonal sum over the resonant and non-resonant subspaces.
On the basis of this decomposition
we study the Floquet eigenvalues of $A$ in Section 7.
Sections 8, 9 are concerned with estimates for the volumes of resonant and non-resonant sets.
These estimates
become the central ingredient of the proof, completed in Section 10.

{\bf Acknowledgment.} This work was supported by the EPSRC grants EP/F029721/1 and
EP/D00022X/2.
The first author
was partially supported by the Leverhulme fellowship.
We would like to thank R.Shterenberg and S. Morozov
for reading the preliminary version of this manuscript and making useful comments.

\section{Periodic
pseudo-differential operators. Main result}\label{setting:sect}

\subsection{Classes of PDO's}\label{classes:subsect}
Before we define the pseudo-differential operators (PDO's), we
introduce the relevant classes of symbols. Let $\SG\in\Rd$
be a lattice. Denote by $\CO$ its fundamental domain. For example,
for $\CO$ one can choose a parallelepiped spanned by a basis of
$\SG$. The dual lattice and its fundamental domain are denoted by
$\SG^\dagger$ and $\CO^\dagger$ respectively. Sometimes we reflect
the dependence on the lattice and write $\CO_{\SG}$ and
$\CO^\dagger_\SG$. In particular, in the case $\SG = (2\pi\Z)^d$
one has $\SG^\dagger = \Z^d$ and it is natural to take $\CO = [0,
2\pi)^d$, $\CO^\dagger = [0, 1)^d$. For any measurable set
$\CC\subset\R^d$ we denote by $|\CC|$ or $\volume (\CC)$ its
Lebesgue measure (volume). The volume of the fundamental domain
does not depend on its choice, it is called the
\textsl{determinant of the lattice} $\SG$ and denoted $\dc(\SG) =
|\CO|$. By $\be_1, \be_2, \dots, \be_d$ we denote the standard
orthonormal basis in $\R^d$.

For any $u\in \plainL2(\CO)$ and $f\in\plainL2(\R^d)$ define the
Fourier coefficients and Fourier transform respectively:
\begin{equation*}
\hat u(\bth) = \frac{1}{\sqrt{\dc(\SG)}} \int_{\CO} e^{-i \lu\bth,
\bx\ru} u(\bx) d\bx,\ \bth\in\SG^\dagger,\ \ (\mathcal F f)(\bxi)
= \frac{1}{(2\pi)^{\frac{d}{2}}} \int_{\R^d} e^{-i\lu\bxi,
\bx\ru}f(\bx) d\bx,\ \bxi\in\R^d.
\end{equation*}
Let us now define the periodic symbols and PDO's associated with
them. Let $b = b(\bx, \bxi)$, $\bx, \bxi\in\R^d$, be a
$\SG$-periodic complex-valued function, i.e.
\begin{equation*}
b(\bx+\bg, \bxi) = b(\bx, \bxi),\ \forall \bg\in\SG.
\end{equation*}
Let $w:\R^d\to \R$ be a locally bounded function such that
$w(\bxi)\ge 1\ \forall \bxi\in\Rd$ and
\begin{equation}\label{weight:eq}
w(\bxi + \boldeta)\le C w(\bxi) \lu\boldeta\ru^\kappa, \ \forall
\bxi, \boldeta\in \R^d,
\end{equation}
for some $\kappa\ge 0$. Here we
have used the standard notation $\lu \bt \ru = \sqrt{1+|\bt|^2},\
\forall \bt\in\R^d$.  We say that the symbol $b$ belongs to the
class $\BS_{\a} = \BS_{\a}(w) = \BS_{\a}(w, \SG)$,\ $\a\in\R$, if
for any $l\ge 0$ and any non-negative $s\in\Z$ the condition
\begin{equation}\label{1b1:eq}
\1 b \1^{(\a)}_{l, s} :=
\max_{|\bs| \le s}
\sup_{\bxi, \bth} \lu \bth\ru^{l}\ w(\bxi)^{-\a + |\bs|}
|\BD_{\bxi}^{\bs} \hat b(\bth, \bxi)|<\infty, \ \ |\bs| = s_1+ s_2 + \dots + s_d,
\end{equation}
is fulfilled.
Here, of course, $\hat b_{\boldeta}(\bth, \bxi)$ is the Fourier coefficient
of the symbol $b(\cdot,\bxi)$ with respect to the first variable.
The quantities \eqref{1b1:eq} define norms on the class $\BS_\a$.
In the situations when it is not important for us to know the exact
values of $l, s$, we denote the above norm
by $\1 b\1^{(\a)}$. In this case
the inequality $A \le C\1 b\1^{(\a)}$ means that there exist values of $l$ and $s$,
and a constant $C>0$, possibly depending on $l, s$,
such that $A \le C\1 b\1^{(\a)}_{l, s}$. Similarly, when we write
$\1 b\1^{(\g)} \le C\1 g\1^{(\a)}$ for some symbols $b\in\BS_\g, g\in\BS_\a$, we mean that
for any $l$ and  $s$ the norm $\1 b\1^{(\g)}_{l, s}$
is bounded by $\1 g\1^{(\a)}_{p, n}$ with some $p$ and $n$ depending on $l, s$, and
some constant $C = C_{l, s}$. In general,
by $C, c$(with or without indices) we denote various positive constants, whose precise value
is unimportant.
Throughout
the entire paper we adopt the following convention. An
estimate (or an assertion) is said to be uniform in a symbol
$b\in\BS_\a$ if the constants in the estimate (or
assertion) at hand depend only on the constants $C_{l, s}$
in the bounds $\1 b \1^{(\a)}_{l, s}\le C_{l, s}$.
This is sometimes expressed by saying that an estimate (or assertion) is
uniform in the symbol $b$ satisfying $\1 b\1^{(\a)} \le C$.

We use the classes
$\BS_\a$ mainly with the weight $w(\bxi) = \lu\bxi\ru^\b$, $\b\in (0, 1]$, which
satisfies \eqref{weight:eq} for $\kappa = \b$.
 Note that $\BS_\a$ is an increasing function of $\a$,
i.e. $\BS_{\a}\subset\BS_{\g}$ for $\a < \g$. For later reference we
write here the following convenient bounds that follow from definition
\eqref{1b1:eq} and property \eqref{weight:eq}:
\begin{gather}
|\BD_{\xi}^{\bs} \hat b(\bth, \bxi)|\le \1 b \1^{(\a)}_{l, s} \lu
\bth\ru^{-l} w(\bxi)^{\a-s},
\label{decay:eq}\\
|\BD^{\bs}_{\bxi}\hat b(\bth, \bxi+ \boldeta) -
\BD^{\bs}_{\bxi}\hat b(\bth, \bxi)|\le C\1 b\1^{(\a)}_{l, s+1} \lu
\bth\ru^{-l} w(\bxi)^{\a-s-1} \lu\boldeta\ru^{\kappa|\a-s-1|}
|\boldeta|, \ s = |\bs|, \label{differ:eq}
\end{gather}
with a constant $C$ depending only on $\a, s$. For a vector $\boldeta\in\R^d$ introduce
the symbol
\begin{equation}\label{bboldeta:eq}
b_{\boldeta}(\bx, \bxi) = b(\bx, \bxi+\boldeta), \boldeta\in\R^d,
\end{equation}
so that $\hat b_{\boldeta}(\bth, \bxi) = \hat b(\bth, \bxi+\boldeta)$ .
The bound \eqref{differ:eq} implies that for all $|\boldeta|\le C$ we have
\begin{equation}\label{differ1:eq}
\1 b - b_{\boldeta}\1^{(\a-1)}_{l, s}\le C_s \1 b\1^{(\a)}_{l, s+1}|\boldeta|,\
\end{equation}
uniformly in $\boldeta$: $|\boldeta|\le C$.

Now we define the PDO $\op(b)$ in the usual way:
\begin{equation*}
\op(b)u(\bx) = \frac{1}{(2\pi)^{\frac{d}{2}}} \int  b(\bx, \bxi)
e^{i\lu\bxi, \bx\ru} (\mathcal Fu)(\bxi) d\bxi,
\end{equation*}
the integrals being over $\Rd$. Under the condition $b\in\BS_\a$
 the integral in the r.h.s. is clearly finite for any
$u$ from the Schwarz
class $\plainS(\Rd)$. Moreover, the condition $b\in \BS_0$
guarantees the boundedness of $\op(b)$ in $\plainL2(\Rd)$, see
Proposition \ref{bound:prop}. Unless otherwise stated, from now on
$\plainS(\Rd)$ is taken as a natural domain for all PDO's at hand. 
Observe that the operator $\op(b)$ is
symmetric if its symbol satisfies the condition
\begin{equation}\label{selfadj:eq}
\hat b(\bth, \bxi) = \overline{\hat b(-\bth, \bxi+\bth)}.
\end{equation}
We shall call such symbols \textsl{symmetric}.

Our aim is to study the spectrum of the operator
 \begin{equation}\label{h:eq}
\begin{cases}
H = \op(h),\ h(\bx, \bxi) = h_0(\bxi) + b(\bx, \bxi),\\[0.2cm]
h_0(\bxi) =  |\bxi|^{2m},\  m >0,\\[0.2cm]
\ b\in\BS_{\a}(\lu\bxi\ru^\b), \
\a\b < 2m,
\end{cases}
\end{equation}
with a symmetric symbol $b$.
The operator $\op(b)$ is infinitesimally
$H_0$-bounded, see Lemma \ref{formbound:lem},
so that $H$
is self-adjoint on the domain $D(H) = D(H_0) = \plainH{2m}(\Rd)$.
Due to the $\SG$-periodicity of the
symbol $b$, the operator $H$ commutes with the shifts along the
lattice vectors, i.e.
\begin{equation*}
H \CT_{\bg} = \CT_{\bg} H, \ \bg\in\SG.
\end{equation*}
with $(\CT_{\bg} u)(\bx) = u(\bx+\bg)$. This allows us to use the
\textsl{Floquet decomposition}.

\subsection{Floquet decomposition}\label{floquet:subsect}
We identify the underlying Hilbert space $\CH = \plainL2(\R^d)$
with the direct integral
\begin{equation*}
\GG = \int_{\CO^\dagger} \GH  d\bk,\ \GH = \plainL2 (\CO).
\end{equation*}
This identification is implemented  by the Gelfand transform
\begin{equation}\label{gelfand:eq}
(U u)(\bx, \bk) = \frac{1}{\sqrt{\dc(\SG^\dagger)}} e^{-i\lu\bk,
\bx\ru} \sum_{\bg\in \SG} e^{-i \lu\bk, \bg\ru } u(\bx +  \bg),\
\bk\in\R^d,
\end{equation}
which is initially defined on $u\in \plainS(\Rd)$ and extends by
continuity  to a unitary mapping from $\CH$ onto $\GG$. In terms
of the Fourier transform the Gelfand transform is defined as
follows: $\widehat{(Uu)}(\bth, \bk) = (\mathcal F u)(\bth + \bk),\
\bth\in\SG^\dagger$. The unitary operator $U$ reduces $\CT_{\bg}$
to the diagonal form:
\begin{equation*}
(U \CT_{\bg}U^{-1} f)(\ \cdot\ , \bk) = e^{i \bk\cdot\bg} f (\
\cdot\ , \bk),\  \forall \bg\in \SG.
\end{equation*}
Let us consider a self-adjoint operator $A$ in $\CH$ which
commutes with $\CT_{\bg}$ for all $\bg\in\SG$, i.e. $A\CT_{\bg} =
\CT_{\bg}A$. We call such operators ($\SG$-)periodic.
Then $A$ is partially diagonalised by $U$ (see
\cite{RS}), that is, there exists a measurable family of
self-adjoint operators (fibres) $A(\bk), \bk\in\CO^\dagger$
acting in $\GH$, such that
\begin{equation}\label{direct:eq}
U A U^*  = \int_{\CO^{\dagger}} A(\bk)d\bk.
\end{equation}
It is easy to show that any periodic  
operator $T$, which
is $A$-bounded with relative bound $\e<1$, can be also decomposed
into a measurable set of fibers $T(\bk)$ in the sense that
\begin{equation*}
(UT f)(\ \cdot\ , \bk) = T(\bk) (U f)(\ \cdot\ , \bk),\
\textup{a.e.}\  \bk\in\CO^{\dagger},
\end{equation*}
for all $f\in D(A)$. Moreover, the fibers $T(\bk)$ are
$A(\bk)$-bounded with the bound $\e$, and if $T$ is symmetric,
then the operator $A(\bk) + T(\bk)$ is self-adjoint on
$D(A(\bk))$.

Suppose that the operator $A$ (and hence $A(\bk)$) is bounded from
below and that the spectrum of each $A(\bk)$ is discrete. Denote
by $\l_j\bigl(A(\bk)\bigr), j = 1, 2, \dots,$ the eigenvalues of
$A(\bk)$ labeled in the ascending order. Define the counting function in the usual way:
\begin{equation*}
N\bigl(\l, A(\bk )\bigr) = \#\{j: \l_j\bigl(A(\bk)\bigr)\le \l\},\
\l\in\R.
\end{equation*}
If $A = \op(a)$ with a real-valued symbol
$a\in \plainL\infty_{\textup{\tiny loc}}(\Rd)$ depending only on $\bxi$,
then $A(\bk)$ is a self-adjoint PDO in $\GH$ defined as follows:
\begin{equation*}
A(\bk) u(\bx) = \frac{1}{\sqrt{\dc(\SG)}}
\sum_{\bm\in\SG^\dagger} e^{i\bm\cdot\bx}
a(\bm +\bk) \hat u(\bm).
\end{equation*}
If $a(\bxi)\to \infty$ as $|\bxi|\to \infty$, then the spectrum of
each $A(\bk)$ is purely discrete with eigenvalues given by
$\l^{(\bm)}(\bk) = a(\bm+\bk), \bm\in\SG^\dagger$. Consequently,
the number of eigenvalues below each $\l \in \R$ is essentially
bounded from above uniformly in $\bk\in\CO^{\dagger}$. If $T$ is a
periodic symmetric operator which is $A$-bounded with a bound
$\e<1$, then the spectrum of $A(\bk) + T(\bk)$ is also purely
discrete and the counting function is also bounded uniformly in
$\bk$.
In particular, the above applies to the elliptic operator $H$ defined in
\eqref{h:eq}.
In fact, applying the Gelfand transform
\eqref{gelfand:eq} to $\op(b)$, one finds that, similarly to $A$
considered above,
 the operator $H(\bk)$ is a PDO in  $\GH$ of the form
\begin{equation}\label{floquet:eq}
H(\bk) u(\bx) = \frac{1}{\sqrt{\dc(\SG)}}
\sum_{\bm\in\SG^\dagger} e^{i\bm\cdot\bx}
h(\bx, \bm + \bk) \hat u(\bm),\ \bk\in\R^d.
\end{equation}
The values $H(\bk)$ for $\bk\in\CO^\dagger$ determine $H(\bk)$
for all $\bk\in\R^d$ due to the following unitary equivalence:
\begin{equation*}
H(\bk + \bm) = e^{-i\bm\bx}H(\bk) e^{i\bm\bx},\ \bm\in\SG^\dagger.
\end{equation*}
This implies, in particular, that
\begin{equation}\label{periodicity:eq}
\l_j (H(\bk+\bm)) = \l_j(H(\bk)),\ j = 1, 2, \dots,
\end{equation}
for all $\bm\in\SG^\dagger$.
The images
\begin{equation*}
\s_j = \bigcup_{\bk\in\overline{\CO^\dagger}} \l_j(H(\bk)),
\end{equation*}
are called \textsl{spectral bands of} $H$.
The spectrum of $H$ is the union
\begin{equation*}
\s(H) = \bigcup_{j} \s_j.
\end{equation*}
Due to the mentioned boundedness of the counting function $N(\l, H(\bk))$,
each interval $(-\infty, \l]$ has non-empty intersection with finitely many spectral
bands.
When proving the Bethe-Sommerfeld conjecture we study the band overlap, which is
characterized by the  \textsl{the overlap function}
$\z(\l)$, $\l\in\R$, defined as the maximal number $t$ such that the symmetric
interval $[\l - t, \l+t]$ is entirely contained in one band, i.e.
\begin{equation}\label{zeta:eq}
\z(\l; H) =
\begin{cases}
\max_j \max\{t: [\l - t, \l+t]\subset \s_j\}, \ \l\in\s(H);\\
0,\ \l\notin\s(H).
\end{cases}
\end{equation}
It is easy to see that $\z$ is continuous in $\l$.
An equivalent definition of $\z(\l)$ is
\begin{equation}\label{zeta1:eq}
\z(\l; H) = \sup\{t: \min_{\bk} N(\l+t, H(\bk)) < \max_{\bk} N(\l-t, H(\bk)\}.
\end{equation}
The function $\z(\l; H)$ was first introduced by M. Skriganov, see e.g. \cite{Skr1}.

The main result of the paper is 
the following Theorem:

\begin{thm} \label{main:thm}
Let $H = H_0 + \op(b)$ where $H_0 = (-\Delta)^{m}$ with some $m >0$, and
$b\in\BS_{\a}(w)$, $w = \lu\bxi\ru^{\b}$, with some $\a\in\R$ and
$\b\in (0, 1)$ satisfying the condition
\begin{equation}\label{alm1:eq}
2m-2 > \b(\a-2).
\end{equation}
Then the spectrum of the operator $H$ contains a half-line, i.e.
there exists a number $\l_0\in\R$ such that $[\l_0, \infty)\subset \s(H)$.
Moreover,
there is a number $S \in\R$ and a constant $c>0$ such that for each $\l\ge \l_0$
we have $\z(\l; H)\ge c\l^S$. The constant $c$ and parameter $\l_0$ are uniform
in $b$ satisfying $\1 b\1^{(\a)} \le C$.
\end{thm}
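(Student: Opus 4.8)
The plan is to follow the strategy outlined in the introduction, combining the gauge-transformation technique of \cite{Sob} with the combinatorial-geometric band-overlap argument of \cite{P}. First I would fix a large spectral parameter $\l$, set $\rho = \l^{1/(2m)}$, so that the relevant part of phase space is the shell $\{|\bxi| \sim \rho\}$, and perform a partition of this shell into a \emph{non-resonant} zone and finitely many \emph{resonant} zones associated with the lattice $\SG^\dagger$ (Section 5 of the paper). The key quantitative input of this partition is that the angular measure of the resonant set is small — of order a negative power of $\rho$ — compared with the full sphere. The condition \eqref{alm1:eq}, i.e. $2m-2 > \b(\a-2)$, is exactly what is needed to make the perturbative scheme converge: the perturbation $\op(b)$ has order $\b\a < 2m$, and the gauge transformation $U = e^{i\Psi}$ produces, after finitely many steps, an operator $A = U^* H U$ whose symbol is, modulo a remainder of arbitrarily negative order, a function of $\bxi$ alone on the non-resonant zone, and admits a partial separation of variables on each resonant zone.

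Next I would pass to the Floquet fibres $H(\bk)$ and their unitarily equivalent counterparts $A(\bk)$, and control the counting function $N(\l, A(\bk))$ separately over the two zones. On the non-resonant zone, since $A(\bk)$ has (approximately) constant coefficients there, the eigenvalues are explicitly $\approx h_0(\bm+\bk) + (\text{lower order})$ for $\bm + \bk$ in the non-resonant set, so the contribution to $N(\l+t, A(\bk))$ and $N(\l-t, A(\bk))$ can be computed and compared precisely; this yields a lower bound $\z(\l) \ge c\l^S$ for some explicit $S$, exactly as in the polyharmonic case \cite{PS1, PS2}. The new ingredient, forced by the strength of the perturbation ($a<2m$ rather than $a<2m-1$ or $a=0$), is the careful treatment of the resonant eigenvalues: using the partial separation of variables one reduces the resonant fibre operator to a direct sum of lower-dimensional problems, and the smallness of the angular measure of the resonant set translates into the statement that the number of resonant eigenvalues in a window $[\l-t,\l+t]$ is negligible compared with the jump in the non-resonant count. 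Combining the two, one verifies the criterion \eqref{zeta1:eq}: $\min_{\bk} N(\l+t, H(\bk)) < \max_{\bk} N(\l - t, H(\bk))$ for all $t \le c\l^S$, which gives $\z(\l; H) \ge c\l^S > 0$ for all $\l \ge \l_0$, and hence $[\l_0,\infty) \subset \s(H)$. Uniformity in $b$ with $\1 b\1^{(\a)} \le C$ is automatic because every estimate in the construction depends on $b$ only through finitely many of the norms $\1 b\1^{(\a)}_{l,s}$.

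The main obstacle, and the bulk of the work, is the precise construction and analysis of the resonant zones (Sections 5, 8, 9) together with the verification that the gauge-transformed operator genuinely separates variables there with controllable remainders. Two difficulties compound: first, the resonant zones are geometrically intricate — they are built from intersections of neighborhoods of the resonant lattice hyperplanes, and one must show their angular measure is small while still being large enough to absorb all the "bad" directions; second, one must propagate the symbol classes $\BS_\a(w)$ through the commutator expansion defining $\Psi$ (Section 4) and through the restriction to each zone, keeping track of how the orders degrade, and check that \eqref{alm1:eq} guarantees the gain at each step is a genuine power of $\rho^{-1}$. Once these structural facts are in hand, the band-overlap estimate itself is, as in \cite{P}, a relatively robust counting argument. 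I would therefore expect the heart of the proof to be the geometric-measure estimates for the resonant sets and the stability of the separation-of-variables structure under the gauge transformation, with the final deduction of the half-line being comparatively short.
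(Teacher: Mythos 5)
Your outline reproduces the paper's architecture up to and including the gauge transformation and the resonant/non-resonant partition, but the final step — the actual deduction of band overlap — is a genuine gap, and in fact it is precisely the step where the paper departs from the earlier literature. You propose to compute the non-resonant contribution to $N(\l\pm t, A(\bk))$ "exactly as in the polyharmonic case \cite{PS1}, \cite{PS2}" and then to argue that the number of resonant eigenvalues in the window $[\l-t,\l+t]$ is negligible compared with the jump in the non-resonant count. That is the Skriganov/\cite{PS1}--\cite{PS2} counting-function comparison, and it is exactly the argument that fails for perturbations of order up to $2m$ in arbitrary dimension: the achievable jump in the non-resonant count is too small to dominate any crude bound on the resonant eigenvalues unless one imposes restrictions like $8m>d+3$. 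The introduction of the paper states this explicitly ("we go beyond all these restrictions, and hence we are forced to study the unstable eigenvalues in detail"). No asymptotic formula for the counting function appears anywhere in the paper, and the smallness of the angular measure of the resonant set is \emph{not} used to bound a number of resonant eigenvalues against a non-resonant jump.

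What the paper actually does is the mechanism of \cite{P}: it builds a global labelling function $g$ for the eigenvalues of the model operator $A(\bk)$ (Section \ref{global:sect}), and then shows (Lemma \ref{lem:simple4}) that there exists at least one direction $\boldsymbol\Om$ in the non-resonant cone $T(\rho)$ such that $g(t\boldsymbol\Om)$ remains a \emph{simple} eigenvalue for every $t$ in the interval $I(\boldsymbol\Om;\rho,\d)$ on which $g$ sweeps through $[\l-\d,\l+\d]$; the intermediate value theorem (Lemma \ref{lem:simple1}) then yields $\z(\l;A)\ge\d$. The existence of such a direction is proved by contradiction from a volume comparison, and the decisive quantitative input is Theorem \ref{volumeCB2a} — the bound on $\volume\bigl(\CB(g;\rho,\d)\cap(\CB(g;\rho,\d)+\bn)\bigr)$ for nonzero lattice vectors $\bn$, together with \eqref{BD:eq} for the mixed intersections with the resonant set. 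This self-intersection estimate, which occupies Sections 8--9 and requires the curvature-type hypotheses \eqref{cond3}--\eqref{cond4} (hence the second-derivative condition $2m-2>\b(\a-2)$ in \eqref{alm1:eq}), has no counterpart in your plan. Without it, or some substitute showing that multiplicities cannot occur along every radial direction, your argument does not close.
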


If one prefers stating the conditions on $b$ in terms
of the ``standard"
classes $\BS_a(\lu\bxi\ru)$, one can re-write  Theorem \ref{main:thm} as follows:

\begin{thm} \label{main1:thm}
Let $H = H_0 + \op(b)$ where $H_0 = (-\Delta)^{m}$ with some $m >0$, and
$b\in\BS_{a}(w)$, $w = \lu\bxi\ru$, with some $ a < 2m$.
Then the spectrum of the operator $H$ contains a half-line, i.e.
there exists a number $\l_0\in\R$ such that $[\l_0, \infty)\subset \s(H)$.
Moreover there is a number $S \in\R$ and a constant $c>0$ such that for each $\l\ge \l_0$
we have $\z(\l; H)\ge c\l^S$. The constant $c$ and parameter $\l_0$ are uniform
in $b$ satisfying $\1 b\1^{(\a)} \le C$.
\end{thm}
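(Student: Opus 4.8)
The plan is to follow the route sketched in the introduction. Since Theorem~\ref{main1:thm} is merely a reformulation of Theorem~\ref{main:thm} --- taking $\a = a/\b$ and letting $\b\to 1$ turns condition \eqref{alm1:eq} into $a<2m$ --- it suffices to prove Theorem~\ref{main:thm}. Everything is aimed at the characterisation \eqref{zeta1:eq} of the overlap function: I will produce numbers $S\in\R$, $c>0$ and $\l_0$ such that $\min_\bk N(\l+c\l^S, H(\bk)) < \max_\bk N(\l-c\l^S, H(\bk))$ for all $\l\ge\l_0$, which by \eqref{zeta1:eq} gives $\z(\l;H)\ge c\l^S$ and hence $[\l_0,\infty)\subset\s(H)$.

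First I would carry out the ``gauge transformation''. The goal is a self-adjoint periodic PDO $\Psi = \op(\psi)$ such that the symbol of $A := e^{i\Psi}He^{-i\Psi}$ is independent of $\bx$ on a suitable non-resonant region of $\bxi$-space. Expanding $e^{i\Psi}He^{-i\Psi} = H + i[\Psi,H] - \tfrac12[\Psi,[\Psi,H]] + \cdots$ and solving the resulting chain of homological equations order by order in the symbol calculus, the leading one has solution, in Fourier variables, $\hat\psi(\bth,\bxi) \sim \hat b(\bth,\bxi)\bigl(h_0(\bxi+\bth)-h_0(\bxi)\bigr)^{-1}$, $\bth\ne0$; this makes sense precisely away from the resonant hyperplanes $\{\bxi:|\bxi+\bth|^{2m}=|\bxi|^{2m}\}$ attached to the short vectors $\bth\in\SG^\dagger$. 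The symbol estimates \eqref{decay:eq}, \eqref{differ:eq}, \eqref{differ1:eq} are tailored so that on the non-resonant region each successive correction gains a negative power of $\lu\bxi\ru$ whenever \eqref{alm1:eq} holds, so the iteration converges modulo a remainder of arbitrarily negative order.

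Next comes the (purely combinatorial-geometric) partition of a large shell $\{\rho\le|\bxi|\le 2\rho\}$ into a non-resonant set and finitely many resonant sets, each associated with a rational subspace $\GV$ spanned by short vectors of $\SG^\dagger$, following a simplified version of \cite{P}. On the non-resonant set the relevant fibre of $A$ acts diagonally in the exponential basis, so its Floquet eigenvalues are $|\bm+\bk|^{2m}$ plus an explicit lower-order correction and are completely controllable. On each resonant set the construction must be arranged so that $A$ admits a partial separation of variables --- it decouples, up to harmless terms, into a part acting along $\GV$ and a free part in the orthogonal directions --- and one then shows, using the volume bounds, that the total angular measure of the resonant set is negligible compared with that of the non-resonant one.

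Finally I would run the combinatorial-geometric counting argument of \cite{P}. The explicit non-resonant eigenvalues show that the number of bands crossing a large level $\l$ is enormous and strongly $\bk$-dependent, producing the gap $\min_\bk N(\l+c\l^S,H(\bk)) < \max_\bk N(\l-c\l^S,H(\bk))$; the smallness of the angular measure of the resonant sets, together with a crude bound on the resonant eigenvalues and the periodicity \eqref{periodicity:eq}, guarantees that the resonant bands cannot fill this gap. Uniformity in $b$ with $\1 b\1^{(\a)}\le C$ is inherited from the uniformity of every symbol estimate used. The main obstacle is the analysis of the resonant (unstable) eigenvalues: the resonant zones are genuinely intricate geometric objects, establishing the partial separation of variables on each of them with fully explicit control of all error terms is delicate, and the angular-measure estimates for these zones must be sharp enough to dominate the non-resonant contribution --- this is exactly where going beyond the earlier restrictions ($8m>d+3$, rational lattices, $a<2m-1$) requires genuinely new effort.
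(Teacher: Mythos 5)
Your reduction of Theorem \ref{main1:thm} to Theorem \ref{main:thm} is exactly the paper's: embed $\BS_a(\lu\bxi\ru)$ into $\BS_\a(\lu\bxi\ru^\b)$ with $\a=a\b^{-1}$ and observe that \eqref{alm1:eq} becomes $\b>a/2-m+1$, which admits a solution $\b\in(0,1)$ precisely when $a<2m$ (one fixes such a $\b$; no limit $\b\to1$ is taken). Had you then simply invoked Theorem \ref{main:thm}, the argument would be complete. Instead you undertake to prove Theorem \ref{main:thm}, and what you offer for it is a programme, not a proof: every hard quantitative step is named but none is carried out. The gauge transformation requires not just the formula $\hat\psi\sim\hat b/\tau_\bth$ but the whole inductive machinery of Lemma \ref{gauge:lem} (the classes $\BS_{\s_j}$, the bounds \eqref{psik1:eq}--\eqref{rm:eq}, and the smallness condition $\rho^{\b(\s-1)}\1 b\1^{(\a)}\ll1$); the resonant sets $\Xi(\GV)$ must be built so that \eqref{exhaust:eq}--\eqref{span:eq} hold and the congruence classes $\BUps(\bxi)$ are invariant, which is the entire content of Section 5; and the passage from the model operator $A$ back to $H$ needs the counting-function comparison of Lemma \ref{atoa1:lem}, which you do not mention.

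More seriously, your description of the final step mischaracterizes the mechanism. The overlap is \emph{not} obtained by showing that ``the number of bands crossing $\l$ is enormous and strongly $\bk$-dependent'' together with smallness of the resonant angular measure --- that is closer to the older lattice-point-counting arguments that only work under restrictions like $8m>d+3$. The decisive ingredient here (and in \cite{P}) is the two-set volume estimate of Theorem \ref{volumeCB2a}, bounding $\volume\bigl(\CB(g;\rho,\d)\cap(\CB(g;\rho,\d)+\bb)\bigr)$ by $\d^2\rho^{4-4m+d+6\ep}+\d\rho^{1-2m-\ep(d-1)}$, whose proof occupies Section 9 and requires the regularization $\tilde g_j(\bxi)=g_j(\bxi-\bv_j)$ to enforce \eqref{nuli:eq} and the two geometric regimes of Lemmas \ref{kiss:lem} and \ref{volumeCB2.2}. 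From it one deduces (Lemmas \ref{lem:simple3}, \ref{lem:simple4}) that for a suitable $\d\asymp\rho^{2m-4-d-12(d-1)^{-1}}$ there exists a direction $\boldsymbol\Om\in T(\rho)$ along which $g(t\boldsymbol\Om)$ is a \emph{simple} eigenvalue for all $t$ in the window, and then the intermediate value theorem (Lemma \ref{lem:simple1}) yields $\z(\rho^{2m};A)\ge\d$. Without this self-intersection estimate and the simple-eigenvalue argument along a ray, the plan has no engine, and your own closing admission that the resonant analysis ``requires genuinely new effort'' confirms the gap.
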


To deduce Theorem \ref{main1:thm} from \ref{main:thm} it suffices to note that
 $S_{a}(\lu\bxi\ru)\subset S_\a(\lu\bxi\ru^\b)$ for any $\b\in (0, 1)$
and $\a = a \b^{-1}$, and that  for this $\a$ the condition \eqref{alm1:eq}
is equivalent to
\begin{equation}\label{alm2:eq}
\b > \frac{a}{2} - m+1.
\end{equation}

\begin{rem}
The magnetic Schr\"odinger operator $H = (-i\nabla - \ba)^2 + V$
with a smooth $\SG$-periodic vector-potential $\ba: \R^d\to\R^d$ and
electric potential $V:\R^d\to\R$,
is a special case of the operator \eqref{h:eq}
with $h_0(\bxi) = |\bxi|^2$ and
$b(\bx, \bxi) = -2\ba(\bx)\cdot\bxi + i (\nabla\ba(\bx)) + \ba^2(\bx) + V(\bx)$.
Thus defined symbols satisfy the conditions of Theorem \ref{main1:thm} with $m=1$
and $a = 1$. In this case any $\b > 1/2$ satisfies \eqref{alm2:eq}.
\end{rem}

\begin{rem}
In \cite{BarPar}
the Bethe-Sommerfeld conjecture was proved for symmetric symbols $b$
satisfying the conditions
$\1 b\1^{(a)}_{l, 1} < \infty$ (here $w(\bxi) = \lu \bxi\ru$)
for some $a < 2m-1$, and all $l\ge 1$.
Although the restriction on the order of $b$ is stronger than in Theorem \ref{main1:thm},
the paper \cite{BarPar} does not impose any
conditions on derivatives w.r.t. $\bxi$ of order higher than one.
In general, an interesting question is to find out how the smoothness of the perturbation
in $\bxi$ affects the band overlap. We hope to address this issue in a further publication.
\end{rem}

We conclude the Introduction by fixing some notations which will be used
throughout the paper.

\subsection{Some notational conventions}\label{fibre:subsect}
For any measurable set $\CC\subset\R^d$ we denote by $\CP(\CC)$ the operator
$\op(\chi( \ \cdot\ ; \CC))$, where $\chi(\ \cdot\ ; \CC)$ is the characteristic function
of the set $\CC$. We denote $\CH(\CC) = \CP(\CC)\CH$, $\CH = \plainL2(\R^d)$.
Accordingly, the fibres $\CP(\bk, \CC), \bk\in\CO^\dagger$,
of $\CP(\CC)$, which act in $\GH$, are PDO's with symbols 
$\sum_{\bm\in\SG^\dagger}\chi(\bm +\bk; \CC)$.
In other words, each $\CP(\bk; \CC)$ is a projection
in $\GH$ on the linear span of the exponentials 
\begin{equation}\label{exp:eq}
E_\bm(\bx) := \frac{1}{\sqrt{\dc(\SG)}}
e^{i\bm\cdot \bx},\ \ \bm\in\SG^\dagger: \bm+\bk\in\CC.
\end{equation}
The subspace $\CP(\bk; \CC)\GH$ of $\GH$ is denoted by $\GH(\bk; \CC)$.

Suppose that  
$\CH(\CC)$ is an invariant subspace of the operator
$H$ defined in \eqref{h:eq},  
that is  $(H-iI)^{-1}\CH(\CC)\subset \CH(\CC)$.
Then the subspace $\GH(\bk; \CC)$,
$\bk\in\CO^\dagger$, is invariant for $H(\bk)$.
We denote by $H(\bk; \CC)$ the part of $H(\bk)$ in $\GH(\bk; \CC)$, so that
\begin{equation*}
H(\bk) = H(\bk; \CC)\oplus H(\bk; \R^d\setminus\CC),\ \bk\in\CO^\dagger,
\end{equation*}
where $\oplus$ denotes the orthogonal sum. If $\CH(\CC)$ is invariant for $H$,
then we denote by $N(\l, H(\bk); \CC)$ the counting function of $H(\bk; \CC)$
on the subspace $\GH(\bk; \CC)$.

Each $\bxi\in\R^d$ can be uniquely represented as the
sum $\bxi = \bm+\bk$, where $\bm\in\SG^\dagger$ and $\bk\in\CO^\dagger$.
We say that $\bm=:[\bxi]$ is the integer part of $\bxi$ and
$\bk=:\{\bxi\}$ is the fractional part of $\bxi$.
 
The notation $B(\bx_0, R)$ is used for the open
ball in $\R^d$ of radius $R>0$, centered  at $\bx_0\in\R^d$. We also write
$B(R)$ for the open ball of radius $R$ centered at $0$.

For the reference convenience we copy here the conventions about symbol classes made
earlier in this section.
In the situations when it  is not important for us to know the exact
values of $l, s$ in the norm $\1 b\1^{(\a)}_{l, s}$, we denote the above norm
by $\1 b\1^{(\a)}$. In this case
the inequality $A \le C\1 b\1^{(\a)}$ means that there exist values of $l$ and $s$,
and a constant $C>0$, possibly depending on $l, s$,
such that $A \le C\1 b\1^{(\a)}_{l, s}$. Similarly, when we write
$\1 b\1^{(\g)} \le C\1 g\1^{(\a)}$ for some symbols $b\in\BS_\g, g\in\BS_\a$, we mean that
for any $l$ and  $s$ the norm $\1 b\1^{(\g)}_{l, s}$
is bounded by $\1 g\1^{(\a)}_{p, n}$ with some $p$ and $n$ depending on $l, s$, and
some constant $C = C_{l, s}$. In general,
by $C, c$(with or without indices) we denote various positive constants, whose precise value
is unimportant.
Throughout
the entire paper we adopt the following convention. An
estimate (or an assertion) is said to be uniform in a symbol
$b\in\BS_\a$ if the constants in the estimate (or
assertion) at hand depend only on the constants $C_{l, s}$
in the bounds $\1 b \1^{(\a)}_{l, s}\le C_{l, s}$.
This is sometimes expressed by saying that an estimate (or assertion) is
uniform in the symbol $b$ satisfying $\1 b\1^{(\a)} \le C$.

We sometimes use notation $f\ll g$
or $g\gg f$ for two positive functions $f, g$,
if there is a constant $C>0$ independent of $f, g$ such that $f\le C g$.
If $f\ll g$ and $g\ll f$, then we write $f\asymp g$.


\section{Properties of periodic PDO's}\label{calc:sect}

In this section we collect various properties of periodic PDO's
to be used in what follows.

\subsection{Some basic results on the calculus of periodic PDO's}
We begin by listing some elementary results for periodic PDO's, some of which can be found in
\cite{Sob0}.

Recall that $\plainS(\Rd)$ is taken as a natural domain of
$\op(b)$. Unless otherwise stated, all the symbols are supposed to
belong to the class $\BS_\a = \BS_\a(w; \SG),\ \a\in\R,$ with an arbitrary
function $w$ satisfying \eqref{weight:eq}
 and a lattice $\SG$.
The functions $w$ and the lattice $\SG$ are
usually omitted from the notation.

\begin{prop}\label{bound:prop}(See e.g. \cite{Sob0})
Suppose that $\1 b\1^{(0)}_{l, 0}<\infty$ with some $l >d$. Then $B = \op(b)$ is
bounded in $\CH$ and $\|B\|\le C \1 b \1^{(0)}_{l, 0}$,
with a constant $C$ independent of $b$.
\end{prop}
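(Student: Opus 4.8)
The plan is to pass to the Fourier transform side, where $\op(b)$ becomes a ``discrete convolution'' operator. Fix $u\in\plainS(\Rd)$ and set $g=\mathcal F u$. By \eqref{decay:eq} with $\a=0$ and $\bs=0$ we have the uniform bound $|\hat b(\bth,\bxi)|\le\1 b\1^{(0)}_{l,0}\lu\bth\ru^{-l}$, and since $l>d$ the Fourier series
\[
b(\bx,\bxi)=\frac{1}{\sqrt{\dc(\SG)}}\sum_{\bth\in\SG^\dagger}\hat b(\bth,\bxi)\,e^{i\lu\bth,\bx\ru}
\]
converges absolutely, uniformly in $(\bx,\bxi)$. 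Substituting this into the definition of $\op(b)u$ and interchanging the sum with the (absolutely convergent, since $g$ is Schwartz) $\bxi$-integral, one obtains after applying $\mathcal F$ the identity
\[
(\mathcal F\op(b)u)(\bxi)=\frac{1}{\sqrt{\dc(\SG)}}\sum_{\bth\in\SG^\dagger}\hat b(\bth,\bxi-\bth)\,g(\bxi-\bth),\qquad\bxi\in\Rd.
\]

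Next I would estimate this sum. Using the decay bound again, $|(\mathcal F\op(b)u)(\bxi)|\le\dc(\SG)^{-1/2}\1 b\1^{(0)}_{l,0}\sum_{\bth\in\SG^\dagger}\lu\bth\ru^{-l}\,|g(\bxi-\bth)|$. Writing $\lu\bth\ru^{-l}=\lu\bth\ru^{-l/2}\cdot\lu\bth\ru^{-l/2}$ and applying the Cauchy--Schwarz inequality in $\bth$ (equivalently, Young's inequality for the convolution of $|g|$ with the finite measure $\sum_{\bth}\lu\bth\ru^{-l}\delta_{\bth}$) gives
\[
|(\mathcal F\op(b)u)(\bxi)|^2\le\frac{(\1 b\1^{(0)}_{l,0})^2}{\dc(\SG)}\,A_l\sum_{\bth\in\SG^\dagger}\lu\bth\ru^{-l}\,|g(\bxi-\bth)|^2,\qquad A_l:=\sum_{\bth\in\SG^\dagger}\lu\bth\ru^{-l},
\]
where $A_l<\infty$ precisely because $l>d$ (compare the sum over the lattice $\SG^\dagger$ with $\int_{\Rd}\lu\bxi\ru^{-l}d\bxi$). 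Integrating in $\bxi$, using Tonelli and the translation invariance of Lebesgue measure, $\|\mathcal F\op(b)u\|^2\le\dc(\SG)^{-1}(\1 b\1^{(0)}_{l,0})^2A_l^2\|g\|^2$; since $\mathcal F$ is unitary this reads $\|\op(b)u\|\le C\1 b\1^{(0)}_{l,0}\|u\|$ with $C=A_l\,\dc(\SG)^{-1/2}$, manifestly independent of $b$. Finally $\op(b)$ extends from the dense subspace $\plainS(\Rd)$ to a bounded operator on all of $\CH$ with the same norm bound.

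There is no serious obstacle here; this is the standard Schur-type argument, and the only points that need care are routine: justifying the termwise Fourier expansion of $b$ and the interchange of summation and integration (both immediate from the uniform $\lu\bth\ru^{-l}$ decay and the rapid decay of $\mathcal F u$), and verifying the convergence of $A_l$ for $l>d$. One could alternatively invoke the Schur test directly on the sum displayed above, but the elementary Cauchy--Schwarz estimate seems the most transparent route.
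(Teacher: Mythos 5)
Your argument is correct: the passage to the Fourier side, the convolution-type formula $(\mathcal F\op(b)u)(\bxi)=\dc(\SG)^{-1/2}\sum_{\bth}\hat b(\bth,\bxi-\bth)g(\bxi-\bth)$, and the Cauchy--Schwarz/Schur estimate with $A_l=\sum_{\bth}\lu\bth\ru^{-l}<\infty$ for $l>d$ all check out, and this is exactly the standard argument behind the result the paper simply quotes from \cite{Sob0} without proof. The only cosmetic remark is that one can shortcut the pointwise Cauchy--Schwarz step by viewing $\op(b)$ as the sum over $\bth$ of a modulation $e^{i\bth\cdot\bx}$ composed with the Fourier multiplier $\hat b(\bth,\cdot)$, and summing operator norms $\le \1 b\1^{(0)}_{l,0}\lu\bth\ru^{-l}$ directly; this yields the same constant $C=A_l\,\dc(\SG)^{-1/2}$.
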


Since $\op(b) u\in\plainS(\Rd)$ for any $b\in\BS_{\a}$ and
$u\in \plainS(\Rd)$,
the product $\op(b) \op(g)$, $b\in \BS_{\a}, g\in \BS_{\g}$,
is well defined on $\plainS(\Rd)$. A straightforward calculation gives
the following formula for the symbol
$b\circ g $ of the product $\op(b)\op(g)$:
\begin{equation*}
(b\circ g)(\bx, \bxi) = \frac{1}{ {\dc(\SG)}} \sum_{\bth, \bphi}
\hat b(\bth, \bxi +\bphi) \hat g(\bphi, \bxi)
e^{i(\bth+\bphi)\bx},
\end{equation*}
and hence
\begin{equation}\label{prodsymb:eq}
\widehat{(b\circ g)}(\bchi, \bxi) = \frac{1}{ \sqrt{\dc(\SG)}}
\sum_{\bth +\bphi = \bchi} \hat b (\bth, \bxi +\bphi) \hat g(\bphi,
\bxi),\ \bchi\in\SG^\dagger,\ \bxi\in \Rd.
\end{equation}
Here and below $\bth, \bphi\in\SG^\dagger$. In particular, one sees that
$\op(b) \op(w^{\d}) = \op(b w^\d)$ for any $\d \in\R$.
This observation leads to the following Lemma.
We remind that the symbol $b_{\boldeta}$ is defined in \eqref{bboldeta:eq}.

\begin{lem}\label{formbound:lem}
Let $b\in\BS_{\a}(w)$ with $w(\bxi) = \lu\bxi\ru^\b, \b\in (0, 1]$.
Then for any $u\in\plainS(\R^d)$ and any $l > d$, we have
\begin{equation}\label{formbound1:eq}
\| \op(b) u\|\le C \1 b\1^{(\a)}_{l, 0}
\|(H_0 + I)^{\tilde\g} u\|,
\tilde\g  = \frac{\a\b}{2m},
\end{equation}
with a constant $C$ independent of $b, u$.
In particular, if $\a\b  < 2m$, then
$\op(b)$ is $H_0$-bounded with an arbitrarily small relative bound.

Moreover, for any $\boldeta\in\R^d$ and any $l > d$,
\begin{equation}\label{formdiffer:eq}
\|(\op(b) - \op(b_{\boldeta})) u\|
\le C |\boldeta| \1 b\1^{(\a)}_{l, 1}\|(H_0 + I)^{\g} u\|,\
\g=\frac{\b(\a-1)}{2m},
\end{equation}
where the constant $C$ does not depend on $b, u$, and
is uniform in $\boldeta$:
$|\boldeta|\le \tilde C$.
\end{lem}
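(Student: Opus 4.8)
The plan is to reduce both estimates to the $\plainL2$-boundedness result of Proposition \ref{bound:prop} by factoring out a suitable power of $(H_0+I)$. For \eqref{formbound1:eq}, the key observation is the identity $\op(b)\op(w^\delta) = \op(bw^\delta)$ recorded just before the statement, together with the fact that $H_0 + I$ is the PDO with symbol $\lu\bxi\ru^{2m}$ up to a symbol in $\BS_0$ (more precisely $(H_0+I) = \op(1+|\bxi|^{2m})$ and $1+|\bxi|^{2m}\asymp\lu\bxi\ru^{2m}$, with all derivatives in $\bxi$ controlled). First I would write $\op(b) = \op(b\,w^{-\alpha})\,\op(w^{\alpha})$, where $w^\alpha = \lu\bxi\ru^{\alpha\beta}$; since $b\in\BS_\alpha(w)$, the symbol $b w^{-\alpha}$ lies in $\BS_0(w)$ with $\1 b w^{-\alpha}\1^{(0)}_{l,0}\le C\1 b\1^{(\alpha)}_{l,0}$ directly from \eqref{1b1:eq}. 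Then $\op(w^\alpha)u = \op(\lu\bxi\ru^{\alpha\beta})u$, and since $\lu\bxi\ru^{\alpha\beta}\le C(1+|\bxi|^{2m})^{\tilde\gamma}$ with $\tilde\gamma=\alpha\beta/(2m)$, we get $\|\op(w^\alpha)u\|\le C\|(H_0+I)^{\tilde\gamma}u\|$ (this last step is an elementary spectral-theorem bound since $(H_0+I)^{\tilde\gamma}$ acts as multiplication by $(1+|\bxi|^{2m})^{\tilde\gamma}$ on the Fourier side). Combining with Proposition \ref{bound:prop} applied to $\op(bw^{-\alpha})$ gives \eqref{formbound1:eq}. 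The statement about infinitesimal $H_0$-boundedness when $\alpha\beta<2m$ follows because then $\tilde\gamma<1$, so $\|(H_0+I)^{\tilde\gamma}u\|\le \varepsilon\|(H_0+I)u\| + C_\varepsilon\|u\|$ for any $\varepsilon>0$ by Young's inequality applied to the scalar function $t\mapsto (1+t)^{\tilde\gamma}$.

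For \eqref{formdiffer:eq} the strategy is the same but applied to the difference symbol $b - b_{\boldeta}$. By \eqref{differ1:eq} we have $\1 b - b_{\boldeta}\1^{(\alpha-1)}_{l,0}\le C|\boldeta|\,\1 b\1^{(\alpha)}_{l,1}$ uniformly for $|\boldeta|\le \tilde C$, so $b-b_{\boldeta}\in\BS_{\alpha-1}(w)$ with the displayed norm control. Now apply the already-established estimate \eqref{formbound1:eq} with $\alpha$ replaced by $\alpha-1$: this yields $\|(\op(b)-\op(b_{\boldeta}))u\|\le C\1 b-b_{\boldeta}\1^{(\alpha-1)}_{l,0}\|(H_0+I)^{\gamma}u\|$ with $\gamma = \beta(\alpha-1)/(2m)$, and inserting the bound on $\1 b-b_{\boldeta}\1^{(\alpha-1)}_{l,0}$ finishes the proof.

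The only genuinely non-routine point is making the passage from the symbol inequality $\lu\bxi\ru^{\alpha\beta}\le C(1+|\bxi|^{2m})^{\tilde\gamma}$ to the operator inequality rigorous when $\op$ is the non-standard (periodic, left) quantization used here rather than the Weyl or Fourier-multiplier calculus; however, since $\lu\bxi\ru^{\alpha\beta}$ and $1+|\bxi|^{2m}$ depend only on $\bxi$, the operators $\op(\lu\bxi\ru^{\alpha\beta})$ and $(H_0+I)^{\tilde\gamma}$ are both Fourier multipliers and the comparison is literally a pointwise inequality between multiplier symbols, so no real PDO calculus is needed — one factors $\op(\lu\bxi\ru^{\alpha\beta}) = \op(\lu\bxi\ru^{\alpha\beta}(1+|\bxi|^{2m})^{-\tilde\gamma})\,(H_0+I)^{\tilde\gamma}$ and notes the first factor is a bounded Fourier multiplier. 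Thus I expect no serious obstacle; the lemma is essentially a bookkeeping exercise in peeling off powers of the weight and invoking Proposition \ref{bound:prop}, and the main care is simply in tracking which norm index ($l,0$ versus $l,1$) is needed at each stage.
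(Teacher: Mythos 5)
Your proposal is correct and mirrors the paper's argument exactly: factor $\op(b)=\op(bw^{-\alpha})\op(w^{\alpha})$, bound the first factor via Proposition \ref{bound:prop}, compare the Fourier multiplier $\op(w^{\alpha})$ with $(H_0+I)^{\tilde\gamma}$ pointwise, and then apply the first estimate with $\alpha-1$ in place of $\alpha$ to the symbol $b-b_{\boldeta}$ together with \eqref{differ1:eq}. The only cosmetic difference is that the paper observes the sharper identity $\1 bw^{-\alpha}\1^{(0)}_{l,0}=\1 b\1^{(\alpha)}_{l,0}$, where you content yourself with $\le C$; otherwise the two proofs coincide.
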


\begin{proof}
Define $G =  B \op(w^{-\a })$.
As we have observed earlier, $G = \op(g)$ with $g = b w^{-\a}$, so that
$g\in\BS_0(w)$ and $\1 g\1^{(0)}_{l, 0}=\1 b\1^{(\a)}_{l, 0}$. Hence,
by Lemma \ref{bound:prop}, $\|G\|\le C\1 b\1^{(\a)}_{l, 0}$ and
\begin{equation}\label{form:eq}
\|\op(b) u\| = \|G \op(w^\a) u \| \le C
\1 b\1^{(\a)}_{l, 0} \| \op(w^{\a})u\|.
\end{equation}
As $\op(w^\a)\le C(H_0+I)^{\tilde\g}$, $\tilde\g = \a\b(2m)^{-1}$,
we get \eqref{formbound1:eq}.

The bound \eqref{formdiffer:eq} follows from \eqref{formbound1:eq} when applied to
the symbol $b-b_{\boldeta}$, and from the estimate \eqref{differ1:eq}.
\end{proof}

The bound \eqref{formbound1:eq} allows one to give a proper meaning
to the operator \eqref{h:eq}, since $b$ is infinitesimally $H_0$-bounded.
The bound \eqref{formdiffer:eq} will be useful in the study of the Floquet eigenvalues
as functions of the quasi-momentum $\bk$.

For general symbols $b, g$ we have the following proposition (see e.g. \cite{Sob0}).

\begin{prop}\label{product:prop}
Let $b\in\BS_{\a}$,\ $g\in\BS_{\g}$. Then
$b\circ g\in\BS_{\a+\g}$ and
\begin{equation*}
\1 b\circ g\1^{(\a+\g)}
\le C \1 b\1^{(\a)} \1 g\1^{(\g)},
\end{equation*}
with a constant $C$ independent of $b, g$.
\end{prop}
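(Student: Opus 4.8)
The statement is the standard symbol calculus estimate for the composition, and I would prove it by a direct estimate on the Fourier coefficients of $b\circ g$ via the composition formula \eqref{prodsymb:eq}. Fix $l\ge 0$ and a non‑negative integer $s$; I must bound
\[
\1 b\circ g\1^{(\a+\g)}_{l, s} = \max_{|\bs|\le s}\ \sup_{\bxi,\bchi}\ \lu\bchi\ru^{l}\, w(\bxi)^{-(\a+\g)+|\bs|}\,\bigl|\BD_{\bxi}^{\bs}\widehat{(b\circ g)}(\bchi,\bxi)\bigr|
\]
by $C\1 b\1^{(\a)}_{l',s}\1 g\1^{(\g)}_{l'',s}$ for suitable $l', l''$ and $C$ depending only on $l, s, \a, d, \SG, \kappa$; under the paper's conventions this is exactly the asserted inequality. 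Periodicity of $b\circ g$ in $\bx$ (hence $b\circ g\in\BS_{\a+\g}$ once the norms are finite) is immediate from \eqref{prodsymb:eq}.

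\textbf{Step 1 (Leibniz rule and decay bounds).} Differentiating \eqref{prodsymb:eq} termwise (legitimate once we have the bounds below), I write $\BD_\bxi^\bs\bigl[\hat b(\bth,\bxi+\bphi)\hat g(\bphi,\bxi)\bigr]$ as a finite linear combination, with binomial coefficients, of products $\BD_\bxi^{\bs'}\hat b(\bth,\bxi+\bphi)\,\BD_\bxi^{\bs''}\hat g(\bphi,\bxi)$ with $\bs'+\bs''=\bs$. To each factor I apply \eqref{decay:eq}:
\[
|\BD_\bxi^{\bs'}\hat b(\bth,\bxi+\bphi)|\le \1 b\1^{(\a)}_{l',|\bs'|}\lu\bth\ru^{-l'}w(\bxi+\bphi)^{\a-|\bs'|},\qquad |\BD_\bxi^{\bs''}\hat g(\bphi,\bxi)|\le \1 g\1^{(\g)}_{l'',|\bs''|}\lu\bphi\ru^{-l''}w(\bxi)^{\g-|\bs''|},
\]
with $l', l''$ to be chosen.

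\textbf{Step 2 (fixing the weight at $\bxi$).} Applying \eqref{weight:eq} once with $\boldeta=\bphi$ and once, after shifting $\bxi\mapsto\bxi+\bphi$, with $\boldeta=-\bphi$, gives $c\,w(\bxi)\lu\bphi\ru^{-\kappa}\le w(\bxi+\bphi)\le C\,w(\bxi)\lu\bphi\ru^{\kappa}$, hence $w(\bxi+\bphi)^{\a-|\bs'|}\le C\,w(\bxi)^{\a-|\bs'|}\lu\bphi\ru^{\kappa|\a-|\bs'||}$ whatever the sign of $\a-|\bs'|$. Since $|\bs'|+|\bs''|=|\bs|$, the product of the two weight factors is $C\,w(\bxi)^{\a+\g-|\bs|}\lu\bphi\ru^{\kappa|\a-|\bs'||}$, and the factor $w(\bxi)^{-(\a+\g)+|\bs|}$ in the norm cancels it precisely.

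\textbf{Step 3 (summing over the dual lattice).} Using $\lu\bchi\ru=\lu\bth+\bphi\ru\le\sqrt2\,\lu\bth\ru\lu\bphi\ru$ for $\bth+\bphi=\bchi$, so $\lu\bchi\ru^{l}\ll\lu\bth\ru^{l}\lu\bphi\ru^{l}$, the remaining sum is controlled by $\sum_{\bth,\bphi\in\SG^\dagger}\lu\bth\ru^{l-l'}\lu\bphi\ru^{l-l''+\kappa|\a-|\bs'||}$. Choosing $l'=l+d+1$ and $l''=l+d+1+\kappa\max_{|\bs'|\le s}|\a-|\bs'||$ makes both geometric factors summable over $\SG^\dagger\subset\Rd$, with sum bounded by a constant depending only on $d,\SG,l,s,\a,\kappa$. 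Collecting the three steps yields $\1 b\circ g\1^{(\a+\g)}_{l,s}\le C\1 b\1^{(\a)}_{l',s}\1 g\1^{(\g)}_{l'',s}$, as required.

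\textbf{Main obstacle.} The only point needing care is Step 2: because $\a-|\bs'|$ may be of either sign, one must invoke \eqref{weight:eq} in both directions, and the resulting power $\lu\bphi\ru^{\kappa|\a-|\bs'||}$ then dictates how large $l''$ must be taken to guarantee absolute convergence of the lattice sum (and a posteriori the legitimacy of termwise differentiation). Everything else is the Leibniz rule, the decay estimate \eqref{decay:eq}, and a convergent series over $\SG^\dagger$.
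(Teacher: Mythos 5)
Your proof is correct. The paper does not prove this proposition itself — it is stated with a citation to \cite{Sob0} — so your argument fills in what the paper leaves to the reference, and it does so by the expected route: the explicit composition formula \eqref{prodsymb:eq}, Leibniz, the decay estimate \eqref{decay:eq}, the two-sided consequence of \eqref{weight:eq}, Peetre's inequality for $\lu\bchi\ru$, and a choice of $l', l''$ large enough to make the lattice sum over $\SG^\dagger$ converge. You also correctly identify the one non-routine point, namely that $\a-|\bs'|$ can have either sign so the weight comparison must be applied in both directions, which is exactly why $l''$ must absorb the extra $\kappa\max_{|\bs'|\le s}|\a-|\bs'||$ and why the bound has the ``for any $l,s$ there are $l'',l'$'' form compatible with the paper's norm convention.
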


We are also interested in the estimates for symbols of
commutators. For PDO's $A, \Psi_l, \ l = 1, 2, \dots ,N$,
denote
\begin{gather*}
\ad(A; \Psi_1, \Psi_2, \dots, \Psi_N)
= i\bigl[\ad(A; \Psi_1, \Psi_2, \dots, \Psi_{N-1}), \Psi_N\bigr],\\
\ad(A; \Psi) = i[A, \Psi],\ \ \ad^N(A; \Psi) =
\ad(A; \Psi, \Psi, \dots, \Psi),\ \ad^0(A; \Psi) = A.
\end{gather*}
For the sake of convenience we use the notation
$\ad(a;  \psi_1, \psi_2, \dots, \psi_N)$
and $\ad^N(a, \psi)$ for the symbols of multiple commutators.
It follows from \eqref{prodsymb:eq} that the
Fourier coefficients of the
symbol $\ad(b,g)$ are given by
\begin{multline}\label{comm:eq}
\widehat{\ad(b, g)}(\bchi, \bxi) = \frac{i}{\sqrt{\dc(\SG)}}
\sum_{\bth +\bphi = \bchi} \bigl[\hat b(\bth, \bxi +\bphi) \hat
g(\bphi, \bxi) - \hat b(\bth, \bxi)
\hat g(\bphi, \bxi + \bth)\bigr],\\
\bchi\in\SG^\dagger,\ \bxi\in \Rd.
\end{multline}

\begin{prop}\label{commut0:prop}(See e.g. \cite{Sob0})
Let $b\in \BS_{\a}$ and $g_j\in\BS_{\g_j}$,\
$j = 1, 2, \dots, N$.
Then $\ad(b; g_1, \dots, g_N) \in\BS_{\g}$ with
$$
\g = \a+\sum_{j=1}^N(\g_j-1),
$$
and
\begin{equation}\label{commutator:eq}
\1 \ad(b; g_1, \dots, g_N)\1^{(\g)}
\le C \1 b\1^{(\a)}
\prod_{j=1}^N \1 g_j\1^{(\g_j)},
\end{equation}
with a constant $C$ independent of $b, g_j$.
\end{prop}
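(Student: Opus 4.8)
The plan is to reduce the statement to an iterated application of the product estimate in Proposition \ref{product:prop}, together with a bookkeeping argument that tracks the loss of one unit of order per commutator. First I would treat the case $N=1$ directly from the explicit formula \eqref{comm:eq} for $\widehat{\ad(b,g)}(\bchi,\bxi)$. The two terms there are $\hat b(\bth,\bxi+\bphi)\hat g(\bphi,\bxi)$ and $\hat b(\bth,\bxi)\hat g(\bphi,\bxi+\bth)$; by \eqref{prodsymb:eq} these are exactly the Fourier coefficients of $b\circ g$ and $g\circ b$, so $\ad(b,g) = i(b\circ g - g\circ b)$ and by Proposition \ref{product:prop} this lies in $\BS_{\a+\g}$ with norm bounded by $C\1 b\1^{(\a)}\1 g\1^{(\g)}$. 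That alone, however, gives only order $\a+\g$, not $\a+\g-1$, so the key point is to extract the extra gain of one order from the \emph{difference} structure. The way to see this is to write, using the fundamental theorem of calculus in the $\bxi$-variable,
\begin{equation*}
\hat b(\bth,\bxi+\bphi)\hat g(\bphi,\bxi) - \hat b(\bth,\bxi)\hat g(\bphi,\bxi+\bth)
= \bigl[\hat b(\bth,\bxi+\bphi) - \hat b(\bth,\bxi)\bigr]\hat g(\bphi,\bxi)
 + \hat b(\bth,\bxi)\bigl[\hat g(\bphi,\bxi) - \hat g(\bphi,\bxi+\bth)\bigr],
\end{equation*}
and then invoke the difference bounds \eqref{differ:eq} (equivalently \eqref{differ1:eq}): shifting $b$ by $\bphi$ costs a factor $|\bphi|$ but lowers the order of $b$ by one, and similarly for $g$ shifted by $\bth$. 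The surplus powers of $|\bth|$ and $|\bphi|$ are absorbed into the rapidly decaying factors $\lu\bth\ru^{-l}$, $\lu\bphi\ru^{-l}$ coming from \eqref{decay:eq}, at the cost of increasing the index $l$ in the norms; this is exactly the kind of tradeoff permitted by the convention on the symbol $\1\cdot\1^{(\cdot)}$ spelled out in Section \ref{setting:sect}. Carrying out the summation over $\bth+\bphi=\bchi$ as in the proof of Proposition \ref{product:prop} (where one splits $\lu\bchi\ru \le \lu\bth\ru\lu\bphi\ru$ and uses $\sum_{\bth}\lu\bth\ru^{-l'}<\infty$ for $l'>d$) then yields $\ad(b,g)\in\BS_{\a+\g-1}$ with the claimed bound.

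For general $N$ I would argue by induction. Suppose the claim holds for $N-1$ commutators, so that $f := \ad(b;g_1,\dots,g_{N-1}) \in \BS_{\a'}$ with $\a' = \a + \sum_{j=1}^{N-1}(\g_j-1)$ and $\1 f\1^{(\a')} \le C\1 b\1^{(\a)}\prod_{j=1}^{N-1}\1 g_j\1^{(\g_j)}$. By definition $\ad(b;g_1,\dots,g_N) = i[f,\Psi_N]$-symbol $= \ad(f,g_N)$, so the $N=1$ case applied to the pair $(f,g_N)$ gives $\ad(f,g_N)\in\BS_{\a'+\g_N-1}$ with norm $\le C\1 f\1^{(\a')}\1 g_N\1^{(\g_N)}$, and $\a'+\g_N-1 = \a + \sum_{j=1}^N(\g_j-1)$ is precisely $\g$. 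Chaining the norm estimates completes the induction. One small point to watch: the $N=1$ estimate must be stated uniformly in the symbols, i.e. with the index tradeoff $l \mapsto l', s\mapsto s'$ made explicit, so that the induction does not accumulate an unbounded loss of regularity at a fixed final index — but since $N$ is fixed and finite this is purely a matter of choosing $l,s$ large enough at the outset and tracking them through finitely many steps.

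The main obstacle is the first step — squeezing the order down from $\a+\g$ to $\a+\g-1$ in the single commutator. This is where the cancellation inherent in $[b,g]$ (as opposed to $bg$) must be used quantitatively, and the only mechanism available is the telescoping identity above combined with the first-order difference estimate \eqref{differ:eq}. Everything after that — the induction on $N$, the summation over the dual lattice, and the tracking of the norm indices — is routine and follows the template already used for Propositions \ref{bound:prop} and \ref{product:prop}. I would also remark that the factors $\lu\boldeta\ru^{\kappa|\a-s-1|}$ appearing in \eqref{differ:eq} when $\boldeta$ is not small are harmless here: in the commutator formula the shifts are by lattice vectors $\bth,\bphi$ of arbitrary size, but the polynomial growth in $\lu\bth\ru,\lu\bphi\ru$ is again dominated by the $\lu\bth\ru^{-l},\lu\bphi\ru^{-l}$ decay after enlarging $l$, exactly as in the absolutely convergent double sum of \eqref{prodsymb:eq}.
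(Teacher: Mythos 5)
Your proof is correct, and it follows the standard argument (the paper itself gives no proof here, deferring to \cite{Sob0}, where essentially this route is taken): the gain of one order in a single commutator comes exactly from the telescoping identity combined with the first-order difference bound \eqref{differ:eq}, the $\lu\bth\ru^{\kappa|\cdot|}$ and $|\bphi|$ losses are absorbed into the arbitrarily fast lattice decay, and the general case follows by induction through the recursive definition $\ad(b;g_1,\dots,g_N)=\ad\bigl(\ad(b;g_1,\dots,g_{N-1});g_N\bigr)$. No gaps.
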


\subsection{Partition of the perturbation}
>From now on the weights in the definition
of classes $\BS_{\a} = \BS_{\a}(w)$
are assumed to be $w(\bxi) = \lu\bxi\ru^\b$
with some $\b\in (0, 1]$. Here we partition every symbol $b\in\BS_\a$ into the sum
of several symbols, restricted to different parts of the phase space.
These symbols depend on the parameter
$\rho\ge 1$, but this dependence is usually omitted from the notation.
Later on, we will put $\rho= \l^{\frac{1}{2m}}$.

Let $\iota\in \plainC\infty(\R)$ be a non-negative function such that
\begin{equation}\label{eta:eq}
0\le\iota\le 1,\ \ \iota(z) =
\begin{cases}
& 1,\  z \le 1/4;\\
& 0,\  z \ge 1/2.
\end{cases}
\end{equation}
For $L \ge 1$ and $\bth\in \SG^\dagger, \bth\not = \mathbf 0$, define the following
$\plainC\infty$-cut-off functions:
\begin{equation}\label{el:eq}
\begin{cases}
e_{\bth}(\bxi) =&\ \iota\biggl({\biggl|\dfrac{|\bxi +\bth/2|}{\rho}-1\biggr|}
 \biggr),\\[0.5cm]
\ell^{>}_{\bth}(\bxi) = &\ 1 -
\iota\biggl(\dfrac{|\bxi + \bth/2|}{\rho}-1
\biggr),\\[0.5cm]
\ell^{<}_{\bth}(\bxi) = &\ 1 -
\iota\biggl(1 - \dfrac{|\bxi + \bth/2|}{\rho}\biggr),
\end{cases}
\end{equation}
and
\begin{equation}\label{phizeta:eq}
\begin{cases}
\z_{\t}(\bxi; L) =&\
\iota\biggl(\dfrac{|\bth(\bxi + \bth/2)|}
{L|\bth|}\biggr),\\[0.5cm]
\varphi_{\t}(\bxi; L)
= &\  1 - \z_{\bth}(\bxi; L).
\end{cases}
\end{equation}
Note that $e_{\bth}+\ell^{>}_{\bth} + \ell^{<}_{\bth} = 1$.
The function $\ell^{>}_{\bth}$ is supported on the
set $|\bxi+\bth/2|>5\rho/4$, and
$\ell^{<}_{\bth}$ is supported on the
set $|\bxi+\bth/2|< 3\rho/4$. The function $e_{\bth}$ is supported in the
shell $\rho/2\le |\bxi|\le 3\rho/2$.
Omitting the parameter $L$ and using the notation
$\ell_{\bth}$ for any of the functions $\ell^{>}_{\bth}$ or $\ell^{<}_{\bth}$,
we point out that
\begin{equation}\label{symmetry:eq}
\begin{cases}
e_{\bth}(\bxi)
= e_{-\bth}(\bxi + \bth), \ \ell_{\bth}(\bxi)
= \ell_{-\bth}(\bxi + \bth),\\[0.2cm]
\varphi_{\bth}(\bxi)
=  \varphi_{-\bth}(\bxi + \bth),\ \
\z_{\bth}(\bxi) = \z_{-\bth}(\bxi + \bth).
\end{cases}
\end{equation}
Note that the above functions satisfy the estimates
\begin{equation}\label{varphi:eq}
\begin{cases}
|\BD^{\bs}_{\bxi} e_{\bth}(\bxi)|
+ |\BD^{\bs}_{\bxi}\ell_{\bth}(\bxi)|\ll \rho^{-|\mathbf s|},\\[0.2cm]
|\BD^{\bs}_{\bxi}\varphi_{\bth}(\bxi; L)|
+ |\BD^{\bs}_{\bxi} \z_{\bth}(\bxi; L)|
\ll L^{-|\bs|}.
\end{cases}
\end{equation}
Let
\begin{equation}\label{T:eq}
\T_r = \T_r(\SG) = \{\bth\in\SG^\dagger: 0<|\bth|\le r\},\
\T_r^{0} = \T_r\cup\{\mathbf 0\},
\end{equation}
with some $r >0$.
We always assume that $1\le r \le \rho^{\varkappa}$, where $\varkappa <\b$ is
a fixed (small) positive number the precise value of which will be chosen later.
Using the above cut-off functions, for any symbol
$b\in\BS_{\a }(w)$ we introduce six new symbols
$b^{\uparrow}, b^{\downarrow}, b^o, b^{\sharp}, b^{\natural}, b^{\flat}$
in the following way:
\begin{gather}
b^{\uparrow}(\bx, \bxi; \rho) = \frac{1}{\sqrt{\dc(\SG)}}
\sum_{\bth\notin\T_r^0}
\hat b(\bth, \bxi)
e^{i\bth \bx},\label{uparrow:eq}\\
b^{\sharp}(\bx, \bxi; \rho)
=  \frac{1}{\sqrt{\dc(\SG)}}\sum_{\bth\in\T_r}
\hat b(\bth, \bxi)
\ell^{>}_{\bth}(\bxi)
 e^{i\bth \bx},\label{sharp:eq}\\
b^{\natural}(\bx, \bxi; \rho) = \frac{1}{\sqrt{\dc(\SG)}}
\sum_{\bth\in\T_r}
\hat b(\bth, \bxi) \varphi_{\bth}(\bxi; \rho^\b)
e_{\bth}(\bxi)  e^{i\bth \bx},
\label{natural:eq}\\
b^{\flat}(\bx, \bxi; \rho) = \frac{1}{\sqrt{\dc(\SG)}}
\sum_{\bth\in\T_r}
\hat b(\bth, \bxi)\z_{\bth}(\bxi; \rho^\b)
e_{\bth}(\bxi)
  e^{i\bth \bx},\label{flat:eq}\\
b^{\downarrow}(\bx, \bxi; \rho) = \frac{1}{\sqrt{\dc(\SG)}}
\sum_{\bth\in\T_r}
\hat b(\bth, \bxi) \ell^{<}_{\bth}(\bxi)
  e^{i\bth \bx},\label{downarrow:eq}\\
b^o(\bx, \bxi; \rho) = b^o(\bxi; \rho) =
\frac{1}{\sqrt{\dc(\SG)}}\hat b(0, \bxi).\label{o:eq}
\end{gather}
The superscripts here are chosen to mean
correspondingly: $\uparrow$ =`large Fourier' (coefficients),
$\sharp$ = `large energy', $\natural$ = `non-resonance',
$\flat$ = `resonance', $\downarrow$ =`small energy', $o$ =$0$-th Fourier coefficient.
Sometimes the dependence of the introduced symbols
on the parameter $\rho$ is omitted from the notation.
The corresponding operators are denoted by
\begin{equation*}
\bes
B^{\uparrow} &= \op(b^{\uparrow}),\
B^{\sharp} = \op(b^{\sharp}),\ B^{\natural} = \op(b^{\natural}),\\
B^{\flat} &= \op(b^{\flat}),\ B^{\downarrow} = \op(b^{\downarrow}),\ B^o = \op (b^o).
\end{split}
\end{equation*}
By definition \eqref{eta:eq},
\begin{equation*}
b = b^o + b^{\downarrow}+ b^{\flat} + b^{\natural} + b^{\sharp} + b^{\uparrow}.
\end{equation*}
The role of each of these operator is easy to explain. The symbol
$b^{\uparrow}$ contains only Fourier coefficients with $|\bth|>r$,
and the remaining symbols contain the Fourier coefficients
with $|\bth|\le r$.
Note that on the support of the functions
$\hat b^{\natural}(\bth, \ \cdot\ ; \rho)$ and
$\hat b^{\flat}(\bth, \ \cdot\ ; \rho)$ we have
\begin{equation}\label{ds:eq}
|\bth|\le \rho^\b,\
\frac{1}{2}\rho \le |\bxi+\bth/2|\le \frac{3}{2}\rho, \
\frac{1}{2}\rho - \frac{1}{2}\rho^\varkappa\le |\bxi|
\le \frac{3}{2}\rho + \frac{1}{2}\rho^\varkappa.
\end{equation}
On the support of $b^{\downarrow}(\bth, \ \cdot\ ; \rho)$ we have
\begin{equation}\label{supportell<:eq}
\biggl|\bxi+\frac{\bth}{2}\biggr|\le \frac{3}{4}\rho,\
|\bxi|\le \frac{3}{4}\rho + \frac{1}{2}\rho^{\varkappa}.
\end{equation}
On the support of
$b^{\sharp}(\bx, \ \cdot\ ; \rho)$
we have
\begin{equation}\label{supportell>:eq}
\biggl|\bxi+\frac{\bth}{2}\biggr|\ge \frac{5}{4}\rho,\
|\bxi|\ge \frac{5}{4}\rho - \frac{1}{2}\rho^{\varkappa}.
\end{equation}
The introduced symbols play a central role in the proof
of the Main Theorem \ref{main:thm}. As we show in the course of the proof,
due to \eqref{supportell<:eq} and \eqref{supportell>:eq} the symbols
$b^{\uparrow}$, $b^\downarrow$  and $b^\sharp$ make only a negligible contribution to
the spectrum of the operator \eqref{h:eq} near the point $\l = \rho^{2m}$.
The only significant
components of $b$ are the symbols $b^\natural, b^{\flat}$
and $b^o$. The symbol $b^\natural$ will be
transformed in the next Section into another symbol, independent of
$\bx$.

We will often combine $B^{\flat}$, $B^{\sharp}$ and $B^{\uparrow}, B^{\downarrow}$:
for instance
$B^{\flat, \sharp} = B^{\flat} + B^{\sharp}$,
$B^{\flat, \sharp, \uparrow} = B^{\flat, \sharp} + B^{\uparrow}$.
A similar convention applies to the symbols.
Under the condition
$b\in\BS_{\a}(w)$ the above symbols
belong to the same class $\BS_{\a}(w)$
and the following bounds hold:
\begin{equation}\label{subord:eq}
\1 b^{\flat}\1^{(\a)}_{l, s}
+ \1 b^{\natural}\1^{(\a)}_{l, s} +
\1 b^{\sharp}\1^{(\a)}_{l, s}
+ \1 b^{o}\1^{(\a)}_{l, s}
+ \1 b^{\downarrow}\1^{(\a)}_{l, s}
+ \1 b^{\uparrow}\1^{(\a)}_{l, s}
\ll \1 b\1^{(\a)}_{l, s}.
\end{equation}
Indeed, let us check this  for the symbol $b^{\natural}$, for instance.
According to \eqref{ds:eq}
and \eqref{varphi:eq}, on the support of
the function $\hat b^{\natural}(\bth, \ \cdot\ ; \rho)$ we have
\begin{gather*}
|\BD^{\bs}\varphi_{\bth}(\bxi, \rho^\b)|\ll \rho^{-\b|\bs|}
\ll  w^{-|\bs|},\\[0.2cm]
|\BD^{\bs}\ell^{>}_{\bth}(\bxi)|
+ |\BD^{\bs}\ell^{<}_{\bth}(\bxi)|
+ |\BD^{\bs}e_{\bth}(\bxi)|\ll  \rho^{-|\bs|}\ll w^{-|\bs|}.
\end{gather*}
This immediately leads to the bound of the form \eqref{subord:eq}
for the symbol $b^{\natural}$.

The introduced operations also
preserve symmetry. Precisely, calculate using
\eqref{symmetry:eq}:
\begin{align*}
\overline{\hat b^{\flat}(-\bth, \bxi + \bth)}
= & \ \overline{\hat b(-\bth, \bxi + \bth)}
\z_{-\bth}(\bxi+\bth; \rho^\b)
e_{-\bth}(\bxi+\bth)\\
= &\ \hat b(\bth, \bxi) \z_{\bth}(\bxi; \rho^\b)
e_{\bth}(\bxi)
= \hat b^{\flat}(\bth, \bxi).
\end{align*}
Therefore, by \eqref{selfadj:eq} the operator $B^{\flat}$ is
symmetric if so is $B$. The proof is similar for the rest of the
operators introduced above.

Let us list some other elementary properties
of the introduced operators. In the Lemma below we
use the projection $\CP(\CC), \CC\subset\R$ whose definition
was given in Subsection \ref{fibre:subsect}.

\begin{lem}\label{smallorthog:lem}
Let $b\in \BS_{\a}(w)$,
$w = \lu\bxi\ru^\b$, $\b\in (0, 1]$, with some
$\a\in\R$.
Then the following hold:
\begin{itemize}
\item[(i)]
The operator $\op(b^{\downarrow})$ is bounded and
\begin{equation*}
\|\op(b^{\downarrow})\|\ll \1 b \1^{(\a)}_{l, 0}
\rho^{\b\max(\a, 0)}.
\end{equation*}
Moreover,
\begin{equation*}
\bigl(I - \CP (B(7\rho/8) )\bigr) \op(b^{\downarrow})
= \op(b^{\downarrow}) \bigl(I - \CP (B(7\rho/8))\bigr)  = 0.
\end{equation*}

\item[(ii)] The operator $B^\flat$ satisfies the following relations
\begin{align}\label{bflatorthog:eq}
\CP(B(3\rho/8)) B^{\flat} =  &\ B^{\flat} \CP(B(3\rho/8)) \notag\\[0.2cm]
= &\ \bigl(I - \CP(B(11\rho/8))\bigr) B^{\flat}
=  B^{\flat} \bigl(I - \CP(B(11\rho/8))\bigr) = 0,
\end{align}
and similar relations hold for the operator $B^{\natural}$ as well.

Moreover, for any $\g \in\R$ one has
$b^{\natural}, b^{\flat} \in\BS_{\g}$ and
\begin{equation}\label{nat:eq}
\1 b^{\natural}\1^{(\g)}_{l, s} + \1 b^{\flat}\1^{(\g)}_{l, s}
\ll \rho^{\b(\a - \g)}\1 b\1^{(\a)}_{l, s},
\end{equation}
for all $l$ and $s$,
with an implied constant independent of $b$ and $\rho\ge 1$.
In particular, the operators $B^{\natural}, B^{\flat}$ are bounded and
\begin{equation*}
\| B^{\natural} \| + \|B^{\flat}\| \ll \rho^{\b \a } \1 b \1^{(\a)}_{l, 0},
\end{equation*}
for any $l >d$.

\item[(iii)]
\begin{equation*}
\CP\bigl(B(9\rho/8)\bigr)B^{\sharp} = B^{\sharp}\CP\bigl(B(9\rho/8)\bigr) = 0.
\end{equation*}

\item[(iv)] If $R\le 2\rho$, then
\begin{equation}\label{shar:eq}
\| \CP(B(R))B^{\uparrow}\| + \| B^{\uparrow} \CP(B(R))\|
\ll \1 b \1^{(\a)}_{l, 0}r^{p-l} \rho^{\b \max(\a, 0) },
\end{equation}
for any  $p>d$ and any $l \ge p$.
\end{itemize}
\end{lem}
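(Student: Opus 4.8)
The plan is to verify each of the four items (i)--(iv) by tracking the support properties of the relevant cut-off functions together with the subordination bound \eqref{subord:eq}, the boundedness criterion Proposition \ref{bound:prop}, and the elementary observation that $\op(b)\op(w^\d) = \op(bw^\d)$. Throughout, the key structural fact is that each of the symbols $b^{\downarrow}, b^{\flat}, b^{\natural}, b^{\sharp}, b^{\uparrow}$ has its $\bxi$-support confined to an explicit annulus or ball as recorded in \eqref{ds:eq}, \eqref{supportell<:eq}, \eqref{supportell>:eq}, and in the definition \eqref{uparrow:eq}--\eqref{o:eq}. Recall also that a PDO $\op(g)$ with $g(\bx,\bxi)$ supported (in $\bxi$) in a set $\CC$ satisfies $\op(g)\CP(\R^d\setminus\CC') = 0$ whenever $\CC$ and $\CC'$ are separated by the lattice-shift structure; concretely, since $\CP(\bk;\CC)$ projects onto exponentials $E_\bm$ with $\bm+\bk\in\CC$, and $\op(g)$ maps such an exponential to a combination of $E_{\bm+\bth}$ with coefficients $\hat g(\bth,\bm+\bk)$, the composition vanishes once the $\bxi$-support of $g$ misses $\CC'$. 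This gives all the orthogonality relations in items (i)--(iv) by purely elementary bookkeeping on the radii ($7\rho/8 < 3\rho/4 + \tfrac12\rho^\varkappa$ is false for large $\rho$, etc. --- one checks the numerical inequalities using $\rho^\varkappa = o(\rho)$).

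For the norm bounds I would proceed as follows. For (i): on the support of $b^{\downarrow}$ we have $|\bxi| \le \tfrac34\rho + \tfrac12\rho^\varkappa \ll \rho$, so $w(\bxi) = \lu\bxi\ru^\b \ll \rho^\b$; writing $b^\downarrow = (b^\downarrow w^{-\a})\, w^\a$ and noting $b^\downarrow w^{-\a} \in \BS_0$ with norm $\ll \1 b\1^{(\a)}$ (via \eqref{subord:eq} and Proposition \ref{product:prop}), Proposition \ref{bound:prop} gives $\|\op(b^\downarrow w^{-\a})\| \ll \1 b\1^{(\a)}_{l,0}$, and then $\|\op(w^\a)\CP(\cdot)\| \ll \rho^{\b\max(\a,0)}$ on the support, yielding the stated bound after inserting the projection that $b^\downarrow$ already carries. (If $\a < 0$ one uses $w^\a \le 1$, which explains the $\max(\a,0)$.) Item (iii) is the analogue of the orthogonality half of (i) and requires no norm bound.

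Item (ii) is the main one. The bound \eqref{nat:eq} is the crucial gain: on the support of $b^\natural$ and $b^\flat$ we have $\tfrac12\rho - \tfrac12\rho^\varkappa \le |\bxi| \le \tfrac32\rho + \tfrac12\rho^\varkappa$, so $w(\bxi) \asymp \rho^\b$ there, and for \emph{any} target order $\g$ one can write $b^\natural = (b^\natural w^{-\a}) w^{\a}$ and estimate $\1 b^\natural \1^{(\g)}_{l,s}$ by pulling out the factor $w^{\a-\g} \asymp \rho^{\b(\a-\g)}$ on the support, with the remaining symbol lying in $\BS_\g$ with norm $\ll \1 b\1^{(\a)}_{l,s}$ after using the derivative estimates \eqref{varphi:eq} for $\varphi_\bth, \z_\bth, e_\bth$ (these cost $\rho^{-\b|\bs|} \ll w^{-|\bs|}$, hence are harmless); one must be careful that differentiating $w^{\a-\g}$ produces factors bounded on the support, which is where the two-sided bound $w \asymp \rho^\b$ (rather than just an upper bound) is needed. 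The boundedness of $B^\natural, B^\flat$ with $\|\cdot\| \ll \rho^{\b\a}\1 b\1^{(\a)}_{l,0}$ then follows by taking $\g = 0$ in \eqref{nat:eq} and applying Proposition \ref{bound:prop}.

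For (iv), the point is that $b^\uparrow$ retains only Fourier coefficients with $|\bth| > r$, so the decay estimate \eqref{decay:eq}, $|\hat b(\bth,\bxi)| \ll \1 b\1^{(\a)}_{l,0}\lu\bth\ru^{-l} w(\bxi)^\a$, together with $\sum_{|\bth|>r}\lu\bth\ru^{-l} \ll r^{p-l}$ for $l \ge p > d$, controls the relevant norm; composing with $\CP(B(R))$ for $R \le 2\rho$ restricts $\bxi$ to $|\bxi| \ll \rho$, hence $w(\bxi)^\a \ll \rho^{\b\max(\a,0)}$, and one applies the $\BS_0$ boundedness criterion to $b^\uparrow w^{-\a}$ (whose $(0)$-norm with index $(p,0)$ is $\ll r^{p-l}\1 b\1^{(\a)}_{l,0}$ by the summation). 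I expect the main obstacle to be purely the careful verification of the numerical inequalities among the radii $3\rho/8, 7\rho/8, 9\rho/8, 11\rho/8$ and the support descriptions \eqref{ds:eq}--\eqref{supportell>:eq} --- in particular ensuring the separation survives the $O(\rho^\varkappa)$ fattening, which holds for all sufficiently large $\rho$ since $\varkappa < \b \le 1$; all the analytic content is a routine repackaging of \eqref{subord:eq}, \eqref{decay:eq}, Proposition \ref{bound:prop}, and Proposition \ref{product:prop}.
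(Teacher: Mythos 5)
Your proposal follows essentially the same route as the paper: the orthogonality relations come from the $\bxi$-support information in \eqref{ds:eq}, \eqref{supportell<:eq}, \eqref{supportell>:eq}, the operator-norm bounds come from \eqref{decay:eq}, \eqref{subord:eq} and Proposition \ref{bound:prop}, and the key gain \eqref{nat:eq} comes from the two-sided bound $w(\bxi)\asymp\rho^{\b}$ on the support of $b^{\natural}, b^{\flat}$. The only cosmetic difference is that you factor out $\op(w^{\a})$ before invoking Proposition \ref{bound:prop}, whereas the paper bounds the Fourier coefficients of each symbol directly and applies Proposition \ref{bound:prop} once; the two are equivalent.
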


\begin{proof}
\underline{Proof of (i).}
By  \eqref{decay:eq},
\begin{equation*}
|\hat b(\bth, \bxi; \rho)|\le
\1 b \1^{(\a)}_{l, 0}  \lu \bth\ru^{-l} \lu\bxi\ru^{\b\a},
\end{equation*}
for any $l >0$. It follows from \eqref{supportell<:eq}  that
\begin{equation}\label{flatdecay:eq}
|\hat b^{\downarrow} (\bth, \bxi; \rho)|
\ll \1 b \1^{(\a)}_{l, 0} \rho^{\b\max(\a, 0)}
\lu \bth\ru^{-l},\ \forall l >0.
\end{equation}
By Proposition \ref{bound:prop} this implies the
sought bound for the norm $\|\op(b^{\downarrow})\|$.

In view of \eqref{supportell<:eq},
the second part of statement (i) follows from
\eqref{downarrow:eq} by inspection.

\underline{Proof of (ii).}
 The relations \eqref{bflatorthog:eq} follow from the definitions
\eqref{flat:eq} and \eqref{natural:eq} in view of \eqref{ds:eq}.

Furthermore, by  \eqref{decay:eq} and \eqref{subord:eq},
\begin{equation*}
|\BD^{\bs}_{\bxi} \hat b^{\natural}(\bth, \bxi; \rho)|
+ |\BD^{\bs}_{\bxi} \hat b^{\flat}(\bth, \bxi; \rho)|
\ll \1 b \1^{(\a)}_{l, s}\lu \bxi\ru^{\b(\a - \g)}  \lu\bxi\ru^{ \b(\g - s)}
\lu\bth\ru^{-l}.
\end{equation*}
Thus, using again \eqref{ds:eq}, we obtain:
\begin{equation*}
|\BD^{\bs}_{\bxi} \hat b^{\natural}(\bth, \bxi; \rho)|
+ |\BD^{\bs}_{\bxi} \hat b^{\flat}(\bth, \bxi; \rho)|
\ll \rho^{\b(\a-\g)}
\1 b \1^{(\a)}_{l, s} w^{ \g-s} \lu\bth\ru^{-l}.
\end{equation*}
This means that $b^{\natural}, b^{\flat}\in\BS_\g$ for any $\g\in\R$  and
\eqref{nat:eq} holds. The bounds for the norms follow
from \eqref{nat:eq} with $\g = 0$, and Proposition \ref{bound:prop}.

\underline{Proof of (iii)} is similar to (i). The required result follows from
\eqref{supportell>:eq}.

\underline{Proof of (iv).} By definition \eqref{T:eq}
the sum \eqref{uparrow:eq} contains only those values of $\bth$
for which  $|\bth|\ge r $.
Thus, in view of \eqref{decay:eq} and \eqref{subord:eq},
for any $l \ge p$ we have
\begin{equation*}
| \hat b^{\uparrow}(\bth, \bxi; \rho)|
\le
\1 b \1^{(\a )}_{l, 0}\  w^{\a}
\lu\bth\ru^{-l}
\ll r^{p - l}
\1 b \1^{(\a)}_{l, 0}\ w^{\a}
\lu\bth\ru^{-p},
\end{equation*}
Thus the symbol of $ B^{\uparrow} \CP\bigl(B(R)\bigr)$ is bounded by
\[
  C r^{p - l}
\1 b \1^{(\a)}_{l, 0}\ R^{\b\max(\a, 0)}
\lu\bth\ru^{-p},
\]
so that the sought estimate follows by Proposition \ref{bound:prop}.
The same argument leads to
the same bound for $\CP\bigl(B(R)\bigr) B^{\uparrow}$.
\end{proof}

In what follows a central role is played by the operator of the form
\bee\label{newA}
A := H_0 + B^o + B^{\flat}
\ene
with some symmetric symbol $b\in\BS_\a$.
In the next Lemma we study the continuity
of the Floquet eigenvalues $\l_j(A(\bk))$, $j=1, 2, \dots$,
as functions of the quasi-momentum
$\bk\in\R$. Here, $A(\bk)$ are the fibers of the operator \eqref{newA}.
To state the result, we introduce
for any vector $\boldeta\in\R^d$ the \textit{distance on the torus:}
\begin{equation}\label{distorus:eq}
|\boldeta|_{\Torus} = \min_{\bm\in\SG^\dagger}|\boldeta-\bm|.
\end{equation}

\begin{thm}\label{unperturbed:thm}
Suppose that $\rho\ge 1$ and
\begin{equation}\label{beta:eq}
\b(\a-1)< 2m-1.
\end{equation}
If for some $j$
\bee\label{newA1}
\l_j(A(\bk))\asymp \rho^{2m},
\ene
then
for any $l > d$ we have
\begin{equation}\label{unperturbed:eq}
|\l_j(A(\bk+\boldeta)) - \l_j(A(\bk))|
\ll\bigl(1+\1 b\1^{(\a)}_{l, 1}\bigr)|\boldeta|_{\Torus} \rho^{2m-1}.
\end{equation}
The implied constant in \eqref{unperturbed:eq} depends
on the constants in \eqref{newA1}.
\end{thm}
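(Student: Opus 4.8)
The plan is to reduce the continuity statement for the eigenvalues $\l_j(A(\bk))$ to a perturbation estimate relating the fibers $A(\bk+\boldeta)$ and $A(\bk)$. First I would exploit the standard unitary equivalence of fibers: for $\bm\in\SG^\dagger$ we have $A(\bk+\bm) = e^{-i\bm\bx}A(\bk)e^{i\bm\bx}$, so the eigenvalues are $\SG^\dagger$-periodic in $\bk$ (as recorded in \eqref{periodicity:eq}); hence it suffices to prove \eqref{unperturbed:eq} for the representative $\boldeta$ of minimal length, i.e.\ we may assume $|\boldeta| = |\boldeta|_{\Torus}$ and in particular $|\boldeta|\le C$. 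With this reduction in hand, the min-max principle gives
\begin{equation*}
|\l_j(A(\bk+\boldeta)) - \l_j(A(\bk))| \le \| (A(\bk+\boldeta) - A(\bk))(A(\bk)+cI)^{-\gamma}\| \cdot \| (A(\bk)+cI)^{\gamma}\psi_j\|
\end{equation*}
applied to the eigenfunctions; more precisely, I would compare the fibers as forms and bound the difference $A(\bk+\boldeta)-A(\bk)$ relative to $A(\bk)$ restricted to the relevant spectral subspace.

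The operator $A = H_0 + B^o + B^{\flat}$ has three pieces. For $H_0 = \op(|\bxi|^{2m})$ the fiber difference is the multiplication operator on Fourier side $|\bm+\bk+\boldeta|^{2m} - |\bm+\bk|^{2m}$, whose gradient in $\bk$ has size $\ll \lu\bm+\bk\ru^{2m-1}$; so on the spectral subspace where $A(\bk)$ (hence, by the bounds on $B^o, B^{\flat}$, also $H_0(\bk)$) is of order $\rho^{2m}$, this contributes $\ll |\boldeta|\,\rho^{2m-1}$. For $B^o = \op(b^o(\bxi))$, which depends on $\bxi$ only, the same computation applies: the $\bk$-derivative of $b^o(\bm+\bk)$ is $\ll \lu\bm+\bk\ru^{\b\a - \b}$ by \eqref{decay:eq}, and on the relevant subspace $\lu\bm+\bk\ru\asymp\rho$, giving $\ll|\boldeta|\,\rho^{\b(\a-1)}$, which is $\ll |\boldeta|\,\rho^{2m-1}$ precisely by hypothesis \eqref{beta:eq}. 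For $B^{\flat} = \op(b^{\flat})$ I would use Lemma \ref{formbound:lem}, specifically the bound \eqref{formdiffer:eq}: the fiber of $\op(b^{\flat}) - \op(b^{\flat}_{\boldeta})$ is controlled by $|\boldeta|\,\1 b^{\flat}\1^{(\a)}_{l,1}\|(H_0+I)^{\gamma}u\|$ with $\gamma = \b(\a-1)/(2m)$; combining with \eqref{subord:eq} (so $\1 b^{\flat}\1^{(\a)}_{l,1}\ll\1 b\1^{(\a)}_{l,1}$) and the fact that on the spectral subspace at height $\asymp\rho^{2m}$ the operator $(H_0+I)^{\gamma}$ acts with norm $\ll\rho^{2m\gamma} = \rho^{\b(\a-1)}\ll\rho^{2m-1}$ again by \eqref{beta:eq}, one gets the desired $\ll(1+\1 b\1^{(\a)}_{l,1})|\boldeta|\,\rho^{2m-1}$.

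The one subtlety — and the step I expect to be the main obstacle — is making rigorous the passage from ``$\l_j(A(\bk))\asymp\rho^{2m}$'' to ``the comparison can be carried out on a spectral subspace where $H_0(\bk)$ is of size $\rho^{2m}$.'' The fibers $A(\bk)$ and $A(\bk+\boldeta)$ act in the \emph{same} space $\GH$ but are not literally comparable eigenvalue-by-eigenvalue without care, because $B^{\flat}$ and $B^o$ are only $H_0$-bounded (with small relative bound, by \eqref{formbound1:eq} and condition \eqref{beta:eq} $\Rightarrow$ $\b\a<2m$), not bounded. The clean way is: use the resolvent identity / min-max to write $\l_j(A(\bk+\boldeta)) - \l_j(A(\bk))$ as an integral over a segment of $t\mapsto \frac{d}{dt}\l_j(A(\bk+t\boldeta))$ where the latter (by Feynman--Hellmann, applied on eigenfunctions, or via a difference-quotient argument that avoids differentiability issues) equals $\langle \partial_{\boldeta}A(\bk+t\boldeta)\psi_j,\psi_j\rangle$ with $\psi_j$ normalized; then estimate $\partial_{\boldeta}A$ piece-by-piece as above, using at each stage that $\|(H_0(\bk+t\boldeta)+I)^{\gamma}\psi_j\|\ll \rho^{2m\gamma}$. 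This last bound itself follows from $\l_j(A(\bk+t\boldeta))\asymp\rho^{2m}$ together with the relative boundedness of $B^o+B^{\flat}$, which forces $\langle H_0(\bk+t\boldeta)\psi_j,\psi_j\rangle\asymp\rho^{2m}$ and then, by the spectral theorem applied to $H_0(\bk+t\boldeta)$ and the exponential localization of its spectrum, controls all powers. Once this bookkeeping is set up, each of the three contributions is a routine application of the estimates already established in Section \ref{calc:sect}, and assembling them yields \eqref{unperturbed:eq} with the implied constant depending only on the constants in \eqref{newA1} and on $\1 b\1^{(\a)}_{l,1}$.
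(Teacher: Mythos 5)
Your proposal is correct in outline but follows a genuinely different route from the paper. The paper first exploits the momentum-shell localization of $B^{\flat}$ (Lemma \ref{smallorthog:lem}(ii)) to split $A(\bk)=A_-(\bk)\oplus A_c(\bk)\oplus A_+(\bk)$: the outer blocks $A_\pm$ have constant coefficients, so their eigenvalues $\mu_{\bm}(\bk)=(\bm+\bk)^{2m}+b^o(\bm+\bk)$ are explicit and the estimate follows from \eqref{differ:eq} plus a counting-function argument to pass to the ascending labelling, while the central block $A_c$ is bounded, and there one simply estimates the operator norm of the difference, using that $b^{\flat}\in\BS_1$ with $\1 b^{\flat}\1^{(1)}\ll\rho^{\b(\a-1)}\1 b\1^{(\a)}$ by \eqref{nat:eq}, so that \eqref{formdiffer:eq} yields a genuinely bounded difference of norm $\ll|\boldeta|\rho^{\b(\a-1)}$; this completely avoids any analysis of eigenfunctions. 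Your route --- a global min-max/Feynman--Hellmann comparison of the two fibers --- buys a more uniform treatment of the three pieces but shifts the burden onto the two points you only gesture at: (i) differentiating ordered eigenvalues through crossings, which is fixable by the standard relative form-bound version of min-max (this gives the Lipschitz estimate directly, without Feynman--Hellmann or eigenfunctions at all, and is the cleanest repair); and (ii) the momentum localization $\|(H_0(\bk)+I)^{\g}\psi_j\|\ll\rho^{2m\g}$ for eigenfunctions at energy $\asymp\rho^{2m}$. Your justification of (ii) by ``exponential localization of its spectrum'' is not an argument; the correct one is that the eigenvalue equation together with the $H_0$-boundedness of $B^o+B^{\flat}$ (Lemma \ref{formbound:lem}) gives $\|H_0(\bk)\psi_j\|\ll\rho^{2m}$, whence $\lu (H_0+I)^{2\g}\psi_j,\psi_j\ru\ll\rho^{4m\g}$ by interpolation for $\g\le 1$ --- note that Jensen's inequality alone does not suffice here, since $2\g=\b(\a-1)/m$ may exceed $1$. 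With these two points filled in your argument closes and gives the same constant; the paper's block decomposition is the shorter path precisely because it never needs to know where the eigenfunctions live.
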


\begin{proof}
By Lemma \ref{smallorthog:lem} (ii),
\[
\CP\bigl(\bk, B(3\rho/8)\bigr) B^{\flat}(\bk)
=  B^{\flat}(\bk) \CP\bigl(\bk, B(3\rho/8)\bigr) = 0,
\]
so that
\[
\CP\bigl(\mathbf 0, B(3\rho/8-|\bk|)\bigr) B^{\flat}(\bk)
=  B^{\flat}(\bk) \CP\bigl(\mathbf 0, B(3\rho/8-|\bk|)\bigr) = 0.
\]
Similarly, \eqref{bflatorthog:eq} implies that
\[
\biggl(I - \CP\bigl(\mathbf 0, B(11\rho/8+|\bk|)\bigr)\biggr) B^{\flat}(\bk)
=  B^{\flat}(\bk) \biggl(I - \CP\bigl(\mathbf 0, B(11\rho/8+|\bk|)\bigr)\biggr)  = 0.
\]
Thus, if one assumes that $|\bk|\le R$ with some $R$, then the operator
$A(\bk)$ can be represented in the form
\begin{equation}\label{orthog:eq}
A(\bk) = A_-(\bk) \oplus A_c(\bk) \oplus A_+(\bk),
\end{equation}
with
\begin{equation*}
A_\pm(\bk) = \CP_\pm (H_0(\bk)+B^o(\bk))\CP_\pm,\
A_c(\bk) = \CP_c A(\bk) \CP_c,
\end{equation*}
where by $\CP_\pm$ and $\CP_c$ we have denoted the following
projections in $\GH$:
\begin{align*}
\CP_-= &\ \CP\bigl(\mathbf 0, B(3\rho/8- R )\bigr),\\[0.2cm]
\CP_c =  &\ \CP\bigl(\mathbf 0, B(11\rho/8+R)\bigr)
- \CP\bigl(\mathbf 0, B(3\rho/8- R )\bigr),\\[0.2cm]
\CP_+ = &\ I - \CP\bigl(\mathbf 0, B(11\rho/8+ R)\bigr).
\end{align*}
Thanks to \eqref{orthog:eq}, in order to establish \eqref{unperturbed:eq}
it suffices to prove this inequality for eigenvalues
(labeled in the standard ascending order)
of each of the operators $A_-, A_c, A_+$.

Suppose first that $|\boldeta|_{\Torus} = |\boldeta|$, so that
\begin{equation*}
|\boldeta|\le R:=\min_{\mathbf 0\not=\bm\in\SG^\dagger} |\bm|.
\end{equation*}
The operator
$A_0(\bk) = H_0(\bk) + B^o(\bk)$ has constant coefficients and its
eigenvalues are found explicitly:
\begin{equation*}
\mu_{\bm}(\bk) = (\bm+\bk)^{2m} + b^o(\bm+\bk), \bm\in\SG^\dagger.
\end{equation*}
Assuming that $|\bm+\bk|\ge 2R$, from \eqref{differ:eq}
and \eqref{subord:eq} we obtain that
\begin{equation*}
|\mu_{\bm}(\bk+\boldeta) - \mu_{\bm}(\bk)|
\ll \bigl(|\bm+\bk|^{2m-1}  + \1 b\1^{(\a)}_{0, 1}
|\bm+\bk|^{\b(\a-1)}\bigr) |\boldeta|.
\end{equation*}
Due to conditions \eqref{beta:eq},
\begin{equation}\label{seig0:eq}
|\mu_{\bm}(\bk+\boldeta) - \mu_{\bm}(\bk)|
\ll \bigl(1 + \1 b\1^{(\a)}_{0, 1}
 \bigr)|\bm+\bk|^{2m-1}  |\boldeta|.
\end{equation}
If we assume that the considered eigenvalue $\mu_\bm(\bk)$ satisfies \eqref{newA1},
then $|\bm+\bk|\asymp \rho$, and hence the inequality \eqref{seig0:eq} implies that
\begin{equation}\label{seig1:eq}
|\mu_{\bm}(\bk+\boldeta) - \mu_{\bm}(\bk)|
\ll \bigl(1 + \1 b\1^{(\a)}_{0, 1}
 \bigr)\rho^{2m-1}  |\boldeta|.
\end{equation}
In order to rewrite this bound for the eigenvalues $\l_j(A_\pm(\bk))$ arranged in the
ascending order, note that for $\bm\in B(3\rho/8-R)$ or $\bm\notin B(11\rho/8+R)$,
the previous inequality is equivalent
to the following one:
\begin{equation*}
\begin{split}
&N(\la-C \bigl(1 + \1 b\1^{(\a)}_{0, 1}
 \bigr)\rho^{2m-1}  |\boldeta|; A_\pm(\bk))\le
 N(\la; A_\pm(\bk+\boldeta))\le\\
 &
 N(\la+C \bigl(1 + \1 b\1^{(\a)}_{0, 1}
 \bigr)\rho^{2m-1}  |\boldeta|; A_\pm(\bk)),
\end{split}
\end{equation*}
which, in turn means that
\begin{equation}\label{seig:eq}
|\l_j(A_\pm(\bk+\boldeta)) - \l_j(A_\pm(\bk))|
\ll \bigl(1 + \1 b\1^{(\a)}_{0, 1}
 \bigr)\rho^{2m-1}|\boldeta|
\end{equation}
for all eigenvalues satisfying
the condition $|\l_j(A_\pm(\bk))|\asymp\rho^{2m}$.

Let us study the eigenvalues of $A_c$.
This operator is bounded, and hence
it suffices to find an upper bound for the sum of the norms
\[
\|\CP_c \bigl(A_0(\bk+\boldeta) - A_0(\bk)\bigr)\CP_c\|
+ \| B^\flat(\bk+\boldeta) - B^\flat(\bk)\|.
\]
Using \eqref{seig1:eq} for $3\rho/8-R\le |\bm|\le 11\rho/8 + R$, we get
\begin{equation}\label{a0:eq}
\|\CP_c \bigl(A_0(\bk+\boldeta) - A_0(\bk)\bigr)\CP_c\|
\ll  \bigl(1 + \1 b\1^{(\a)}_{0, 1}\bigr)\rho^{2m-1}  |\boldeta|.
\end{equation}
(note that $\CP_c A_0(\bk+\boldeta)\CP_c$. 
By Lemma \ref{smallorthog:lem}(ii),
$b^\flat\in\BS_{\g}$ with any $\g\in\R$, so that \eqref{formdiffer:eq} and
\eqref{nat:eq} give:
\begin{equation*}
\| B^\flat(\bk+\boldeta) - B^\flat(\bk)\|\ll |\boldeta| \1 b^\flat\1^{(1)}_{l, 1}
\ll |\boldeta| \1 b\1^{(\a)}_{l, 1} \rho^{\b(\a-1)},
\end{equation*}
for any $l >d$.
By \eqref{beta:eq} $\b(\a-1)< 2m-1$, so that the above estimate
in combination with \eqref{a0:eq}, proves that
\[
|\l_j(A_c(\bk+\boldeta) - \l_j(A_c(\bk))|
\ll |\boldeta| (1+\1 b\1^{(\a)}_{l, 1}) \rho^{2m-1},
\]
uniformly in $j$.
In its turn, this estimate together with \eqref{seig:eq} leads to \eqref{unperturbed:eq}.

For a general $\boldeta$, note that according to
\eqref{periodicity:eq},
$\l_j(A(\bk+\boldeta)) = \l_j(A(\bk+\bm+\boldeta))$ for any $\bm\in\SG^\dagger$.
Choose $\bm$ in such a way that
$$
|\boldeta|_{\Torus} = |\boldeta+\bm|.
$$
Denote $\boldeta_1 = \boldeta+\bm$ and use the first part of the proof
for $\boldeta_1$.
\end{proof}


\section{A ``gauge transformation''}\label{gauge:sect}

In this and all the subsequent sections we assume that
$\BS_\a = \BS_\a(w)$ with $w(\bxi) = \lu \bxi\ru^\b$,
$\b\in (0, 1]$.
Recall that we study spectral properties of the operator
$H$ defined in \eqref{h:eq}. Our ultimate goal is to prove
that each sufficiently large $\la$ belongs to the spectrum of $H$.
We are going to use the notation from the previous section with
the parameter $\rho=\la^{\frac{1}{2m}}\ge 1$.

\subsection{Preparation}
Our strategy is to find a unitary
operator which reduces $H = H_0+\op(b)$ to another PDO, whose
symbol, up to some controllable small errors,
depends only on $\bxi$.
The sought unitary operator is constructed in the
form $U = e^{i\Psi}$ with a suitable bounded
self-adjoint $\SG$-periodic PDO $\Psi$.
This is why we sometimes call it a ``gauge transformation''.
It is useful to consider $e^{i\Psi}$ as an element of the group
\begin{equation*}
U(t) = \exp\{ i \Psi t\},\ \ \forall
t\in\R.
\end{equation*}
\textbf{We assume that the operator
$\ad(H_0, \Psi)$ is bounded, so that
$U(t) D(H_0) = D(H_0)$}. This assumption will be justified
later on. Let us express the operator
\begin{equation*}
A_t := U(-t)H U(t)
\end{equation*}
via its (weak) derivative with respect to $t$:
\begin{equation*}
A_t = H +  \int_0^t U(-t') \ad(H; \Psi) U(t') dt'.
\end{equation*}
By induction it is easy to show that
\begin{gather}
A_1 = H + \sum_{j=1}^M \frac{1}{j!}
\ad^j(H; \Psi) +  R^{(1)}_{M+1},\label{decomp:eq}\\
R^{(1)}_{M+1} :=  \int_0^1 d t_1
\int_0^{t_1} d t_2\dots \int_0^{t_M}
U(-t_{M+1})
\ad^{M+1}(H; \Psi)
U(t_{M+1}) dt_{M+1}.\notag
\end{gather}
The operator $\Psi$ is sought in the form
\begin{equation}\label{psik:eq}
\Psi = \sum_{k=1}^M \Psi_k,\ \Psi_k = \op(\psi_k),
\end{equation}
with symbols $\psi_k$ from some suitable
classes $\BS_\s, \s = \s_k$ to be specified later on.
Substitute this formula in \eqref{decomp:eq}
and rewrite, regrouping the terms:
\begin{gather}
A_1 = H_0 + B + \sum_{j=1}^M \frac{1}{j!}
\sum_{l=j}^M \sum_{k_1+ k_2+\dots + k_j = l}
\ad(H; \Psi_{k_1}, \Psi_{k_2}, \dots, \Psi_{k_j})\notag\\
 +
 R^{(1)}_{M+1} + R^{(2)}_{M+1},\notag\\
R^{(2)}_{M+1}: =
\sum_{j=1}^M \frac{1}{j!}\sum_{k_1+ k_2+\dots + k_j \ge  M+1}
\ad(H; \Psi_{k_1}, \Psi_{k_2}, \dots, \Psi_{k_j}).
\label{rtilde:eq}
\end{gather}
Changing this expression yet again produces
\begin{gather*}
A_1 = H_0 + B + \sum_{l=1}^M \ad(H_0; \Psi_l)
+ \sum_{j=2}^M \frac{1}{j!}
\sum_{l=j}^M \sum_{k_1+ k_2+\dots + k_j = l}
\ad(H_0; \Psi_{k_1}, \Psi_{k_2}, \dots, \Psi_{k_j})\\
+ \sum_{j=1}^M \frac{1}{j!}
\sum_{l=j}^M \sum_{k_1+ k_2+\dots + k_j = l}
\ad(B; \Psi_{k_1}, \Psi_{k_2}, \dots, \Psi_{k_j}) +
R^{(1)}_{M+1} + R^{(2)}_{M+1}.
\end{gather*}
Next, we switch the summation signs
and decrease $l$ by one in the second summation:
\begin{gather*}
A_1 = H_0 + B + \sum_{l=1}^M \ad(H_0; \Psi_l)
+ \sum_{l=2}^M
\sum_{j=2}^l \frac{1}{j!}\sum_{k_1+ k_2+\dots + k_j = l}
\ad(H_0; \Psi_{k_1}, \Psi_{k_2}, \dots, \Psi_{k_j})\\
+\sum_{l=2}^{M+1}
\sum_{j=1}^{l-1} \frac{1}{j!}\sum_{k_1+ k_2+\dots + k_j = l-1}
\ad(B; \Psi_{k_1}, \Psi_{k_2}, \dots, \Psi_{k_j}) +
R^{(1)}_{M+1} + R^{(2)}_{M+1}.
\end{gather*}
Now we introduce the notation
\begin{gather}
B_1 := B,\notag\\
B_l := \sum_{j=1}^{l-1} \frac{1}{j!}
 \sum_{k_1+ k_2+\dots + k_j = l-1}
\ad(B; \Psi_{k_1}, \Psi_{k_2}, \dots, \Psi_{k_j}),
\ l\ge 2,\label{bl:eq}\\
T_l := \sum_{j=2}^l \frac{1}{j!}
\sum_{k_1+ k_2+\dots + k_j = l}
\ad(H_0; \Psi_{k_1}, \Psi_{k_2}, \dots, \Psi_{k_j}),\  l\ge 2.
\label{tl:eq}
\end{gather}
We emphasise that the operators $B_l$ and $T_l$ depend
only on $\Psi_1, \Psi_2, \dots, \Psi_{l-1}$.
Let us make one more rearrangement:
\begin{gather}
A_1 =  H_0 + B + \sum_{l=1}^M \ad(H_0, \Psi_l)
+ \sum_{l=2}^M B_l
+   \sum_{l=2}^{M} T_{l} + R_{M+1},\notag\\
R_{M+1} =  B_{M+1} +
R^{(1)}_{M+1} + R^{(2)}_{M+1}.\label{r:eq}
\end{gather}
Now we can specify our algorithm for finding $\Psi_k$'s.
The symbols $\psi_k$
will be found from the following system of commutator equations:
\begin{gather}
\ad(H_0; \Psi_1) + B_1^{\natural} = 0,\label{psi1:eq}\\
\ad(H_0; \Psi_l) + B_l^{\natural} + T_l^{\natural}
= 0,\ l\ge 2,\label{psil:eq}
\end{gather}
and hence
\begin{equation}\label{lm:eq}
\begin{cases}
A_1 = A_0 + X_M^{\flat}
+ X_{M}^{\downarrow, \sharp, \uparrow} + R_{M+1},\\[0.3cm]
X_M =  \sum_{l=1}^{M} B_l + \sum_{l=2}^{M}
T_l,\\[0.3cm]
A_0 = H_0
+ X^{(o)}_M.
\end{cases}
\end{equation}
Below we denote by $x_M$ the symbol of the PDO $X_M$.
Recall that by Lemma \ref{smallorthog:lem}(ii),
the operators $B_l^{\natural}, T_l^{\natural}$ are bounded,
and therefore, in view of \eqref{psi1:eq}, \eqref{psil:eq},
so is  the commutator $\ad(H_0; \Psi)$. This justifies
the assumption made in the beginning of the formal calculations
in this Section.

\subsection{Commutator equations}
Recall that $h_0(\bxi) = |\bxi|^{2m}$ with $m >0$. Before
proceeding to the study of the commutator equations
\eqref{psi1:eq}, \eqref{psil:eq} note that for
$\bxi$ in the support of the function
$\hat b^{\natural}(\bth, \ \cdot\ ; \rho)$
the symbol
\begin{equation}\label{taut:eq}
\tau_{\bth}(\bxi) = h_0(\bxi+\bth) - h_0(\bxi) = \bigl(|\bxi|^2+
2\bth\cdot(\bxi+\bth/2)\bigr)^m - |\bxi|^{2m}
\end{equation}
satisfies the bound
\begin{equation*}
\tau_{\bth}(\bxi)\asymp \rho^{2m-2}\
|\bth\cdot(\bxi+\bth/2)|,
\end{equation*}
which easily follows from \eqref{ds:eq}. Using
\eqref{ds:eq} again, we conclude that
\begin{equation*}
  |\bth| \rho^{2m-2 + \b}\ \ll \tau_{\bth}(\bxi)\ll
|\bth| \rho^{2m-1}.
\end{equation*}
Note also that
\begin{equation*}
|\BD_{\bxi}^{\bs} \tau_{\bth}(\bxi)|\ll |\bth| \rho^{2m-1-s}, \ |\bs| = s.
\end{equation*}
Therefore,
\begin{equation}\label{tau:eq}
|\BD_{\bxi}^{\bs} \tau_{\t}^{-1}| \ll |\bth|^{-1}
\rho^{-2m+2 -\b(1+s)}
\ll |\bth|^{-1} w^{-(2m-2)\b^{-1}-1-s},\ \bth\not=0,
\end{equation}
for all $\bxi$ in the support of the function
$\hat b^{\natural}(\bth, \ \cdot\ ; \rho)$.
This estimate will come in handy in the next lemma.

\begin{lem} \label{commut:lem}
Let $A = \op(a)$ be a symmetric PDO with $a\in\BS_{\om }$.
Then the PDO $\Psi$ with the Fourier coefficients of the symbol
$\psi(\bx, \bxi; \rho)$ given by
\begin{equation}\label{psihat:eq}
\begin{cases}
\hat\psi(\bth, \bxi; \rho) = i \dfrac{\hat a^{\natural}(\bth, \bxi; \rho)}
{\tau_{\bth}(\bxi)},\ \bth\not = 0,\\[0.3cm]
\hat\psi(\mathbf 0, \bxi; \rho) = 0,
\end{cases}
\end{equation}
solves the equation
\begin{equation}\label{adb:eq}
\ad(H_0; \Psi) + \op(a^{\natural})= 0.
\end{equation}
Moreover, the operator $\Psi$ is bounded and self-adjoint, its symbol $\psi$
belongs to $\BS_{ \g}$ with any $\g \in\R$
and the following bound holds:
\begin{equation}\label{psitau:eq}
\1 \psi\1^{(\g)}_{l, s}
\ll \rho^{\b(\s-\g)}\1 a\1^{(\om)}_{l-1, s},
\end{equation}
where
\begin{equation}\label{sigma:eq}
\s = \om - (2m - 2)\b^{-1}-1.
\end{equation}
\end{lem}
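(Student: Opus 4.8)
The plan is to verify the commutator identity \eqref{adb:eq} directly at the level of symbols, and then to extract all the analytic properties of $\psi$ from the explicit formula \eqref{psihat:eq}. Since $h_0$ depends on $\bxi$ alone, its only nonzero Fourier coefficient is $\hat h_0(\mathbf 0,\bxi)=\sqrt{\dc(\SG)}\,h_0(\bxi)$, so in the commutator formula \eqref{comm:eq} with $b=h_0$, $g=\psi$ only the term $\bth=\mathbf 0$ contributes, and one finds $\widehat{\ad(h_0,\psi)}(\bchi,\bxi)=i\bigl(h_0(\bxi+\bchi)-h_0(\bxi)\bigr)\hat\psi(\bchi,\bxi)=i\,\tau_{\bchi}(\bxi)\,\hat\psi(\bchi,\bxi)$. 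Substituting \eqref{psihat:eq} collapses the right-hand side to $-\hat a^{\natural}(\bchi,\bxi)$ for $\bchi\ne\mathbf 0$, while for $\bchi=\mathbf 0$ both sides vanish because $\hat a^{\natural}(\mathbf 0,\cdot)=0$ by \eqref{natural:eq}; this is exactly \eqref{adb:eq}. The definition \eqref{psihat:eq} is legitimate because, as noted just before the lemma, $\tau_{\bth}(\bxi)\gg|\bth|\rho^{2m-2+\b}>0$ on the support of $\hat a^{\natural}(\bth,\cdot;\rho)$, so $\tau_{\bth}^{-1}$ is smooth there and $\psi(\cdot,\bxi)$ is a finite trigonometric polynomial depending smoothly on $\bxi$.

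Next I would prove the symbol bound \eqref{psitau:eq}, which in particular yields $\psi\in\BS_{\g}$ for every $\g$. Writing $\hat\psi=i\,\hat a^{\natural}\tau_{\bth}^{-1}$, expand $\BD^{\bs}_{\bxi}\hat\psi$ by the Leibniz rule. For the derivatives of $\hat a^{\natural}$ I would combine \eqref{decay:eq} for $\hat a$ with the cut-off estimates \eqref{varphi:eq} for $\varphi_{\bth}(\cdot;\rho^{\b})$ and $e_{\bth}$, using that $w(\bxi)=\lu\bxi\ru^{\b}\asymp\rho^{\b}$ on the support (a consequence of \eqref{ds:eq}), so that each derivative landing on a cut-off costs no more than $w^{-1}$; this gives $|\BD^{\bs_1}_{\bxi}\hat a^{\natural}(\bth,\bxi)|\ll\1 a\1^{(\om)}_{l-1,|\bs_1|}\lu\bth\ru^{-(l-1)}w(\bxi)^{\om-|\bs_1|}$. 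For the derivatives of $\tau_{\bth}^{-1}$ I would quote \eqref{tau:eq}. Multiplying the two and summing over $\bs_1+\bs_2=\bs$ produces the $w$-power $w^{\om-(2m-2)\b^{-1}-1-|\bs|}=w^{\s-|\bs|}$ and, since $|\bth|\ge c_0>0$ for $\mathbf 0\ne\bth\in\SG^{\dagger}$, the combined decay $\lu\bth\ru^{-(l-1)}|\bth|^{-1}\ll\lu\bth\ru^{-l}$. Hence $|\BD^{\bs}_{\bxi}\hat\psi(\bth,\bxi)|\ll\1 a\1^{(\om)}_{l-1,|\bs|}\lu\bth\ru^{-l}w(\bxi)^{\s-|\bs|}$, and multiplying by $\lu\bth\ru^{l}w^{-\g+|\bs|}$ and replacing $w$ by $\rho^{\b}$ on the support gives \eqref{psitau:eq}.

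Finally, boundedness of $\Psi=\op(\psi)$ is immediate from Proposition \ref{bound:prop} applied with $\g=0$ and any $l>d$, since $\1\psi\1^{(0)}_{l,0}<\infty$. To obtain self-adjointness it suffices to check that $\psi$ is a symmetric symbol, i.e. satisfies \eqref{selfadj:eq}: the $\natural$-truncation preserves symmetry — the computation carried out for the $\flat$-part in Section \ref{calc:sect} applies verbatim thanks to the relations \eqref{symmetry:eq} — so $\hat a^{\natural}(\bth,\bxi)=\overline{\hat a^{\natural}(-\bth,\bxi+\bth)}$; combining this in \eqref{psihat:eq} with the elementary facts that $\tau_{\bth}$ is real-valued and $\tau_{-\bth}(\bxi+\bth)=-\tau_{\bth}(\bxi)$ shows $\overline{\hat\psi(-\bth,\bxi+\bth)}=\hat\psi(\bth,\bxi)$, as required.

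The one step demanding genuine care is the symbol bound: one must track how the $|\bs|$ derivatives distribute over $\hat a$, the cut-offs $\varphi_{\bth}$, $e_{\bth}$, and $\tau_{\bth}^{-1}$, and make sure every derivative of a cut-off is absorbed into a power of $w\asymp\rho^{\b}$. Because $\varkappa<\b$ this absorption is lossless, so no spurious powers of $\rho$ appear, and the rest of the argument is a substitution into bounds already established in Sections \ref{setting:sect} and \ref{calc:sect}.
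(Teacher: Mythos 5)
Your proposal is correct and follows essentially the same route as the paper: verify \eqref{adb:eq} at the level of Fourier coefficients via \eqref{prodsymb:eq}/\eqref{comm:eq}, justify the division by $\tau_{\bth}$ on the support of $\hat a^{\natural}$, combine the bound \eqref{nat:eq}/\eqref{decay:eq} for $\hat a^{\natural}$ with \eqref{tau:eq} to obtain \eqref{psitau:eq}, and deduce boundedness from Proposition \ref{bound:prop} and self-adjointness from the symmetry condition \eqref{selfadj:eq}. Your explicit verification of the symmetry of $\hat\psi$ (via $\tau_{-\bth}(\bxi+\bth)=-\tau_{\bth}(\bxi)$) only spells out a step the paper asserts without computation.
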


\begin{proof}
For brevity we omit $\rho$ from the notation.
Let $t$ be the symbol of $\ad(H_0; \Psi)$.
The Fourier transform $\hat t(\bth, \bxi)$
is easy to find using \eqref{prodsymb:eq}:
\begin{equation*}
\hat t(\bth, \bxi) =
i \bigl(h_0(\bxi+\bth) - h_0(\bxi)\bigr) \hat \psi(\bth, \bxi)
= i\tau_{\bth}(\bxi)\hat\psi(\bth, \bxi).
\end{equation*}
Therefore, by definition \eqref{natural:eq},
the equation \eqref{adb:eq} amounts to
\begin{equation*}
i \tau_{\bth}(\bxi)\hat \psi(\bth, \bxi)
= - \hat a^{\natural}(\bth, \bxi; \rho)
= - \hat a(\bth, \bxi; \rho)\varphi_{\bth}(\bxi; \rho^{\b})
e_{\bth}(\bxi), \ |\bth|\le r.
\end{equation*}
By  definition of the functions $\varphi_{\bth}, e_{\bth}$,
the function $\hat\psi$ given by \eqref{psihat:eq} is defined for all $\bxi$.
Moreover, the symbol $\hat\psi$ satisfies the condition
\eqref{selfadj:eq}, so that $\Psi$
is a symmetric operator.

In order to prove that $\psi\in \BS_\g$ for all $\g\in\R$,
note that according to \eqref{nat:eq} and \eqref{decay:eq},
\begin{equation*}
|\BD^{\bs}_{\bxi} \hat a^{\natural}(\bth, \bxi; \rho)|
\ll \rho^{\b(\om-\g)} \1 a\1^{(\om)}_{l, s} w^{\g - s} |\bth|^{-l}.
\end{equation*}
Together with \eqref{tau:eq} this implies that
\begin{equation*}
|\BD^{\bs}_{\bxi} \hat\psi(\bth, \bxi; \rho)|
\ll \rho^{-\b\g} \1 a\1^{(\om)}_{l, s} w^{\s+\g - s} |\bth|^{-l-1},
\end{equation*}
so that $\psi\in \BS_\g$ and it satisfies \eqref{psitau:eq}.

The estimate \eqref{psitau:eq} with $ \g = 0, s = 0$, and Proposition
\ref{bound:prop} ensure the boundedness of $\Psi$.
\end{proof}

Let us apply Lemma \ref{commut:lem} to equations
\eqref{psi1:eq} and \eqref{psil:eq}.

\begin{lem}\label{gauge:lem}
Let $b\in \BS_{\a}$ be a symmetric symbol, $\rho\ge 1$, and let
\begin{equation}\label{sigmaeps:eq}
\begin{cases}
\s = &\ \a - (2m-2)\b^{-1} - 1,\\
\s_j = &\ j(\s-1)+1,\\
\e_j = &\ j(\s-1)+(2m-2)\b^{-1}+2,
\end{cases}
\end{equation}
$j=1, 2, \dots$.
Then there exists a sequence of
self-adjoint bounded PDO's  $\Psi_j$, $j = 1, 2, \dots$
with the symbols
$\psi_j$ such that $\psi_j\in\BS_{ \g}$ for any $\g \in\R$,
\eqref{psi1:eq} and \eqref{psil:eq} hold,
and
\begin{equation}\label{psik1:eq}
\1 \psi_j\1^{( \g)}
\ll \rho^{\b(\s_j-\g)} \bigl(\1 b\1^{(\a)} )^j,\ j\ge 1.
\end{equation}
The symbols $b_j$, $t_j$ of the
corresponding operators $B_j$, $T_j$ belong to
$\BS_{\g}$ for any $\g\in\R$ and
\begin{equation}\label{bt:eq}
\1 b_j\1^{(\g)}
+ \1 t_j\1^{(\g)}
\ll \rho^{\b(\e_j - \g)}(\1 b\1^{(\a )} )^j,\  j\ge 2.
\end{equation}

If $\rho^{\b(\s-1)} \1 b\1^{(\a)}\ll 1$,
then for any $M$ and $\psi = \sum_{j=1}^M \psi_j$ the following bounds hold:
\begin{gather}
\1\psi \1^{(\g)} \ll \rho^{\b(\s-\g)}\1 b\1^{(\a)},\ \forall \g\in\R;\ \ \
\ \ \1 x_M\1^{(\a)} \ll\1 b\1^{(\a)}, \label{xm:eq}\\[0.2cm]
\| R_{M+1}\|
 \ll (\1 b\1^{(\a)} )^{M+1} \rho^{\b\e_{M+1}};
 \label{rm:eq}
\end{gather}
uniformly in $b$ satisfying $\rho^{\b(\s-1)} \1 b\1^{(\a)}\ll 1$.
\end{lem}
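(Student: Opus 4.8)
The statement is proved by induction on $j$, building the symbols $\psi_j$ one at a time via Lemma~\ref{commut:lem}. The plan is as follows. For $j=1$, equation \eqref{psi1:eq} is exactly \eqref{adb:eq} with $a = b = B_1$, $\om = \a$; thus Lemma~\ref{commut:lem} produces a self-adjoint bounded $\Psi_1 = \op(\psi_1)$ with $\psi_1 \in \BS_\g$ for every $\g$, and \eqref{psitau:eq} gives $\1\psi_1\1^{(\g)} \ll \rho^{\b(\s - \g)}\1 b\1^{(\a)}$, which is \eqref{psik1:eq} for $j=1$ since $\s_1 = \s$. For the inductive step, suppose $\Psi_1, \dots, \Psi_{l-1}$ have been constructed with the stated bounds. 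Recall that $B_l$ and $T_l$, defined in \eqref{bl:eq}, \eqref{tl:eq}, depend only on $\Psi_1, \dots, \Psi_{l-1}$. Using Proposition~\ref{commut0:prop} on the multiple-commutator symbols, each term of $B_l$ of the form $\ad(b; \psi_{k_1}, \dots, \psi_{k_j})$ with $k_1 + \dots + k_j = l-1$ lies in $\BS_\g$ for every $\g$, and
\begin{equation*}
\1 \ad(b; \psi_{k_1}, \dots, \psi_{k_j})\1^{(\g)} \ll \1 b\1^{(\a)} \prod_{i=1}^j \1 \psi_{k_i}\1^{(\g_i)}
\end{equation*}
with a suitable choice of the intermediate orders $\g_i$; feeding in \eqref{psik1:eq} for the already-constructed $\psi_{k_i}$ and summing the exponents $\b(\s_{k_i} - \g_i)$ together with the book-keeping relations $\s_k = k(\s-1)+1$ produces the exponent $\b(\e_l - \g)$. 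The same computation applies to $T_l$, where now the ``seed'' operator is $H_0$: here one uses that $h_0 \in \BS_{2m/\b}$ in the sense of these classes — more precisely, the relevant quantity is $\tau_{\bth}$, controlled through \eqref{tau:eq} — and Proposition~\ref{commut0:prop} again gives membership in every $\BS_\g$ with the matching bound. This establishes \eqref{bt:eq}. Then $B_l^{\natural} + T_l^{\natural} \in \BS_\g$ for all $\g$, it is symmetric (the $\natural$-operation preserves symmetry, as shown after \eqref{subord:eq}), and Lemma~\ref{commut:lem} applied with $a = B_l + T_l$, $\om = \e_l$, solves \eqref{psil:eq} and yields $\psi_l \in \BS_\g$ with $\1\psi_l\1^{(\g)} \ll \rho^{\b(\s_l' - \g)}(\1 b\1^{(\a)})^l$ where $\s_l' = \e_l - (2m-2)\b^{-1} - 1 = l(\s-1)+1 = \s_l$, which is \eqref{psik1:eq}. (The self-adjointness and boundedness of $\Psi_l$ come directly from Lemma~\ref{commut:lem}.)

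\textbf{The summed bounds.} For the last group of estimates assume $\rho^{\b(\s-1)}\1 b\1^{(\a)} \ll 1$. From \eqref{psik1:eq},
\begin{equation*}
\1 \psi\1^{(\g)} \le \sum_{j=1}^M \1\psi_j\1^{(\g)} \ll \rho^{-\b\g}\sum_{j=1}^M \rho^{\b\s_j}(\1 b\1^{(\a)})^j = \rho^{\b(\s-\g)}\1 b\1^{(\a)}\sum_{j=1}^M \bigl(\rho^{\b(\s-1)}\1 b\1^{(\a)}\bigr)^{j-1},
\end{equation*}
and the geometric series is $\ll 1$ under the smallness hypothesis; this is the first part of \eqref{xm:eq}. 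For $\1 x_M\1^{(\a)}$, recall $X_M = \sum_{l=1}^M B_l + \sum_{l=2}^M T_l$, so $x_M = b + \sum_{l \ge 2}(b_l + t_l)$; using \eqref{bt:eq} with $\g = \a$, the tail is $\ll \sum_{l\ge 2}\rho^{\b(\e_l - \a)}(\1 b\1^{(\a)})^l$, and since $\e_l - \a = l(\s-1) + 1 - \a + (2m-2)\b^{-1} + 1 = l(\s-1) - (\s - 1) \cdot(\text{\dots})$ — more simply, $\e_l = \s_l + (2m-2)\b^{-1} + 1$ and $\e_l - \a = (l-1)(\s-1) + [\s_1 + (2m-2)\b^{-1}+1 - \a] = (l-1)(\s-1)$ by \eqref{sigmaeps:eq} — this tail is $\ll \1 b\1^{(\a)}\sum_{l\ge 2}(\rho^{\b(\s-1)}\1 b\1^{(\a)})^{l-1} \ll \1 b\1^{(\a)}$. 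Hence $\1 x_M\1^{(\a)} \ll \1 b\1^{(\a)}$.

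\textbf{The remainder estimate.} For \eqref{rm:eq}, recall from \eqref{r:eq} that $R_{M+1} = B_{M+1} + R^{(1)}_{M+1} + R^{(2)}_{M+1}$. The term $B_{M+1}$ is estimated directly by \eqref{bt:eq} with $\g = 0$: $\|B_{M+1}\| \ll \1 b_{M+1}\1^{(0)}_{l,0} \ll \rho^{\b\e_{M+1}}(\1 b\1^{(\a)})^{M+1}$ via Proposition~\ref{bound:prop}. For $R^{(2)}_{M+1}$, which by \eqref{rtilde:eq} collects the multiple commutators with $k_1 + \dots + k_j \ge M+1$, one bounds each summand as above: a commutator with total index $L \ge M+1$ contributes $\rho^{\b\e_L}(\1 b\1^{(\a)})^L$ (in operator norm, again via Proposition~\ref{bound:prop} applied to the symbol, which lies in every $\BS_\g$), and since $\e_L - \e_{M+1} = (L - M - 1)(\s - 1)$, the smallness hypothesis makes $\sum_{L \ge M+1}$ converge with sum $\ll \rho^{\b\e_{M+1}}(\1 b\1^{(\a)})^{M+1}$ — here one must be a little careful that the number of summands with a given $L$ grows only polynomially in $L$, which is absorbed by shrinking the implied constant in the geometric factor. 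Finally $R^{(1)}_{M+1}$ is the integrated $(M+1)$-fold commutator $\ad^{M+1}(H;\Psi)$ conjugated by the unitaries $U(\pm t)$; since conjugation by a unitary preserves operator norm, $\|R^{(1)}_{M+1}\| \le \frac{1}{(M+1)!}\|\ad^{M+1}(H;\Psi)\|$, and writing $H = H_0 + B$ and expanding, $\ad^{M+1}(H_0;\Psi)$ is controlled through the same $\tau_{\bth}$ mechanism while $\ad^{M+1}(B;\Psi)$ is a PDO whose symbol, by Proposition~\ref{commut0:prop} and the first bound in \eqref{xm:eq}, has $\BS_0$-norm $\ll \1 b\1^{(\a)}(\rho^{\b(\s-1)}\1 b\1^{(\a)})^{M+1}\rho^{\b(2m-2)\b^{-1}\cdot(\dots)}$; matching exponents gives again $\rho^{\b\e_{M+1}}(\1 b\1^{(\a)})^{M+1}$. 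Summing the three contributions proves \eqref{rm:eq}. \textbf{The main obstacle} I anticipate is purely book-keeping: verifying that the exponent $\b(\e_l - \g)$ emerging from the recursive commutator bounds (with its mixture of $\psi$-factors of orders $\s_{k_i}$ and $\tau_{\bth}^{-1}$ losses of size $(2m-2)\b^{-1}+1$ per application of Lemma~\ref{commut:lem}) really matches the closed-form $\e_j = j(\s-1) + (2m-2)\b^{-1} + 2$ in \eqref{sigmaeps:eq}, and that the combinatorial sums over partitions $k_1 + \dots + k_j = l$ do not spoil the geometric convergence; all of this is routine but must be done with care about which intermediate orders $\g_i$ are assigned in each application of Proposition~\ref{commut0:prop}.
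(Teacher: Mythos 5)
Your overall architecture is the paper's: induction on $j$, Lemma~\ref{commut:lem} to solve \eqref{psi1:eq}--\eqref{psil:eq}, Proposition~\ref{commut0:prop} for the multiple commutators, and the same exponent bookkeeping via \eqref{sigmaeps:eq}. The estimates for the $b_j$, the summed bounds \eqref{xm:eq}, and the terms $B_{M+1}$, $R^{(2)}_{M+1}$, $\ad^{M+1}(B;\Psi)$ are handled essentially as in the paper.

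There is, however, a genuine gap in your treatment of every term whose ``seed'' is $H_0$, i.e.\ the symbols $t_j$ and the remainder contribution $\ad^{M+1}(H_0;\Psi)$. You propose to apply Proposition~\ref{commut0:prop} with $h_0\in\BS_{2m/\b}$ as the outer symbol (your reference to ``$\tau_{\bth}$ controlled through \eqref{tau:eq}'' does not repair this: \eqref{tau:eq} bounds $\tau_{\bth}^{-1}$ and is used to \emph{solve} the commutator equation, not to estimate $\ad(h_0;\psi)$). Run the arithmetic: with $h_0\in\BS_{2m/\b}$ and $\psi_{k_1}\in\BS_{\g'}$, landing $\ad(h_0;\psi_{k_1})$ in $\BS_{\om}$ forces $\g'=\om-2m\b^{-1}+1$, and \eqref{psik1:eq} then gives the bound $\rho^{\b(\s_{k_1}-\om)+2m-\b}$, whereas \eqref{bt:eq} requires $\rho^{\b(\e_{k_1}-\om)}=\rho^{\b(\s_{k_1}-\om)+2m-2+\b}$. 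The discrepancy is a factor $\rho^{2-2\b}$, which is a positive power of $\rho$ whenever $\b<1$. This loss is not a one-off: it degrades $t_j$, hence $\psi_j$ via \eqref{psil:eq}, hence $t_{j+1}$, and so on, so the effective exponent grows like $\e_j+j(2-2\b)\b^{-1}$. The remainder estimate \eqref{rm:eq} is useful only because $\e_j\to-\infty$ (equivalently $\s<1$); with the accumulated loss one would instead need $\b(\s-1)+2-2\b<0$, which fails exactly in the regime the theorem is built for (e.g.\ the magnetic Schr\"odinger operator with $\tfrac12<\b<\tfrac34$). The paper's resolution, which your writeup is missing, is to never commute with $h_0$ generically: one substitutes the already-solved commutator equations, $\ad(h_0;\psi_{k_1})=-(B_{k_1}^{\natural}+T_{k_1}^{\natural})$ in Step~II and $\ad(h_0;\psi)=-x_M^{\natural}$ for $\ad^{M+1}(h_0;\psi)=\ad^{M}(x_M^{\natural};\psi)$ in the remainder, so that the innermost factor inherits the sharp bound \eqref{bt:eq} (resp.\ \eqref{nat:eq}) rather than the lossy product of separate sup-bounds on $\tau_{\bth}$ and $\hat\psi$. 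Without this substitution the induction does not close.
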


\begin{proof}

The existence of $\psi_1\in\BS_{\g}$ with required properties
follows from Lemma \ref{commut:lem}. Further proof is by induction.

Suppose that $\psi_k$ with $k = 1, 2, \dots, K-1$ satisfy
\eqref{psik1:eq}.
In order to conclude that $\psi_K$
also satisfies \eqref{psik1:eq}, first
we need to check that $b_K$ and $t_K$ satisfy \eqref{bt:eq}.

\underline{Step I. Estimates for $b_j$.}
Note that
\begin{equation}\label{sigmatoeps:eq}
\begin{split}
\e_j = &\ \s_{j-1} + \a - 1 \\
 = &\ (j-1)\s + \a - (j-1),\ j \ge 2.
 \end{split}
\end{equation}
To begin with, we prove that all the symbols
$b_j$  with $j \le K$, satisfy the estimate
\eqref{bt:eq}.
We first obtain
a bound for $\ad(b; \psi_{k_1}, \psi_{k_2}, \dots, \psi_{k_q})$
with $k_1+k_2+\dots+k_q = j-1$.
To this end we use \eqref{psik1:eq} with $\g = (\om-\a)q^{-1}+1$
for each $\psi_{k_n}$
and Proposition \ref{commut0:prop} to conclude that
\begin{equation}\label{b:eq}
\1\ad(b; \psi_{k_1}, \psi_{k_2},
\dots, \psi_{k_q})\1^{(\om)}
\ll \1 b\1^{(\a)}
\prod_{n=1}^q \1 \psi_{k_n}\1^{(\g)} \ll (\1 b\1^{(\a)})^j\prod_{n=1}^q
\rho^{\b(\s_{k_n}-\g)},
\end{equation}
for any $\om\in\R$. Obviously, we have:
\begin{align*}
\sum_{n=1}^q \b(\s_{k_n} - \g)
= &\ \b \bigl(q(1-\gamma)+\sum_{n=1}^qk_n(\s-1)\bigr)
=  \b \bigl((j-1)(\s-1) - \om + \a \bigr)\\
= &\ \b(\s_{j-1} + \a - 1 -\om) = \b(\e_j - \om).
\end{align*}
Now, \eqref{bl:eq} implies
\begin{equation*}
\1 b_j\1^{(\om)} \ll  \rho^{\b(\e_j-\om)}(\1 b\1^{(\a)})^j,
\end{equation*}
for all $\om\in\R$, i.e. $b_j$ satisfies \eqref{bt:eq} for all $j\le K$.

\underline{Step II. Estimates for $t_j$.}
For the symbols $t_j$ the proof is by induction.
First of all, note that
\begin{equation*}
\ad(h_0; \psi_1, \psi_1) = - \ad(b^{\natural}, \psi_1),
\end{equation*}
so that, using \eqref{subord:eq}, Proposition \ref{commut0:prop}, and
\eqref{psik1:eq} with $\g = \om - \a + 1$,
we obtain
\begin{equation*}
\1\ad(h_0; \psi_1, \psi_1)\1^{(\om)}
\ll \1 b\1^{(\a)} \1 \psi_1\1^{(\g)}
\ll \rho^{\b(\s_1-\g)}(\1 b\1^{(\a)})^2.
\end{equation*}
By \eqref{sigmaeps:eq}, we have $\s_1 - \g = \s_1 +\a  -1 - \om = \e_2 - \om$,
and hence $t_2$ satisfies \eqref{bt:eq}.
Suppose that all $t_k$ with $k\le j-1 \le K-1$ satisfy \eqref{bt:eq}.
As we have already established that
all $b_k, k\le K$ satisfy \eqref{bt:eq},
by definition \eqref{psil:eq} and \eqref{subord:eq}
all $\ad(h_0; \psi_k)$, $k \le j-1$,
satisfy the same bound, i.e.
\begin{equation*}
\1 \ad(h_0, \psi_k)\1^{(\g)}\ll \rho^{\b(\e_k - \g)}(\1 b\1^{(\a)})^k, k\le j-1.
\end{equation*}
Using the bound \eqref{psik1:eq} for $\psi_k$, $k\le j-1$, with $\g = (\om-1)q^{-1}+1$,
and applying Proposition \ref{commut0:prop},
we obtain for $k_1+k_2+\dots+k_q = j$, $q\ge 2$ the following estimate:
\begin{align}\label{h0:eq}
\1 \ad(h_0; \psi_{k_1}, \psi_{k_2}, &\ \dots, \psi_{k_q})\1^{(\om)}
= \1 \ad\bigl(  \ad(h_0; \psi_{k_1});
\psi_{k_2}, \dots, \psi_{k_q}\bigr)\1^{(\om)}\notag\\[0.2cm]
\ll &\  \1 \ad(h_0; \psi_{k_1})\1^{(\g)} \prod_{n=2}^q
\1\psi_{k_n}\1^{(\g)}
\ll  (\1 b \1^{(\a)})^j \rho^{\b(\e_{k_1}-\g)}\prod_{n=2}^q \rho^{\b(\s_{k_n}-\g)},
\end{align}
for any $\om\in\R$. Obviously, we have:
\begin{align*}
\b(\e_{k_1}-\g)+\sum_{n=2}^q \b(\s_{k_n}-\g)
= &\ \b\bigl(
(\s-1)\sum_{n=1}^q k_n  + 2 + (2m-2)\b^{-1} +  q-1 -q\g
\bigr)\\
= &\ \b\bigl( j(\s-1) + 2 + (2m-2)\b^{-1}  + q(1 -\g) - 1
\bigr)\\
= &\ \b\bigl(\e_j - \om\bigr).
\end{align*}
This leads to \eqref{bt:eq} for all $t_j$, $j\le K$.

\underline{Step III.} In order to handle $\Psi_K$, we use the solution $\Psi$
of the equation \eqref{adb:eq}
constructed in Lemma \ref{commut:lem}.
Then from definition
\eqref{psil:eq} and steps I, II
we immediately conclude that
$\psi_{K}\in \BS_{\g }$ for any $\g\in \R$. 
Moreover, estimate \eqref{psitau:eq} with $\omega=\gamma$ implies
\begin{align*}
\1 \psi_{K}\1^{( \g )}
\ll  &\  \rho^{\b( - (2m-2)\b^{-1} - 1)}\bigl(\1 b_K\1^{(\g )} + \1 t_K\1^{(\g )}\bigr)\\
\ll &\ \rho^{\b(\e_K - \g- (2m-2)\b^{-1} - 1)} (\1 b\1^{(\a)})^K.
\end{align*}
Since
$\e_K - (2m-2)\b^{-1} - 1 = \s_K$,
the required result follows.

\underline{Step IV. Proof of \eqref{rm:eq} and \eqref{xm:eq}.}
We assume that $\rho^{\b(\s-1)} \1 b\1^{(\a)}\ll 1$ throughout.
Before treating the remainder $R_{M+1}$ (see \eqref{r:eq} for its definition), let us prove
estimates \eqref{xm:eq} for the symbols
$\psi = \sum_{j=1}^M \psi_j$ and $x_M = b+\sum_{j=2}^M (b_j + t_j)$.
Using \eqref{psik1:eq}, we get
\begin{equation*}
\1 \psi\1^{(\g)}\ll \sum_{j=1}^M\rho^{\b(\s_j-\g)}(\1 b\1^{(\a)})^j
= \rho^{\b(1-\g)} \sum_{j=1}^M \bigl(\rho^{\b(\s-1)}\1 b\1^{(\a)}\bigr)^j,
\end{equation*}
which implies that
\begin{equation}\label{globalpsi:eq}
\1\psi\1^{(\g)}\ll \rho^{\b(\s-\g)} \1 b\1^{(\a)} \biggl( 1
+ \bigl(\rho^{\b(\s-1)}\1 b\1^{(\a )}\bigr)^{M-1}\biggr)
\ll \rho^{\b(\s-\g)} \1 b\1^{(\a)}
\end{equation}
with an arbitrary $\g\in\R$. Similarly,
in view of \eqref{bt:eq} and \eqref{sigmatoeps:eq},
\begin{align}\label{globalx:eq}
\1 x_M\1^{(\a)}\ll &\
\1 b\1^{(\a)} + \sum_{j=2}^{M} \rho^{\b(\e_j-\a)} (\1 b\1^{(\a)})^j
= \1 b\1^{(\a)} + \sum_{j=2}^M \rho^{\b(j-1)(\s-1)} (\1 b\1^{(\a)})^j \notag\\[0.2cm]
\ll &\ \1 b\1^{(\a)}\biggl(1 + (\rho^{\b(\s-1)}\1 b\1^{(\a)})^{M-1}\biggr)
\ll \1 b\1^{(\a)}.
\end{align}
Now we apply these estimates to find upper bounds for $R_{M+1}$.

The remainder $R_{M+1}$ consists of three components.
To estimate the first one, $B_{M+1}$, note, that
in view of \eqref{bt:eq},
$b_{M+1}\in \BS_0$ and $\1 b_{M+1}\1^{(0)} \ll \rho^{\b\e_{M+1}}(\1 b\1^{(\a)})^{M+1}$.
By
Proposition \ref{bound:prop}, we conclude that the norm of $B_{M+1}$ is bounded
by $ (\1 b\1^{(\a)})^{M+1}\rho^{\b\e_{M+1}}$ as required.

Consider now $R^{(1)}_{M+1}$ defined in \eqref{decomp:eq}.   
Using \eqref{globalpsi:eq} with $\g = -(M+1)^{-1} \a + 1$ and applying
 Proposition \ref{commut0:prop},we conclude that
\begin{equation*}
\1
\ad^{M+1}(b; \psi)\1^{(0)}
\ll \1 b\1^{(\a)} \bigl(\1 \psi\1^{(\g)}\bigr)^{M+1}
\ll
\rho^{\b (M+1)(\s-\g)}\bigl(\1 b\1^{(\a)}\bigr)^{M+2}.
\end{equation*}
In view of \eqref{sigmatoeps:eq},
\begin{equation*}
(M+1)(\s-\g) =  M\s + \a - M + \s -1  = \e_{M+1}  + \s-1,
\end{equation*}
so that Proposition \ref{bound:prop} yields:
\begin{equation}\label{big:eq} 
\|\ad^{M+1}(B; \Psi)\|\ll  
\rho^{\b\e_{M+1}}
\bigl(\1 b\1^{(\a)}\bigr)^{M+1} 
\rho^{\b(\s-1)}\1 b\1^{(\a)} 
\ll  \rho^{\b\e_{M+1}}
\bigl(\1 b\1^{(\a)}\bigr)^{M+1}.
\end{equation}
To estimate the norm of $\ad^{M+1}(H_0; \Psi)$, note that
by \eqref{globalx:eq} and \eqref{nat:eq},
\begin{equation*}
\1 x_M^{\natural}\1^{(\g)}\ll \rho^{\b(\a-\g)}\1 b\1^{(\a)}
\end{equation*}
for any $\g\in\R$.
It follows from \eqref{psi1:eq} and \eqref{psil:eq} that
$\ad(h_0, \psi) + x^{\natural}_M = 0$.
Thus, using the above bound and \eqref{globalpsi:eq} with  
$\g = M(1+M)^{-1}$,
we obtain with the help of Proposition \ref{commut0:prop} that
\begin{align*}
\1 \ad^{M+1}(h_0; \psi)\1^{(0)} = &\ \1 \ad^M(x_M^{\natural}; \psi)\1^{(0)}
\ll \1 x_M^\natural\1^{(\g)} \bigl(\1 \psi\1^{(\g)}\bigr)^{M}\\
\ll &\  \rho^{\b M(\s-\g) + \b(\a-\g)}\bigl(\1 b\1^{(\a)}\bigr)^{M+1}.
\end{align*}
According to \eqref{sigmatoeps:eq},
\begin{equation*}
M(\s-\g) +  \a-\g = M\s + \a - M -(M+1)\g + M = \e_{M+1},
\end{equation*}
so that by Proposition \ref{bound:prop},
\begin{equation*}
\|\ad^{M+1}(H_0; \Psi)\|\ll \rho^{\b\e_{M+1}}\bigl(\1 b\1^{(\a)}\bigr)^{M+1}.
\end{equation*}
Together with \eqref{big:eq} this leads to the estimate of the form \eqref{rm:eq}
for $R^{(1)}_{M+1}$.

Following the same strategy, one easily obtains the sought estimate for
the norm of the $R^{(2)}_{M+1}$ defined in \eqref{rtilde:eq}.
This completes the proof of \eqref{rm:eq}.
\end{proof}


Let us now summarize the results of this section in the following
Theorem:
the implications of the above Lemma for the operator $H = H_0 + \op(b)$, defined in
\eqref{h:eq}.

\begin{thm}\label{reduction:thm}
Let $b\in\BS_{\a}(w)$, $w(\bxi) = \lu \bxi\ru^\b, \b\in (0, 1]$, $\a\in\R$
be a symmetric symbol, and let $H$ be the operator defined in \eqref{h:eq}.
Suppose that the condition \eqref{alm1:eq} is satisfied.
Then for any positive integer $M$
there exist symmetric
symbols $\psi=\psi_M$, $x = x_M$, and a self-adjoint bounded operator $R_{M+1}$
satisfying the following properties:
\begin{enumerate}
\item $\psi\in\BS_\g$ for all $\g\in\R$, $x\in\BS_\a$,
and
\begin{align*}
\1 \psi\1^{(\g)}\ll &\ \rho^{\b(\s-\g)}\1 b\1^{(\a)},\\
\1 x\1^{(\a)}\ll &\ \1 b\1^{(\a)},\\
\|R_{M+1}\|\ll &\ (\1b\1^{(\a)})^{M+1} \rho^{\b \e_{M+1}},
\end{align*}
uniformly in $b$ satisfying $\1 b\1^{(\a)}\ll 1$;
\item
The operator
$A_1 = e^{-i\Psi} H e^{i\Psi}, \Psi = \op(\psi)$, has the form
\begin{equation}\label{a1:eq}
A_1 = A_0 + X^{\flat} + X^{\downarrow, \sharp, \uparrow} + R_{M+1},\
A_0 = H_0 + X^o.
\end{equation}
\end{enumerate}
\end{thm}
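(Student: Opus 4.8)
The statement is essentially a repackaging of Lemma \ref{gauge:lem} together with the formal identity \eqref{lm:eq}, so the plan is to verify that \eqref{alm1:eq} places us in the situation covered by that lemma and then to collect its conclusions, adding the routine domain and self-adjointness bookkeeping.

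First I would record the role of the hypothesis. With $\s=\a-(2m-2)\b^{-1}-1$ as in \eqref{sigmaeps:eq}, one has $\s-1=\a-2-(2m-2)\b^{-1}$, so that \eqref{alm1:eq} is \emph{equivalent} to $\s<1$, i.e. $\b(\s-1)<0$. Hence $\rho^{\b(\s-1)}\le1$ for every $\rho\ge1$, and the smallness condition $\rho^{\b(\s-1)}\1 b\1^{(\a)}\ll1$ required in Lemma \ref{gauge:lem} holds uniformly in $\rho\ge1$ as soon as $\1 b\1^{(\a)}\ll1$. I would also note that \eqref{alm1:eq} forces $\b\a<2m-2+2\b\le 2m$, which is what will make $A_0$ self-adjoint below.

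Next I would invoke Lemma \ref{gauge:lem} to produce self-adjoint bounded operators $\Psi_j=\op(\psi_j)$, $j=1,\dots,M$, with $\psi_j\in\BS_\g$ for every $\g$, solving \eqref{psi1:eq} and \eqref{psil:eq}, and set $\psi:=\sum_{j=1}^M\psi_j$, $x:=x_M=b+\sum_{l=2}^M(b_l+t_l)$, and $R_{M+1}:=B_{M+1}+R^{(1)}_{M+1}+R^{(2)}_{M+1}$ as in \eqref{r:eq}. Item (1) is then immediate: $\psi\in\BS_\g$ for all $\g$ and $x\in\BS_\a$ by Lemma \ref{gauge:lem} (recall $b\in\BS_\a$, and $b_l,t_l\in\BS_\g$ for every $\g$), the three norm bounds are exactly those recorded in \eqref{xm:eq} and \eqref{rm:eq}, and $\psi$, $x$ are symmetric because $B=\op(b)$ is symmetric, each $\Psi_j$ is self-adjoint, and an iterated commutator of self-adjoint PDO's is self-adjoint (since $i[A,\Psi]$ is self-adjoint when $A,\Psi$ are). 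For item (2): summing \eqref{psi1:eq} and \eqref{psil:eq} over $l=1,\dots,M$ yields $\ad(H_0;\Psi)=-\op(x_M^{\natural})$, which is bounded by Lemma \ref{smallorthog:lem}(ii) together with \eqref{xm:eq}; therefore $e^{i\Psi}$ is unitary and $e^{i\Psi}D(H_0)=D(H_0)$, which makes the formal computation leading to \eqref{lm:eq} legitimate. Plugging $\psi$ into \eqref{decomp:eq}--\eqref{r:eq} and using the commutator equations produces precisely \eqref{lm:eq}, i.e. \eqref{a1:eq} with $X=X_M$ and $A_0=H_0+X^o$. Finally, $A_1=e^{-i\Psi}He^{i\Psi}$ is self-adjoint, being a unitary conjugate of the self-adjoint operator $H$ of \eqref{h:eq}; $A_0=H_0+\op(x^o)$ is self-adjoint on $D(H_0)$ because $x^o$ is real-valued ($x$ being symmetric) and $\op(x^o)$ is infinitesimally $H_0$-bounded by Lemma \ref{formbound:lem} (here $x^o\in\BS_\a$ and $\a\b<2m$); and $X^{\flat}$ and $X^{\downarrow, \sharp, \uparrow}$ are bounded and symmetric by Lemma \ref{smallorthog:lem}. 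Hence $R_{M+1}=A_1-A_0-X^{\flat}-X^{\downarrow, \sharp, \uparrow}$ is bounded and self-adjoint, as required.

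The only genuine points to watch are the equivalence of \eqref{alm1:eq} with $\s<1$ (without it the geometric-type estimates inside the proof of Lemma \ref{gauge:lem} would not be summable uniformly in $\rho$), and the modest domain and self-adjointness bookkeeping needed to upgrade the purely formal identity \eqref{lm:eq} to an operator identity; both are short once the boundedness of $\ad(H_0;\Psi)$ is established. Beyond this there is no serious obstacle, since essentially all the analytic content has already been produced in Lemma \ref{gauge:lem}.
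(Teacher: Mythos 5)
Your proposal is correct and follows essentially the same route as the paper: the paper's proof simply notes that \eqref{alm1:eq} is equivalent to $\s<1$ and then quotes Lemma \ref{gauge:lem}, with the norm bounds coming from \eqref{xm:eq} and \eqref{rm:eq}. Your additional bookkeeping on symmetry, the boundedness of $\ad(H_0;\Psi)$, and the domain invariance is sound and merely makes explicit what the paper leaves implicit.
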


\begin{proof}
Note that the condition \eqref{alm1:eq} is equivalent to $\s< 1$. Thus the
existence of symbols $\psi$, $x$ and the operator $R_{M+1}$ with required properties
follows from Lemma \ref{gauge:lem}. In particular, the claimed upper bounds
are direct consequences
of \eqref{xm:eq} and \eqref{rm:eq}.
\end{proof}


\section{Geometry of congruent points: resonant sets}\label{ezhiki:sect}

In the course of the proof we need a substantial number of
certain lattice-geometric constructions. They are discussed in this section.
First, we fix the notation.

For any vector $\bxi\in\R^d$, $\bxi\not = 0$, we denote
$\normal(\bxi) = \bxi|\bxi|^{-1}$.
For any  $\boldeta\in\R^d$ the vector $\bxi_{\boldeta}$ is the projection of $\bxi$
onto the one-dimensional subspace spanned by $\boldeta$, i.e.
$\bxi_{\boldeta} = (\bxi\cdot\normal(\boldeta)) \normal(\boldeta)$.
Let $\T_r\subset\SG^\dagger$ and $\T_r^0\subset\SG^\dagger$
be the sets defined in \eqref{T:eq}. We always assume that
$$
\underline{r\ge r_0,
\ \ \textup{where
$r_0$ is such that $\T_{r_0}$ contains $d$ linearly independent lattice vectors.}}
$$
We say that a subspace $\GV\subset \R^d$ is \textsl{a lattice
$r$-subspace} if $\GV$ is spanned by some linearly independent lattice vectors
from the set $\T_r^0$. A one dimensional lattice subspace
spanned by a vector $\bth\in\T_r$ is denoted by $\GV(\bth)$.
The set of all lattice $r$-subspaces of dimension $n$ is denoted
by $\CV(n)=\CV(r,n),\ n = 0, 1, \dots, d$, and
$\mathcal W = \mathcal W(r)= \cup_{n=0}^d \CV(r, n)$.
We have included in this list the zero dimensional
subspace $\GX = \{0\}$.
For any vector $\bxi\in\R^d$ we denote by $\bxi_{\GV}$
its orthogonal projection on $\GV\in\CV(r, n)$.
In particular, $\bxi_{\GX} = \mathbf 0$.

The notation $\w_n$ is used for the volume of the unit ball in $\R^n$.

\subsection{Elementary geometrical estimates}

We begin with estimates for distances between lattice subspaces and lattice points.

\begin{lem} \label{cassels:lem}
\begin{enumerate}
\item
For any  $\GV\in \CV(r, d-1)$ there exists a vector
$\bg\in \SG$ such that $\bg_{\GV} = 0$, and
$ |\bg|\le 2\dc(\SG)\w_{d-1}^{-1}  \pi^{1-d} r^{d-1}$.
\item
For any $\GW\in \CV(r, n), n\le d-1$, and any
$\bnu\notin \GW$, $\bnu\in\SG^{\dagger}$,
one has
\begin{equation*}
\dist(\bnu, \GW)\ge \dc(\SG)^{-1}\w_{d-1}  \pi^d r^{1-d}.
\end{equation*}
\end{enumerate}
\end{lem}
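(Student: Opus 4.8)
Both parts are classical facts about lattices, quantified via the Minkowski-type relation between a sublattice and its orthogonal complement, so the plan is to extract them from the determinant identity. For part (1), fix $\GV\in\CV(r,d-1)$ and consider the rank-one lattice $\SG\cap\GV^{\perp}$. The key observation is that $\GV$ is spanned by $d-1$ linearly independent vectors of $\SG^{\dagger}$ of length $\le r$; these span a sublattice of $\SG^{\dagger}\cap\GV$ whose $(d-1)$-dimensional determinant is, by Hadamard's inequality, at most $r^{d-1}$. The dual picture: $\SG\cap\GV^{\perp}$ is the orthogonal lattice, and the product of the determinant of $\SG^{\dagger}\cap\GV$ (as a lattice in $\GV$) with the length of a generator of $\SG\cap\GV^{\perp}$ is controlled by $\dc(\SG)$ up to the normalisation constants relating $\SG$ and $\SG^{\dagger}$ (recall $\dc(\SG)\dc(\SG^{\dagger}) = (2\pi)^d$). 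Solving for the generator length of $\SG\cap\GV^{\perp}$ gives a bound $\ll \dc(\SG)\,r^{d-1}$ with the stated constant $2\dc(\SG)\w_{d-1}^{-1}\pi^{1-d}$; one takes $\bg$ to be that generator, which by construction satisfies $\bg_{\GV}=0$.

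For part (2), let $\GW\in\CV(r,n)$ with $n\le d-1$ and $\bnu\in\SG^{\dagger}\setminus\GW$. The distance $\dist(\bnu,\GW)$ equals the $(n+1)$-dimensional volume of the parallelepiped spanned by $\bnu$ together with a basis of $\SG^{\dagger}\cap\GW$, divided by the $n$-dimensional volume (determinant) of $\SG^{\dagger}\cap\GW$. The numerator is the determinant of a rank-$(n+1)$ sublattice of $\SG^{\dagger}$, hence is at least the first successive minimum product bound — more simply, it is bounded below by a constant depending only on $\dc(\SG^{\dagger})$ and the diameters of the other basis vectors — while the denominator, being the determinant of a sublattice of $\SG^{\dagger}\cap\GW$ spanned by vectors of length $\le r$, is at most $r^{n}\le r^{d-1}$ by Hadamard. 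Combining, $\dist(\bnu,\GW)\gg \dc(\SG^{\dagger})^{-1} r^{1-d}$, and rewriting $\dc(\SG^{\dagger})$ in terms of $\dc(\SG)$ via $\dc(\SG)\dc(\SG^{\dagger})=(2\pi)^d$ produces the bound $\dc(\SG)^{-1}\w_{d-1}\pi^{d}r^{1-d}$.

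I would package the common lattice-geometric input as a single lemma on sublattices: for a primitive sublattice $\SL\subset\SG^{\dagger}$ of rank $k$, $\dc(\SL)\cdot\dc(\SL^{\perp}) = \dc(\SG^{\dagger})$ where $\SL^{\perp}=\SG^{\dagger}\cap(\mathrm{span}\,\SL)^{\perp}$, together with Hadamard's inequality $\dc(\SL)\le\prod\|\bv_i\|$ for any generating set $\{\bv_i\}$ and the trivial lower bound $\dc(\SL)\ge (\text{shortest vector})$ in rank one. This is essentially a result of Cassels (hence the lemma's name), so I would cite \cite{Cassels} or the relevant geometry-of-numbers reference rather than reprove the duality identity. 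The one point requiring a little care is keeping track of which lattice ($\SG$ or $\SG^{\dagger}$) each object lives in and applying the factor $(2\pi)^d$ correctly, since part (1) is a statement about $\SG$ and part (2) about $\SG^{\dagger}$; mixing these up is the only real pitfall.

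\textbf{Main obstacle.} There is no deep obstacle here — the estimates are standard geometry of numbers. The only thing to watch is the bookkeeping of constants and the direct/dual lattice normalisations; getting exactly the asserted numerical constants $2\dc(\SG)\w_{d-1}^{-1}\pi^{1-d}$ and $\dc(\SG)^{-1}\w_{d-1}\pi^{d}$ requires invoking a sharp form of Minkowski's convex body theorem (via the volume $\w_{d-1}$ of the unit ball) rather than a crude version, so I would make sure to cite or state the sharp Minkowski bound explicitly.
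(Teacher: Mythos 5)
Your overall route is genuinely different from the paper's, which applies Minkowski's convex body theorem directly (via Cassels, \S III.2.2, Theorem II) to a cylindrical symmetric convex body $\CA_t = \{\bxi : |\bxi_{\GV}| < 2\pi r^{-1},\ |\bxi_{\be}| \le t\}$, picks $t$ so that $\volume\CA_t \ge 2^d\dc(\SG)$, and extracts the required $\bg\in\SG$; the cut-off $2\pi r^{-1}$ in the $\GV$-direction together with $\bth\in\T_r\cap\GV$ forces $\bg\cdot\bth = 0$. The $\w_{d-1}$ in the constant is precisely the volume of the ball cross-section of that cylinder. Your proposal instead runs through the determinant--Hadamard machinery, which is a legitimate alternative, and if carried out correctly actually yields a sharper constant (of the form $\dc(\SG)(2\pi)^{1-d}r^{d-1}$ rather than $2\dc(\SG)\w_{d-1}^{-1}\pi^{1-d}r^{d-1}$). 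But two things need fixing.

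First, the determinant identity as you state it, $\dc(\SL)\cdot\dc(\SL^\perp)=\dc(\SG^\dagger)$, is wrong: the correct relation for a primitive sublattice $\SL\subset\SG^\dagger$ of rank $k$ is $\dc(\SL)\cdot\dc(\pi_{(\mathrm{span}\,\SL)^\perp}(\SG^\dagger))=\dc(\SG^\dagger)$, and one then passes to $\SG\cap(\mathrm{span}\,\SL)^\perp$ by the \emph{cross}-duality $\dc(\pi_{W}(\SG^\dagger))\cdot\dc(\SG\cap W)=(2\pi)^{\dim W}$ (the $2\pi$ arising from the paper's normalisation of $\SG^\dagger$). With that fixed, part (1) goes through. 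Your final paragraph, however, is inconsistent with your plan: if you go the duality-plus-Hadamard route you never invoke Minkowski, and you do not pick up a factor $\w_{d-1}$; the $\w_{d-1}$ appears only in the paper's cylinder argument. You cannot have both.

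Second, and more seriously, your part (2) has a genuine gap. You propose $\dist(\bnu,\GW)=\dc(\SM)/\dc(\SG^\dagger\cap\GW)$ with $\SM$ the rank-$(n+1)$ lattice generated by $\bnu$ and $\SG^\dagger\cap\GW$, and you bound the denominator above by $r^{n}$ via Hadamard, but your proposed lower bound for the numerator (``at least the first successive minimum product bound'' or ``a constant depending only on $\dc(\SG^\dagger)$ and the diameters of the other basis vectors'') is vague and does not obviously produce the required $r$-dependence; a rank-$(n+1)$ primitive sublattice of $\SG^\dagger$ has no useful universal lower bound for its determinant in terms of $\dc(\SG^\dagger)$ alone. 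The paper sidesteps all of this: it embeds $\GW$ in a $(d-1)$-dimensional $\GV\in\CV(r,d-1)$ with $\bnu\notin\GV$, takes the $\bg$ from part (1) orthogonal to $\GV$, and observes that $\dist(\bnu,\GW)\ge\dist(\bnu,\GV)=|\bnu\cdot\bg|/|\bg|\ge 2\pi/|\bg|$ since $\bnu\cdot\bg$ is a nonzero multiple of $2\pi$. You should either reproduce that one-line reduction of (2) to (1), or else supply the missing lower bound for $\dc(\SM)$ carefully; as written the argument for (2) is incomplete.
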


\begin{proof}  Denote  by $\be\in \R^d$ a unit vector, orthogonal to $\GV$.
For $t\ge 1$, let $\CA_t\subset \R^d$ be the cylindrical set
\begin{equation*}
\CA_t = \{\bxi\in \R^d: |\bxi_{\GV}|< 2\pi r^{-1},  |\bxi_{\be}|\le t \},
\end{equation*}
which is obviously convex and symmetric about the origin.
Moreover,
\begin{equation*}
\volume(\CA_t)= t (2\pi)^{d-1}r^{-d+1}\w_{d-1}.
\end{equation*}
By \cite{Cas},
\S III.2.2 Theorem II, under the condition
$\volume(\CA_t)\ge \dc(\SG) 2^d$ the set
$\CA_t$ contains at least two points $\pm\bg\in\SG$.
The above condition is satisfied if
$t = \dc(\SG)\w_{d-1}^{-1} 2^{d} (2\pi)^{1-d} r^{d-1}$.

By definition of $\CA_t$,
for any $\bm\in \T_r\cap \GV$ we have $|\bg\cdot\normal(\bm)|< 2\pi r^{-1}$, and hence
$|\bg\cdot\bm|< 2\pi r^{-1} |\bm| \le 2\pi$. Since $\bg\in\SG$ and
$\bm\in\SG^\dagger$,
the latter inequality  implies that $\bg\cdot \bm = 0$, so $\bg_{\GV} = 0$.
It is also clear that $|\bg|\le t = 2\dc(\SG)\w_{d-1}^{-1}  \pi^{1-d} r^{d-1}$.

To prove the second statement, we find a subspace
$\GV\in \CV(r, d-1)$ such that $\bnu\notin \GV$
and $\GW\subset \GV$, and denote by $\bg\in\SG$ the vector orthogonal to $\GV$,
found at the first step of the proof. Since
\begin{equation*}
\dist(\bnu, \GW)\ge \dist(\bnu, \GV)
= \frac{1}{|\bg|} |\bnu\cdot\bg|\ge \frac{2\pi}{|\bg|},
\end{equation*}
the required inequality follows from the first part of the Lemma.
\end{proof}

\begin{lem}\label{projections:lem}
  Let $\GV\in\CV(r, m)$, $m \le d-1$.
\begin{enumerate}
\item
If $\bth\in\T_r$ and $\bth\notin \GV$,
then for any $\bxi\in \GV+\GV(\bth)$ we have
\begin{equation*}
|\bxi|\ll r^{d}(|\bxi_{\bth}| + |\bxi_{\GV}|).
\end{equation*}

\item
For any subspace $\GW\in \CV(r, n)$, $n\le d-1$,
\begin{equation*}
|\bxi_{\GV+\GW}|\ll r^{nd}(|\bxi_{\GV}| + |\bxi_{\GW}|).
\end{equation*}
\end{enumerate}
\end{lem}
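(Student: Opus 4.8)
The plan is to prove both parts by induction on the number of lattice vectors needed to span the relevant subspaces, reducing everything to the one-step estimate in part (1), which in turn follows from Lemma \ref{cassels:lem}.

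\textbf{Part (1).} First I would handle the case $\dim\GV=m\le d-1$ and $\bth\in\T_r$, $\bth\notin\GV$, so that $\GV' := \GV+\GV(\bth)$ has dimension $m+1\le d$. Pick an orthonormal basis $\be'_1,\dots,\be'_{m}$ of $\GV$ and let $\normal$ be a unit vector in $\GV'$ orthogonal to $\GV$; then for $\bxi\in\GV'$ we have $|\bxi|^2 = |\bxi_\GV|^2 + (\bxi\cdot\normal)^2$, so it suffices to bound $|\bxi\cdot\normal|$ by $C r^d(|\bxi_\bth| + |\bxi_\GV|)$. Write $\bth = \bth_\GV + (\bth\cdot\normal)\normal$; since $\bth\notin\GV$ the component $\bth\cdot\normal$ is nonzero, and by Lemma \ref{cassels:lem}(2) applied with $\GW=\GV$ (which is a lattice $r$-subspace and $\bth\notin\GV$) we get $|\bth\cdot\normal| = \dist(\bth,\GV)\ge c\, r^{1-d}$. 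On the other hand $\bxi_\bth = (\bxi\cdot\normal(\bth))\normal(\bth)$, so $|\bxi\cdot\bth| = |\bxi_\bth|\,|\bth| \le |\bxi_\bth|\, r$. Also $\bxi\cdot\bth = \bxi_\GV\cdot\bth_\GV + (\bxi\cdot\normal)(\bth\cdot\normal)$, whence
\begin{equation*}
|\bxi\cdot\normal| = \frac{|\bxi\cdot\bth - \bxi_\GV\cdot\bth_\GV|}{|\bth\cdot\normal|}
\le \frac{|\bxi_\bth|\, r + |\bxi_\GV|\, r}{c\, r^{1-d}} \ll r^{d}\,(|\bxi_\bth| + |\bxi_\GV|),
\end{equation*}
which gives the claim.

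\textbf{Part (2).} Now I would prove the general projection bound by induction on $n = \dim\GW$. For $n=0$, $\GW=\GX=\{0\}$ and $\bxi_{\GV+\GW}=\bxi_\GV$, so the inequality is trivial. For the inductive step, write $\GW = \GW_0 + \GV(\bth)$ with $\GW_0\in\CV(r,n-1)$ and $\bth\in\T_r$ a lattice vector spanning a complementary line (such a decomposition exists since $\GW$ is a lattice $r$-subspace, hence spanned by vectors of $\T_r^0$). Set $\GU := \GV + \GW_0$, a lattice subspace (generally of dimension at most $m+n-1$). If $\bth\in\GU$ then $\GV+\GW = \GU$ and we are done by the inductive hypothesis; otherwise apply part (1) with $\GV$ replaced by $\GU$ and the same $\bth$ to get $|\bxi_{\GV+\GW}| = |\bxi_{\GU+\GV(\bth)}| \ll r^{d}\bigl(|\bxi_\bth| + |\bxi_\GU|\bigr)$ for $\bxi\in\GU+\GV(\bth)=\GV+\GW$. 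Then bound $|\bxi_\bth| \ll |\bxi_\GW|$ (since $\GV(\bth)\subset\GW$, projecting onto the line $\GV(\bth)$ and then noting $|\bxi_\bth| = |(\bxi_\GW)_\bth| \le |\bxi_\GW|$), and bound $|\bxi_\GU| = |\bxi_{\GV+\GW_0}| \ll r^{(n-1)d}(|\bxi_\GV| + |\bxi_{\GW_0}|) \ll r^{(n-1)d}(|\bxi_\GV| + |\bxi_\GW|)$ by the inductive hypothesis. Combining, $|\bxi_{\GV+\GW}| \ll r^d\cdot r^{(n-1)d}(|\bxi_\GV| + |\bxi_\GW|) = r^{nd}(|\bxi_\GV|+|\bxi_\GW|)$, completing the induction.

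\textbf{Main obstacle.} The routine-looking points that actually need care are: (a) verifying that in part (2) one can always peel off a \emph{single} lattice line $\GV(\bth)$, $\bth\in\T_r$, so that the remaining $\GW_0$ is still a lattice $r$-subspace — this uses that $\GW$ is spanned by lattice vectors from $\T_r^0$ and a dimension count; and (b) keeping track of the exponent of $r$ so that $n$ applications of part (1) produce exactly $r^{nd}$ rather than something worse, which forces the inductive hypothesis to be stated with the sharp power $r^{(n-1)d}$ at each stage. The genuinely substantive input — the lower bound $\dist(\bth,\GV)\gg r^{1-d}$ separating a lattice point from a lattice hyperplane — is already supplied by Lemma \ref{cassels:lem}(2), so the remaining work is bookkeeping rather than new ideas.
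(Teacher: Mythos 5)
Your proof is correct and follows essentially the same route as the paper: part (1) via the decomposition $\bxi\cdot\bth=\bxi_{\GV}\cdot\bth_{\GV}+(\bxi\cdot\normal)(\bth\cdot\normal)$ together with the lower bound $\dist(\bth,\GV)\gg r^{1-d}$ from Lemma \ref{cassels:lem}, and part (2) by peeling off one lattice line of $\GW$ at a time and iterating part (1), using $|\bxi_{\bth}|\le|\bxi_{\GW}|$. The only (minor, welcome) difference is that you explicitly handle the degenerate case $\bth\in\GU$ where the new spanning vector adds nothing, which the paper's "apply the first part repeatedly" silently skips over.
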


\begin{proof} Let $\bth\notin \GV$, and let $\bxi\in \GV+\GV(\bth)$.
By Lemma \ref{cassels:lem},
\begin{equation}\label{e:eq}
|\bth_{\be}| = \dist(\bth, \GV) \gg r^{1-d},
\end{equation}
where $\be\in \GV+\GV(\bth)$ is a unit vector orthogonal to $\GV$.
We have:
\begin{equation}\label{ex:eq}
\bxi = \bxi_\be + \bxi_{\GV},
\end{equation}
so that
\begin{equation*}
 \bxi_{\bth} \cdot \bth = \bxi\cdot \bth =
\bxi_{\be}\cdot\bth_{\be} + \bxi_{\GV}\cdot \bth_{\GV}.
\end{equation*}
Since $|\bxi_{\be}\cdot\bth_{\be}| = |\bxi_{\be}|\ |\bth_{\be}|$,
using \eqref{e:eq} we obtain
\begin{equation*}
|\bxi_\be|\le \frac{1}{|\bth_\be|}\bigl(
|\bxi_{\GV}| + |\bxi_{\bth}|
\bigr)  |\bth|\ll r^d (|\bxi_{\GV}| + |\bxi_{\bth}|).
\end{equation*}
Together with \eqref{ex:eq} this implies
the required bound.

To prove the second statement we may assume that $\bxi\in \GV + \GW$.
Let $\bth_1, \bth_2, \dots, \bth_n\in\T_r\cap \GW$ be a linearly independent
set of lattice vectors.
Denote by $\GW^{(j)}$, $j = 1, 2, \dots, n$
the subspace spanned by $\bth_1, \bth_2, \dots, \bth_j$.
Applying the first part of the Lemma repeatedly, we get
\begin{align*}
|\bxi|\ll &\ r^d(|\bxi_{\bth_n}| + |\bxi_{\GW^{(n-1)}+\GV}|)\\
\ll &\ r^d\bigl(|\bxi_{\bth_n}| + r^d(|\bxi_{\bth_{n-1}}| + |\bxi_{\GW^{(n-2)}+\GV}|)\bigr)
\ll \dots \ll r^{nd}\biggl(\sum_{j=1}^n |\bxi_{\bth_j}| + |\bxi_{\GV}|\biggr).
\end{align*}
Noticing that $|\bxi_{\bth}|\le |\bxi_{\GW}|$ for any
$\bth\in\GW$,
we get the proclaimed estimate.
\end{proof}

\subsection{Congruent vectors}
For a non-zero vector $\bth$ we
define \textsl{the resonant layer}
corresponding to $\bth$ by
\begin{equation}\label{layer:eq}
\L(\bth) : = \{\bxi\in\R^d,\, |\bxi\cdot \bth|  < \rho^{\al_1} |\bth|\}.
\end{equation}
Here and below, $\a_1\in (0, 1)$ is a fixed number which will be specified later, and $\rho\ge 1$.
For the sake of uniformity of the notation, in case $\bth = \mathbf 0$ we denote
$\L(\mathbf 0) = \R^d$.

\begin{defn}
\label{reachability:defn}
Let $\bth, \bth_1, \bth_2, \dots, \bth_m$ be some vectors from $\T^0_r$,
which are not necessarily distinct.
\begin{enumerate}
\item \label{1}
We say that two vectors
$\bxi, \boldeta\in\R^d$ are \textsl{$\bth$-resonant congruent}
if both $\bxi$ and $\boldeta$ are inside $\L(\bth)$
and $(\bxi - \boldeta) =l\bth$ with $l\in\Z$.
In this case we write $\bxi \lra \boldeta \mod \bth$.
In particular, each $\bxi\in\R^d$ is $\mathbf 0$-resonant congruent to itself.
\item\label{2}
For each $\bxi\in\R^d$ we denote by
$\BUps_{\bth}(\bxi)$ the set of all points which are
$\bth$-resonant congruent to $\bxi$.
For $\bth\not = \mathbf 0$ we say that
$\BUps_{\bth}(\bxi) = \varnothing$
if $\bxi\notin\L(\bth)$.
\item\label{3}
We say that $\boldeta$
is \textsl{$\bth_1, \bth_2, \dots, \bth_m$-resonant congruent}
to $\bxi$, if there exists a sequence $\bxi_j\in\R^d, j=0, 1, \dots, m$ such that
$\bxi_0 = \bxi$, $\bxi_m = \boldeta$,
and $\bxi_j\in\BUps_{\bth_j}(\bxi_{j-1})$ for $j = 1, 2, \dots, m$.
\item
We say that $\boldeta $ and
$\bxi $ are \textsl{resonant congruent}, if
$\boldeta$ is  $\bth_1, \bth_2, \dots, \bth_m$-resonant congruent to $\bxi$
with some
 $\bth_1, \bth_2, \dots, \bth_m\in\T_r$.
 The set of \textbf{all} points, resonant congruent
to $\bxi$, is denoted by $\BUps(\bxi)$.
We use the notation
$\SN(\bxi) = \{ \bm\in\SG^\dagger: \bxi+\bm\in\BUps(\bxi)\}$,
i.e. $\SN(\bxi) = \BUps(\bxi) - \bxi\subset \SG^\dagger$.
For points $\boldeta\in\BUps(\bxi)$ we write $\boldeta\lra\bxi$.
\end{enumerate}
\end{defn}

\begin{rem}\label{reachability:rem}
\begin{enumerate}
\item
Note that according to the above
definition every point in $\R^d$ is resonant congruent to some point. In particular,
every vector $\bxi$ is $\mathbf 0$-resonant congruent to itself,
i.e. $\BUps_{\mathbf 0}(\bxi) = \{\bxi\}$.
\item
It is clear that the resonant congruence defines
an equivalence relation, so that if $\boldeta\lra\bxi$ then we have
$\BUps(\bxi) = \BUps(\boldeta)$.
\item \label{3:rem}
Note also that if $\boldeta$ is \textsl{$\bth_1, \bth_2, \dots, \bth_m$-resonant congruent}
to $\bxi$, then $\bxi$ is \textsl{$\bth_m, \bth_{m-1}, \dots, \bth_1$-resonant congruent}
to $\boldeta$,  and
$\bxi$ is
\textsl{$\bth_1, \bth_2, \dots, \bth_{m-1},
\bth_m, \bth_{m-1}, \dots, \bth_1$-resonant congruent}
to itself.
\end{enumerate}
\end{rem}

Let us establish some immediate properties of congruent points:

\begin{lem}\label{trans:lem}
For any vectors $\bxi\in\R^d$, $\bth\in\T_r$, and any $\bnu\perp\bth$ we have
$\BUps_{\bth}(\bxi)+\bnu\subset \BUps_{\bth}(\bxi+\bnu)$.

As a consequence, if $\SN(\bxi)\subset \GV$, then
for any vector $\bnu\in\GV^\perp$ we have
$\BUps(\bxi)+\bnu\subset \BUps(\bxi+\bnu)$
and $\SN(\bxi)\subset\SN(\bxi+\bnu)$.
\end{lem}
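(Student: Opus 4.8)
The plan is to read both inclusions straight off the definitions: the first (``translation'') inclusion $\BUps_{\bth}(\bxi)+\bnu\subset\BUps_{\bth}(\bxi+\bnu)$ is essentially immediate, and the second is reduced to it by running along a chain of single-step congruences. For the first inclusion, if $\bxi\notin\L(\bth)$ the left-hand side is empty and there is nothing to prove, so I would assume $\bxi\in\L(\bth)$ and take $\bz\in\BUps_{\bth}(\bxi)$, i.e.\ $\bz\in\L(\bth)$ and $\bz-\bxi=l\bth$ for some $l\in\Z$. The one observation needed is that, by \eqref{layer:eq}, membership in $\L(\bth)$ depends on a vector only through its inner product with $\bth$; since $\bnu\perp\bth$ we have $(\bxi+\bnu)\cdot\bth=\bxi\cdot\bth$ and $(\bz+\bnu)\cdot\bth=\bz\cdot\bth$, so both $\bxi+\bnu$ and $\bz+\bnu$ lie in $\L(\bth)$, and since $(\bz+\bnu)-(\bxi+\bnu)=l\bth$ we get $\bz+\bnu\in\BUps_{\bth}(\bxi+\bnu)$. (Here $\bth\neq\mathbf 0$ automatically, as $\bth\in\T_r$.)

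For the second inclusion I would assume $\SN(\bxi)\subset\GV$ and $\bnu\in\GV^\perp$, and let $\boldeta\in\BUps(\bxi)$; the case $\boldeta=\bxi$ being trivial, pick $\bth_1,\dots,\bth_m\in\T_r$ and a chain $\bxi=\bxi_0,\bxi_1,\dots,\bxi_m=\boldeta$ with $\bxi_j\in\BUps_{\bth_j}(\bxi_{j-1})$, and after deleting the steps where $\bxi_j=\bxi_{j-1}$ write $\bxi_j=\bxi_{j-1}+l_j\bth_j$ with $l_j\in\Z\setminus\{0\}$. The crucial point is that every $\bxi_j$ is resonant congruent to $\bxi$, hence $\bxi_j-\bxi\in\SN(\bxi)\subset\GV$; subtracting consecutive terms gives $l_j\bth_j\in\GV$, and since $l_j\neq0$ this forces $\bth_j\in\GV$, so $\bnu\perp\bth_j$ for every $j$. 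Applying the first inclusion one step at a time along the translated chain $\bxi_0+\bnu,\dots,\bxi_m+\bnu$ then yields $\bxi_j+\bnu\in\BUps_{\bth_j}(\bxi_{j-1}+\bnu)$, so $\boldeta+\bnu$ is $\bth_1,\dots,\bth_m$-resonant congruent to $\bxi+\bnu$, i.e.\ $\boldeta+\bnu\in\BUps(\bxi+\bnu)$. Finally, for $\bm\in\SN(\bxi)$ one has $\bxi+\bm\in\BUps(\bxi)$, hence $(\bxi+\bnu)+\bm\in\BUps(\bxi+\bnu)$ by what was just shown, i.e.\ $\bm\in\SN(\bxi+\bnu)$, giving $\SN(\bxi)\subset\SN(\bxi+\bnu)$.

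I do not anticipate any real obstacle here — the whole argument is a matter of unwinding definitions, and the two elementary facts it rests on are that the congruence relations are invariant under translation and that $\L(\bth)$ only ``sees'' the $\bth$-component of a point. The only places where a little care is needed are bookkeeping points: handling the empty set $\BUps_{\bth}(\bxi)$ when $\bxi\notin\L(\bth)$, and discarding the degenerate steps $l_j=0$ in the chain (for which $\bth_j\in\GV$ cannot be asserted) before invoking the first inclusion.
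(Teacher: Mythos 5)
Your proof is correct, and it is a faithful unwinding of the paper's terse argument, which simply asserts that the first inclusion is ``an immediate consequence of Definition \ref{reachability:defn} in view of the orthogonality of $\bth$ and $\bnu$'' and that the second part follows. In particular, your careful handling of the degenerate steps $l_j=0$ (discard them so that $l_j\bth_j\in\GV$, $l_j\neq0$, genuinely forces $\bth_j\in\GV$) is exactly the bookkeeping needed to make the paper's one-line deduction rigorous.
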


\begin{proof} If $\BUps_{\bth}(\bxi)$ is empty, the result is obvious.
For a non-empty $\BUps_{\bth}(\bxi)$
the first statement is an immediate consequence of
Definition \ref{reachability:defn} in view of the orthogonality of $\bth$ and $\bnu$.
Under the conditions $\SN(\bxi)\subset\GV$, $\bnu\in\GV^\perp$,
this leads to the inclusion $\SN(\bxi)\subset\SN(\bxi+\bnu)$.
\end{proof}

The next Lemma is the first of many results, establishing some inequalities for
congruent points and/or lattice subspaces.
To avoid unnecessary repetitions in their formulations, we adopt
the following convention.

\begin{convention}\label{uniform:conv}
\begin{enumerate}
\item
Each inequality (e.g. $\ll$, $\gg$ or $\asymp$),
involving points of the Euclidean space
and/or lattice subspace(s), is assumed to be uniform in those objects.
\item
The constants
in the inequalities are allowed  to depend on the dimension $d$ and the lattice $\SG^\dagger$
only.
\item
We say that a certain statement, involving points of the Euclidean space
and/or lattice subspace(s), holds for sufficiently large $\rho$, if
there is a number $\rho_0>0$, independent on the points and/or subspace(s) at hand,
such that the statement holds for all $\rho \ge \rho_0$.
\end{enumerate}
\end{convention}

\begin{lem}\label{lem:reachability1}
If $\boldeta\in\BUps(\bxi)$, then
\begin{equation}\label{couple:eq}
|\bxi-\boldeta|\ll\rho^{\al_1}r^{(d-1)d}.
\end{equation}
In particular,
\begin{equation}\label{sn:eq}
\max_{\SN(\bxi)}|\bm|\ll \rho^{\a_1}r^{d(d-1)},\ \
N(\bxi):=\card \SN(\bxi)\ll \rho^{d\a_1}r^{d^2(d-1)},
\end{equation}
uniformly in $\bxi$.
\end{lem}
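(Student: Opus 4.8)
The plan is to track a resonant-congruence chain step by step and bound the total displacement. Suppose $\boldeta\lra\bxi$, so by Definition \ref{reachability:defn} there are vectors $\bth_1,\dots,\bth_m\in\T_r$ and a sequence $\bxi=\bxi_0,\bxi_1,\dots,\bxi_m=\boldeta$ with $\bxi_j\in\BUps_{\bth_j}(\bxi_{j-1})$. First I would note that each single step has $\bxi_j-\bxi_{j-1}=l_j\bth_j$ for some $l_j\in\Z$, and that both $\bxi_{j-1}$ and $\bxi_j$ lie in the layer $\L(\bth_j)$, i.e. $|\bxi_j\cdot\bth_j|<\rho^{\a_1}|\bth_j|$. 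The key is that we may assume the chain is ``reduced'': the subspace $\GV$ spanned by $\bth_1,\dots,\bth_m$ can be taken to be a lattice $r$-subspace, and by reordering/merging we can assume the distinct directions among the $\bth_j$ form a linearly independent set of at most $d$ vectors from $\T_r$ (if two steps use parallel $\bth_j$, combine them; if the directions are dependent, one direction is redundant because congruence along it is already forced). Actually, more carefully: since all displacements lie in $\GV$, we have $\bxi-\boldeta\in\GV$, so it suffices to bound $|(\bxi-\boldeta)_{\GV}|$, and $\dim\GV=:n\le d$.

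The main estimate comes from controlling the projection of $\bxi-\boldeta$ onto each one-dimensional lattice direction $\GV(\bth)$, $\bth\in\T_r\cap\GV$. I would argue that for the endpoint vector $\bxi$ (equivalently $\boldeta$), the component $\bxi_{\bth}$ in any lattice direction appearing in the reduced chain is small: indeed, at the step where direction $\bth$ is used, one of the two consecutive points lies in $\L(\bth)$, giving $|\bxi_j\cdot\normal(\bth)|<\rho^{\a_1}$; and since all other steps move $\bxi$ only by lattice vectors in directions which — in a reduced chain — one shows do not change the $\bth$-component modulo controlled error, or one simply takes the chain to visit each independent direction once at the ``end''. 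The cleanest route is: reduce to a chain where the $m\le d-1$ (or $\le d$) distinct directions $\bth_1,\dots,\bth_n$ are linearly independent and used in order, so that $\bxi_n=\boldeta$ and $|\bxi_j\cdot\bth_j|<\rho^{\a_1}|\bth_j|$ for each $j$, hence $|(\bxi_j)_{\bth_j}|<\rho^{\a_1}$. Then $(\boldeta)_{\bth_j}$ differs from $(\bxi_j)_{\bth_j}$ only by projections of later lattice steps, but those are integer multiples of $\bth_{j+1},\dots$; applying Lemma \ref{projections:lem} inductively to the subspace $\GV=\GV(\bth_1)+\dots+\GV(\bth_n)$ gives
\begin{equation*}
|\bxi-\boldeta|=|(\bxi-\boldeta)_{\GV}|\ll r^{n d}\sum_{j=1}^n |(\bxi-\boldeta)_{\bth_j}|\ll r^{(d-1)d}\rho^{\a_1},
\end{equation*}
where the sum of the one-dimensional projections is $\ll\rho^{\a_1}$ because each is controlled by the layer condition on the appropriate intermediate point together with the analogous bound on the preceding points. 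This gives \eqref{couple:eq}.

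For the consequences \eqref{sn:eq}: the bound $\max_{\SN(\bxi)}|\bm|\ll\rho^{\a_1}r^{d(d-1)}$ is immediate from \eqref{couple:eq} since $\bm=\boldeta-\bxi$ for $\boldeta\in\BUps(\bxi)$. Then $\SN(\bxi)\subset\SG^\dagger$ is contained in a ball of radius $\ll\rho^{\a_1}r^{d(d-1)}$, and the number of dual-lattice points in a ball of radius $R$ is $\ll R^d$ (for $R\ge 1$, with constant depending only on $\SG^\dagger$ and $d$), yielding $N(\bxi)\ll(\rho^{\a_1}r^{d(d-1)})^d=\rho^{d\a_1}r^{d^2(d-1)}$.

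The main obstacle I anticipate is the reduction of an arbitrary resonant-congruence chain to a ``reduced'' one with linearly independent directions used in a controlled order — one must verify that collapsing parallel steps and discarding linearly dependent directions does not enlarge $|\bxi-\boldeta|$ beyond the claimed bound, and that the layer membership $\bxi_{j-1},\bxi_j\in\L(\bth_j)$ survives the reduction well enough to still pin down each one-dimensional projection of the endpoints. Once that combinatorial bookkeeping is in place, the geometric estimate is a direct application of Lemma \ref{projections:lem} and Lemma \ref{cassels:lem}.
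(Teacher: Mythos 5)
There is a genuine gap, and you have already put your finger on it in your last paragraph: the ``reduction'' of an arbitrary chain to one using linearly independent directions once each in a controlled order is not possible in general. The layer membership $\bxi_{j-1},\bxi_j\in\L(\bth_j)$ is not preserved under reordering or collapsing steps. For example, if the chain visits a direction $\bth$ twice with steps in other directions in between (say $\bxi\to\bxi+\bth\to\bxi+\bth+\bth'\to\bxi+\bth'$ with all four points in the appropriate layers), there is no reason for $\bxi$ and $\bxi+\bth'$ to lie in $\L(\bth')$, so the collapsed one-step chain $\bxi\to\bxi+\bth'$ need not be a resonant-congruence chain at all, and your accounting no longer applies. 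Linearly dependent direction vectors genuinely occur and cannot be declared redundant, because congruence along a direction is not ``already forced''; it requires the explicit layer condition on both endpoints.

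Even setting the reduction aside, your central estimate $\sum_j|(\bxi-\boldeta)_{\bth_j}|\ll\rho^{\a_1}$ does not follow from the layer conditions. The layer conditions control $|(\bxi_j)_{\bth_{j+1}}|$ for the specific intermediate points $\bxi_j$, not the projection of the total displacement $\bxi-\boldeta$ onto each $\bth_j$. Because the $\bth_j$ are not orthogonal, a later step in the direction $\bth_k$ ($k>j$) can contribute a large amount to $(\boldeta)_{\bth_j}$, and ``projections of later lattice steps'' is exactly what is not controlled. The paper avoids both difficulties by instead proving (Lemma \ref{lem:reachability11}) a bound on $|\bxi_{\GV}|$, the projection of the \emph{endpoint itself} onto the span $\GV$ of all direction vectors in the chain, by induction on $\dim\GV$: one isolates the unique step $l$ at which the span dimension first becomes $k+1$, bounds $|(\bxi_l)_{\bth_{l+1}}|$ directly from the layer condition, bounds $|(\bxi_l)_{\GW}|$ (with $\GW$ the previous $k$-dimensional span) by the induction hypothesis applied to the reversed sub-chain starting at $\bxi_l$, combines via Lemma \ref{projections:lem}(1), and then transfers back from $\bxi_l$ to $\bxi$ using the induction hypothesis once more. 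Finally, by symmetry of $\lra$, the same holds for $\boldeta_{\GV}$, and $|\bxi-\boldeta|=|(\bxi-\boldeta)_{\GV}|\le|\bxi_{\GV}|+|\boldeta_{\GV}|$ gives \eqref{couple:eq}. Your derivation of \eqref{sn:eq} from \eqref{couple:eq} is correct, but you need a correct proof of \eqref{couple:eq} first, and the route you sketch does not close.
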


The proof of this Lemma relies on the following result:

\begin{lem}\label{lem:reachability11}

(i) Let $\boldeta\in\BUps(\bxi)$
be $\bth_1, \bth_2,\dots, \bth_m$-resonant congruent to $\bxi$, and let
\[
\textup{span}(\bth_1, \bth_2,\dots, \bth_m)=:\GV\subset \CV(n)
\]
with some $n\le d$. Then
\begin{equation}\label{couple10:eq}
|\bxi_\GV|\ll\rho^{\al_1}r^{(n-1)d}.
\end{equation}

(ii) Suppose that
\begin{equation*}
\textup{span}\ \SN(\bxi) = \GV\subset \CV(n),
\end{equation*}
with some $n\le d$. Then
\begin{equation}\label{couple1:eq}
|\bxi_\GV|\ll\rho^{\al_1}r^{(n-1)d}.
\end{equation}
\end{lem}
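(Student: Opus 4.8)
The plan is to prove part (i) by induction on $n=\dim\GV$, and then to obtain part (ii) from part (i) by splicing chains together.

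\textbf{Part (i).} I would fix a chain $\bxi=\bxi_0,\bxi_1,\dots,\bxi_m=\boldeta$ realising the resonant congruence, so that $\bxi_j-\bxi_{j-1}=l_j\bth_j$ with $l_j\in\Z$ and --- the decisive point --- $\bxi_{j-1}\in\L(\bth_j)$, i.e. $|\bxi_{j-1}\cdot\normal(\bth_j)|<\rho^{\al_1}$, for every $j$ (since $\BUps_{\bth_j}(\bxi_{j-1})\ne\varnothing$). One may assume every $\bth_j\in\T_r$, as steps with $\bth_j=\mathbf 0$ are trivial and affect neither the chain nor $\GV$. Put $\GW_j=\textup{span}(\bth_1,\dots,\bth_j)$; these are lattice $r$-subspaces with $\{0\}=\GW_0\subseteq\cdots\subseteq\GW_m=\GV$, whose dimensions increase by $0$ or $1$ at each step, and $\bxi_j-\bxi\in\GW_j$ throughout. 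The case $n=0$ is trivial, and for $n=1$ we have $\GV=\GV(\bth_1)$, so $|\bxi_\GV|=|\bxi_0\cdot\normal(\bth_1)|<\rho^{\al_1}$. For $n\ge2$ let $p$ be the last index with $\dim\GW_p=n-1$ and set $\GW:=\GW_p\in\CV(n-1)$, so that $\bth_{p+1}\notin\GW$ and $\GV=\GW+\GV(\bth_{p+1})$. The truncated chain shows that $\bxi_p$ is $\bth_1,\dots,\bth_p$-resonant congruent to $\bxi$, and, by Remark \ref{reachability:rem}, that $\bxi$ is $\bth_p,\dots,\bth_1$-resonant congruent to $\bxi_p$; both chains span the $(n-1)$-dimensional $\GW$, so the inductive hypothesis gives
\[
|\bxi_\GW|+|(\bxi_p)_\GW|\ll\rho^{\al_1}r^{(n-2)d}.
\]
Since $\bxi_p-\bxi\in\GW$ equals $(\bxi_p)_\GW-\bxi_\GW$, this controls $|\bxi_p-\bxi|$, whence $|\bxi\cdot\normal(\bth_{p+1})|\le|\bxi_p\cdot\normal(\bth_{p+1})|+|\bxi_p-\bxi|\ll\rho^{\al_1}r^{(n-2)d}$. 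Now Lemma \ref{projections:lem}(1), applied with the subspace $\GW$ and the vector $\bth_{p+1}$ to $\bxi_\GV\in\GW+\GV(\bth_{p+1})$ (permissible because $n\le d$), yields
\[
|\bxi_\GV|\ll r^d\bigl(|\bxi_{\bth_{p+1}}|+|\bxi_\GW|\bigr)\ll r^d\cdot\rho^{\al_1}r^{(n-2)d}=\rho^{\al_1}r^{(n-1)d},
\]
closing the induction.

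\textbf{Part (ii).} I would pick $\bm^{(1)},\dots,\bm^{(n)}\in\SN(\bxi)$ spanning $\GV$. For each $i$ choose a chain from $\bxi$ to $\bxi+\bm^{(i)}$ and delete from it every step whose endpoints coincide; along the reduced chain each intermediate point lies in $\BUps(\bxi)$, hence differs from $\bxi$ by an element of $\SN(\bxi)\subseteq\GV$, so every remaining difference $l\bth$ (with $l\ne0$) lies in $\GV$ and therefore so does each $\bth$ occurring; moreover $\bm^{(i)}$ is still the sum of these differences, hence lies in the span of the reduced chain. Stringing these reduced chains together with alternating forward and backward passes $\bxi\to\bxi+\bm^{(1)}\to\bxi\to\bxi+\bm^{(2)}\to\cdots\to\bxi+\bm^{(n)}$ (the backward passes legitimate by Remark \ref{reachability:rem}) produces one chain from $\bxi$ to $\bxi+\bm^{(n)}$ whose vectors span exactly $\GV$ --- their span lies in $\GV$ yet contains every $\bm^{(i)}$. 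Part (i) applied to this chain gives $|\bxi_\GV|\ll\rho^{\al_1}r^{(n-1)d}$.

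\textbf{Main obstacle.} The hard part is arranging (i) so that the induction closes with the precise exponent $(n-1)d$: bounding $|\bxi_p-\bxi|$ by the length of the chain is hopeless, and the right device is to project $\bxi_p-\bxi$ onto the $(n-1)$-dimensional subspace $\GW$ and apply the inductive hypothesis at \emph{both} endpoints of the truncated chain, using its reversibility (Remark \ref{reachability:rem}). A secondary point that needs care is reducing the chains in (ii) before concatenating, so that the concatenated chain spans $\GV$ and no larger subspace --- otherwise the exponent would come out too large.
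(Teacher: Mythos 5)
Your proof is correct and follows essentially the same route as the paper's: induction on $\dim\GV$ in part (i), isolating the step at which the span first reaches full dimension, applying the inductive hypothesis to both endpoints of the truncated chain (via reversibility) to control $|\bxi_p-\bxi|$, and then invoking Lemma \ref{projections:lem}(1); part (ii) is likewise reduced to part (i) by building a closed chain whose directions span exactly $\GV$. The only differences are cosmetic — the paper applies the projection lemma at the intermediate point $\bxi_l$ and transfers back to $\bxi$ afterwards, and is terser about why the spliced chain in (ii) spans no more than $\GV$, a point you rightly handle by discarding the trivial steps.
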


\begin{proof}[Proof of Lemma \ref{lem:reachability1}]
As the relation $\bxi\lra\boldeta$ is symmetric, Lemma \ref{lem:reachability11}
 implies that $|\bxi_{\GV}| + |\boldeta_{\GV}|\ll \rho^{\a_1} r^{(d-1)d}$. 
As $\bxi-\boldeta\in\GV$, we have
\begin{equation*}
|\bxi-\boldeta|=|(\bxi-\boldeta)_\GV|
\le |\bxi_{\GV}|+|\boldeta_\GV|\ll\rho^{\al_1}r^{(d-1)d},
\end{equation*}
which is \eqref{couple:eq}.
The first estimate in \eqref{sn:eq} follows from
the inequality
\begin{equation*}
\max_{\bm\in\SN(\bxi)}|\bm|\le \sup_{\boldeta\in\BUps(\bxi)}
|\bxi-\boldeta|
\end{equation*}
and from \eqref{couple:eq}. The bound for $N(\bxi)$ is simply
an estimate for the number of lattice points in the ball of radius
$\ll \rho^{\a_1} r^{d(d-1)}$.
\end{proof}

\begin{proof}[Proof of Lemma \ref{lem:reachability11}]
The proof of the first part is by induction.
For $n=1$ the statement is an
immediate consequence of the definitions.
Assume that \eqref{couple10:eq} holds for all $n\le k$, $k\ge 1$, and let us prove
\eqref{couple10:eq} for $n=k+1$.

Let $\boldeta\in\BUps(\bxi)$
be $\bth_1, \bth_2,\dots, \bth_m$-resonant congruent to $\bxi$,
and let $\bxi_0, \bxi_1,
\dots, \bxi_m$ be the vectors from Definition  \ref{reachability:defn}(\ref{3}).
Let $l\ge k, l < m$,  be a number, uniquely defined by the following conditions:
$\text{span}(\bth_1,\dots,\bth_{l+1}) = \GV\subset \CV(k+1)$,
and $\GW:=\text{span}(\bth_1,\dots,\bth_{l})\not=\GV$.
Clearly, $\dim\GW = k$.
Since $\bxi_{l+1} \lra \bxi_l \mod \bth_{l+1}$, by Definition
\ref{reachability:defn}(\ref{1}) we have $|(\bxi_l)_{\bth_{l+1}}|<\rho^{\a_1}$.
At the same time, since $\bxi_l\lra\bxi$, we also have
$|(\bxi_l)_{\GW}|\ll \rho^{\a_1} r^{(k-1)d}$ by the induction assumption, as
$\GW\in\CV(k)$.
Thus, according to Lemma \ref{projections:lem}(1),
\begin{equation}\label{eq:1}
|(\bxi_l)_{\GV}|\ll r^d (|(\bxi_l)_{\GW}| + |(\bxi_l)_{\bth_{l+1}}|)
\ll \rho^{\a_1} r^{kd}.
\end{equation}
Noticing that $\bxi - \bxi_l\in\GW$, we infer from the induction assumption again that
\begin{equation*}
|\bxi-\bxi_l|=|(\bxi-\bxi_l)_\GW|\le |\bxi_\GW|+|(\bxi_l)_\GW|
\ll\rho^{\al_1}r^{(k-1)d},
\end{equation*}
Together with \eqref{eq:1} this leads
to $|\bxi_\GV|\ll \rho^{\al_1}r^{ (n-1)d}$.
Thus by induction,  \eqref{couple10:eq} is proved.

In order to prove part (ii), we
use the observation made in Remark \ref{reachability:rem} (\ref{3:rem}) to conclude that
under the assumption $\textup{span}\ \SN(\bxi) = \GV$ there are vectors $\bth_1,\dots,\bth_m$
such that $\bxi$ is $\bth_1,\dots,\bth_m$-resonant congruent to itself and
$\textup{span}(\bth_1,\dots,\bth_m) = \GV$. It remains to use the first part of the Lemma.
\end{proof}

One important conclusion of Lemma \ref{lem:reachability1}
is that the numbers $\card\BUps(\bxi)$ are bounded uniformly
in $\bxi\in\R^d$.

We need further notions related to congruency:

\begin{defn}\label{chain:defn}
We say that a point $\bxi\in\R^d$ is $\bth$-non-critical, if
for a sufficiently small $\e>0$ we have
$\BUps_{\bth}(\bxi+\bmu) = \BUps_{\bth}(\bxi)+ \bmu$ for all $\bmu: |\bmu|<\e$.
Otherwise we call $\bxi$ $\bth$-critical.

If $\bxi$ is $\bth$-non-critical for all $\bth\in\T_r$, we call it
non-critical.

 We say that the set $\BUps(\bxi)$ is non-critical,
if it consists of non-critical points. In other words,
for a sufficiently small $\e>0$, we have
$\BUps(\bxi+\bmu) = \BUps(\bxi)+\bmu$ if $|\bmu| < \e$. Otherwise
$\BUps(\bxi)$ is said to be critical.
\end{defn}

If $\BUps(\bxi)$ is non-critical, then the set $\SN(\cdot)$
remains constant in a neighbourhood of $\bxi$.

It is clear that if $\bth\not = 0$, then the set
$$
\{\bxi\in\R^d:  \bxi\cdot \normal(\bth)  \not \equiv \pm \rho^{\a_1} \mod |\bth|
\},
$$
consists of $\bth$-non-critical points. Each $\bxi\in\R^d$ is obviously
$\mathbf 0$-non-critical. Thus,
the set of $\bxi\in\R^d$ for which  $\BUps(\bxi)$ is non-critical,
is open and of full measure.

\subsection{Resonant sets and their properties}
\label{resonant:subsect}

Our aim is to construct a collection of sets
$\Xi(\GV)\subset\R^d$ parametrised by subspaces
$\GV\in\CV(r, n)$ for $n = 0, 1, \dots, d$, and depending on the parameter
$\rho >0$,
satisfying the following properties:
\begin{equation}\label{exhaust:eq}
\R^d = \bigcup_{\GV\in\mathcal W(r)} {\Xi(\GV)},
\end{equation}
\begin{equation}\label{disjoint:eq}
\Xi(\GV_1)\cap\Xi(\GV_2)=\varnothing \ \ \ \textup{for}\ \ \ \  \GV_1\ne\GV_2,
\end{equation}
\begin{equation}\label{span:eq}
\textup{For each}\ \  \bxi\in\Xi(\GV), \textup{one has}\ \
\BUps(\bxi) \subset\Xi(\GV) \ \ \
\textup{and}\ \ \  \SN(\bxi)\subset\GV.
\end{equation}
The required sets will depend on the arbitrarily chosen real
parameters $\a_0, \a_1, \dots, \a_d$ and $\varkappa$, satisfying the conditions
$0=\a_0<\a_1< \a_2< \dots< \a_d <1$ and
\begin{equation}\label{eq:condition1}
\begin{cases}
r\le \rho^{\varkappa},\\[0.2cm]
\a_{n+1} > \a_n + 2\varkappa d^2,\ n = 0, 1, \dots, d-1.
\end{cases}
\end{equation}
>From now on these conditions are always assumed to hold.
Under these conditions the inequalities \eqref{sn:eq} give
for sufficiently large $\rho$:
\begin{equation}\label{card:eq}
\begin{cases}
|\bm|\le \rho^{\frac{\a_1+\a_2}{2}}, \bm\in\SN(\bxi);\\[0.2cm]
\card\BUps(\bxi)\le \rho^{\frac{ \a_1+\a_2 }{2} d},
\end{cases}
\end{equation}
uniformly in $\bxi$.
The next Lemma is a straightforward consequence of the above inequalities:

\begin{lem}\label{bups:lem}
Let $|\bxi|\ge \rho/2$. Then
\begin{equation*}
\max_{\BUps(\bxi)}|\boldeta|
\asymp \min_{\BUps(\bxi)}|\boldeta|\asymp |\bxi|,
\end{equation*}
uniformly in $\bxi$.
\end{lem}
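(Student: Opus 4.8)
The plan is to deduce the statement directly from the bound \eqref{card:eq} on the lattice vectors connecting $\bxi$ to the points resonant congruent to it. Recall that $\BUps(\bxi)=\bxi+\SN(\bxi)$ with $\SN(\bxi)\subset\SG^\dagger$, that this set is nonempty (it contains $\bxi$ itself) and finite by Lemma \ref{lem:reachability1}, so that the maximum and the minimum in the statement are genuinely attained.

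First I would record that, by \eqref{card:eq}, every $\bm\in\SN(\bxi)$ satisfies $|\bm|\le\rho^{(\a_1+\a_2)/2}$ uniformly in $\bxi$. The key observation is that this bound is of strictly smaller order than $\rho$: the standing assumptions on the exponents preceding \eqref{eq:condition1} force $\a_2<1$, hence $(\a_1+\a_2)/2<1$, and therefore $\rho^{(\a_1+\a_2)/2}\le\rho/4$ for all sufficiently large $\rho$ in the sense of Convention \ref{uniform:conv}. Combined with the hypothesis $|\bxi|\ge\rho/2$, this gives $|\bm|\le\rho/4\le\tfrac12|\bxi|$ for every $\bm\in\SN(\bxi)$. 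Then the triangle inequality yields, for every $\boldeta=\bxi+\bm\in\BUps(\bxi)$,
\[
\tfrac12\,|\bxi|\ \le\ |\bxi|-|\bm|\ \le\ |\boldeta|\ \le\ |\bxi|+|\bm|\ \le\ \tfrac32\,|\bxi|,
\]
and taking the infimum and the supremum over $\boldeta\in\BUps(\bxi)$ gives $\min_{\BUps(\bxi)}|\boldeta|\asymp\max_{\BUps(\bxi)}|\boldeta|\asymp|\bxi|$ with absolute constants (in particular depending only on $d$ and $\SG^\dagger$), uniformly in $\bxi$, as claimed.

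I do not expect any real obstacle here; the one point worth flagging is that the whole argument hinges on the smallness $\rho^{(\a_1+\a_2)/2}=o(\rho)$, i.e.\ on the inequality $\a_2<1$ embedded in \eqref{eq:condition1} together with the ``sufficiently large $\rho$'' convention, and on the finiteness of $\BUps(\bxi)$ furnished by Lemma \ref{lem:reachability1}, which is what makes the two extrema meaningful in the first place.
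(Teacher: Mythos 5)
Your proof is correct and follows exactly the route the paper intends: the paper introduces this lemma only with the remark that it is ``a straightforward consequence of the above inequalities'' (i.e.\ \eqref{card:eq}), and your argument—bounding $|\bm|\le\rho^{(\a_1+\a_2)/2}=o(\rho)\le\tfrac12|\bxi|$ for large $\rho$ and applying the triangle inequality—is precisely that straightforward consequence, spelled out.
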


For each lattice subspace $\GV\in\CV(n)$, $n=0, 1, \dots, d$ we introduce the (open)
sets
\begin{equation}\label{xi1:eq}
\Xi_1(\GV):=\{\bxi\in\R^d,\,|\bxi_\GV|<\rho^{\a_n}\},
\ \ \ \Xi_2(\GV) := \bigcup_{\bxi\in\Xi_1(\GV)} \BUps(\bxi),
\end{equation}
and
\begin{equation}\label{xi:eq}
\Xi(\GV):=
\begin{cases}
\Xi_2(\GV)\setminus\bigcup\limits_{m=n+1}^d\bigcup
\limits_{\GW\in\CV(m)}\Xi_2(\GW),\ n < d;\\[0.3cm]
\Xi_2(\GV),\ n = d.
\end{cases}
\end{equation}
The set $\Xi(\GV), \GV\not=\GX$,
is referred to as the \textsl{resonant set, associated with
the lattice subspace  $\GV$}.
By definition of $\Xi_2(\GV)$, we have $\bxi\in\Xi_2(\GV)$ if and only if
$\BUps(\bxi)\subset\Xi_2(\GV)$, so that
\begin{equation}\label{xi2:eq}
\bxi\in\Xi(\GV) \ \ \textup{if and only if}\ \ \ \BUps(\bxi)\subset\Xi(\GV).
\end{equation}
This observation immediately leads to the natural
representation of $\Xi(\GV)$ as a union of non-intersecting equivalence classes:
\begin{equation}\label{equiv:eq}
\Xi(\GV) = \bigcup_{\bxi\in\Xi(\GV)/{\lra}} \BUps(\bxi).
\end{equation}
Note that
 $\Xi_1(\GV(\bth)) = \Xi_2(\GV(\bth)) =\L(\bth)$ for each $\bth\in\T_r$.
For $\GV = \GX$ we have
$\Xi_1(\GX)=\Xi_2(\GX)=\R^d$. The sets
\begin{equation}\label{(non)resonant:eq}
\CB: = \Xi(\GX)=\R^d\setminus
\bigcup_{m\ge 1}\bigcup_{\GW\in\CV(m)}\Xi_2(\GW)
\ \ \ \textup{and}\ \ \ \ \CD = \R^d\setminus \CB
\end{equation}
are called the \textsl{non-resonant set} and \textsl{resonant set} of $\R^d$  respectively.
The sets introduced above obviously depend on the parameter $\rho$. Whenever necessary,
the dependence on $\rho$ is reflected in the notation, e.g. $\Xi(\GV; \rho)$,
$\CB(\rho), \CD(\rho)$.

Note an immediate consequence of the definition \eqref{xi:eq} and Lemma
\ref{lem:reachability1}:

\begin{lem}\label{lem:properties3}
Assume \eqref{eq:condition1}.
Let $\GV\in\CV(n)$,\ $n=1, 2, \dots, d$, and $\bxi\in\Xi_2(\GV)$.
Then for sufficiently large $\rho$ we have:
\begin{equation}\label{proj:eq}
|\bxi_\GV|\le \begin{cases}
2\rho^{\frac{\a_1+\a_2}{2}}, n = 1,\\[0.2cm]
2\rho^{\a_n},\ n = 2, \dots, d.
\end{cases}
\end{equation}
\end{lem}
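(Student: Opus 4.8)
The plan is to unwind the definitions \eqref{xi1:eq} of $\Xi_1(\GV)$ and $\Xi_2(\GV)$ and feed them into the uniform bound on $\SN(\bxi)$ recorded in \eqref{card:eq} (which, recall, holds for all sufficiently large $\rho$ thanks to \eqref{sn:eq} and \eqref{eq:condition1}).

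First I would fix $\GV\in\CV(n)$ with $n\ge 1$ and take $\bxi\in\Xi_2(\GV)$. By the very definition of $\Xi_2(\GV)$ there is a point $\boldeta\in\Xi_1(\GV)$ with $\bxi\in\BUps(\boldeta)$; thus $|\boldeta_\GV|<\rho^{\a_n}$ and, since $\SN(\boldeta)=\BUps(\boldeta)-\boldeta$, the vector $\bxi-\boldeta$ lies in $\SN(\boldeta)$. Applying the first inequality of \eqref{card:eq} to $\bm=\bxi-\boldeta$ gives $|\bxi-\boldeta|\le\rho^{\frac{\a_1+\a_2}{2}}$ for all $\rho$ large enough. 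The orthogonal projection onto $\GV$ does not increase Euclidean length, so $|(\bxi-\boldeta)_\GV|\le\rho^{\frac{\a_1+\a_2}{2}}$, and the triangle inequality yields
\[
|\bxi_\GV|\le |\boldeta_\GV|+|(\bxi-\boldeta)_\GV|\le \rho^{\a_n}+\rho^{\frac{\a_1+\a_2}{2}}.
\]

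It then remains to do the elementary case split. When $n=1$ we have $\a_n=\a_1<\frac{\a_1+\a_2}{2}$ because $\a_1<\a_2$, so both summands on the right are at most $\rho^{\frac{\a_1+\a_2}{2}}$ and we obtain $|\bxi_\GV|\le 2\rho^{\frac{\a_1+\a_2}{2}}$. When $n\ge 2$ we have $\frac{\a_1+\a_2}{2}<\a_2\le\a_n$, so both summands are at most $\rho^{\a_n}$ and $|\bxi_\GV|\le 2\rho^{\a_n}$. This is exactly \eqref{proj:eq}.

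There is no genuine obstacle here: the entire content sits in the uniform estimate \eqref{sn:eq}/\eqref{card:eq} proved earlier, and the only points needing a modicum of care are (i) that the threshold $\rho_0$ beyond which \eqref{card:eq} is valid is indeed independent of $\bxi$, $\boldeta$ and $\GV$ (which is guaranteed by the uniformity clause in \eqref{sn:eq} together with Convention \ref{uniform:conv}), and (ii) the bookkeeping in the two exponent comparisons, both of which are immediate from $0=\a_0<\a_1<\a_2<\dots<\a_d<1$.
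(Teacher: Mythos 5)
Your proof is correct and follows essentially the same route as the paper: pick $\boldeta\in\Xi_1(\GV)$ with $\bxi\in\BUps(\boldeta)$, bound $|\bxi-\boldeta|$ via \eqref{card:eq}, project onto $\GV$ and use the triangle inequality, then compare exponents using the monotonicity of the $\a_j$. The only difference is that you spell out the case split $n=1$ versus $n\ge 2$, which the paper leaves implicit.
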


\begin{proof}
By definition \eqref{xi1:eq},
$\bxi\in \BUps(\boldeta)$ for some $\boldeta\in\Xi_1(\GV)$.
Thus, by \eqref{card:eq},
\begin{equation*}
|\bxi_{\GV}|\le |\boldeta_{\GV}| + \max_{\bm\in\BUps(\boldeta)} |\bm|
< \rho^{\a_n}+ \rho^{\frac{\a_1+\a_2}{2}}.
\end{equation*}
In view of monotonicity of $\a_j$'s, this proves \eqref{proj:eq}.
\end{proof}

The previous Lemma shows that the resonant sets $\Xi(\GV), \GV\not=\GX$,
are ``small"
relative to the non-resonant set $\CB = \Xi(\GX)$. More precisely, we show
that the resonant set $\CD$  has a small angular measure.
To this end for each $\bth\in\T_r$ define
$$
\tilde\L(\bth) = \{\bxi\in\R^d: |\bxi\cdot\normal(\bth)| < 2\rho^{\a_{d-1}}\},
$$
cf. \eqref{layer:eq}.
By Lemma \ref{lem:properties3}, for any $\GV\in\CV(n), n\le d-1$ we have
$$
\Xi(\GV)\subset \bigcup_{\bth\in\GV\cap\T_r} \tilde\L(\bth)
$$
so that
$$
\CD\setminus B(\rho/8)
\subset  \bigcup_{\bth\in\T_r} \tilde\L(\bth)\setminus B(\rho/8).
$$
An elementary calculation shows that
\begin{equation*}
\tilde\L(\bth)\setminus B(\rho/8)
\subset S(\bth; \rho)\times [\rho/8, \infty),\
S(\bth; \rho):= \{\boldsymbol \Om\in\mathbb S^{d-1}:
|\boldsymbol\Om\cdot \normal(\bth)| < 16\rho^{\a_{d-1}-1}\},
\end{equation*}
for all sufficiently large $\rho$.
Let
\begin{equation}\label{surface:eq}
S(\rho) = \bigcup_{\bth\in\T_r} S(\bth; \rho), \ \
T(\rho) = \mathbb S^{d-1}\setminus S(\rho).
\end{equation}

\begin{lem}\label{surface:lem}
Let the sets $S(\rho)\subset\mathbb S^{d-1}$, $T(\rho)\subset\mathbb S^{d-1}$
be as defined above.  Then
\begin{equation}\label{surface1:eq}
\volume_{\mathbb S^{d-1}} S(\rho)\ll \rho^{\a_d - 1},\
\volume_{\mathbb S^{d-1}} T(\rho)\asymp 1,
\end{equation}
for sufficiently large $\rho$.

\end{lem}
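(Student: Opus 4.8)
The plan is to estimate the spherical measure of $S(\rho)$ by bounding the measure of each cap $S(\bth;\rho)$ and summing over $\bth\in\T_r$, then to deduce the complementary estimate for $T(\rho)$. First I would recall that for a fixed unit vector $\normal(\bth)$, the set $\{\boldsymbol\Om\in\mathbb S^{d-1}:|\boldsymbol\Om\cdot\normal(\bth)|<\delta\}$ is a symmetric band of half-width $\delta$ about a great $(d-2)$-sphere, and its $(d-1)$-dimensional measure is $\ll\delta$ for all $\delta\le 1$ (and trivially $\ll 1$ always, so the bound $\ll\delta$ is meaningful only when $\delta\lesssim 1$). Applying this with $\delta=16\rho^{\a_{d-1}-1}$ gives $\volume_{\mathbb S^{d-1}}S(\bth;\rho)\ll\rho^{\a_{d-1}-1}$, uniformly in $\bth$, for all sufficiently large $\rho$ (so that $\delta\le 1$).

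Next I would sum over $\bth\in\T_r$. By definition \eqref{T:eq} the number of lattice points in $\T_r$ is $\card\T_r\ll r^d$, and since we always assume $r\le\rho^\varkappa$ this is $\ll\rho^{\varkappa d}$. Hence
\begin{equation*}
\volume_{\mathbb S^{d-1}}S(\rho)\le\sum_{\bth\in\T_r}\volume_{\mathbb S^{d-1}}S(\bth;\rho)\ll r^d\,\rho^{\a_{d-1}-1}\ll\rho^{\varkappa d+\a_{d-1}-1}.
\end{equation*}
It remains to absorb the factor $\rho^{\varkappa d}$ into the gap between $\a_{d-1}$ and $\a_d$. By the second line of \eqref{eq:condition1} applied with $n=d-1$, we have $\a_d>\a_{d-1}+2\varkappa d^2\ge\a_{d-1}+\varkappa d$ (since $d\ge 1$), so $\varkappa d+\a_{d-1}-1<\a_d-1$, which yields $\volume_{\mathbb S^{d-1}}S(\rho)\ll\rho^{\a_d-1}$ as claimed.

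For the second assertion, note $\a_d<1$, so $\rho^{\a_d-1}\to 0$ as $\rho\to\infty$; since $\volume_{\mathbb S^{d-1}}T(\rho)=\volume_{\mathbb S^{d-1}}\mathbb S^{d-1}-\volume_{\mathbb S^{d-1}}S(\rho)=\w_{d-1}d-\volume_{\mathbb S^{d-1}}S(\rho)$ and the subtracted term tends to $0$, we get $\volume_{\mathbb S^{d-1}}T(\rho)\asymp 1$ for all sufficiently large $\rho$ (say, once the first term dominates, $\volume_{\mathbb S^{d-1}}T(\rho)\ge\tfrac12\w_{d-1}d$). There is no real obstacle here; the only point requiring a little care is the uniformity/bookkeeping — making sure that the implied constants in the cap estimate and in $\card\T_r\ll r^d$ depend only on $d$ and the lattice $\SG^\dagger$, in accordance with Convention \ref{uniform:conv}, and that "sufficiently large $\rho$" is chosen after these constants are fixed. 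The cap-measure computation itself (integrating $(1-t^2)^{(d-3)/2}$ over $|t|<\delta$, or simply bounding the band by a product of a $(d-2)$-sphere and an interval of length $2\delta$) is routine and I would not spell it out in detail.
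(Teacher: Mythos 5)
Your proof is correct and follows essentially the same route as the paper: bound the band measure of each $S(\bth;\rho)$ by $\ll\rho^{\a_{d-1}-1}$, sum over the $\ll r^d$ lattice points of $\T_r$, and absorb $r^d\le\rho^{\varkappa d}$ into the gap $\a_d-\a_{d-1}>2\varkappa d^2$. (One trivial slip: with the paper's convention that $\w_n$ denotes the volume of the unit ball in $\R^n$, the total measure of $\mathbb S^{d-1}$ is $d\w_d$, not $d\w_{d-1}$ — this does not affect the argument.)
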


\begin{proof}
The elementary bound
\begin{equation*}
\volume_{\mathbb S^{d-1}} S(\bth; \rho)
= \int_{\mathbb S^{d-2}} \int_{|\cos\om|\le 16\rho^{\a_{d-1}-1}} \sin^{d-2}\om d\om
d\hat{\boldsymbol\Om}\ll \rho^{\a_{d-1}-1},
\end{equation*}
together with the observation that the number of the sets $\tilde\L(\bth)$
is bounded above by $\card \T_r\ll r^d$, gives the estimate
\begin{equation*}
\volume_{\mathbb S^{d-1}} S(\rho)\ll r^d \rho^{\a_{d-1}-1}\ll \rho^{\a_d-1}.
\end{equation*}
Here we have used \eqref{eq:condition1}.

The second bound in \eqref{surface1:eq} immediately follows from the first one by
definition \eqref{surface:eq}.
\end{proof}

In what follows, apart from the non-resonant set $\CB(\rho)$,
the set
\begin{equation}\label{angular:eq}
\tilde\CB(\rho) = T(\rho)\times [\rho/8, \infty)
\end{equation}
will play an important role. Since $\CD\subset S(\rho)\times [\rho/8, \infty)$,
and $\CB = \R^d\setminus\CD$ (see \eqref{(non)resonant:eq}), we have
\begin{equation*}
 \tilde\CB(\rho)\subset \CB(\rho).
\end{equation*}

Let us now proceed with our study of the resonant sets $\Xi(\GV)$.
Introduce a new notion:

\begin{defn}\label{strdis:defn}
Two subspaces $\GV_1$, $\GV_2$ are said to be
\textsl{strongly distinct}, if they are distinct,
and neither of $\GV_1$, $\GV_2$ is a subspace of the other.

An equivalent definition of this property is that
$\dim(\GV_1+\GV_2)> \max(\dim \GV_1, \dim \GV_2)$.
\end{defn}

Here is a simple relation between the sets $\Xi_1, \Xi_2$:

\begin{lem}\label{xi12:lem}
Assume \eqref{eq:condition1}. Then
for any strongly distinct lattice subspaces
$\GV$ and $\GW$ we have
\begin{equation}\label{strongly:eq}
\Xi_2(\GV)\cap\Xi_2(\GW)\subset \Xi_1(\GV + \GW),
\end{equation}
for sufficiently large $\rho$.
\end{lem}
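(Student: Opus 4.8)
The plan is to take a point $\bxi \in \Xi_2(\GV)\cap\Xi_2(\GW)$ and show directly that $|\bxi_{\GV+\GW}| < \rho^{\a_p}$, where $p = \dim(\GV+\GW)$. First I would use the definition \eqref{xi1:eq} of $\Xi_2$: there exist $\boldeta\in\Xi_1(\GV)$ and $\bzeta\in\Xi_1(\GW)$ with $\bxi\in\BUps(\boldeta)$ and $\bxi\in\BUps(\bzeta)$. By Lemma \ref{lem:properties3} (applied to both $\GV$ and $\GW$), the projections $|\bxi_{\GV}|$ and $|\bxi_{\GW}|$ are bounded by $2\rho^{\a_n}$ and $2\rho^{\a_k}$ respectively, where $n = \dim\GV$, $k = \dim\GW$ (with the slightly larger exponent $\tfrac{\a_1+\a_2}{2}$ in the one-dimensional cases, which is still harmless). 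The key geometric input is then Lemma \ref{projections:lem}(2), which gives
\begin{equation*}
|\bxi_{\GV+\GW}| \ll r^{nd}\bigl(|\bxi_{\GV}| + |\bxi_{\GW}|\bigr) \ll r^{nd}\rho^{\a_{\max(n,k)}}.
\end{equation*}

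Next I would invoke strong distinctness: since $\GV$ and $\GW$ are strongly distinct, $p = \dim(\GV+\GW) \ge \max(n,k)+1$, so that $\a_p \ge \a_{\max(n,k)+1} > \a_{\max(n,k)} + 2\varkappa d^2$ by the second line of \eqref{eq:condition1}. Combining this with the bound $r \le \rho^{\varkappa}$ from \eqref{eq:condition1}, we get
\begin{equation*}
r^{nd}\rho^{\a_{\max(n,k)}} \ll \rho^{\varkappa nd + \a_{\max(n,k)}} \ll \rho^{\varkappa d^2 + \a_{\max(n,k)}} \le \rho^{\a_p - \varkappa d^2},
\end{equation*}
which for sufficiently large $\rho$ is less than $\rho^{\a_p}$ (the implied constant is absorbed since the gap $\varkappa d^2$ in the exponent dominates any fixed constant once $\rho$ is large). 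Hence $\bxi\in\Xi_1(\GV+\GW)$, which is \eqref{strongly:eq}. One should also check that $\GV+\GW$ is itself a lattice $r$-subspace — this is immediate, since $\GV$ and $\GW$ are each spanned by vectors from $\T_r^0$, so their sum is spanned by the union of those vectors, all still in $\T_r^0$.

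The main obstacle, such as it is, is bookkeeping with the exponents: one has to be careful that Lemma \ref{lem:properties3} in the case $n=1$ or $k=1$ produces the exponent $\tfrac{\a_1+\a_2}{2}$ rather than $\a_1$, and to confirm that this still leaves enough room — but since $\tfrac{\a_1+\a_2}{2} < \a_2 \le \a_{\max(n,k)+1} \le \a_p$ whenever strong distinctness forces $p\ge 2$, the same chain of inequalities goes through. Everything else is a direct application of the geometric estimates already established (Lemmas \ref{projections:lem} and \ref{lem:properties3}) together with the standing assumptions \eqref{eq:condition1}; there is no subtle point beyond arranging the powers of $r$ and $\rho$ correctly and choosing $\rho_0$ large enough to swallow the implied constants.
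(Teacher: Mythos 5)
Your argument is correct and follows essentially the same route as the paper: bound $|\bxi_{\GV}|$ and $|\bxi_{\GW}|$ via Lemma \ref{lem:properties3}, combine them through Lemma \ref{projections:lem}(2), and use strong distinctness plus \eqref{eq:condition1} to absorb the $r^{O(d^2)}$ factor. The paper handles the $n=m=1$ case by an explicit separate clause, whereas you fold it into the main chain and note the $\tfrac{\a_1+\a_2}{2}$ exception at the end — both are fine, and your remark that $\GV+\GW$ is itself a lattice $r$-subspace is a worthwhile detail the paper leaves implicit.
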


\begin{proof}
Let $\GV\in \CV(n)$, $\GW\in\CV(m)$, and assume without loss of generality that
$m\ge n$.

Suppose first that $m\ge 2$.
By Lemma \ref{lem:properties3}, for any
$\bxi\in \Xi_2(\GV)\cap\Xi_2(\GW)$ we have
\begin{equation*}
|\bxi_{\GV}|< 2 \rho^{\a_m},\ \
|\bxi_{\GW}|< 2\rho^{\a_m}.
\end{equation*}
Therefore, by Lemma \ref{projections:lem},
for the projection onto the subspace $\GA = \GV + \GW$ we have
\begin{equation*}
|\bxi_{\GA}| \ll \rho^{\a_m} r^{d^2}.
\end{equation*}
Since $\GV$ and $\GW$ are strongly distinct,
we have $p:=\dim \GA > m$, and hence, by \eqref{eq:condition1},
for sufficiently large $\rho$ the
right hand side is bounded above by $\rho^{\a_p}$,
which implies the proclaimed inclusion
in view of the definition \eqref{xi1:eq}.

Suppose that $n=m=1$, so that by Lemma \ref{lem:properties3}
\begin{equation*}
|\bxi_{\GV}|,\ |\bxi_{\GW}|< 2 \rho^{\frac{\a_1+\a_2}{2}}.
\end{equation*}
Lemma \ref{projections:lem} gives again that
$|\bxi_{\GA}| \ll \rho^{\frac{\a_1+\a_2}{2}} r^{d^2}$.
Since  $p:=\dim \GA= 2$, by \eqref{eq:condition1},
for sufficiently large $\rho$ the
right hand side is bounded above by $\rho^{\a_2}$,
which implies \eqref{strongly:eq} again.
\end{proof}

Lemma \ref{xi12:lem} has a number of very useful consequences.
First of all note that in the definition \eqref{xi:eq} we could have written
$\GV\subset\GW$ without changing the set $\Xi(\GV)$. Precisely, we have the following
lemma.

\begin{lem}\label{equiv:lem}
Let
\begin{equation}\label{xi11:eq}
\Xi'(\GV):=\Xi_2(\GV)\setminus\bigcup
\limits_{m>n}\bigcup\limits_{\GW\in\CV(m), \GV\subset\GW}\Xi_2(\GW).
\end{equation}
Then for sufficiently large $\rho$, we have
$\Xi(\GV)=\Xi'(\GV)$ for all $\GV\in\CV(n)$,
$n\le d-1$.
\end{lem}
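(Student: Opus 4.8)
The plan is to show the two set inclusions $\Xi(\GV)\subset\Xi'(\GV)$ and $\Xi'(\GV)\subset\Xi(\GV)$ separately. Since the union in \eqref{xi11:eq} defining $\Xi'(\GV)$ is over a smaller family of subspaces than the union in \eqref{xi:eq} defining $\Xi(\GV)$ — namely only those $\GW$ with $\GV\subset\GW$, rather than all $\GW\in\CV(m)$ with $m>n$ — we remove at most as much from $\Xi_2(\GV)$, so $\Xi(\GV)\subset\Xi'(\GV)$ is immediate from the definitions. The content of the lemma is the reverse inclusion.

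For $\Xi'(\GV)\subset\Xi(\GV)$, take $\bxi\in\Xi'(\GV)$; I must show that $\bxi\notin\Xi_2(\GW)$ for every $\GW\in\CV(m)$ with $m>n$, not merely for those containing $\GV$. Suppose towards a contradiction that $\bxi\in\Xi_2(\GW)$ for some such $\GW$ with $\GV\not\subset\GW$. Then $\GV$ and $\GW$ are strongly distinct in the sense of Definition \ref{strdis:defn} (they are distinct since $\dim\GW=m>n=\dim\GV$, and neither contains the other: $\GW\not\subset\GV$ for dimension reasons, and $\GV\not\subset\GW$ by assumption). Now $\bxi\in\Xi_2(\GV)\cap\Xi_2(\GW)$, so Lemma \ref{xi12:lem} gives $\bxi\in\Xi_1(\GV+\GW)$ for sufficiently large $\rho$. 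Setting $\GU=\GV+\GW$, strong distinctness yields $\dim\GU=\dim(\GV+\GW)>\max(n,m)=m>n$, and since $\GV\subset\GU$ we have $\GU$ in the family over which the union in \eqref{xi11:eq} is taken. But $\Xi_1(\GU)\subset\Xi_2(\GU)$ by \eqref{xi1:eq}, so $\bxi\in\Xi_2(\GU)$ with $\GV\subset\GU$ and $\dim\GU>n$, contradicting $\bxi\in\Xi'(\GV)$.

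It remains to handle the degenerate case $\GV=\GX$ (i.e. $n=0$): here $\Xi_2(\GX)=\R^d$ and $\GX\subset\GW$ for every subspace $\GW$, so the two unions in \eqref{xi:eq} and \eqref{xi11:eq} coincide verbatim and there is nothing to prove. For $n\ge 1$ the argument above applies. The only subtlety — and the point where ``sufficiently large $\rho$'' enters — is the invocation of Lemma \ref{xi12:lem}, which requires $\rho$ large enough (depending only on $d$ and $\SG^\dagger$, by Convention \ref{uniform:conv}); since there are only finitely many pairs of lattice $r$-subspaces of dimension $\le d$ up to the constraints, one fixed threshold $\rho_0$ works uniformly. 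I expect no real obstacle here: once Lemma \ref{xi12:lem} is granted, the proof is a short combinatorial bookkeeping argument, and the main thing to be careful about is correctly verifying strong distinctness and that $\GV\subset\GV+\GW$ places the enlarged subspace inside the relevant index set.
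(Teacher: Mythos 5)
Your proof is correct and follows essentially the same route as the paper's: both hinge on Lemma \ref{xi12:lem}, and on observing that for $\GW\in\CV(m)$ with $m>n$ and $\GV\not\subset\GW$ the subspaces $\GV,\GW$ are strongly distinct, so $\Xi_2(\GV)\cap\Xi_2(\GW)\subset\Xi_1(\GV+\GW)\subset\Xi_2(\GV+\GW)$ with $\GV\subset\GV+\GW$ and $\dim(\GV+\GW)>m$. The only stylistic difference is that the paper phrases this as a chain of inclusions between unions of sets, whereas you run a pointwise argument by contradiction; the content is the same, and your $n=0$ aside is harmless but unnecessary since the main argument already covers it vacuously.
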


\begin{proof} Let $\GV\in \CV(r, n)$.
Clearly, $\Xi(\GV)\subset\Xi'(\GV)$.
Let us prove the opposite inclusion. On the r.h.s. of
the formula \eqref{xi:eq} one can replace $\Xi_2(\GW)$ by $\Xi_2(\GW)\cap\Xi_2(\GV)$.
For strongly distinct $\GW$ and $\GV$, from Lemma \ref{xi12:lem} we obtain that
$$
\Xi_2(\GW)\cap\Xi_2(\GV)\subset\Xi_2(\GW+\GV).
$$
For $m > n$ this entails the inclusion
\begin{align*}
\bigcup\limits_{\GW\in\CV(m)}\Xi_2(\GW)\cap\Xi_2(\GV)
\subset &\
\biggl(\bigcup\limits_{\GW\in\CV(m), \GV\not\subset\GW}\Xi_2(\GW+\GV)\biggr)
\bigcup
\biggl(
\bigcup_{\GW\in\CV(m), \GV\subset\GW} \Xi_2(\GW)\biggr)\\[0.2cm]
\subset &\ \bigcup_{p\ge m}\bigcup_{\GW\in\CV(p), \GV\subset\GW} \Xi_2(\GW),
\end{align*}
so that $\Xi(\GV)\supset \Xi'(\GV)$,
as required.
\end{proof}

The next lemma shows that \underline{all} resonant sets $\Xi(\GV)$ are non-empty.
It is given here for the sake of completeness, and it
will not be used in the subsequent argument.

\begin{lem}
 Let $\GV\in\CV(n), n\le d$.
For any $R\gg \rho$ and sufficiently large $\rho$, we have
\begin{equation}\label{nonempty:eq}
\volume \bigl(\Xi(\GV)\cap B(R)\bigr)\gg \rho^{n\a_n} R^{d-n}.
\end{equation}
\end{lem}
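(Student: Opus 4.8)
The plan is to bound $\Xi(\GV)$ from below by a large ``core'' sitting inside $\Xi_1(\GV)$, and to show that the higher–dimensional resonant sets which must be removed from that core occupy only a vanishing fraction of it. First, by Lemma \ref{equiv:lem}, for $n\le d-1$ and sufficiently large $\rho$ one has $\Xi(\GV)=\Xi'(\GV)$, where in \eqref{xi11:eq} only the subspaces $\GW\supsetneq\GV$ appear. Since $\Xi_1(\GV)\subseteq\Xi_2(\GV)$ by \eqref{xi1:eq}, this yields
\begin{equation*}
\Xi(\GV)\supseteq\Xi_1(\GV)\setminus\bigcup_{m=n+1}^{d}\ \bigcup_{\GW\in\CV(m):\ \GV\subset\GW}\Xi_2(\GW).
\end{equation*}
The case $n=0$ is the same with $\GV=\GX$ (then $\rho^{n\a_n}=1$, and the inclusion is an equality by \eqref{(non)resonant:eq}), and the case $n=d$ is immediate: there $\GV=\R^d$, $\Xi(\GV)\supseteq\Xi_1(\GV)=B(\rho^{\a_d})\subset B(R)$ for large $\rho$, and $d-n=0$. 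So from now on $0\le n\le d-1$.

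Next, inscribing in $\Xi_1(\GV)\cap B(R)$ the product of the ball of radius $\rho^{\a_n}/\sqrt2$ in $\GV$ and the ball of radius $R/\sqrt2$ in $\GV^\perp$ (legitimate once $\rho^{\a_n}<R$, which holds for large $\rho$ since $R\gg\rho$) gives the core bound
\begin{equation*}
\volume\bigl(\Xi_1(\GV)\cap B(R)\bigr)\gg\rho^{n\a_n}R^{d-n}.
\end{equation*}
Now fix $\GW\in\CV(m)$ with $\GV\subset\GW$ and $n<m\le d$. By Lemma \ref{lem:properties3}, $\Xi_2(\GW)\subset\{\bxi:|\bxi_\GW|\le 2\rho^{\gamma_m}\}$, where $\gamma_m=\a_m$ for $m\ge2$ and $\gamma_1=\tfrac12(\a_1+\a_2)$; hence $|\bxi_{\GW\cap\GV^\perp}|\le 2\rho^{\gamma_m}$ on $\Xi_2(\GW)$, and splitting $\R^d=\GV\oplus(\GW\cap\GV^\perp)\oplus\GW^\perp$ yields
\begin{equation*}
\volume\bigl(\Xi_2(\GW)\cap\Xi_1(\GV)\cap B(R)\bigr)\ll\rho^{n\a_n}\,\rho^{(m-n)\gamma_m}\,R^{d-m}.
\end{equation*}
Since the number of such $\GW$ is $\ll r^{d(m-n)}$ for $m\le d-1$ and equals $1$ for $m=d$, the total volume removed from the core is at most
\begin{equation*}
C\,\rho^{n\a_n}R^{d-n}\Bigl(\ \sum_{m=n+1}^{d-1}\bigl(r^d\rho^{\gamma_m}R^{-1}\bigr)^{m-n}+\bigl(\rho^{\a_d}R^{-1}\bigr)^{d-n}\ \Bigr).
\end{equation*}
Because $r\le\rho^{\varkappa}$ and $R\gg\rho$, the $m$-th summand is $\ll\rho^{(m-n)(\varkappa d+\gamma_m-1)}$ and the last one is $\ll\rho^{(d-n)(\a_d-1)}$. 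The gap conditions \eqref{eq:condition1} — exactly the input used in the proof of Lemma \ref{surface:lem} — give $\varkappa d+\gamma_m<1$ for all $m\le d-1$, while $\a_d<1$; so every exponent is strictly negative, and for $\rho$ large the bracket is $<\tfrac12$. Combining with the core bound,
\begin{equation*}
\volume\bigl(\Xi(\GV)\cap B(R)\bigr)\ge\tfrac12\,\volume\bigl(\Xi_1(\GV)\cap B(R)\bigr)\gg\rho^{n\a_n}R^{d-n},
\end{equation*}
which is \eqref{nonempty:eq}.

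The one delicate point is this last estimate: the number of lattice $r$-subspaces grows like a fixed power of $r$, hence of $\rho$, so one must make sure this growth is beaten by the smallness of each removed piece. This is precisely where \eqref{eq:condition1} is needed, and one has to be careful at the top dimension: for $\GW=\R^d$ the counting factor $r^{d(m-n)}$ would be fatal (there is no control of $\varkappa d+\a_d$ against $1$), so one must instead use that this subspace is unique and that $\Xi_2(\R^d)$ is already contained in $B(2\rho^{\a_d})$.
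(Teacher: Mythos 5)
Your proof is correct and follows essentially the same route as the paper's: lower-bound the volume via the core $\Xi_1(\GV)\cap B(R)$, subtract the intersections with $\Xi_2(\GW)$ for $\GW\supsetneq\GV$ using the projection bounds of Lemma \ref{lem:properties3} and a polynomial-in-$r$ count of lattice subspaces, and use \eqref{eq:condition1} together with $R\gg\rho$ to show the removed part is a vanishing fraction (treating $\GW=\R^d$ separately, exactly as the paper does). Your refinements — the sharper subspace count $r^{d(m-n)}$ in place of $r^{d^2}$ and the explicit $\gamma_1=(\a_1+\a_2)/2$ for the $m=1$ case — are harmless variations of the same argument.
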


\begin{proof} Let us fix a subspace $\GV\in\CV(n)$.
By definition \eqref{xi1:eq},
\[
V_0(\rho, R):=
\volume\bigl(\Xi_2(\GV)\cap B(R)\bigr)
\ge \volume \bigl(\Xi_1(\GV)\cap B(R)\bigr)
\gg \rho^{n\a_n} R^{d-n}.
\]
For  $n=d$ we have $\GV = \R^d$ and $\Xi_2(\GV) = \Xi(\GV)$, so the above lower bound
yields \eqref{nonempty:eq}.

Suppose that $n \le d-1$.
By Lemma \ref{lem:properties3}, for any $\GV\in\CV(n)$ and  any
$\GW\in \CV(m), m = n+1, \dots, d$, such that $\GV\subset\GW$,
we have
\begin{equation*}
\volume \bigl(\Xi_2(\GW)\cap \Xi_2(\GV)\cap B(0, R)\bigr)
\ll \rho^{ n\a_n} \rho^{(m-n)\a_m}R^{d-m}.
\end{equation*}
If $n = d-1$, then definition \eqref{xi:eq} ensures that
\begin{align*}
\volume \bigl(\Xi(\GV)\cap B(R)\bigr)
\gg &\ V_0(\rho, R) - C\rho^{(d-1)\a_{d-1} +\a_d}\\[0.2cm]
\gg &\ \rho^{(d-1)\a_{d-1}} R \bigl( 1 -C \rho^{\a_d - 1}\bigr)
\gg \rho^{(d-1)\a_{d-1}} R.
\end{align*}
For $n\le d-2$ we recall that
the number of lattice subspaces $\GW\in\mathcal V(m), m\le d-1,$
is less than $Cr^{d^2}$ with
a universal constant $C$. This leads to the estimate
\begin{align*}
V(\rho, R):=\volume \bigcup_{m\ge n+1}
&\ \bigcup_{\substack{\GW\in\CV(m)\\ \GV\subset\GW}}
\bigl(\Xi_2(\GW)\cap\Xi_2(\GV)\cap B(R)\bigr)\\
\ll &\  r^{d^2} \sum_{m=n+1}^{d-1}\rho^{ n\a_n} \rho^{(m-n)\a_m}R^{d-m}
+  \rho^{n\a_n} \rho^{(d-n)\a_d}\\[0.2cm]
\ll &\ \rho^{ n\a_n} R^{d-n}  \biggl[\sum_{m=n+1}^{d-1}
r^{d^2}\rho^{(m-n)\a_m}R^{n-m} + \rho^{(d-n)\a_d} R^{n-d}\biggr].
\end{align*}
Therefore, under the condition $R\gg \rho$, using \eqref{eq:condition1}, we obtain
that
\begin{equation*}
\frac{V(\rho, R)}{V_0(\rho, R)}
\ll \sum _{m=n+1}^{d} \rho^{(m-n)(\a_d-1)}\to 0, \rho\to\infty.
\end{equation*}
Now the required estimate \eqref{nonempty:eq} follows
from Lemma \ref{equiv:lem}.
\end{proof}

\begin{lem}\label{properties2:lem}
Let $\GV\in\CV(n)$ and $\GW\in\CV(m)$ be two strongly distinct
lattice subspaces. Then for sufficiently large $\rho$
we have $\Xi(\GV)\cap \Xi_2(\GW) = \varnothing$.
In particular, if $\bth\notin\GV$, then $\Xi(\GV)\cap \L(\bth) = \varnothing$.
\end{lem}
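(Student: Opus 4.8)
The plan is to reduce the statement to the two structural facts already in hand: the reformulation of the resonant set in Lemma~\ref{equiv:lem} (namely $\Xi(\GV)=\Xi'(\GV)$, where in $\Xi'(\GV)$ one only subtracts off $\Xi_2(\GU)$ for lattice subspaces $\GU\supset\GV$ with $\dim\GU>n$) and the absorption property of Lemma~\ref{xi12:lem} (namely $\Xi_2(\GV)\cap\Xi_2(\GW)\subset\Xi_1(\GV+\GW)$ for strongly distinct $\GV,\GW$). First I would note that strong distinctness of $\GV\in\CV(n)$ and $\GW\in\CV(m)$ forces $n,m\le d-1$: otherwise one of them is $\R^d$, which contains the other, contradicting strong distinctness. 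In particular Lemma~\ref{equiv:lem} is applicable to $\GV$.

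The heart of the argument is a short proof by contradiction. Suppose $\bxi\in\Xi(\GV)\cap\Xi_2(\GW)$. Since $\Xi(\GV)\subset\Xi_2(\GV)$, the point lies in $\Xi_2(\GV)\cap\Xi_2(\GW)$, so by Lemma~\ref{xi12:lem} (for $\rho$ large) $\bxi\in\Xi_1(\GU)\subset\Xi_2(\GU)$ where $\GU:=\GV+\GW$. Next I would verify the two bookkeeping points: $\GU$ is again a lattice $r$-subspace, being spanned by the union of the lattice bases of $\GV$ and $\GW$, all of which lie in $\T_r^0$; and $p:=\dim\GU>\max(\dim\GV,\dim\GW)\ge n$, with $\GV\subset\GU$, precisely because $\GV$ and $\GW$ are strongly distinct. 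Hence $\bxi\in\Xi_2(\GU)$ with $\GU\in\CV(p)$, $p>n$, $\GV\subset\GU$, which by the formula \eqref{xi11:eq} for $\Xi'(\GV)=\Xi(\GV)$ contradicts $\bxi\in\Xi(\GV)$. This proves $\Xi(\GV)\cap\Xi_2(\GW)=\varnothing$.

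For the last assertion I would take $\bth\in\T_r$ with $\bth\notin\GV$ and recall from the text that $\Xi_2(\GV(\bth))=\L(\bth)$. If $\GV\ne\GX$, then $\GV$ and the line $\GV(\bth)$ are strongly distinct ($\GV(\bth)\not\subset\GV$ because $\bth\notin\GV$, and $\GV\not\subset\GV(\bth)$ for dimension reasons when $\dim\GV\ge 2$, and because $\GV\ne\GV(\bth)$ when $\dim\GV=1$), so the first part applies directly. If $\GV=\GX$, then $\Xi(\GX)=\CB$ already excludes every $\Xi_2(\GW)$ with $\dim\GW\ge1$ by definition \eqref{(non)resonant:eq}, in particular $\Xi_2(\GV(\bth))=\L(\bth)$.

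I do not expect a genuine obstacle here: the proof is just a matter of chaining Lemmas~\ref{xi12:lem} and~\ref{equiv:lem}. The only points needing a moment's care are the verification that $\GV+\GW$ is a legitimate lattice $r$-subspace of dimension strictly exceeding $n$ (so that it genuinely occurs among the subspaces removed in $\Xi'(\GV)$), and the separate, trivial handling of the degenerate case $\GV=\GX$.
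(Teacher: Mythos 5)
Your proof is correct and follows essentially the same route as the paper: apply Lemma~\ref{xi12:lem} to place $\Xi(\GV)\cap\Xi_2(\GW)$ inside $\Xi_2(\GV+\GW)$ with $\dim(\GV+\GW)>n$, then conclude from the definition of $\Xi(\GV)$. The only (harmless) difference is that your detour through Lemma~\ref{equiv:lem} is unnecessary — the original definition \eqref{xi:eq} already removes $\Xi_2(\GU)$ for \emph{every} higher-dimensional lattice subspace $\GU$, whether or not it contains $\GV$ — while your separate treatment of the degenerate case $\GV=\GX$ in the second assertion is a point of care the paper leaves implicit.
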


\begin{proof}
By Lemma \ref{xi12:lem}, for sufficiently large $\rho$,
$$
\Xi(\GV)\cap \Xi_2(\GW)\subset \Xi_2(\GV)\cap \Xi_2(\GW)\subset\Xi_2(\GU),\
\GU = \GV+\GW.
$$
Since $\dim\GU > n$, by definition \eqref{xi:eq}, the left hand side
is empty, as required.

As $\L(\bth) = \Xi_1(\GV(\bth))\subset \Xi_2(\GV(\bth))$, the second statement
follows immediately.
\end{proof}

\begin{cor}\label{properties2:cor}
Let $\GV\in\CV(n)$.
If $\bxi\in\Xi(\GV)$, then $\BUps(\bxi)\subset \Xi(\GV)$, and
for sufficiently large
$\rho$ also $\SN(\bxi)\subset\GV$.
\end{cor}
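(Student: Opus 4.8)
The plan is to read off this corollary from the structure of the resonant sets already in place, without any new geometry. The first assertion, $\BUps(\bxi)\subset\Xi(\GV)$, is literally the equivalence \eqref{xi2:eq}, which says that $\bxi\in\Xi(\GV)$ holds if and only if $\BUps(\bxi)\subset\Xi(\GV)$; so there is nothing to do there. The substance is the inclusion $\SN(\bxi)\subset\GV$, and for this I would walk along a resonant-congruence chain, using Lemma \ref{properties2:lem} at each step.

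Concretely, fix $\bm\in\SN(\bxi)$ and set $\boldeta=\bxi+\bm\in\BUps(\bxi)$. By Definition \ref{reachability:defn} there are vectors $\bth_1,\dots,\bth_p\in\T_r$ and points $\bxi_0=\bxi,\bxi_1,\dots,\bxi_p=\boldeta$ with $\bxi_j\in\BUps_{\bth_j}(\bxi_{j-1})$ for every $j$; in particular $\bxi_{j-1}\in\L(\bth_j)$ and $\bxi_j-\bxi_{j-1}\in\Z\bth_j$. I would prove by induction on $j$ that $\bxi_j\in\Xi(\GV)$ and $\bxi_j-\bxi\in\GV$. For $j=0$ both are trivial. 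Assuming the claim for $j-1$: since $\bxi_j$ is $\bth_1,\dots,\bth_j$-resonant congruent to $\bxi$ it belongs to $\BUps(\bxi)$, hence to $\Xi(\GV)$ by the (already established) first part of the corollary. Moreover $\bxi_{j-1}$ lies in $\Xi(\GV)\cap\L(\bth_j)$, which is therefore non-empty; since Lemma \ref{properties2:lem} (for sufficiently large $\rho$) asserts that $\Xi(\GV)\cap\L(\bth)=\varnothing$ whenever $\bth\notin\GV$, we must have $\bth_j\in\GV$. Consequently $\bxi_j-\bxi=(\bxi_{j-1}-\bxi)+(\bxi_j-\bxi_{j-1})\in\GV$, using the inductive hypothesis and $\bxi_j-\bxi_{j-1}\in\Z\bth_j\subset\GV$. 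Taking $j=p$ gives $\bm=\boldeta-\bxi\in\GV$, and since $\bm\in\SN(\bxi)$ was arbitrary, $\SN(\bxi)\subset\GV$, as required.

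This proof relies only on \eqref{xi2:eq} and the second statement of Lemma \ref{properties2:lem}, so I do not expect a genuine obstacle. The single point that has to be handled carefully is to carry the auxiliary fact $\bxi_j\in\Xi(\GV)$ along the chain — it is precisely this that makes Lemma \ref{properties2:lem} applicable at each intermediate point — and that fact is exactly the first part of the corollary, which is why the two assertions must be proved in the stated order.
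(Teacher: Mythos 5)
Your proof is correct and follows essentially the same route as the paper: the first inclusion is read off from \eqref{xi2:eq}, and the second is obtained by applying Lemma \ref{properties2:lem} (via the fact that $\Xi(\GV)\cap\L(\bth)=\varnothing$ for $\bth\notin\GV$) at each link of the resonance chain. The paper's version states only the single-step case $\boldeta\in\BUps_{\bth}(\bxi)$ and leaves the induction along the chain implicit; your write-up simply makes that induction explicit.
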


\begin{proof} The inclusion $\BUps(\bxi)\subset\Xi(\GV)$ immediately follows
from \eqref{xi2:eq}.

To prove the second statement, it suffices to show that
if $\boldeta\in\BUps_{\bth}(\bxi)$ with some $\bth\in\T_r$, then
$\bth\in\GV$. The assumption $\boldeta\in\BUps_{\bth}(\bxi)$
means, in particular, that
 $\bxi\in\L(\bth)$. By Lemma \ref{properties2:lem} the latter inclusion
is possible only if $\bth\in \GV$.
\end{proof}

Now we are in position to prove that the constructed sets $\Xi(\GV)$ satisfy
the conditions \eqref{exhaust:eq}, \eqref{disjoint:eq} and \eqref{span:eq}:

\begin{thm}\label{partition:thm}
Let $\a_0, \a_1, \dots, \a_d$ and $\varkappa$ be some numbers satisfying
\eqref{eq:condition1}.
Then for sufficiently large $\rho$  the collection of  sets
$\Xi(\GV)\subset\R^d, \GV\in \mathcal W(r)$, defined by the equalities
\eqref{xi1:eq}, \eqref{xi:eq}, satisfies the properties
\eqref{exhaust:eq}, \eqref{disjoint:eq} and \eqref{span:eq}.
\end{thm}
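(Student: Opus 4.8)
The plan is to verify the three properties \eqref{exhaust:eq}, \eqref{disjoint:eq}, \eqref{span:eq} in turn, using the lemmas established in this subsection. Property \eqref{span:eq} is essentially immediate: it has already been recorded in Corollary \ref{properties2:cor}, which states that $\bxi\in\Xi(\GV)$ implies $\BUps(\bxi)\subset\Xi(\GV)$ (this is \eqref{xi2:eq}) and, for sufficiently large $\rho$, $\SN(\bxi)\subset\GV$. So that part requires only a reference.

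For the exhaustion property \eqref{exhaust:eq}, I would argue as follows. Fix $\bxi\in\R^d$ and consider the (finite, by Lemma \ref{lem:reachability1}) set $\BUps(\bxi)$ and its associated lattice set $\SN(\bxi)\subset\SG^\dagger$. Let $\GV_0:=\textup{span}\,\SN(\bxi)\in\CV(r,n_0)$ for some $n_0$; if $\SN(\bxi)=\{\mathbf 0\}$ then $\GV_0=\GX$. By Lemma \ref{lem:reachability11}(ii), $|\bxi_{\GV_0}|\ll\rho^{\a_1}r^{(n_0-1)d}\le\rho^{\a_{n_0}}$ for large $\rho$ using \eqref{eq:condition1}, hence $\bxi\in\Xi_1(\GV_0)\subset\Xi_2(\GV_0)$. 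Now either $\bxi\notin\bigcup_{m>n_0}\bigcup_{\GW\in\CV(m)}\Xi_2(\GW)$, in which case $\bxi\in\Xi(\GV_0)$ by \eqref{xi:eq} and we are done; or $\bxi\in\Xi_2(\GW)$ for some $\GW\in\CV(m)$ with $m>n_0$. In the latter case I would take $\GW$ of \emph{maximal} dimension among all lattice subspaces with $\bxi\in\Xi_2(\GW)$; then by definition \eqref{xi:eq}, $\bxi\in\Xi(\GW)$ (there is no $\GW'$ of strictly larger dimension containing $\bxi$ in $\Xi_2(\GW')$, by maximality). Either way $\bxi$ lies in some $\Xi(\GV)$, proving \eqref{exhaust:eq}. (If $m=d$ one lands directly in $\Xi(\R^d)=\Xi_2(\R^d)$.)

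For disjointness \eqref{disjoint:eq}, suppose $\GV_1\ne\GV_2$ and $\bxi\in\Xi(\GV_1)\cap\Xi(\GV_2)$, with $\GV_i\in\CV(n_i)$ and, say, $n_1\le n_2$. If $\GV_1$ and $\GV_2$ are strongly distinct, then Lemma \ref{properties2:lem} (with $\GV=\GV_1$, $\GW=\GV_2$) gives $\Xi(\GV_1)\cap\Xi_2(\GV_2)=\varnothing$, and since $\Xi(\GV_2)\subset\Xi_2(\GV_2)$ this already contradicts $\bxi\in\Xi(\GV_1)\cap\Xi(\GV_2)$. If $\GV_1$ and $\GV_2$ are \emph{not} strongly distinct, then since they are distinct one must be a proper subspace of the other; as $n_1\le n_2$ we have $\GV_1\subsetneq\GV_2$, so $n_1<n_2$. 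But $\bxi\in\Xi(\GV_2)\subset\Xi_2(\GV_2)$ and $\GV_2\in\CV(n_2)$ with $n_2>n_1$, which by the defining formula \eqref{xi:eq} for $\Xi(\GV_1)$ (or, more transparently, by the equivalent description $\Xi(\GV_1)=\Xi'(\GV_1)$ of Lemma \ref{equiv:lem}, in which one explicitly removes $\Xi_2(\GW)$ for all $\GW\supsetneq\GV_1$) forces $\bxi\notin\Xi(\GV_1)$ --- a contradiction. Hence $\Xi(\GV_1)\cap\Xi(\GV_2)=\varnothing$.

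The main obstacle is the bookkeeping in the exhaustion argument, specifically ensuring that the ``maximal dimension'' subspace used there is well-defined and that all the dimension-climbing estimates fit inside \eqref{eq:condition1} uniformly in $\bxi$; this is where Lemma \ref{lem:reachability11} and Lemma \ref{lem:properties3} do the real work. The disjointness step is then almost formal once Lemma \ref{equiv:lem} is invoked to handle the nested (non-strongly-distinct) case, which is the only subtlety beyond a direct appeal to Lemma \ref{properties2:lem}. I expect no genuinely new computation to be needed --- everything reduces to the geometric lemmas already proved.
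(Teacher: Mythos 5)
Your proposal is correct and follows essentially the same route as the paper: \eqref{span:eq} is Corollary \ref{properties2:cor}, disjointness splits into the strongly distinct case (Lemma \ref{properties2:lem}) and the nested case (immediate from \eqref{xi:eq}), and exhaustion comes from the fact that every point lies in some $\Xi_2(\GW)$ together with the removal structure of \eqref{xi:eq} (the paper phrases this as a backward induction on dimension, you as a maximal-dimension choice --- these are the same argument). The only cosmetic difference is your detour through $\GV_0=\textup{span}\,\SN(\bxi)$ and Lemma \ref{lem:reachability11}, which is correct but unnecessary since $\Xi_2(\GX)=\R^d$ already guarantees every point lies in some $\Xi_2$.
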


\begin{proof} Proof of \eqref{exhaust:eq}.
Let us prove that
\begin{equation}\label{xixi1:eq}
\Xi^{(n)} : = \bigcup_{m\ge n} \bigcup_{\GV\in\CV(m)} \Xi(\GV)
= \bigcup_{m\ge n} \bigcup_{\GV\in\CV(m)} \Xi_2(\GV)=: \Xi_2^{(n)},
\end{equation}
for all $n = 0, 1, 2, \dots, d$.
If $n=d$, then $\Xi^{(d)} = \Xi(\R^d) = \Xi_2(\R^d) = \Xi_2^{(d)}$.
Suppose that $0\le n \le d-1$.
By definition \eqref{xi:eq} and backward induction,
\begin{equation*}
\Xi_2^{(n)} = \bigcup_{\GV\in\CV(n)} \Xi_2(\GV)\bigcup \Xi_2^{(n+1)}
= \bigcup_{\GV\in\CV(n)} \Xi(\GV)\bigcup \Xi_2^{(n+1)}
= \bigcup_{\GV\in\CV(n)} \Xi(\GV)\bigcup \Xi^{(n+1)} = \Xi^{(n)}.
\end{equation*}
Therefore $\R^d = \Xi_2(\GX)\subset \Xi_2^{(0)} = \Xi^{(0)}$, as claimed.

Proof of \eqref{disjoint:eq}.
Let $\GV_1$ and $\GV_2$ be distinct lattice subspaces, $\dim\GV_j=p_j$,
$j = 1, 2$.
If $\GV_1\subset \GV_2$, then $p_1<p_2$, and it follows from Definition \eqref{xi:eq}
that
\begin{equation*}
\Xi(\GV_1)\subset \Xi_2(\GV_1)\setminus\Xi(\GV_2),
\end{equation*}
so that $\Xi(\GV_1)\cap\Xi(\GV_2)=\varnothing$.
If $\GV_1$ and $\GV_2$ are strongly distinct, then the required result follows
from Lemma \ref{properties2:lem}.

Proof of \eqref{span:eq}. See Corollary \ref{properties2:cor}.
\end{proof}

 \subsection{Scaling properties of the sets $\BUps(\bxi)$}

\begin{lem}\label{lem:upsilon}
Suppose that  $\bxi\in\Xi(\GV)$ and $\bxi + \bnu\in\Xi(\GV)$
with some $\bnu\in\GV^\perp$.
Then for sufficiently large $\rho$,
\begin{enumerate}
\item\label{upsilon:item1}
$\SN(\bxi) = \SN(\bxi+\bnu)$,
\item\label{upsilon:item2}
For $\bxi\in \Xi(\GV)$
the set $\SN(\bxi)$ depends only on the vector $\bxi_{\GV}$.
\end{enumerate}
\end{lem}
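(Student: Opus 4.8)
The plan is to deduce both statements directly from the translation property of the congruence sets established in Lemma \ref{trans:lem}, combined with the fact that $\SN(\bxi)\subset\GV$ whenever $\bxi\in\Xi(\GV)$, which is the content of Corollary \ref{properties2:cor}. No new geometry is needed; everything is a formal manipulation of the definitions.

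First I would prove part (\ref{upsilon:item1}). Take $\rho$ large enough for Corollary \ref{properties2:cor} to apply. Since both $\bxi$ and $\bxi+\bnu$ lie in $\Xi(\GV)$, we have $\SN(\bxi)\subset\GV$ and $\SN(\bxi+\bnu)\subset\GV$. Because $\bnu\in\GV^\perp$ and $\SN(\bxi)\subset\GV$, the consequence part of Lemma \ref{trans:lem} gives $\SN(\bxi)\subset\SN(\bxi+\bnu)$. Applying the same lemma to the point $\bxi+\bnu$ (whose congruence set is likewise contained in $\GV$) together with the perpendicular vector $-\bnu\in\GV^\perp$ yields $\SN(\bxi+\bnu)\subset\SN\bigl((\bxi+\bnu)-\bnu\bigr)=\SN(\bxi)$. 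The two inclusions combine to $\SN(\bxi)=\SN(\bxi+\bnu)$.

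For part (\ref{upsilon:item2}), I would take two points $\bxi,\bxi'\in\Xi(\GV)$ with $\bxi_\GV=\bxi'_\GV$ and set $\bnu:=\bxi'-\bxi$. Writing each vector as the sum of its $\GV$- and $\GV^\perp$-components and using $\bxi_\GV=\bxi'_\GV$ shows that $\bnu\in\GV^\perp$, while $\bxi+\bnu=\bxi'\in\Xi(\GV)$ by assumption. Thus part (\ref{upsilon:item1}) applies to the pair $\bxi,\bxi+\bnu$ and gives $\SN(\bxi)=\SN(\bxi+\bnu)=\SN(\bxi')$; hence $\SN(\bxi)$ depends only on $\bxi_\GV$ on $\Xi(\GV)$.

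I do not expect a genuine obstacle here. The only point requiring care is that the inclusion $\SN(\bxi)\subset\GV$ from Corollary \ref{properties2:cor} is available only for sufficiently large $\rho$, which is precisely why the lemma is stated with that restriction; every remaining step is an immediate consequence of Lemma \ref{trans:lem} and the definition of the resonant sets.
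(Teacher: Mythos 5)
Your proposal is correct and follows the same route as the paper: it combines Corollary \ref{properties2:cor} (to get $\SN(\bxi),\SN(\bxi+\bnu)\subset\GV$) with Lemma \ref{trans:lem}, applied once in each direction, and then observes that part (\ref{upsilon:item2}) is just a restatement of part (\ref{upsilon:item1}). The only difference is that you spell out the two-inclusion argument that the paper leaves implicit.
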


\begin{proof} By Corollary \ref{properties2:cor}
$\SN(\bxi), \SN(\bxi+\bnu) \subset\GV$, so Part \ref{upsilon:item1}
follows from Lemma \ref{trans:lem}.
Statement \ref{upsilon:item2} is a rephrased statement \ref{upsilon:item1}.
\end{proof}

\begin{lem}\label{perpendicular:lem}
Suppose that $\bxi\in\Xi(\GV)$ with some $\GV\in\CV(n), n\le d$, and
$\bxi_t = \bxi + t\bnu, \bnu = \bxi_{\GV^\perp}$ with $t\ge 0$.
Then for sufficiently large $\rho$ the vector
$\bxi_t\in \Xi(\GV)$ for all $t\ge 0$.
\end{lem}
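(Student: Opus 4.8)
I would argue in four steps. The case $n=d$ is trivial, since then $\GV=\R^d$, $\GV^\perp=\{\mathbf 0\}$, $\bnu=\mathbf 0$ and $\bxi_t\equiv\bxi$; so I may assume $n\le d-1$ (the subcase $\bnu=\mathbf 0$ being trivial as well). By Corollary \ref{properties2:cor}, $\SN(\bxi)\subset\GV$, and since every increment $t\bnu$ lies in $\GV^\perp$, Lemma \ref{trans:lem} gives $\SN(\bxi)\subset\SN(\bxi_t)$ for all $t\ge0$. The first thing I would record is that $\bxi_t\in\Xi_2(\GV)$ for every $t\ge0$: choosing $\boldeta_0\in\BUps(\bxi)\cap\Xi_1(\GV)$ (which exists because $\bxi\in\Xi_2(\GV)$) and writing $\boldeta_0=\bxi+\bm$ with $\bm\in\SN(\bxi)\subset\GV\subset\SN(\bxi_t)$, the point $\bxi_t+\bm=\boldeta_0+t\bnu$ again lies in $\BUps(\bxi_t)$, and its $\GV$-projection equals $(\boldeta_0)_\GV$, so it stays in $\Xi_1(\GV)$. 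In particular, by Lemma \ref{lem:properties3}, every vector in $\BUps(\bxi_t)$ has $\GV$-projection of length $\le2\rho^{\a'}$, where $\a'=\a_n$ for $n\ge2$ and $\a'=(\a_1+\a_2)/2$ for $n=1$ (and $\a'$ is irrelevant when $n=0$).

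The core of the proof is the identity $\SN(\bxi_t)=\SN(\bxi)$, which I would derive from the geometric claim: \emph{for all sufficiently large $\rho$, every $\bth\in\T_r\setminus\GV$ and every $\bz$ with $\bz_{\GV^\perp}=(1+t)\bnu$ and $|\bz_\GV|\le2\rho^{\a'}$ satisfy $\bz\notin\L(\bth)$.} When $n=0$ this is immediate, since $\bz_\GV=\mathbf 0$, $\bz\cdot\bth=(1+t)\,\bxi\cdot\bth$ and $\bxi\in\CB=\Xi(\GX)$ avoids $\L(\bth)$. When $n\ge1$, I would set $\GU=\GV+\GV(\bth)\in\CV(n+1)$; since $\bxi\in\Xi(\GV)$ and $\GV\subsetneq\GU$, definition \eqref{xi:eq} forces $\bxi\notin\Xi_1(\GU)$, i.e. $|\bxi_\GU|\ge\rho^{\a_{n+1}}$, and splitting $\bxi_\GU$ orthogonally into its $\GV$-part (length $\le2\rho^{\a'}$) and its part along $\bth_{\GV^\perp}$, together with the lower bound $|\bth_{\GV^\perp}|=\dist(\bth,\GV)\gg r^{1-d}$ from Lemma \ref{cassels:lem}(2), yields $|\bnu\cdot\bth_{\GV^\perp}|\gg\rho^{\a_{n+1}}r^{1-d}$. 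Comparing with $|\bz_\GV\cdot\bth|\le2\rho^{\a'}r$ and with $\rho^{\a_1}|\bth|\le\rho^{\a_1}r$, and using $r\le\rho^{\varkappa}$ together with the gap condition $\a_{n+1}>\a_n+2\varkappa d^2$ from \eqref{eq:condition1}, the term $(1+t)|\bnu\cdot\bth_{\GV^\perp}|$ dominates for large $\rho$ uniformly in $\bxi,\GV,t$, whence $|\bz\cdot\bth|\ge\rho^{\a_1}|\bth|$. This claim is the step I expect to be the main obstacle: it is where the parameters in \eqref{eq:condition1} and the Cassels-type estimate are actually used.

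Granting the claim, let $\bp\in\SN(\bxi_t)$ and fix a resonant-congruence chain $\bxi_t=\bz_0,\bz_1,\dots,\bz_k=\bxi_t+\bp$ along vectors $\bth_1,\dots,\bth_k\in\T_r$ as in Definition \ref{reachability:defn}. I would argue by induction on $j$: $\bz_{j-1}\in\BUps(\bxi_t)\subset\Xi_2(\GV)$ has $\GV$-part of length $\le2\rho^{\a'}$, and once $\bth_1,\dots,\bth_{j-1}\in\GV$ is known its $\GV^\perp$-part equals $(1+t)\bnu$ (every earlier step added a lattice multiple of a vector in $\GV$); since $\bz_{j-1}\in\L(\bth_j)$, the claim forces $\bth_j\in\GV$. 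Hence all $\bth_j\in\GV$, so $\bp\in\GV$; moreover, subtracting $t\bnu$ from every $\bz_j$ produces a chain from $\bxi$ to $\bxi+\bp$ along the same $\bth_j$, and its defining inequalities $|\bz_j\cdot\bth_j|<\rho^{\a_1}|\bth_j|$ are unchanged because $\bnu\perp\bth_j$; therefore $\bp\in\SN(\bxi)$. With the reverse inclusion already noted, $\SN(\bxi_t)=\SN(\bxi)\subset\GV$.

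It remains to rule out $\bxi_t\in\Xi_2(\GW)$ for lattice $r$-subspaces $\GW$ with $\dim\GW=m>n$. If such a $\GW$ is strongly distinct from $\GV$, Lemma \ref{xi12:lem} gives $\bxi_t\in\Xi_2(\GV)\cap\Xi_2(\GW)\subset\Xi_1(\GV+\GW)\subset\Xi_2(\GV+\GW)$ with $\GV\subsetneq\GV+\GW$, so I may assume $\GV\subsetneq\GW$. Pick $\bu\in\Xi_1(\GW)\cap\BUps(\bxi_t)$, write $\bu=\bxi_t+\bp$ with $\bp\in\SN(\bxi_t)=\SN(\bxi)\subset\GV\subset\GW$, and set $\boldeta:=\bxi+\bp\in\BUps(\bxi)\subset\Xi(\GV)$. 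Decomposing $\GW=\GV\oplus\GW'$ with $\GW'=\GW\cap\GV^\perp$, and using $\bxi_\GV,\bp\in\GV$ and $\bnu\in\GV^\perp$, one gets the orthogonal decompositions $\bu_\GW=(\bxi_\GV+\bp)+(1+t)\bnu_{\GW'}$ and $\boldeta_\GW=(\bxi_\GV+\bp)+\bnu_{\GW'}$, so that $|\boldeta_\GW|^2=|\bxi_\GV+\bp|^2+|\bnu_{\GW'}|^2\le|\bxi_\GV+\bp|^2+(1+t)^2|\bnu_{\GW'}|^2=|\bu_\GW|^2<\rho^{2\a_m}$ because $t\ge0$; hence $\boldeta\in\Xi_1(\GW)\subset\Xi_2(\GW)$, contradicting $\boldeta\in\Xi(\GV)$ and $\dim\GW>n$. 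Thus $\bxi_t$ lies in no $\Xi_2(\GW)$ with $\dim\GW>n$, and combined with $\bxi_t\in\Xi_2(\GV)$ this gives $\bxi_t\in\Xi(\GV)$ by \eqref{xi:eq}, as required.
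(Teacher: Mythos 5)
Your proposal is correct and follows essentially the same route as the paper: you first show $\bxi_t\in\Xi_2(\GV)$ by translating a representative from $\Xi_1(\GV)\cap\BUps(\bxi)$, then establish $\SN(\bxi_t)=\SN(\bxi)\subset\GV$ by showing (via the projection onto $\GV+\GV(\bth)$ and the Cassels-type bound) that no resonance direction $\bth\notin\GV$ can occur along the chain, and finally rule out $\bxi_t\in\Xi_2(\GW)$ for $\GW\supsetneq\GV$ by the monotonicity $|\boldeta_{\GW}|\le|\bu_{\GW}|$. The only cosmetic difference is that you inline the content of the paper's auxiliary Lemmas \ref{coupl:lem} and \ref{equiv:lem} rather than citing them.
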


We precede the proof with another Lemma:

\begin{lem}\label{coupl:lem}
Suppose that $\bxi\in\Xi(\GV)$ with some $\GV\subset\CV(n), n\le d$,  and
$\bxi_t = \bxi + t\bnu, \bnu = \bxi_{\GV^\perp}$ with $t\ge 0$.
 Then for sufficiently large $\rho$ we have $\SN(\bxi) = \SN(\bxi_t)$ for all $t\ge 0$,
and, in particular, $\SN(\bxi_t)\subset \GV$.
\end{lem}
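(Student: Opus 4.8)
The plan is to isolate one geometric fact and then run it through the combinatorics of resonant congruence. Write $n=\dim\GV$. If $\GV=\R^d$ then $\bnu=\bxi_{\GV^\perp}=0$, so $\bxi_t=\bxi$ and there is nothing to prove; assume $n\le d-1$. Since $\bxi\in\Xi(\GV)$, Corollary \ref{properties2:cor} gives $\SN(\bxi)\subset\GV$, and because $t\bnu\in\GV^\perp$ Lemma \ref{trans:lem} immediately yields $\SN(\bxi)\subset\SN(\bxi_t)$. The opposite inclusion will come for free once I have proved $\SN(\bxi_t)\subset\GV$: indeed, Lemma \ref{trans:lem} applied to the point $\bxi_t$ and the shift $-t\bnu\in\GV^\perp$ then gives $\SN(\bxi_t)\subset\SN(\bxi_t-t\bnu)=\SN(\bxi)$. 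So the whole lemma reduces to the inclusion $\SN(\bxi_t)\subset\GV$.

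The geometric fact I will establish is: \emph{for sufficiently large $\rho$, for every $\bzeta\in\Xi(\GV)$, every $s\ge0$, and every $\bth\in\T_r\setminus\GV$, the point $\bzeta+s\,\bzeta_{\GV^\perp}$ does not lie in $\L(\bth)$.} To prove it, set $\GW=\GV+\GV(\bth)\in\CV(r,n+1)$ (if $n=0$ this is just $\GV(\bth)$ and $\Xi_1(\GW)=\L(\bth)$). Since $\bzeta\in\Xi(\GV)$, definition \eqref{xi:eq} shows $\bzeta\notin\Xi_2(\GW)\supset\Xi_1(\GW)$, hence $|\bzeta_{\GW}|\ge\rho^{\a_{n+1}}$. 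Applying Lemma \ref{projections:lem}(1) to $\bzeta_{\GW}\in\GW$ and bounding $|\bzeta_\GV|$ by Lemma \ref{lem:properties3}, the exponent gap together with $r\le\rho^\varkappa$ in \eqref{eq:condition1} forces, for $\rho$ large,
\[
|\bzeta\cdot\bth|=|\bzeta_{\bth}|\,|\bth|\gg\rho^{\a_{n+1}-\varkappa d}\,|\bth|,\qquad |\bzeta_\GV\cdot\bth|\le|\bzeta_\GV|\,|\bth|\le\tfrac12\,|\bzeta\cdot\bth|.
\]
Therefore $\bzeta_{\GV^\perp}\cdot\bth=\bzeta\cdot\bth-\bzeta_\GV\cdot\bth$ has the same sign as $\bzeta\cdot\bth$, so the two summands in $(\bzeta+s\,\bzeta_{\GV^\perp})\cdot\bth=\bzeta\cdot\bth+s\,(\bzeta_{\GV^\perp}\cdot\bth)$ have equal sign for $s\ge0$; consequently $|(\bzeta+s\,\bzeta_{\GV^\perp})\cdot\bth|\ge|\bzeta\cdot\bth|\gg\rho^{\a_{n+1}-\varkappa d}|\bth|\ge\rho^{\a_1}|\bth|$ for $\rho$ large (using $\a_{n+1}-\varkappa d>\a_1$, again from \eqref{eq:condition1}), i.e.\ $\bzeta+s\,\bzeta_{\GV^\perp}\notin\L(\bth)$. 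I expect this to be the main obstacle; everything rests on the domination $|\bzeta_\GV\cdot\bth|\ll|\bzeta_{\GV^\perp}\cdot\bth|$, which makes $\bzeta\cdot\bth$ move monotonically away from $0$ along the ray, and which is precisely what the separation of the $\a_n$'s in \eqref{eq:condition1} buys.

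Finally I deduce $\SN(\bxi_t)\subset\GV$. Take $\boldeta\in\BUps(\bxi_t)$, realised by a resonant chain $\bth_1,\dots,\bth_m\in\T_r$ with points $\by_0=\bxi_t,\by_1,\dots,\by_m=\boldeta$ and $\by_j-\by_{j-1}=l_j\bth_j$; discarding trivial steps I may assume every $\bth_j\ne0$ and every $l_j\ne0$. I claim, by induction on $m$, that every $\bth_j\in\GV$ and that replacing each $\by_j$ by $\tilde\by_j:=\by_j-t\bnu$ produces a resonant chain from $\bxi$ to $\boldeta-t\bnu$. For the inductive step, the induction hypothesis gives $\by_{m-1}-\bxi_t=\sum_{j<m}l_j\bth_j\in\GV$; hence $\tilde\by_{m-1}\in\BUps(\bxi)\subset\Xi(\GV)$ (by \eqref{xi2:eq}), $(\tilde\by_{m-1})_{\GV^\perp}=\bnu$, and $\by_{m-1}=\tilde\by_{m-1}+t\,(\tilde\by_{m-1})_{\GV^\perp}$. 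The chain forces $\by_{m-1}\in\L(\bth_m)$, which by the geometric fact (with $\bzeta=\tilde\by_{m-1}$, $s=t$) is impossible unless $\bth_m\in\GV$; so $\bth_m\in\GV$, and since then $\bth_m\perp\bnu$, a short check gives $\tilde\by_m=\tilde\by_{m-1}+l_m\bth_m=\boldeta-t\bnu\in\BUps_{\bth_m}(\tilde\by_{m-1})$, closing the induction. In particular $\boldeta-\bxi_t=\sum_j l_j\bth_j\in\GV$, so $\SN(\bxi_t)\subset\GV$, which by the first paragraph finishes the proof.
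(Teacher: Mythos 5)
Your proof is correct and follows essentially the same route as the paper's: the key step in both is that $\bxi\notin\Xi_2(\GV+\GV(\bth))$ forces $|\bxi_{\bth}|$, and hence $|\bnu_{\bth}|$, to be of order $\rho^{\a_{n+1}}r^{-d}\ge\rho^{\a_1}$, so that moving along the ray only pushes the point further out of $\L(\bth)$, ruling out resonant steps with $\bth\notin\GV$. Your explicit induction along the congruence chain (applying the geometric fact at each intermediate point $\tilde\by_j\in\BUps(\bxi)\subset\Xi(\GV)$) spells out a propagation step that the paper's proof carries out only at the base point and then asserts for the whole equivalence class.
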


\begin{proof} For $n = d$ the result is trivial, so we assume that
$n\le d-1$. We consider the case $n>0$; the case $n=0$ is similar, and we leave it to the reader.

Let us fix a $t >0$.
By Corollary \ref{properties2:cor}, $\SN(\bxi)\subset\GV$, so that
$\SN(\bxi) \subset \SN(\bxi_t)$ by Lemma \ref{trans:lem}.

In order to prove the opposite inclusion,
it suffices to prove that
$\BUps_{\bth}( \bxi_t) - t\bnu\subset\BUps_{\bth}(\bxi)$
for each $\bth\in\T_r^0$ such that
$\BUps_{\bth}(\bxi_t)\not=\varnothing$.
If $\bth = \mathbf 0$, then the above assertion is obvious.
Suppose that $\bth\not = \mathbf 0$.
Let $\bmu_t\in\BUps_{\bth}(\bxi_t)$,
so we need to show that  $\bmu :=  \bmu_t - t\bnu\subset \BUps_{\bth}(\bxi)$.

First, we prove that $\bth\in\GV$.
Suppose the converse, i.e. that $\bth\notin\GV$,
and define $\GW = \GV + \GV(\bth)\in\CV(n+1)$.
We may assume that $|\bxi_{\GW}|\ge \rho^{\a_{n+1}}$. Indeed, otherwise
we would have
$|\bxi_{\GW}|<\rho^{\a_{n+1}}$,
which would imply that $\bxi\in\Xi_1(\GW)\subset\Xi_2(\GW)$, but
the latter is impossible in view of the condition $\bxi\in\Xi(\GV)$. Since
\begin{equation}\label{aux:eq}
|\bxi_{\GV}|\le 2\rho^{\frac{\a_n+\a_{n+1}}{2}},\
\end{equation}
see Lemma \ref{lem:properties3}, we can
claim by virtue of Lemma \ref{projections:lem} and
conditions \eqref{eq:condition1} that
\begin{equation*}
|\bxi_{\bth}|\ge r^{-d} |\bxi_{\GW}| - |\bxi_{\GV}| \gg \rho^{\a_{n+1}} r^{-d}.
\end{equation*}
Consequently,
\begin{equation*}
|\bnu_{\bth}| = |\bxi_{\GV^\perp}\cdot \normal(\bth)|\ge |\bxi_{\bth}| - |\bxi_{\GV}|
\gg \rho^{\a_{n+1}} r^{-d}.
\end{equation*}
Therefore,
\begin{align*}
| (\bxi_t)_{\bth}| =  &\ |\bxi_{\bth} +
t \bnu_{\bth}|
= |(\bxi_{\GV})_{\bth} + (t+1) \bnu_{\bth}| \\[0.2cm]
\ge &\ (t+1)|\bnu_{\bth}| - |\bxi_{\GV}|
\ge \rho^{\a_{n+1}}r^{-d}
\ge \rho^{\a_1},\ \ t\ge 0,
\end{align*}
so that $ \bxi_t\notin\L(\bth), t\ge 0$,
which contradicts
the fact that $\bxi_t\lra\bmu_t \mod \bth$. Thus, $\bth\in\GV$.

As $\bnu\in\GV^\perp$, the inclusion $\bmu_t\in\BUps_{\bth}(\bxi_t)$ implies
$\bmu\in\BUps_{\bth}(\bxi)$ by Lemma \ref{trans:lem}. Therefore,
$\BUps_{\bth}(\bxi_t)=\BUps_{\bth}(\bxi)+t\bnu$, which in turn implies
$\BUps(\bxi_t)=\BUps(\bxi)+t\bnu$ and $\SN(\bxi_t)=\SN(\bxi)$.
The inclusion $\SN(\bxi_t)\subset\GV$ follows from $\SN(\bxi)\subset\GV$.
\end{proof}

\begin{proof}[Proof of Lemma \ref{perpendicular:lem}]
As in the previous proof assume that $n\le d-1$.
Let us fix a $t >0$.
Since $\bxi\in\Xi(\GV)\subset\Xi_2(\GV)$, one can find a vector
$\boldeta\in\Xi_1(\GV)\cap\BUps(\bxi)$.
As $\bnu\in\GV^\perp$,
this implies that $ \boldeta_t := \boldeta + t\bnu\in\Xi_1(\GV)$ and
by Lemma \ref{trans:lem},
$\bxi_t\in\BUps(\boldeta_t)\subset\Xi_2(\GV)$.
Thus, in view of Lemma \ref{equiv:lem}
it remains to prove that $\bxi_t\not\in\Xi_2(\GW)$
for any $\GW\supsetneq\GV$.
Suppose the contrary, i.e. for some $\GW\in\CV(m)$, $m>n$, $\GW\supset\GV$,
we have $\bxi_t\in\Xi_2(\GW)$.
We show that under this condition
we have $\bxi\in\Xi_2(\GW)$, which would contradict the
assumption $\bxi\in\Xi(\GV)$.

Denote $\GU:=\GW\ominus\GV$ ($\GU$ does not have to
be a lattice $r$-subspace).
By Definition \eqref{xi1:eq} there is a vector
$\bmu_t\in\Xi_1(\GW)$, resonant congruent to $\bxi_t$.
By Lemma \ref{coupl:lem}, $\SN(\bxi_t) = \SN(\bxi)\subset\GV$,
so that $(\bmu_t)_{\GU} = (\bxi_t)_{\GU}$.
Therefore, for the vector  $\bmu = \bmu_t - t\bnu$
we also have $ \bmu_{\GU} =  \bxi_{\GU}$.
This implies that
\begin{align*}
| (\bmu_t)_{\GW}|^2 = &\ |\bmu_{\GW} + t\bnu_{\GW}|^2 =
|\bmu_{\GW} +  t \bxi_{\GU}|^2 = |\bmu_{\GW} +  t \bmu_{\GU}|^2\\[0.2cm]
= &\  |\bmu_{\GV} + (t +1)\bmu_{\GU}|^2
= |\bmu_{\GV}|^2 + (t+1)^2 |\bmu_{\GU}|^2
\ge  \ |\bmu_{\GW}|^2, t\ge 0.
\end{align*}
By definition \eqref{xi1:eq}, $|(\bmu_t)_{\GW}|< \rho^{\a_m}$, and hence
$|\bmu_{\GW}|< \rho^{\a_m}$,
i.e. $\bmu\in\Xi_1(\GW)$.
On the other hand, since $ \bmu_t\in\BUps(\bxi_t)$, Lemma \ref{coupl:lem}
entails that $\bmu\in\BUps(\bxi)$, and thus $\bmu\in\Xi(\GV)$. This contradicts
the inclusion $\bmu\in\Xi_1(\GW)$, by definition of $\Xi(\GV)$.
Consequently, $\bxi_t\notin\Xi_2(\GW)$ for any
$\GW\supset\GV$, and the proof of the Lemma is complete.
\end{proof}


\section{Invariant subspaces for the ``gauged" operator}

The resonant sets $\Xi(\GV)$ are designed to describe the invariant subspaces
of the periodic PDO's having the form
\begin{equation}\label{model:eq}
A = H_0 + B^o + B^\flat
\end{equation}
with the symbols $h_0(\bxi) = |\bxi|^{2m}$ and $b\in \BS_{\a}(w)$, where
 $\a \in\R$, $w(\bxi) = \lu \bxi\ru^{\b}$, $\b\in (0, 1)$.
 By \eqref{decay:eq} and \eqref{subord:eq},
 \begin{equation}\label{bm:eq}
 |\BD_{\bxi}^{\mathbf s} b(\bth, \bxi)|
 + |\BD_{\bxi}^{\mathbf s} \hat b^o(\bth, \bxi)|
 + |\BD_{\bxi}^{\mathbf s} b^{\flat}(\bth, \bxi)| \ll \1 b\1^{(\a)}_{l, s}
 \lu\bxi\ru^{(\a - |\mathbf s|)\b}\lu \bth\ru^{-l},
 \end{equation}
 for all $\mathbf s$.
 We always assume that \eqref{alm1:eq} is satisfied, so that
 \begin{equation}\label{alm:eq}
 2m > \a\b,\ 2m-1 > \b(\a-1), 2m-2 > \b(\a-2).
 \end{equation}
 This guarantees that the symbol $b$ and its first two derivatives
grow slower than the principal symbol $h_0$ and its corresponding derivatives
respectively.

In order to use the resonant sets $\Xi(\GV)$ constructed previously, set
\begin{equation*}
\a_1 = \b,
\end{equation*}
and assume that the condition \eqref{eq:condition1} is satisfied.
In addition to  the symbol \eqref{flat:eq}, for any lattice subspace $\GV\in\CV(n)$,
$n=1, 2, \dots, d$ we define
\begin{equation}\label{bflatv:eq}
b_{\GV}^{\flat}(\bx, \bxi; \rho) = \frac{1}{\sqrt{\dc(\SG)}}
\sum_{\bth\in\T_r\cap\GV}
\hat b(\bth, \bxi; \rho)\z_{\bth}(\bxi; \rho^\b)
e_{\bth}(\bxi) e^{i\bth \bx}.
\end{equation}
It is clear that the above symbol retains from $b^{\flat}$ only the Fourier coefficients
with $\bth\in\GV$. Introduce also the notation for the  appropriate reduced version of the
model operator \eqref{model:eq}:
\begin{equation*}
A_{\GV} = H_0 + B^o + B_{\GV}^{\flat},\ B_{\GV}^{\flat} = \op(b_{\GV}^{\flat}).
\end{equation*}
Recall (see Subsect. \ref{fibre:subsect})
that for any set $\CC\subset\R^d$ we denote by $\CP(\CC)$ the operator
$\chi(\BD; \CC)$, where $\chi(\ \cdot\ ; \CC)$ is the characteristic function
of the set $\CC$. Accordingly, for the operators in the Floquet decomposition
acting on the torus, we define $\CP(\bk; \CC)$ to be $\chi(\BD+\bk; \CC)$.

In what follows we still apply Convention \ref{uniform:conv}, and add to
it one more rule: the estimates which we obtain are also uniform in the symbol $b$, satisfying
the condition $\1 b\1^{(\a)}\ll 1$.

\begin{lem}
Let $b$ be as above. Then
for sufficiently large $\rho$, and any $\GV\in\CV(n)$ we have
\begin{equation}\label{inv:eq}
B^\flat\CP\bigl(\Xi(\GV)\bigr) = B^{\flat}_{\GV}\CP\bigl(\Xi(\GV)\bigr)
= \CP\bigl(\Xi(\GV)\bigr)B^{\flat}_{\GV}\CP\bigl(\Xi(\GV)\bigr),
\end{equation}
and
\begin{equation}\label{inv0:eq}
B^\flat(\bk) \CP(\bk; \BUps(\bmu))
= \CP(\bk; \BUps(\bmu))B_{\GV}^\flat(\bk) \CP(\bk; \BUps(\bmu)),
\end{equation}
for any $\bk\in\CO^\dagger$ and any $\bmu\in\Xi(\GV)$ with $\{\bmu\} = \bk$.
\end{lem}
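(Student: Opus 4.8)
The plan is to work directly with the Fourier-coefficient description of the operators. Recall that $\CP\bigl(\Xi(\GV)\bigr)$ is the PDO with symbol $\sum_{\bm\in\SG^\dagger}\chi(\bm+\bk;\Xi(\GV))$, i.e.\ on the Floquet fibre it is the orthogonal projection onto the exponentials $E_\bm$ with $\bm+\bk\in\Xi(\GV)$. Likewise $B^\flat$ acts between exponentials $E_\bm$ and $E_{\bm+\bth}$ with a coefficient proportional to $\hat b(\bth,\bm+\bk)\,\z_{\bth}(\bm+\bk;\rho^\b)\,e_{\bth}(\bm+\bk)$, where $\bth\in\T_r$ (from the definition \eqref{flat:eq}), and $B^\flat_{\GV}$ is the same sum restricted to $\bth\in\T_r\cap\GV$ (from \eqref{bflatv:eq}). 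So the difference $B^\flat-B^\flat_{\GV}$ only involves jumps $\bth\in\T_r$ with $\bth\notin\GV$.

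First I would prove \eqref{inv0:eq}, which is the cleaner statement. Fix $\bmu\in\Xi(\GV)$ with $\{\bmu\}=\bk$ and write $\bmu_0:=[\bmu]$, so $\bmu=\bmu_0+\bk$. Apply $B^\flat(\bk)$ to a basis vector $E_{\bm}$ with $\bm\in\SN(\bmu)$ (i.e.\ $\bmu+\bm\in\BUps(\bmu)$); the output is a combination of $E_{\bm+\bth}$, $\bth\in\T_r$, and by \eqref{flatdecay:eq}-type support considerations — more precisely by the cut-offs $\z_{\bth}e_{\bth}$ in \eqref{flat:eq} — the coefficient of $E_{\bm+\bth}$ is nonzero only when $\bmu+\bm\in\L(\bth)$ and $\tfrac12\rho\le|\bmu+\bm+\bth/2|\le\tfrac32\rho$. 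When that coefficient is nonzero, $\bmu+\bm$ and $\bmu+\bm+\bth$ are $\bth$-resonant congruent (Definition \ref{reachability:defn}(\ref{1})), hence $\bmu+\bm+\bth\in\BUps(\bmu+\bm)=\BUps(\bmu)$ by transitivity. This simultaneously shows (a) $B^\flat(\bk)$ maps $\GH(\bk;\BUps(\bmu))$ into itself, so $\CP(\bk;\BUps(\bmu))B^\flat(\bk)\CP(\bk;\BUps(\bmu))=B^\flat(\bk)\CP(\bk;\BUps(\bmu))$, and (b) $\bth\in\SN(\bmu+\bm)-\{\bmu+\bm\}\subset\GV$ by Corollary \ref{properties2:cor} applied to $\bmu+\bm\in\BUps(\bmu)\subset\Xi(\GV)$ (using \eqref{span:eq} / \eqref{xi2:eq}), for $\rho$ large; hence only $\bth\in\T_r\cap\GV$ contribute and $B^\flat(\bk)\CP(\bk;\BUps(\bmu))=B^\flat_{\GV}(\bk)\CP(\bk;\BUps(\bmu))$. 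Combining gives \eqref{inv0:eq}.

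Next I would deduce \eqref{inv:eq} by assembling the fibrewise statement over $\bk$, or equivalently arguing directly on $\R^d$: $\CP\bigl(\Xi(\GV)\bigr)$ projects onto the span of exponentials $e^{i\bxi\cdot\bx}$ with $\bxi\in\Xi(\GV)$, which by \eqref{equiv:eq} decomposes as the union over equivalence classes $\BUps(\bmu)\subset\Xi(\GV)$. On each such class the computation above shows $B^\flat$ restricted to that class equals $B^\flat_{\GV}$ restricted to it and preserves it; since $\BUps(\bmu)\subset\Xi(\GV)$ by \eqref{xi2:eq}, summing over classes gives $B^\flat\CP(\Xi(\GV))=B^\flat_{\GV}\CP(\Xi(\GV))=\CP(\Xi(\GV))B^\flat_{\GV}\CP(\Xi(\GV))$, which is \eqref{inv:eq}. (The last equality is the invariance statement: $B^\flat_{\GV}$ maps $\CH(\Xi(\GV))$ into itself.)

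The main obstacle is the bookkeeping in step (b): one must be sure that whenever the coefficient of a jump $\bth$ is nonzero, the relevant point genuinely lies in $\L(\bth)$ so that resonant congruence — and not merely proximity — holds, and that $\bth\in\T_r$ so that $\bth$ is an admissible jump vector in Definition \ref{reachability:defn}. This is exactly where one needs $\a_1=\b$ (so that the cut-off $\z_{\bth}(\,\cdot\,;\rho^\b)$ in \eqref{flat:eq} is supported inside $\L(\bth)=\{|\bxi\cdot\bth|<\rho^{\a_1}|\bth|\}$, cf.\ \eqref{layer:eq}) and the sufficiently-large-$\rho$ hypothesis, via Corollary \ref{properties2:cor}, to guarantee $\SN\subset\GV$. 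Once those two support facts are pinned down, the rest is routine.
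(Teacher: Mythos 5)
Your proof is correct and follows essentially the same route as the paper's: both rest on the observation that the support of $\z_{\bth}(\,\cdot\,;\rho^\b)$ lies inside $\L(\bth)$ (so a nonzero jump $\bth$ forces $\bth$-resonant congruence of $\bxi$ and $\bxi+\bth$ and, via Lemma \ref{properties2:lem} / Corollary \ref{properties2:cor}, forces $\bth\in\GV$), and then pass from \eqref{inv0:eq} to \eqref{inv:eq} via \eqref{equiv:eq} and the direct integral. The only detail to pin down is that Definition \ref{reachability:defn} requires \emph{both} $\bxi$ and $\bxi+\bth$ to lie in $\L(\bth)$; the second inclusion follows from the same one-line estimate because the cut-off is centred at $\bxi+\bth/2$.
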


\begin{proof}
Assume without loss of generality that
$b^\flat$ has only one non-zero Fourier coefficient, i.e.
\begin{equation}\label{odin:eq}
b^{\flat}(\bx, \bxi; \rho) = \frac{1}{\sqrt{\dc(\SG)}}
\hat b(\bth, \bxi; \rho)\z_{\bth}(\bxi; \rho^\b)
e_{\bth}(\bxi) e^{i\bth \bx},
\end{equation}
so that $b_{\GV}^{\flat} = 0$ if $\bth\notin\GV$. According to \eqref{floquet:eq},
\begin{align}
(B^{\flat}(\bk) u)(\bx) = &\ \frac{1}{\dc(\SG)}
\sum_{\bm\in\SG^\dagger}
\hat b^\flat (\bth, \bm+\bk) e^{i(\bm+\bth)\bx} \hat u(\bm),\notag\\[0.2cm]
\bigl(B^\flat(\bk) \CP(\bk; \BUps(\bmu))u\bigr)(\bx)
=   &\ \frac{1}{\dc(\SG)}
\sum_{\bm: \bm+\bk\in\BUps(\bmu)}
\hat b^\flat (\bth, \bm+\bk) e^{i(\bm+\bth)\bx} \hat u(\bm), \label{floquetups:eq}
\end{align}
for any $u\in\plainL2(\Td)$.
Observe that by virtue of \eqref{phizeta:eq}
for any $\bxi:=\bm+\bk\in\supp\z_{\bth}(\ \cdot\ ; \rho^\b)$ we have
\begin{equation*}
|\bxi\cdot\bth|
\le |\bth(\bxi+\bth/2)| + |\bth|^2/2\le (\rho^{\b}/2+ r/2)|\bth|<
\rho^\b |\bth|,
\end{equation*}
that is $\bxi\in\L(\bth)$,
and a similar calculation shows that
$\bxi+\bth\in\L(\bth)$ as well.
By Lemma \ref{properties2:lem},
$\BUps(\bmu)\cap \L(\bth)  = \varnothing$, if $\bth\notin\GV$, so
it follows from \eqref{floquetups:eq}
that
\[
B^\flat(\bk) \CP(\bk; \BUps(\bmu)) = B^\flat_{\GV}(\bk) \CP(\bk; \BUps(\bmu)) =0,
\ \ \  \textup{if} \ \ \bth\notin\GV.
 \]
In the case $\bth\in\GV$, by Definition \ref{reachability:defn}, the points
$\bxi:=\bm+\bk\in\BUps(\bmu)$ and $\bxi+\bth$ are $\bth$-resonant congruent,  so that
$\bxi+\bth\in\BUps(\bmu)$. This completes the proof of \eqref{inv0:eq}.

Using \eqref{equiv:eq} we get from \eqref{inv0:eq}:
\begin{equation*}
B^\flat(\bk) \CP(\bk; \Xi(\GV))
= \CP(\bk; \Xi(\GV))B_{\GV}^\flat(\bk) \CP(\bk; \Xi(\GV)).
\end{equation*}
Taking the direct integral in $\bk$ yields \eqref{inv:eq}.
\end{proof}

\subsection{Operator $A$ in the invariant subspaces}
Due to the properties \eqref{exhaust:eq} and
\eqref{disjoint:eq}, the formulas \eqref{inv:eq} and \eqref{inv0:eq}
imply the following orthogonal decomposition for the Floquet fibres $A(\bk)$:
\begin{equation}\label{orhogonal:eq}
A(\bk) = \underset{\GV\in \mathcal W(r)}{\bigoplus} A(\bk; \Xi(\GV))
= \underset{\GV\in \mathcal W(r)}{\bigoplus}
\bigoplus_{\substack{\bmu \in \Xi(\GV)/{\lra}\\ \{\bmu\} = \bk}}
A_{\GV}(\bk; \BUps(\bmu)).
\end{equation}
Since $\card\BUps(\bmu)<\infty$, see \eqref{card:eq}, for each $\bmu\in \Xi(\GV)$
the operator
$A_{\GV}(\bk; \BUps(\bmu))$
is finite dimensional. In the basis
$$
E_{[\bmu]+\bm}(\bx),\ \bm \in \SN(\bmu)
$$
(see definition \eqref{exp:eq}),
of the subspace $\GH\bigl(\bk; \BUps(\bmu)\bigr)$,
the operator $A(\bk; \BUps(\bmu))$ reduces to the matrix $\CA(\bmu)$ with
the entries
\begin{equation}\label{ca:eq}
\CA_{\bm, \bn}(\bmu) = \frac{1}{\sqrt{\dc(\SG)}}\
\hat a (\bm  - \bn, \bmu + \bn; \rho),\ \bm, \bn\in  \SN(\bmu).
\end{equation}
Denote by $\l_j(\CA(\bmu))$, $j = 1, 2, \dots, N(\bmu) = \card \SN(\bmu)$ the
eigenvalues of the matrix $\CA(\bmu)$, arranged in descending order.
It is easy to check that the matrices $\CA(\bmu)$ and $\CA(\bmu')$
with $\bmu'\in\BUps(\bmu)$, are unitarily equivalent,
so that the eigenvalues do not depend
on the choice of $\bmu$, but only on the set $\BUps(\bmu)$.

\begin{lem}\label{twosided:lem}
Let $\l_j(\CA(\bmu))$
be the eigenvalues introduced above. Then for sufficiently large $\rho$,
for all $|\bmu|\gg \rho$, and for all $ j = 1, 2, \dots, N(\bmu)$ one has
\begin{equation*}
\l_j(\CA(\bmu))\asymp \min_{\boldeta\in\BUps(\bmu)}|\boldeta|^{2m}
\asymp
\max_{\boldeta\in\BUps(\bmu)}|\boldeta|^{2m}
\asymp|\bmu|^{2m},
\end{equation*}
uniformly in $\bmu$.
\end{lem}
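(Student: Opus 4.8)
The plan is to realise $\CA(\bmu)$ as a diagonal matrix plus a small off-diagonal perturbation, and then to apply the eigenvalue stability estimate for Hermitian matrices. First I would observe that, since $h_0$ and $b^o$ depend only on $\bxi$ and $\widehat{b^\flat}(\mathbf 0,\ \cdot\ ;\rho)=0$ by \eqref{flat:eq}, the matrix \eqref{ca:eq} splits as $\CA(\bmu)=\CA_0(\bmu)+\CA_1(\bmu)$, where $\CA_0(\bmu)$ is diagonal with $(\CA_0(\bmu))_{\bm,\bm}=|\bmu+\bm|^{2m}+b^o(\bmu+\bm)$ for $\bm\in\SN(\bmu)$, and $\CA_1(\bmu)$ collects the off-diagonal coefficients $\dc(\SG)^{-1/2}\,\widehat{b^\flat}(\bm-\bn,\bmu+\bn;\rho)$; both are Hermitian because $A=H_0+B^o+B^\flat$ is self-adjoint. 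The two summands will be controlled separately.

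For the diagonal part I would use that $|\bmu|\gg\rho$, so Lemma \ref{bups:lem} is applicable and gives $|\bmu+\bm|\asymp|\bmu|$ uniformly for $\bm\in\SN(\bmu)$, hence $|\bmu+\bm|^{2m}\asymp|\bmu|^{2m}$; moreover \eqref{decay:eq} yields $|b^o(\bmu+\bm)|\ll\1 b\1^{(\a)}\,\lu\bmu+\bm\ru^{\a\b}\ll|\bmu|^{\a\b}$, which, since $\a\b<2m$ by \eqref{alm:eq}, is $o(|\bmu|^{2m})$. Thus for $\rho$ large every diagonal entry lies between $c|\bmu|^{2m}$ and $C|\bmu|^{2m}$ with fixed $0<c<C$, so that $\l_j(\CA_0(\bmu))\asymp|\bmu|^{2m}$ uniformly in $j$. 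For the off-diagonal part the key input is Lemma \ref{smallorthog:lem}(ii), i.e. \eqref{nat:eq} with $\g=0$, which together with \eqref{decay:eq} gives $|\widehat{b^\flat}(\bth,\bxi;\rho)|\ll\rho^{\b\a}\,\1 b\1^{(\a)}_{l,0}\,\lu\bth\ru^{-l}$; since the rows and columns of $\CA_1(\bmu)$ are indexed by distinct lattice vectors in $\SN(\bmu)\subset\SG^\dagger$, a Schur-test bound (with $l>d$ and $\1 b\1^{(\a)}\ll1$, in the spirit of Proposition \ref{bound:prop}) then gives $\|\CA_1(\bmu)\|\ll\rho^{\b\a}$, uniformly in $\bmu$.

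Finally I would combine these facts: because $|\bmu|\gg\rho$ and $\a\b<2m$ we have $\rho^{\b\a}=o(|\bmu|^{2m})$ as $\rho\to\infty$, so for $\rho$ large $\|\CA_1(\bmu)\|\le\tfrac12\,c\,|\bmu|^{2m}$, and then Weyl's inequality $|\l_j(\CA(\bmu))-\l_j(\CA_0(\bmu))|\le\|\CA_1(\bmu)\|$ yields $\l_j(\CA(\bmu))\asymp|\bmu|^{2m}$ uniformly in $\bmu$ and $j$; one more application of Lemma \ref{bups:lem} supplies the outer links $\min_{\boldeta\in\BUps(\bmu)}|\boldeta|^{2m}\asymp\max_{\boldeta\in\BUps(\bmu)}|\boldeta|^{2m}\asymp|\bmu|^{2m}$, completing the chain. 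I do not expect a serious obstacle; the one point that needs care is the bookkeeping of quantifiers, so that ``$\rho$ sufficiently large'' together with ``$|\bmu|\gg\rho$'' genuinely forces $|\bmu|^{2m}$ to dominate both $\rho^{\b\a}$ and $|\bmu|^{\a\b}$ with uniform constants — everything else reduces to the calculus already developed (Lemma \ref{bups:lem}, \eqref{nat:eq}, \eqref{decay:eq}).
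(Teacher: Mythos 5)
Your argument is correct, but it takes a more hands-on route than the paper. The paper does not decompose $\CA(\bmu)$ into diagonal and off-diagonal pieces at all: it invokes Lemma \ref{formbound:lem} (using $\a\b<2m$) to conclude that $b^{o,\flat}$ is infinitesimally $H_0$-bounded, hence $cH_0-\tilde C\le A\le CH_0+\tilde C$ as quadratic forms; these inequalities pass to the fibres and then to the restrictions on the invariant subspace $\GH(\bk;\BUps(\bmu))$, so by the min--max principle each $\l_j(\CA(\bmu))$ is sandwiched between $c\l_j(H_0(\bk;\BUps(\bmu)))-\tilde C$ and $C\l_j(H_0(\bk;\BUps(\bmu)))+\tilde C$, and the eigenvalues of $H_0$ on that subspace are exactly the numbers $|\boldeta|^{2m}$, $\boldeta\in\BUps(\bmu)$, which Lemma \ref{bups:lem} identifies with $|\bmu|^{2m}$ up to constants. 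Your version instead absorbs $b^o$ into the diagonal, isolates $b^\flat$ as the off-diagonal Hermitian perturbation, bounds its norm by $\rho^{\b\a}$ via \eqref{nat:eq} and a Schur test, and finishes with Weyl's inequality. What the paper's route buys is brevity and the avoidance of any explicit norm estimate on $B^\flat$ (only relative boundedness of the whole perturbation is needed); what your route buys is slightly more information, namely that each $\l_j(\CA(\bmu))$ is within $O(\rho^{\b\a})+O(|\bmu|^{\a\b})$ of the specific unperturbed value $|\bmu+\bm|^{2m}$, not merely comparable to $|\bmu|^{2m}$. Both arguments rest on the same two pillars --- the condition $\a\b<2m$ from \eqref{alm:eq} and Lemma \ref{bups:lem} --- and your attention to the quantifier bookkeeping at the end (that $|\bmu|\gg\rho$ with $\rho$ large forces both $\rho^{\b\a}$ and $|\bmu|^{\a\b}$ to be $o(|\bmu|^{2m})$ uniformly) is exactly the point that needed care; it goes through.
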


\begin{proof}
The operator $A$ has the form $H_0+ b^{o, \flat}$, and since
$\a\b < 2m$ (see \eqref{alm:eq}),
by Lemma
\ref{formbound:lem}, the perturbation
$b^{o, \flat}$ is infinitesimally $H_0$-bounded,
so that $cH_0 - \tilde C\le A\le CH_0 + \tilde C$ with some
positive constants $C, c, \tilde C$.
Therefore, the same bounds hold for the
fibers $H_0(\bk)$ and $A(\bk)$. As a consequence,
the restriction of both operators to
the subspace $\GH(\bk, \BUps(\bmu))$
satisfy the same inequalities:
$$
c H_0(\bk; \BUps(\bmu)) - \tilde C\le A (\bk; \BUps(\bmu))
\le C H_0 (\bk; \BUps(\bmu)) + \tilde C.
$$
Now the claimed inequalities follows from Lemma \ref{bups:lem}.
\end{proof}

If $\BUps(\bmu)$ is non-critical (see Definition \ref{chain:defn}),
the set $\SN(\bmu)$
remains constant in a neighbourhood of $\bmu$.
Since the entries of the matrix $\CA$ depend continuously on $\bmu$, we conclude that
the eigenvalues $\l_j(\CA(\ \cdot\ ))$
are continuous in a neighbourhood of such a point $\bmu$.
Moreover, by virtue of Lemma \ref{lem:upsilon}, for any $\bmu\in\Xi(\GV)$
the set $\SN(\bmu)$
remains constant if $\bmu_\GV$ is kept constant, and
hence it makes sense  to study the eigenvalues as functions of the component
$\bnu = \bmu_{\GV^{\perp}}$. Define the matrix
\begin{equation*}
\tilde\CA(t) = \CA(\bmu_{\GV} + t\normal(\bnu)),\ \normal(\bnu) = \frac{\bnu}{|\bnu|},
\end{equation*}
with a real-valued parameter $t\ge t_0 := |\bnu|$. By \eqref{ca:eq}
the entries of this matrix are
\begin{equation}\label{cb:eq}
\tilde\CA_{\bm, \bn}(t) = \frac{1}{\sqrt{\dc(\SG)}}
\hat a(\bm - \bn, \bmu_{\GV} + \bn + t\normal(\bnu); \rho),\ \bm, \bn\in\SN(\bmu).
\end{equation}
By Lemma \ref{perpendicular:lem} the
matrix $\tilde \CA$ is well-defined on the interval $[t_0, \infty)$.

\begin{lem}\label{radial:lem}
Let \eqref{alm:eq} be satisfied.
Suppose that  $\bmu\in\Xi(\GV)$ and $|\bmu|\asymp\rho$.
Then
\begin{equation}\label{radial1:eq}
\l_j(\tilde\CA(t_2)) - \l_j(\tilde\CA(t_1))\asymp
 \rho^{2m-1} (t_2-t_1),
\end{equation}
for any $t_1, t_2 \asymp t_0,\  t_0 \le t_1 <t_2$,
uniformly in $j = 1, 2, \dots, \SN(\bmu)$, $\bmu $ and $\GV$.
\end{lem}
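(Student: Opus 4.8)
The plan is to differentiate the Hermitian matrix $\tilde\CA(t)$ with respect to $t$, show that $\frac{d}{dt}\tilde\CA(t)$ is a positive scalar matrix of size $\rho^{2m-1}$ up to negligible errors, and then integrate. First I would record the structural facts. By Corollary~\ref{properties2:cor}, $\SN(\bmu)\subset\GV$, and by Lemma~\ref{coupl:lem} (invoked inside the proof of Lemma~\ref{perpendicular:lem}) the set $\SN(\bmu)$ is constant along the ray $\bmu_\GV+t\normal(\bnu)$, $t\ge t_0$; hence for each such $t$ the matrix $\tilde\CA(t)$ is, in the fixed basis indexed by $\SN(\bmu)$, the matrix of the corresponding fibre $A_\GV(\,\cdot\,;\BUps(\,\cdot\,))$, so it is Hermitian, and (since, as shown below, $|\bxi_\bn(t)|\asymp\rho$ with $\bxi_\bn(t):=\bmu_\GV+\bn+t\normal(\bnu)$) its entries \eqref{cb:eq} are smooth in $t$ --- one stays away from the singularity of $h_0$ at the origin and from the boundaries of the cut-offs defining $b^\flat$.

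Next I would split $\tilde\CA(t)$ into its diagonal part, with entries $h_0(\bxi_\bn(t))+b^o(\bxi_\bn(t))$, and its off-diagonal part, which comes entirely from $b^\flat$. The geometric point is that $\bn\in\SN(\bmu)\subset\GV$ and $\bmu_\GV\in\GV$ are orthogonal to $\normal(\bnu)\in\GV^\perp$, so that $\bxi_\bn(t)\cdot\normal(\bnu)=t$ and $|\bxi_\bn(t)|^2=|\bmu_\GV+\bn|^2+t^2$. By Lemma~\ref{lem:properties3}, $|\bmu_\GV|\ll\rho^{\a_d}$, and by \eqref{card:eq}, $|\bn|\ll\rho^{(\a_1+\a_2)/2}$, both $o(\rho)$ because $\a_d<1$, while $t\asymp t_0=|\bmu_{\GV^\perp}|\asymp|\bmu|\asymp\rho$; hence $|\bxi_\bn(t)|\asymp\rho$ uniformly for $t\in[t_1,t_2]$. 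Differentiating the diagonal entry,
\[
\frac{d}{dt}\bigl(h_0(\bxi_\bn(t))+b^o(\bxi_\bn(t))\bigr)
= 2m\,t\,|\bxi_\bn(t)|^{2m-2}+\normal(\bnu)\cdot\nabla b^o(\bxi_\bn(t)),
\]
whose leading term is $\asymp\rho^{2m-1}$ with uniform positive lower and upper constants (recall $m>0$, $t>0$).

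It remains to absorb the errors. By \eqref{decay:eq}, $|\nabla b^o(\bxi)|\ll\1 b\1^{(\a)}_{l,1}\lu\bxi\ru^{\b(\a-1)}$, so the $b^o$-term is $\ll\rho^{\b(\a-1)}$. For the off-diagonal part, $\partial_{\normal(\bnu)}b^\flat\in\BS_0(w)$ with $\1 \partial_{\normal(\bnu)}b^\flat \1^{(0)}_{l,s}\ll\rho^{\b(\a-1)}\1 b \1^{(\a)}_{l,s+1}$ by \eqref{nat:eq} (with $\g=1$) and \eqref{decay:eq}, so Proposition~\ref{bound:prop} bounds the norm of $\frac{d}{dt}$ of the off-diagonal part by $\ll\rho^{\b(\a-1)}$, uniformly in the (possibly large) size $N(\bmu)$ of the matrix thanks to the $\lu\bth\ru^{-l}$-decay with $l>d$. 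Since $\b(\a-1)<2m-1$ by \eqref{alm:eq}, both errors are $o(\rho^{2m-1})$, hence for large $\rho$,
\[
c\,\rho^{2m-1}\,I\ \le\ \frac{d}{dt}\tilde\CA(t)\ \le\ C\,\rho^{2m-1}\,I,\qquad t\in[t_1,t_2],
\]
as quadratic forms, with $0<c<C$ uniform in $j$, $\bmu$ and $\GV$. Integrating over $[t_1,t_2]$ and using the min-max principle (monotonicity of the ordered eigenvalues of Hermitian matrices under an ordered perturbation) gives $c\,\rho^{2m-1}(t_2-t_1)\le\l_j(\tilde\CA(t_2))-\l_j(\tilde\CA(t_1))\le C\,\rho^{2m-1}(t_2-t_1)$, which is \eqref{radial1:eq}.

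The main obstacle is the second step: establishing the clean formula for the radial derivative of $h_0$ and, above all, the bound $|\bxi_\bn(t)|\asymp\rho$ uniformly over the whole segment $[t_1,t_2]$ --- this is precisely where the inclusion $\SN(\bmu)\subset\GV$, the orthogonality $\GV\perp\bnu$, and the scaling Lemmas~\ref{lem:properties3} and~\ref{perpendicular:lem} enter --- together with making the estimate on the norm of the off-diagonal derivative genuinely uniform in the matrix dimension $N(\bmu)$, which grows with $\rho$.
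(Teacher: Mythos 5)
Your proposal is correct and follows essentially the same route as the paper's proof: decompose $\frac{d}{dt}\tilde\CA(t)$ into the diagonal contribution $2m\,t\,|\bmu_t+\bm|^{2m-2}\asymp\rho^{2m-1}$ (using $|\bmu_\GV|,|\bm|\ll\rho^{\a_d}$ and $t\asymp\rho$) plus the matrix of $\normal(\bnu)\cdot\nabla\hat b^{o,\flat}$, whose norm is controlled by $\rho^{\b(\a-1)}=o(\rho^{2m-1})$ via the $\lu\bm-\bn\ru^{-l}$ decay, and then integrate and apply eigenvalue monotonicity. The only cosmetic difference is that the paper bounds the error matrix by a direct Schur estimate rather than by citing Proposition \ref{bound:prop}, but the content is identical.
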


\begin{proof} By Lemma \ref{lem:properties3}, $|\bmu_{\GV}|\le 2\rho^{\a_{d}}$, so
that $t_0=|\bnu|\asymp |\bmu|\asymp \rho$.
By the elementary perturbation theory, it would suffice to establish for the matrix
\begin{equation*}
\tilde\CA(t_1, t_2) = \tilde\CA(t_2) - \tilde\CA(t_1)
\end{equation*}
the relation
\begin{equation*}
(\tilde\CA(t_1, t_2) u, u)\asymp \rho^{2m-1} (t_2 - t_1)\|u\|^2,\ \
t_1, t_2\asymp \rho, t_2 > t_1\ge t_0,
\end{equation*}
for all $u\in\GH$.
The entries of this matrix are
\begin{equation*}
\int_{t_1}^{t_2} \mathcal Y_{\bm, \bn} (t) dt,\ \
\mathcal Y_{\bm, \bn} (t) = \frac{d}{dt} \tilde\CA_{\bm, \bn}(t).
\end{equation*}
We show that the matrix $\mathcal Y(t)$
satisfies
\begin{equation}\label{caly:eq}
(\mathcal Y(t) u, u)\asymp\rho^{2m-1} \|u\|^2,\ t\asymp\rho,
\end{equation}
for all $u\in \GH$, uniformly in $\bmu$,\ $\GV$, and the symbol $b$.
Denote
\begin{equation*}
\bmu_t = \bmu_{\GV} + t \normal(\bnu), \bnu = \bmu_{\GV^\perp}.
\end{equation*}
Corollary \ref{properties2:cor} implies that $\SN(\bxi)\in\GV$. Therefore,
$\mathcal Y(t)$ is the sum of the matrix with diagonal entries
\begin{equation*}
 \frac{d}{dt} h_0( \bmu_t + \bm)
= 2m |\bmu_t + \bm|^{2m-2} t,\ \bm\in\SN(\bmu),
\end{equation*}
 and the matrix $\mathcal Z(t)$ with the entries
\begin{equation*}
\mathcal Z_{\bm, \bn}(t) = \frac{1}{\sqrt{\dc(\SG)}}
\left. \nabla_{\bxi}
 \hat b^{o, \flat}(\bm-\bn, \bxi; \rho)\cdot \normal(\bnu)
\right|_{\bxi = \bmu_t+\bn}.
\end{equation*}
By \eqref{card:eq},  $|\bm|\le  \rho^{\a_{d}}$, and hence
\begin{equation}\label{both:eq}
 \rho\ll t\le |\bmu_t+\bm|\ll \rho,\ \bm\in \SN(\bmu).
\end{equation}
Thus
\begin{equation}\label{radial:eq}
\frac{d}{dt} h_0( \bmu_t + \bm)\asymp \rho^{2m-1}.
\end{equation}
Also, by \eqref{bm:eq},
\begin{equation*}
|\mathcal Z_{\bm, \bn}(t) |\ll
\lu\bm-\bn\ru^{-l} |\bmu_t + \bn|^{(\a-1)\b} \ll \lu\bm-\bn\ru^{-l}  \rho^{(\a-1)\b},
\end{equation*}
for any $l>0$.
Assuming that $l >d$, from here we get:
\begin{align*}
\| \mathcal Z(t)\|\le &\
 \max_{\bn}\sum_{\bm}
|\mathcal Z_{\bm, \bn}(t)|
 \\[0.3cm]
\le &\ C_l \rho^{(\a-1)\b} \sup_{\bn}\max_{\bm} \lu \bm-\bn\ru^{-l}
\ll C \rho^{(\a-1)\b}.
\end{align*}
This, together with \eqref{alm:eq} and \eqref{radial:eq},
leads to \eqref{caly:eq}, which implies \eqref{radial1:eq}, as required.
\end{proof}


\section{Global description of the eigenvalues of operator $A(\bk)$}
\label{global:sect}

In this section we continue the study of the
discrete spectrum of the fibres $A(\bk)$. Our aim is to construct a
function $g: \R^d\to \R$, which establishes a one-to-one correspondence between the
points of $\R^d$ and the eigenvalues of $A(\bk)$. More precisely,
we seek a function $g$ such that
\begin{enumerate}
\item
for every $\bxi\in\R^d$ the value $g(\bxi)$ is an eigenvalue
of the operator $A(\bk),\ \bk = \{\bxi\}$,   and
\item
for every $j\in\mathbb N$
there exists a uniquely defined point $\bxi$ with $\{\bxi\} = \bk$ such that
$g(\bxi) = \l_j(A(\bk))$.
\end{enumerate}
In other words, we intend to label the eigenvalues of $A(\bk)$
by the points of the lattice $\SG^\dagger$, shifted by $\bk$. The construction of the
convenient function $g$ is conducted using the decomposition
\eqref{orhogonal:eq}, individually in the invariant subspaces generated by the sets
$\Xi(\GV)$.

We begin with the non-resonant set $\CB = \Xi(\GX), \GX = \{0\}$.
On the subspace $\CH(\Xi(\GX))$
the symbol of the operator $A$ is $\bx$-independent, and it is
$a^o(\bxi) = h_0(\bxi) + b^o(\bxi)$.
Therefore the eigenvalues
of the operator $A(\bk)$ are given by
$a^o(\bmu+\bk), \bmu\in\SG^\dagger, \bmu+\bk\in\Xi(\GX)$.
Clearly, it is natural to label the eigenvalues
by lattice points. Let us define
$$
g(\bxi) = a^o(\bxi),\ \bxi\in\Xi(\GX) = \CB.
$$
According to \eqref{bm:eq},
\begin{equation}\label{ginb:eq}
g(\bxi) = h_0(\bxi) + b^o(\bxi),\ \
\biggl|\frac{\partial}{\partial |\bxi|}b^o(\bxi)\biggr|
\le C\lu\bxi\ru^{(\a-1)\b},\ \ \
\biggl|\frac{\partial^2}{\partial |\bxi|^2}b^o(\bxi)\biggr|
\le C\lu\bxi\ru^{(\a-2)\b}
\end{equation}
for all $\bxi\in\CB$.

Suppose now that $\bxi\in\Xi(\GV)$
with some non-trivial lattice subspace $\GV$.
In view of \eqref{orhogonal:eq},
it suffices to define the function $g$ on the sets $\BUps(\bxi)$.
Let us label all numbers
$ \boldeta \in\BUps(\bxi)$, $\bxi\in\Xi(\GV)$
in the increasing order of their length $|\boldeta|$
by natural numbers from the set $\{1, 2, \dots, N(\bxi)\}$;
if there are two different vectors
$\boldeta, \tilde\boldeta\in\BUps(\bxi)$
with $|\boldeta|=|\tilde\boldeta|$, we
label them in the lexicographic order of
their coordinates, i.e. we put $\boldeta = (\eta_1, \eta_2, \dots, \eta_d)$
before $\tilde\boldeta = (\tilde\eta_1, \tilde\eta_2, \dots, \tilde\eta_d)$
if either $\eta_1<\tilde\eta_1$, or $\eta_1=\tilde\eta_1$
and $\eta_2<\tilde\eta_2$, etc. Such a labeling
associates in a natural way with each point $\boldeta\in\BUps(\bxi)$
a positive integer $\ell=\ell(\boldeta)\le N(\bxi)$. Clearly, this number does
not depend on the choice of the point $\bxi$ as long as $\bxi$ remains
within the same $\lra$-equivalence class.
In particular,
\begin{equation}\label{newindices}
|\boldeta|^{2m} = \l_{\ell(\boldeta)}( H_0(\bk; \BUps(\boldeta))).
\end{equation}
Now for every $\boldeta\in\R^d$ we define
\begin{equation*}
g(\boldeta):=\l_{\ell(\boldeta)}(\CA(\boldeta)),
\end{equation*}
where $\CA(\ \cdot\ )$ is the matrix defined in \eqref{ca:eq}.
Note that in view of Lemma \ref{twosided:lem}
\begin{equation}\label{twosided:eq}
g(\boldeta)\asymp |\boldeta|^{2m},\ |\boldeta|\gg \rho,
\end{equation}
for sufficiently large $\rho$.

In order to analyse the continuity of $g(\ \cdot\ )$, we assume that $\boldeta$
 is a non-critical point, i.e. the set $\SN(\ \cdot\ )$ remains
constant in a neighbourhood of $\boldeta$,
see Definition \ref{chain:defn}. Furthermore,
in the non-critical set, $\ell(\boldeta)$ remains constant,
if the point $\mathbf 0$ stays away from the Voronoi
hyper-planes associated with
 pairs of points from the set $\BUps(\boldeta)$. Recall that
 the Voronoi hyper-plane for a pair $\boldeta_1, \boldeta_2\in\R^d$
 is the set of all points $\bz\in\R^d$ such that $|\boldeta_1-\bz| = |\boldeta_2-\bz|$.
 Thus, the function $g(\ \cdot\ )$ is
 continuous on an open set of full measure in $\R^d$.

In each set $\Xi(\GV)$ the labeling function $\ell$
possesses the following important property.

\begin{lem}\label{lem:new1}
Let $\boldeta, \tilde\boldeta\in\Xi(\GV)$ satisfy
$\bnu := \tilde\boldeta-\boldeta\perp\GV$. Then
$\ell(\tilde\boldeta)=\ell(\boldeta)$.
\end{lem}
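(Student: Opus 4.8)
The plan is to reduce the statement to two elementary observations: that the orthogonal splitting $\R^d=\GV\oplus\GV^\perp$ makes all pairwise length comparisons inside an equivalence class insensitive to the shift $\bnu$, and that the lexicographic tie-breaking rule used to define $\ell$ is invariant under adding a fixed vector.

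First I would record the structure of the two relevant sets. Since $\boldeta,\tilde\boldeta\in\Xi(\GV)$ and $\bnu=\tilde\boldeta-\boldeta\in\GV^\perp$, Lemma \ref{lem:upsilon}(\ref{upsilon:item1}) (applied with $\bxi=\boldeta$) gives $\SN(\boldeta)=\SN(\tilde\boldeta)=:\SN$, and by Corollary \ref{properties2:cor} we have $\SN\subset\GV$. Thus $\BUps(\boldeta)=\{\boldeta+\bm:\bm\in\SN\}$ and $\BUps(\tilde\boldeta)=\{\boldeta+\bnu+\bm:\bm\in\SN\}$, and by the construction preceding the Lemma, $\ell(\boldeta)$ (resp.\ $\ell(\tilde\boldeta)$) is the position of the element indexed by $\bm=\mathbf 0$ in the ordering of $\BUps(\boldeta)$ (resp.\ $\BUps(\tilde\boldeta)$); note $\mathbf 0\in\SN$ always. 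Hence it suffices to show that the map $\bm\mapsto\boldeta+\bm$ and the map $\bm\mapsto\tilde\boldeta+\bm$ induce the same total order on the common index set $\SN$.

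Next I would compare lengths. Writing $\boldeta=\boldeta_\GV+\boldeta_{\GV^\perp}$ and using $\bm\in\GV$, $\bnu\in\GV^\perp$, orthogonality gives
\begin{equation*}
|\boldeta+\bm|^2=|\boldeta_\GV+\bm|^2+|\boldeta_{\GV^\perp}|^2,\qquad
|\tilde\boldeta+\bm|^2=|\boldeta_\GV+\bm|^2+|\boldeta_{\GV^\perp}+\bnu|^2,
\end{equation*}
for every $\bm\in\SN$. Consequently, for any $\bm,\bm'\in\SN$,
\begin{equation*}
|\boldeta+\bm|^2-|\boldeta+\bm'|^2=|\boldeta_\GV+\bm|^2-|\boldeta_\GV+\bm'|^2=|\tilde\boldeta+\bm|^2-|\tilde\boldeta+\bm'|^2 .
\end{equation*}
In particular $|\boldeta+\bm|<|\boldeta+\bm'|$ exactly when $|\tilde\boldeta+\bm|<|\tilde\boldeta+\bm'|$, and equality of lengths occurs for precisely the same pairs $\bm,\bm'$.

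Finally I would handle the lexicographic tie-breaking, which is the only point requiring a (small) argument. If $|\boldeta+\bm|=|\boldeta+\bm'|$, then $\tilde\boldeta+\bm=(\boldeta+\bm)+\bnu$ and $\tilde\boldeta+\bm'=(\boldeta+\bm')+\bnu$ are obtained from $\boldeta+\bm$ and $\boldeta+\bm'$ by adding the same fixed vector $\bnu$; since the first coordinate in which two vectors of $\R^d$ differ, together with which of the two is smaller in that coordinate, is unchanged by a common translation, the lexicographic order of the pair is the same before and after the shift. Combining this with the previous paragraph, the two maps $\bm\mapsto\boldeta+\bm$ and $\bm\mapsto\tilde\boldeta+\bm$ induce the same ordering of $\SN$, so the index $\bm=\mathbf 0$ occupies the same position in both; that is $\ell(\tilde\boldeta)=\ell(\boldeta)$.
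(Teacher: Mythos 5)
Your proof is correct and follows essentially the same route as the paper's: both rest on the fact that $\SN(\boldeta)=\SN(\tilde\boldeta)\subset\GV$ (via Lemma \ref{lem:upsilon} and Corollary \ref{properties2:cor}), so that the orthogonal splitting along $\GV$ makes every length comparison within an equivalence class, as well as the lexicographic tie-break, invariant under the common shift by $\bnu\in\GV^\perp$. Your version is slightly more explicit than the paper's (which only compares each $\bmu$ against $\boldeta$ rather than all pairs, and dismisses the lexicographic point in one clause), but the substance is identical.
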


\begin{proof}
Recall that by Lemma \ref{lem:upsilon} $\BUps(\boldeta)+ \bnu = \BUps(\tilde\boldeta)$.
 Let $\boldeta\lra\bmu$, so that
$(\boldeta+\bnu)\lra(\bmu+\bnu)$.
By Corollary \ref{properties2:cor},
$\bmu_{\GV^\perp} = \boldeta_{\GV^\perp}$, and hence
the inequality $|\bmu|\le |\boldeta|$ holds or does not hold simultaneously
with $|\bmu+\bnu|\le |\boldeta + \bnu|$.
The same property is true for the coordinates
of the vectors involved. Therefore, $\ell(\boldeta) = \ell(\boldeta+\bnu)$.
\end{proof}

The next lemma allows us to establish smoothness of the function $g$
with respect to the variable $\boldeta_{\GV^\perp}$.

\begin{lem}
Let $\GV\in \CV(n), 1\le n\le d-1$ and let \eqref{alm:eq}
be satisfied.
Suppose that  $\boldeta\in\Xi(\GV)$ and $|\boldeta|\asymp\rho$, and let
$\bnu = \boldeta_{\GV^\perp}$.
Then for sufficiently large $\rho$, on the interval $[t_0, \infty),\ t_0 := |\bnu|$,
the function $\tilde g(t; \boldeta) := g(\boldeta_{\GV} + t\normal(\bnu))$  satisfies
\begin{equation}\label{radial2:eq}
 \tilde g(t_2, \boldeta) - \tilde g(t_1, \boldeta)\asymp \rho^{2m-1}(t_2-t_1),
\end{equation}
for any $t_1, t_2\in[ t_0, \infty)$ such that  $t_1 < t_2$ and  $t_1, t_2\asymp t_0$,
uniformly in $\boldeta\in\Xi(\GV)$ and $\GV$.
\end{lem}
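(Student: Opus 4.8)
The plan is to reduce the statement to Lemma~\ref{radial:lem} by recognising $\tilde g(\,\cdot\,;\boldeta)$ as a single eigenvalue branch of the matrix $\tilde\CA(t)$ introduced there, with an index that does not move as $t$ varies.

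First I would dispose of the geometry. Since $|\boldeta|\asymp\rho$ while $|\boldeta_\GV|\le 2\rho^{\a_d}$ by Lemma~\ref{lem:properties3} and $\a_d<1$, for sufficiently large $\rho$ we have $t_0=|\bnu|=|\boldeta_{\GV^\perp}|\asymp|\boldeta|\asymp\rho$; in particular $\bnu\ne0$, so $\normal(\bnu)$ is well defined and the direction of radial motion is the one used in Lemma~\ref{radial:lem}. For $t\ge t_0$ set $\bmu_t:=\boldeta_\GV+t\normal(\bnu)$, so that $\bmu_{t_0}=\boldeta$ and $\bmu_t-\boldeta=(t-t_0)\normal(\bnu)\in\GV^\perp$. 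Applying Lemma~\ref{perpendicular:lem} with $\bxi=\boldeta$ (whose $\GV^\perp$-component is precisely $\bnu$, up to the harmless reparametrisation $t\mapsto (1+t)t_0$) shows that every $\bmu_t$, $t\ge t_0$, lies in $\Xi(\GV)$; hence $g$ is given on these points by its definition on $\Xi(\GV)$, namely $g(\bmu_t)=\l_{\ell(\bmu_t)}\bigl(\CA(\bmu_t)\bigr)=\l_{\ell(\bmu_t)}\bigl(\tilde\CA(t)\bigr)$, since $\tilde\CA(t)=\CA(\boldeta_\GV+t\normal(\bnu))$ is exactly the matrix of Lemma~\ref{radial:lem} with $\bmu=\boldeta$.

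The second step freezes the index. Because $\bmu_t$ and $\boldeta$ both lie in $\Xi(\GV)$ and differ by a vector of $\GV^\perp$, Lemma~\ref{lem:new1} gives $\ell(\bmu_t)=\ell(\boldeta)=:j$ for all $t\ge t_0$; this uses implicitly that $\SN(\,\cdot\,)$, and hence the size of the matrix $\CA$, stays constant along this segment (Lemmas~\ref{coupl:lem} and~\ref{lem:upsilon}). Thus $\tilde g(t;\boldeta)=\l_j(\tilde\CA(t))$ for a fixed $j$. Finally I would quote Lemma~\ref{radial:lem} with $\bmu=\boldeta$: its hypotheses $\boldeta\in\Xi(\GV)$, $|\boldeta|\asymp\rho$ and~\eqref{alm:eq} all hold, so \eqref{radial1:eq} applies to the branch $j$ and yields $\tilde g(t_2;\boldeta)-\tilde g(t_1;\boldeta)=\l_j(\tilde\CA(t_2))-\l_j(\tilde\CA(t_1))\asymp\rho^{2m-1}(t_2-t_1)$ for all $t_1,t_2\asymp t_0$ with $t_0\le t_1<t_2$, which is \eqref{radial2:eq}. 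Uniformity in $\boldeta$ and $\GV$ is inherited from Lemma~\ref{radial:lem} and Convention~\ref{uniform:conv}.

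No step is genuinely hard: the analytic content of the growth rate was already extracted in Lemma~\ref{radial:lem}, and the geometric input is Lemma~\ref{perpendicular:lem}. The only point needing real care is the compatibility of the two labellings — one must verify that the ordinal $\ell$ assigned to $\boldeta$ by the length/lexicographic ordering of $\BUps(\boldeta)$ coincides, for every $t$, with the index that selects $\tilde g(t;\boldeta)$ among the eigenvalues of $\tilde\CA(t)$; this is precisely Lemma~\ref{lem:new1}, together with the constancy of $\SN$ under $\GV^\perp$-translations inside $\Xi(\GV)$.
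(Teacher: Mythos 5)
Your proposal is correct and follows exactly the paper's own route: Lemma \ref{perpendicular:lem} keeps $\boldeta_{\GV}+t\normal(\bnu)$ inside $\Xi(\GV)$, Lemma \ref{lem:new1} freezes the label $\ell$, and Lemma \ref{radial:lem} then delivers the two-sided bound. The extra care you take with the reparametrisation and with the constancy of $\SN$ along the ray is sound but not a departure from the paper's argument.
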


\begin{proof}
Recall that in view of Lemma \ref{perpendicular:lem},
$\boldeta_{\GV} + t\normal(\bnu)\in\Xi(\GV)$
for all $t>t_0$.
Thus by Lemma \ref{lem:new1}, $\ell(\boldeta) = \ell(\boldeta_{\GV}+t\normal(\bnu))=:\ell$
for  $t\in [t_0, \infty)$. Therefore,
\begin{equation*}
\tilde g(t; \boldeta) = \l_\ell(\CA(\boldeta_{\GV} + t\normal(\bnu))).
\end{equation*}
It remains to apply Lemma \ref{radial:lem}.
\end{proof}

In order to study the global properties of
 $g$, note that by the above construction there is a bijection
 $J: \SG^\dagger + \bk\to \mathbb N$
such that
\begin{equation*}
g(\boldeta) = \l_{J(\boldeta)}(A(\bk)),\ \bk = \{\boldeta\},
\end{equation*}
for all $\boldeta\in\R^d$.
 For the following Lemma recall that the distance on the torus
$|\ \cdot\ |_{\Torus}$ is defined in \eqref{distorus:eq}.

\begin{lem}\label{partialofs2}
Let $\ba, \bb\in\R^d$ be such that $|\ba|\asymp \rho$.
Then there exists a vector $\bn\in\SG^\dagger$ such that
\begin{equation}\label{lip:eq}
|g(\bb+\bn) - g(\ba)|\ll \rho^{2m-1}|\bb-\ba|_{\Torus},
\end{equation}
for sufficiently large $\rho$.

Suppose in addition, that $\bm\in\SG^\dagger, \bm\not=\mathbf 0$,
is a vector such that $|\ba+\bm|\asymp \rho$.
Then there exists a $\tilde \bn\in\SG^\dagger$, such that $\bn\not=\tilde\bn$ and
\begin{equation}\label{lip1:eq}
|g(\bb+\tilde\bn) - g(\ba+\bm)|\ll \rho^{2m-1}|\bb-\ba|_{\Torus},
\end{equation}
for sufficiently large $\rho$.
\end{lem}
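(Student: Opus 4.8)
The plan is to deduce both inequalities directly from Theorem~\ref{unperturbed:thm} — the Lipschitz continuity of the Floquet eigenvalues of $A=H_0+B^o+B^\flat$ in the quasi-momentum — combined with the labelling recalled just before the lemma: for each $\bk$ there is a bijection $J\colon\SG^\dagger+\bk\to\mathbb N$ with $g(\boldeta)=\l_{J(\boldeta)}(A(\bk))$, $\bk=\{\boldeta\}$. Two preliminary remarks are in order. Since $\{\bb\}$ and $\{\ba\}$ differ from $\bb$ and $\ba$ by vectors of $\SG^\dagger$, one has $|\{\bb\}-\{\ba\}|_{\Torus}=|\bb-\ba|_{\Torus}$; and $|\ba|\asymp\rho$ together with \eqref{twosided:eq} gives $g(\ba)\asymp\rho^{2m}$, and likewise $g(\ba+\bm)\asymp\rho^{2m}$ in the second assertion because $|\ba+\bm|\asymp\rho$.

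Now put $\bk_0=\{\ba\}=\{\ba+\bm\}$ and $j=J(\ba)$, so that $g(\ba)=\l_j(A(\bk_0))\asymp\rho^{2m}$. Applying Theorem~\ref{unperturbed:thm} with $\bk=\bk_0$ and $\boldeta=\{\bb\}-\bk_0$ — its hypothesis $\b(\a-1)<2m-1$ is part of \eqref{alm:eq}, and under the running assumption $\1 b\1^{(\a)}\ll1$ the factor $1+\1 b\1^{(\a)}_{l,1}$ is bounded — we obtain
\[
|\l_j(A(\{\bb\}))-\l_j(A(\bk_0))|\ll\rho^{2m-1}|\{\bb\}-\bk_0|_{\Torus}=\rho^{2m-1}|\bb-\ba|_{\Torus}.
\]
Since $J$ for the fibre $\{\bb\}$ is a bijection onto $\mathbb N$, there is a unique $\bn\in\SG^\dagger$ with $J(\bb+\bn)=j$, and then $g(\bb+\bn)=\l_j(A(\{\bb\}))$; inserting this into the displayed bound gives \eqref{lip:eq}.

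For the second part, set $j'=J(\ba+\bm)$. Since $\bm\ne\mathbf 0$ we have $\ba+\bm\ne\ba$ in $\SG^\dagger+\bk_0$, so $j'\ne j$ by injectivity of $J$, and $g(\ba+\bm)=\l_{j'}(A(\bk_0))\asymp\rho^{2m}$. Running the argument of the previous paragraph with $j'$ in place of $j$ produces $\tilde\bn\in\SG^\dagger$ with $J(\bb+\tilde\bn)=j'$ and hence $g(\bb+\tilde\bn)=\l_{j'}(A(\{\bb\}))$, which yields \eqref{lip1:eq}. Finally $\tilde\bn\ne\bn$: the indices $j$ and $j'$ are distinct and $J$ is injective on $\SG^\dagger+\{\bb\}$, so $\bb+\bn\ne\bb+\tilde\bn$.

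I do not expect a genuine obstacle; the lemma is essentially a reformulation of Theorem~\ref{unperturbed:thm} through the labelling function $g$. The points that need care are: (i) identifying the eigenvalue index $j$ coming from $J(\ba)$ with the index appearing in Theorem~\ref{unperturbed:thm} — precisely the role of the bijection $J$; (ii) checking $g(\ba)\asymp\rho^{2m}$ so that the hypothesis of that theorem holds, which is immediate from $|\ba|\asymp\rho$ and \eqref{twosided:eq}; and (iii) verifying that all implied constants stay uniform in $b$ satisfying $\1 b\1^{(\a)}\ll1$, which is the case since both Theorem~\ref{unperturbed:thm} and \eqref{twosided:eq} are uniform in that sense.
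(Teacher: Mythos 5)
Your proof is correct and follows essentially the same route as the paper: both apply Theorem~\ref{unperturbed:thm} to the eigenvalue $\l_{J(\ba)}(A(\{\ba\}))\asymp\rho^{2m}$ (justified via \eqref{twosided:eq}) and then use the bijectivity of the labelling $J$ on the fibre $\{\bb\}$ to pull the perturbed eigenvalue back to a point $\bb+\bn$, with injectivity of $J$ giving $\bn\ne\tilde\bn$ in the second part. The paper phrases the last step via a point $\bp$ with $\{\bp\}=\{\bb\}$ and $g(\bp)=\l_{J(\ba)}(A(\{\bb\}))$, setting $\bn=\bp-\bb$, which is the same construction.
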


\begin{proof} As $\l_{J(\ba)}(A(\bk))= g(\ba)$,  by \eqref{twosided:eq},
we have
$$
|\l_{J(\ba)}(A(\bk))| \asymp \rho^{2m}.
$$
Denote $\bk=\{\ba\}$, $\bk_1 = \{\bb\}$. Recall that
the condition \eqref{beta:eq} is satisfied due to \eqref{alm:eq}, so
by Theorem \ref{unperturbed:thm}
\begin{equation}\label{lip2:eq}
|\l_{J(\ba)}( A(\bk)) - \l_{J(\ba)}( A(\bk_1))|
\ll \rho^{2m-1}|\bk-\bk_1|_{\Torus}=
\rho^{2m-1}|\bb-\ba|_{\Torus}.
\end{equation}
Let $\bp\in\R^d$ be a vector such that $\{\bp\} = \bk_1$ and
$g(\bp) = \l_{J(\ba)}(A(\bk_1))$. Now \eqref{lip2:eq} implies \eqref{lip:eq}
with $\bn = \bp-\bb$.

In order to prove \eqref{lip1:eq}, we use \eqref{lip2:eq} with $\ba+\bm$ instead of
$\ba$. Then, as above, one can find a vector $\tilde \bp$ such that
$\{\tilde \bp\} = \bk_1$ and $g(\tilde \bp) = \l_{J(\ba+\bm)}(A(\bk_1))$.
Since $J$ is one-to-one, we have $J(\ba+\bm)\not = J(\ba)$, and hence
$\bp \not =\tilde\bp$. As a consequence, $\tilde \bn  = \tilde \bp - \bb\not = \bn$,
as required, and \eqref{lip2:eq} again leads to \eqref{lip1:eq}.
\end{proof}


\section{Estimates of volumes}

In this section we continue the investigation of the operator of the form
\eqref{model:eq}, with a symbol $b\in \BS_{\a}(w)$,
$w(\bxi) = \lu \bxi\ru^\b$,
with parameters $\a, \b$,
satisfying the conditions \eqref{alm:eq}. Let $g:\R^d\to\R$ be the function
defined in the previous section, and let $\CB(\rho), \CD(\rho)$ and  $\tilde\CB(\rho)$
be the sets introduced  in \eqref{(non)resonant:eq} and \eqref{angular:eq} respectively.

Let $\de\in (0, \l/4]$, $\l = \rho^{2m}$, and let
\begin{equation}\label{ABD:eq}
\begin{cases}
\CA(\rho, \de)=\CA(g; \rho, \de):= g^{-1}([\la-\de,\la+\de]),\\[0.2cm]
\CB(\rho, \de)=\CB(g; \rho,\de):= \CA(\rho, \de)\cap\CB(\rho),\\[0.2cm]
\CD(\rho, \de)=\CD(g; \rho, \de): = \CA(\rho, \de)\cap\CD(\rho),\\[0.2cm]
\tilde\CB(\rho, \de)=\tilde\CB(g; \rho, \de):= \CA(\rho, \de)\cap\tilde\CB(\rho).
\end{cases}
\end{equation}
The estimates for the volumes of the above sets are very important for our argument.

\begin{lem}\label{lem:volumeCA} Let $A$ be the operator
\eqref{model:eq}, and let $\a, \b$ satisfy the conditions
\eqref{alm:eq}.
Then for any $\d \in (0,  \rho^{2m}/4]$ and for all sufficiently large $\rho$,
the following estimates hold
\begin{equation}\label{volume_tildeCB:eq}
\volume \tilde\CB (\rho, \d) \asymp \d \rho^{d-2m},
\end{equation}
and
\begin{equation}\label{eq:volumeCD}
\volume(\CD(\rho, \de))\ll \d\rho^{d-1-2m+\alpha_d}.
\end{equation}
Here $\a_d\in (0, 1)$
is the number defined together with $\a_1, \a_2, \dots, \a_{d-1}$
at the beginning of Subsection \ref{resonant:subsect}(see \eqref{eq:condition1}).
\end{lem}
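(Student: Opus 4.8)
The plan is to evaluate both volumes in polar‑type coordinates adapted to the lattice subspaces, using the radial monotonicity of $g$ established in Section~\ref{global:sect}. The starting observation is that $\CA(\rho,\de)\subset\{\bxi:|\bxi|\asymp\rho\}$: if $g(\bxi)\in[\la-\de,\la+\de]\subset[\tfrac34\la,\tfrac54\la]$ with $\la=\rho^{2m}$, then $g(\bxi)\asymp\rho^{2m}$, and since $g(\bxi)\asymp|\bxi|^{2m}$ (by \eqref{twosided:eq}, together with the two‑sided bound $A\asymp H_0$ coming from the infinitesimal $H_0$‑boundedness of $B^o+B^\flat$, Lemma~\ref{formbound:lem}, which covers the remaining moderate range of $|\bxi|$) this forces $|\bxi|\asymp\rho$; in particular $|\bxi|\ge\rho/8$ for large $\rho$.

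\emph{Estimate for $\tilde\CB(\rho,\de)$.} Since $\tilde\CB(\rho)\subset\CB(\rho)=\Xi(\GX)$, on this set $g$ equals $a^o(\bxi)=|\bxi|^{2m}+b^o(\bxi)$ with the radial derivative bounds \eqref{ginb:eq}. Writing $\bxi=t\boldsymbol\Om$ with $t=|\bxi|$ and $\boldsymbol\Om\in\mathbb S^{d-1}$, one has for $t\asymp\rho$
\[
\frac{d}{dt}\,g(t\boldsymbol\Om)=2mt^{2m-1}+\frac{\p b^o}{\p|\bxi|}(t\boldsymbol\Om)\asymp\rho^{2m-1},
\]
because $(\a-1)\b<2m-1$ by \eqref{alm:eq}. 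Hence along each ray $g$ is strictly increasing in the region $g\asymp\rho^{2m}$, so $\{t>0:g(t\boldsymbol\Om)\in[\la-\de,\la+\de]\}$ is a single interval of length $\asymp\de\,\rho^{1-2m}$ contained in $\{t\asymp\rho\}$. Integrating and restricting $\boldsymbol\Om$ to the angular part $T(\rho)$ of $\tilde\CB(\rho)$,
\[
\volume\tilde\CB(\rho,\de)\asymp\int_{T(\rho)}\rho^{d-1}\,\de\,\rho^{1-2m}\,d\boldsymbol\Om=\de\,\rho^{d-2m}\,\volume_{\mathbb S^{d-1}}T(\rho)\asymp\de\,\rho^{d-2m},
\]
the last step by Lemma~\ref{surface:lem}.

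\emph{Estimate for $\CD(\rho,\de)$.} By \eqref{(non)resonant:eq} and the partition \eqref{exhaust:eq}--\eqref{disjoint:eq}, $\CD(\rho,\de)=\bigcup_{n=1}^{d-1}\bigcup_{\GV\in\CV(n)}\bigl(\CA(\rho,\de)\cap\Xi(\GV)\bigr)$; the case $n=d$ drops out since $\Xi_2(\R^d)\subset B(2\rho^{\a_d})$ by Lemma~\ref{lem:properties3}, which is disjoint from $\{|\bxi|\asymp\rho\}$ for large $\rho$. Fix $\GV\in\CV(n)$, $1\le n\le d-1$. By Lemma~\ref{lem:properties3}, $\Xi(\GV)$ lies in the slab $\{|\bxi_\GV|\le 2\rho^{\a_n}\}$ (replace $\a_n$ by $(\a_1+\a_2)/2$ when $n=1$). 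Decompose $\bxi=\bxi_\GV+s\boldsymbol\omega$ with $s=|\bxi_{\GV^\perp}|$ and $\boldsymbol\omega$ a unit vector in $\GV^\perp$, so that $d\bxi=d\bxi_\GV\,s^{d-n-1}\,ds\,d\boldsymbol\omega$. For fixed $\bxi_\GV,\boldsymbol\omega$, Lemma~\ref{perpendicular:lem} shows $\{s\ge0:\bxi_\GV+s\boldsymbol\omega\in\Xi(\GV)\}$ is upward closed, and on its intersection with $\{s\asymp\rho\}$ the map $s\mapsto g(\bxi_\GV+s\boldsymbol\omega)$ coincides with $\tilde g(s;\cdot)$ for a base point in $\Xi(\GV)$, hence is strictly increasing with derivative $\asymp\rho^{2m-1}$ by \eqref{radial2:eq}. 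Thus $\{s:\bxi_\GV+s\boldsymbol\omega\in\CA(\rho,\de)\cap\Xi(\GV)\}$ is an interval of length $\ll\de\,\rho^{1-2m}$ sitting at $s\asymp\rho$, giving $\int_0^\infty\chi\,s^{d-n-1}\,ds\ll\rho^{d-n-1}\de\,\rho^{1-2m}$. Integrating over $\boldsymbol\omega$ (measure $O(1)$) and over $\bxi_\GV$ in the slab (measure $\ll\rho^{n\a_n}$, resp.\ $\ll\rho^{(\a_1+\a_2)/2}$ for $n=1$) yields
\[
\volume\bigl(\CA(\rho,\de)\cap\Xi(\GV)\bigr)\ll\de\,\rho^{d-2m}\,\rho^{-n(1-\a_n)}\ \ (n\ge2),\qquad \ll\de\,\rho^{d-1-2m+(\a_1+\a_2)/2}\ \ (n=1).
\]
Since $\mathcal W(r)$ has $\ll r^{d^2}\le\rho^{\varkappa d^2}$ elements, summing and invoking \eqref{eq:condition1} — which gives $\a_d-\a_j>2\varkappa d^2$ for $j\le d-1$, whence $\varkappa d^2+(\a_1+\a_2)/2\le\a_d$ and $\varkappa d^2-n(1-\a_n)\le\a_d-1$ for $2\le n\le d-1$ — we obtain $\volume\CD(\rho,\de)\ll\de\,\rho^{d-1-2m+\a_d}$.

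\emph{Main obstacle.} The substance is the second estimate: reorganizing a $d$‑dimensional integral over $\Xi(\GV)$ as (thinness of the slab in the $\GV$‑directions) $\times$ (a one‑dimensional length from radial monotonicity of $g$) $\times$ (sphere measure), and then checking that the gain $\rho^{-n(1-\a_n)}$ from the thin slab survives summation over the $\ll r^{d^2}$ lattice subspaces and still produces the small factor $\rho^{\a_d-1}$. This bookkeeping is exactly what the hierarchy $0=\a_0<\dots<\a_d<1$ with gaps $\a_{n+1}-\a_n>2\varkappa d^2$ is designed to handle; the only slightly delicate point is $n=1$, where Lemma~\ref{lem:properties3} only gives the wider slab $\rho^{(\a_1+\a_2)/2}$ instead of $\rho^{\a_1}$, and one still needs $(\a_1+\a_2)/2+\varkappa d^2\le\a_d$. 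The first estimate is routine once \eqref{ginb:eq} and Lemma~\ref{surface:lem} are available.
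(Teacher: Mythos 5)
Your proof is correct and follows essentially the same route as the paper: radial integration over $T(\rho)$ combined with \eqref{ginb:eq} and Lemma~\ref{surface:lem} for $\tilde\CB(\rho,\d)$, and for $\CD(\rho,\d)$ the decomposition over $\Xi(\GV)$, the coordinates $\bxi=\bxi_\GV+s\boldsymbol\omega$, Lemma~\ref{perpendicular:lem} plus \eqref{radial2:eq} to get a transversal interval of length $\ll\d\rho^{1-2m}$, the slab bound from Lemma~\ref{lem:properties3}, and summation over the $\ll r^{d^2}$ subspaces via \eqref{eq:condition1}. The only (harmless) difference is that you keep the $n$-dependent slab width $\rho^{\a_n}$ (resp. $\rho^{(\a_1+\a_2)/2}$ for $n=1$), whereas the paper uses the uniform bound $2\rho^{\a_{d-1}}$ for all $n\le d-1$; both yield \eqref{eq:volumeCD}.
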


Before proving the above lemma we find a convenient representation
of the set $\tilde \CB(\rho, \delta)$.
Since $\tilde\CB(\rho, \delta)\subset\CB(\rho)$, for all $\bxi\in \tilde \CB(\rho, \d)$
the function $g$ is defined by the
formula \eqref{ginb:eq}, and in particular, it is continuous.
For all $\boldsymbol\Om\in T(\rho)$ (see \eqref{surface:eq} for definition), we
introduce the subsets of the real line defined as follows:
\begin{equation}\label{iomd:eq}
I(\boldsymbol\Om; \rho, \d)
= \{t >0: \rho^{2m}-\d\le g(t\boldsymbol\Om) \le \rho^{2m}+\d \}.
\end{equation}
By \eqref{ginb:eq},  for $t\in I(\boldsymbol\Om; \rho, \d)$ we
have $\rho/2 <t < 2\rho$,
and hence $t\boldsymbol\Om\in \tilde\CB(\rho, \d)$.
If $\rho$ is sufficiently large, by virtue of \eqref{ginb:eq},
for these  values of $t$ the function $g(t\boldsymbol\Om)$ is strictly increasing,
and hence $I(\boldsymbol\Om; \rho, \d)$ is a closed interval.
Moreover, \eqref{ginb:eq}
implies the relation
\begin{equation}\label{iomd1:eq}
|I(\boldsymbol\Om; \rho, \d)|\asymp \d \rho^{1-2m}
\end{equation}
for its length, uniformly in $\boldsymbol\Om$. By construction,
\begin{equation}\label{tildecb:eq}
\tilde\CB(\rho, \d) = \underset{\boldsymbol\Om\in T(\rho)}
\bigcup I(\boldsymbol\Om; \rho, \d)\boldsymbol\Om.
\end{equation}

\begin{proof}[Proof of Lemma \ref{lem:volumeCA}]
In view of  Lemma
\ref{surface:lem} and of the bound \eqref{iomd1:eq},
we obtain from \eqref{tildecb:eq}:
\begin{equation*}
\volume \tilde\CB(\rho, \d)
= \int_{T(\rho)}\int_{I(\boldsymbol\Om; \rho, \d)}
t^{d-1} dt d\boldsymbol\Om \asymp \d \rho^{d-2m}.
\end{equation*}
This proves \eqref{volume_tildeCB:eq}.

Proof of \eqref{eq:volumeCD}. By definition \eqref{(non)resonant:eq}
and relation \eqref{exhaust:eq},
$$
\CD(\rho) = \underset{\GV\subset \CV(n), 1\le n\le d}{\bigcup}\Xi(\GV; \rho).
$$
Let us estimate the volume of each intersection $\Xi(\GV; \rho)\cap \CA(\rho, \d)$.
Since Lemma \ref{lem:properties3} implies that
$\Xi(\R^d)\cap\CA(\rho, \d) = \varnothing$,
we assume that $n\le d-1$.

For all $\boldsymbol\chi\in\GV$ and
$\boldsymbol\Om\in\GV^\perp, |\boldsymbol\Om| = 1$ denote
\begin{equation*}
S(\boldsymbol\chi, \boldsymbol\Om; \rho)
= \{t\ge 0: \boldsymbol\chi + t\boldsymbol\Om\in\Xi(\GV; \rho)\}.
\end{equation*}
According to Lemma \ref{perpendicular:lem}, this set
is either empty, or it is a half-line of the form $[t_0, \infty)$
or $(t_0, \infty)$ with some $t_0 \ge 0$.
Due to the estimate \eqref{proj:eq}, $S(\boldsymbol\chi, \boldsymbol\Om; \rho)$
is empty if $|\boldsymbol\chi|\ge 2\rho^{\a_{d-1}}$. From now on we assume that
$S(\boldsymbol\chi, \boldsymbol\Om; \rho)\not=\varnothing$,
so that $|\boldsymbol\chi|<2\rho^{\a_{d-1}}$. Consider the subset
\begin{equation}\label{S:eq}
S(\boldsymbol\chi, \boldsymbol\Om; \rho, \d)
= \{ t\in S(\boldsymbol\chi, \boldsymbol\Om; \rho):
\rho^{2m} - \d \le g(\boldsymbol\chi + t\boldsymbol\Om)
\le \rho^{2m} + \d\}.
\end{equation}
In view of \eqref{twosided:eq}, $t\asymp \rho$.
By \eqref{radial2:eq}, the function
$\tilde g(t) = g(\boldsymbol\chi + t\boldsymbol\Om)$ is strictly increasing and continuous,
and hence, $S(\boldsymbol\chi, \boldsymbol\Om; \rho, \d)$
is an interval.  The bound \eqref{radial2:eq} also guarantees the upper
bound
\begin{equation*}
|S(\boldsymbol\chi, \boldsymbol\Om; \rho, \d)| \ll \d\rho^{1-2m},
\end{equation*}
for the length of this interval, uniformly in $\boldsymbol\chi$ and
$\boldsymbol\Om$.
Now we can estimate the volume of the intersection:
\begin{align*}
\volume(\Xi(\GV; \rho)\cap \CA(\rho, \d))
= &\ \underset{|\boldsymbol\chi|<2\rho^{\a_{d-1}}}\int \ \
\underset{\mathbb S^{d-n-1}} \int\ \
\underset{ S(\boldsymbol\chi, \boldsymbol\Om; \rho, \d)}
\int t^{d-n-1} dt
d\boldsymbol\Om \ d\boldsymbol\chi\\[0.2cm]
\ll &\ \rho^{d-n-1}\underset{|\boldsymbol\chi|<2\rho^{\a_{d-1}}}\int \ \
\underset{\mathbb S^{d-n-1}} \int\ |S(\boldsymbol\chi, \boldsymbol\Om; \rho, \d)|
d\boldsymbol\Om \ d\boldsymbol\chi\\[0.2cm]
\ll  &\ \d\rho^{1-2m} \rho^{d-n-1} (\rho^{\a_{d-1}})^{n}
\ll  \d \rho^{d-1 - 2m + \a_{d-1}}.
\end{align*}
Recall that the number of distinct subspaces $\GV\subset\mathcal W(r)$
does not exceed $Cr^{d^2}$ with some universal constant $C$, so that
\begin{equation*}
\volume \CD(\rho, \d)\ll \d\rho^{d-1 - 2m + \a_{d-1}} r^{d^2}\ll \d\rho^{d-1 - 2m + \a_d},
\end{equation*}
where we have used the conditions \eqref{eq:condition1}.
Now \eqref{eq:volumeCD} is proved.
\end{proof}

The next estimate is more subtle:

\begin{thm}\label{volumeCB2a}
Let
$\CB(g; \rho, \d)$, $\d \in (0, \rho^{2m}/4]$,
be as defined in \eqref{ABD:eq}. Let $\ep >0$ be a fixed number.
If $\d\rho^{2-2m+2\ep}\to 0$ as
 $\rho\to\infty$, then
\begin{equation}\label{nokiss:eq}
\volume \Bigl(\CB(g; \rho, \d)\cap \bigl(\CB(g; \rho, \d) + \bb\bigr)\Bigr)
\ll \d^2 \rho^{4-4m + d+6\ep} + \d \rho^{1-2m - \ep(d-1)},
\end{equation}
uniformly in $\bb, |\bb|\gg 1$.
\end{thm}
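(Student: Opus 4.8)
\emph{Plan of proof.} The plan is to pass from the $d$-dimensional volume to a one-parameter family of one-dimensional measures by slicing in the direction of $\bb$, and then to exploit the strict convexity, on $\CB(\rho)$, of the ``radial profile'' of $g$ together with the smallness of the zeroth Fourier mode $b^o$ guaranteed by \eqref{alm:eq}. First, by \eqref{twosided:eq} every $\bxi\in\CB(g;\rho,\d)$ has $|\bxi|\asymp\rho$, so $|\bxi-\bb|\asymp\rho$ as well; hence the set in \eqref{nokiss:eq} is empty unless $|\bb|\ll\rho$, which I assume from now on. Write $\bxi=u\,\normal+\bchi$ with $\normal=\normal(\bb)$, $\bchi\perp\bb$, and put $\gamma_\bchi(u):=|u\normal+\bchi|^{2m}+b^o(u\normal+\bchi)$; on $\CB(\rho)$ this equals $g$ by \eqref{ginb:eq}, and $g$ at $\bxi-\bb$ equals $\gamma_\bchi(u-|\bb|)$. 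Dropping the constraint $\bxi\in\CB(\rho)$ only enlarges the set, so by Fubini
\begin{equation*}
\volume\bigl(\CB(g;\rho,\d)\cap(\CB(g;\rho,\d)+\bb)\bigr)\le\int_{\bb^\perp}\mu(\bchi)\,d\bchi,\qquad
\mu(\bchi):=\bigl|J_\bchi\cap(J_\bchi+|\bb|)\bigr|,
\end{equation*}
where $J_\bchi:=\{u:\gamma_\bchi(u)\in[\la-\d,\la+\d]\}$. On the range $u^2+|\bchi|^2\asymp\rho^2$ the three inequalities in \eqref{alm:eq} (via \eqref{bm:eq}) give that $\gamma_\bchi$ is $\plainC2$ with $\gamma_\bchi''\asymp\rho^{2m-2}>0$; hence $J_\bchi$ is a union of at most two intervals, it has two components $I^\pm_\bchi$ lying near the two roots $\pm\tilde p_0(\bchi)$ of $\gamma_\bchi=\la$ exactly when $\min_u\gamma_\bchi<\la-\d$ (with $\tilde p_0(\bchi)=\sqrt{(\rho^2-|\bchi|^2)_+}+O\bigl(\rho^{\a\b}(\rho^{2m-2}\tilde p_0)^{-1}\bigr)$ and $|I^\pm_\bchi|\asymp\d(\rho^{2m-2}\tilde p_0(\bchi))^{-1}$), and otherwise it is a single interval of length $\ll\sqrt{\d\rho^{2-2m}}$, or empty.

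Since a single interval of length $\ll\sqrt{\d\rho^{2-2m}}\to0$ cannot meet its translate by $|\bb|\gg1$, we have $\mu(\bchi)=0$ unless $J_\bchi$ has two components and the shift carries $I^-_\bchi$ onto $I^+_\bchi$; this is the only possible alignment, and it forces $|2\tilde p_0(\bchi)-|\bb||\ll|I^\pm_\bchi|$, so $\mu(\bchi)=0$ off a thin shell $\mathcal R\subset\bb^\perp$ on which $\tilde p_0(\bchi)\asymp|\bb|$. Since $\d<\rho^{2m-2}\le\rho^{2m-2}|\bb|^2$ (using $\d\rho^{2-2m+2\ep}\to0$), this gives $|I^\pm_\bchi|\asymp\d\rho^{2-2m}|\bb|^{-1}$ on $\mathcal R$, and more precisely $\mu(\bchi)\asymp\bigl(|I^\pm_\bchi|-|2\tilde p_0(\bchi)-|\bb||\bigr)_+$. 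The crucial point is that, along the ray through $\bchi$ in $\bb^\perp$, the function $2\tilde p_0(\cdot)$ has derivative $\asymp|\bchi|/\sqrt{\rho^2-|\bchi|^2}\asymp\rho/|\bb|$ up to a perturbation coming from $b^o$ whose derivative, by \eqref{alm:eq} (namely $\b(\a-1)<2m-1$), is of strictly smaller order; hence $2\tilde p_0(\cdot)-|\bb|$ is monotone along this ray and vanishes once. Integrating the displayed formula along the ray therefore yields a factor $\asymp|I^\pm_\bchi|\cdot|\bb|/\rho$ in place of the naive $|I^\pm_\bchi|\cdot\mathrm{thickness}(\mathcal R)$, and one obtains
\begin{equation*}
\int_{\bb^\perp}\mu(\bchi)\,d\bchi\ll R_*^{\,(d-2)_+}\,\frac{|I^\pm|^2|\bb|}{\rho}\ll R_*^{\,(d-2)_+}\,\frac{\d^2\rho^{3-4m}}{|\bb|},\qquad R_*:=\sqrt{(\rho^2-|\bb|^2/4)_+},
\end{equation*}
with $\mathcal R=\varnothing$ when $|\bb|\ge2\rho$ (and the factor $R_*^{(d-2)_+}$ read as $1$ when $d=2$).

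It remains to split into two regimes of $R_*$. If $R_*\le\rho$ we bound $R_*^{\,(d-2)_+}\le\rho^{d-2}$ and get $\ll\d^2\rho^{d+1-4m}|\bb|^{-1}\ll\d^2\rho^{d+1-4m}$, far below the first term of \eqref{nokiss:eq}. If instead $R_*$ is so small that $R_*^2<\d\rho^{2-2m}$, then necessarily $|\bb|\asymp\rho$ (since $R_*^2=\rho^2-|\bb|^2/4$ and $\d\rho^{2-2m}\to0$), so $|I^\pm|\ll\d\rho^{1-2m}$ and, using $\d^{1/2}\rho^{1-m+\ep}\le1$,
\begin{equation*}
\int_{\bb^\perp}\mu\,d\bchi\ll(\d\rho^{2-2m})^{(d-2)/2}\,\d^2\rho^{2-4m}
=\d\rho^{-2m}\bigl(\d^{1/2}\rho^{1-m+\ep}\bigr)^{d}\rho^{-\ep d}\le\d\rho^{-2m-\ep d}\le\d\rho^{1-2m-\ep(d-1)},
\end{equation*}
which is below the second term of \eqref{nokiss:eq}. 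Combining the two regimes completes the plan.

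I expect the main obstacle to be the uniform control of the zeroth Fourier mode $b^o$ throughout: all three inequalities of \eqref{alm:eq} are used — to make $\gamma_\bchi$ strictly convex with the sharp constant $\rho^{2m-2}$, to keep $|I^\pm_\bchi|$ of the expected size away from the ``equator'' $u\approx0$, and to ensure that the $b^o$-perturbation of $\tilde p_0(\bchi)$ has derivative of strictly smaller order than the main term $\rho/|\bb|$ (this monotonicity is what prevents the wobble of the level set $\{g=\la\}$ from fattening the shell $\mathcal R$ and thereby producing a spurious term of the form $\d\rho^{d+1-4m+\a\b}$). A secondary technical point is the bookkeeping near $|\bb|\asymp2\rho$ (the genuinely degenerate, nearly tangent, configuration), handled in the second regime above, and the very small-$|\bb|$ range $1\ll|\bb|\ll\rho^{\b(\a-1)-2m+2}$, where $\tilde p_0$ is comparable to the argmin of $\gamma_\bchi$ and a slightly more careful (but easier, since then $\mathcal R$ is extremely thin) estimate is required; it is precisely the interplay of the monotonicity of $\tilde p_0$ with the hypothesis $\d\rho^{2-2m+2\ep}\to0$ that yields the two terms of \eqref{nokiss:eq}.
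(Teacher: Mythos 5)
Your overall strategy --- extend $g$ to all of $\R^d$ by \eqref{ginb:eq}, slice the intersection along lines parallel to $\bb$, and observe that a line can meet both translated shells only if its two short crossings of $\{|g-\la|\le\d\}$ are aligned by the shift, which confines $\bchi$ to a thin shell in $\bb^\perp$ --- is genuinely different from the paper's. The paper instead proves a general two-function statement (Theorem \ref{volumeCB2}): it first reduces to $m=1$ by taking $g^{1/m}$, then subtracts the local linearizations of the perturbations at a reference point $\bmu$ so that \eqref{nuli:eq} holds (shifting $\bb$ to a nearby $\ba$), and finally covers the intersection by small balls, on each of which the transversality of $\nabla g_1(\bxi)$ and $\nabla g_2(\bxi-\bb)$ in the plane spanned by $\bxi$ and $\bb$ (Lemma \ref{volumeCB2.2}) or the ``kissing spheres'' estimate (Lemma \ref{kiss:lem}, for $|\bb|$ near $2\rho$) gives the two terms of \eqref{nokiss:eq}. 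Your two regimes correspond exactly to these two lemmas, so the geometry you identify is the right one; but your execution has gaps that the paper's two preliminary devices are designed to remove.

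The main gap is the regime you yourself set aside, $1\ll|\bb|\ll\rho^{\b(\a-1)-2m+2}$, and it is not ``easier''. All of your quantitative claims --- $|I^{\pm}_\bchi|\asymp\d(\rho^{2m-2}\tilde p_0)^{-1}$, the two-interval structure of $J_\bchi$, and the monotonicity of $2\tilde p_0(\cdot)-|\bb|$ along rays --- require the radial derivative $2m\,\tilde p_0\,\rho^{2m-2}$ of $h_0$ in the direction $\normal(\bb)$ to dominate $\partial_u b^o=O(\rho^{\b(\a-1)})$, i.e.\ $\tilde p_0\gg\rho^{\b(\a-1)-2m+2}$. Since \eqref{alm:eq} only gives $\b(\a-1)<2m-1$, this threshold can be as large as $\rho^{1-\epsilon}$, and on your shell $\tilde p_0\asymp|\bb|/2$, so for all $|\bb|$ below the threshold the slicing direction is nearly tangent to both level surfaces near the equator, $J_\bchi$ need not be two short intervals, and the implicit-function computation for $d\tilde p_0/ds$ has an uncontrolled denominator. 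This is precisely where your feared spurious term would arise; the missing idea is the paper's adjustment $\tilde g_j(\bxi)=g_j(\bxi-\tfrac12\nabla G_j(\bmu_j))$, which kills the first-order term of $b^o$ at the reference point and makes the subsequent estimates uniform in $|\bb|$. Two further points: the claim $\gamma_\bchi''\asymp\rho^{2m-2}>0$ is false for $m<1/2$ (the radial profile of $|\bxi|^{2m}$ is concave there), which is why the paper reduces to $m=1$ before doing any geometry; and on the shell the derivative of $2\tilde p_0$ along a ray is $\asymp|\bchi|/\tilde p_0\asymp R_*/|\bb|$, not $\rho/|\bb|$, so your first-regime formula is only valid for $R_*\asymp\rho$ and the intermediate range $\d\rho^{2-2m}\le R_*^2\ll\rho^2$ (which your two regimes do not jointly cover) must be redone with the corrected derivative --- it does still close, including for $d=2$, but not as written.
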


This Theorem will be proved in the next section.
As an immediate consequence, we can write the following estimate:
\begin{equation}\label{sobranie:eq}
\volume \bigcup_{\bn\in\SG^\dagger\setminus\{0\}}
\Bigl(\CB(g; \rho, \d)\cap \bigl(\CB(g; \rho, \d) + \bn\bigr)\Bigr)
\ll \d^2 \rho^{4-4m + 2d+6\ep} + \d \rho^{1-2m+d - \ep(d-1)},
\end{equation}
valid under the condition $\d \rho^{2-2m+2\ep}\to 0$, $\rho\to\infty$.
Indeed, to get \eqref{sobranie:eq} from \eqref{nokiss:eq}
one notices that the union in the above estimate does not extend to
the lattice points $\bn$ such that $|\bn|\ge 3\rho$.

Another important ingredient is the following estimate on the volumes:

\begin{lem}
Let
$\CB(g; \rho, \d)$, $\CD(g; \rho, \d)$, $\d \in (0, \rho^{2m}/4]$,
be as defined in \eqref{ABD:eq}.
Let $\ep>0$ be some number. If $\d \rho^{2-2m+2\ep}\to 0$
as $\rho\to\infty$, then
\begin{align}\label{BD:eq}
\volume \bigcup_{\bn\in\SG^\dagger\setminus\{0\}}
\Bigl(\CB(g; \rho, \d)\cap &\ \bigl(\CD(g; \rho, \d) + \bn\bigr)\Bigr)\notag\\[0.2cm]
\ll &\ \d^2 \rho^{4-4m + 2d+6\ep} + \d \rho^{1-2m+d - \ep(d-1)}
 + \d\rho^{d-1-2m+\a_{d}}.
\end{align}

\end{lem}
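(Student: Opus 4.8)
The plan is to separate off the ``angularly thin'' parts of $\CB(g;\rho,\d)$ and $\CD(g;\rho,\d)$ and then, on what remains, re-run the argument behind Theorem~\ref{volumeCB2a} (equivalently \eqref{sobranie:eq}); note that the hypothesis $\d\rho^{2-2m+2\ep}\to0$ of the present lemma is precisely the one needed to invoke that theorem. First I would write $\CB(g;\rho,\d)=\tilde\CB(g;\rho,\d)\cup\CB_0$ with $\CB_0:=\CB(g;\rho,\d)\setminus\tilde\CB(g;\rho,\d)$, and bound $\volume\CB_0\ll\d\rho^{d-1-2m+\a_d}$. Indeed, since $\tilde\CB(\rho)\subset\CB(\rho)$ we have $\CB_0=\CA(\rho,\d)\cap\bigl(\CB(\rho)\setminus\tilde\CB(\rho)\bigr)$; on $\CB(\rho)$ the function $g$ coincides with $a^o$, so by \eqref{ginb:eq} and \eqref{alm:eq} it satisfies $g(\bxi)=|\bxi|^{2m}\bigl(1+o(1)\bigr)$ and $\partial g/\partial|\bxi|\asymp\rho^{2m-1}$ for $\bxi\in\CB(\rho)$ with $g(\bxi)\asymp\rho^{2m}$. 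Hence on $\CA(\rho,\d)$ one has $|\bxi|=\rho(1+o(1))$, so $\CB_0$ lies in $\{t\boldsymbol\Om:\boldsymbol\Om\in S(\rho),\ t>0\}$ intersected with a radial shell on which $g$ is strictly increasing through an interval of length $2\d$, i.e. of radial width $\ll\d\rho^{1-2m}$. With Lemma~\ref{surface:lem} this gives $\volume\CB_0\ll\rho^{d-1}\cdot\d\rho^{1-2m}\cdot\rho^{\a_d-1}=\d\rho^{d-1-2m+\a_d}$. Since $\bigcup_{\bn\ne0}\bigl(\CB_0\cap(\CD(g;\rho,\d)+\bn)\bigr)\subset\CB_0$, this part contributes only the last term of \eqref{BD:eq}.

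Next I would estimate $\bigcup_{\bn\ne0}\bigl(\tilde\CB(g;\rho,\d)\cap(\CD(g;\rho,\d)+\bn)\bigr)$. By \eqref{(non)resonant:eq}, \eqref{exhaust:eq} and Lemma~\ref{lem:properties3} one has $\CD(g;\rho,\d)=\bigcup_{\GV\ne\GX}\bigl(\Xi(\GV)\cap\CA(\rho,\d)\bigr)$, a union of at most $Cr^{d^2}$ sets. Exactly as in the proof of \eqref{eq:volumeCD}, each $\Xi(\GV)\cap\CA(\rho,\d)$, $\GV\in\CV(n)$, $n\le d-1$, is foliated over the pairs $(\boldsymbol\chi,\boldsymbol\Om)$, $\boldsymbol\chi\in\GV$, $|\boldsymbol\chi|<2\rho^{\a_{d-1}}$, $\boldsymbol\Om\in\GV^{\perp}\cap\mathbb S^{d-1}$, by segments $\{\boldsymbol\chi+t\boldsymbol\Om:t\in S(\boldsymbol\chi,\boldsymbol\Om;\rho,\d)\}$ of length $\ll\d\rho^{1-2m}$, the radial monotonicity of $t\mapsto g(\boldsymbol\chi+t\boldsymbol\Om)$ being supplied by \eqref{radial2:eq}; on the other side $\tilde\CB(g;\rho,\d)=\bigcup_{\boldsymbol\Om\in T(\rho)}I(\boldsymbol\Om;\rho,\d)\boldsymbol\Om$ with $|I(\boldsymbol\Om;\rho,\d)|\asymp\d\rho^{1-2m}$, see \eqref{iomd1:eq}--\eqref{tildecb:eq}. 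Thus for each fixed $\GV$ both sets are ``thin shells over spheres'' of the kind handled in the proof of Theorem~\ref{volumeCB2a}, and the two-scale count of that proof (splitting the sum over $\bn$ at $|\bn|\asymp\rho^{\ep}$, and using that two distinct eigenvalues $g(\bxi)$, $g(\bxi-\bn)$ of $A(\{\bxi\})$ cannot both stay in an interval of length $2\d$ over a large set of $\bxi$) applies with $\Xi(\GV)\cap\CA(\rho,\d)$ in the role of the second copy of $\CB(g;\rho,\d)$; the extra slab constraint $|(\bxi-\bn)_\GV|<2\rho^{\a_{d-1}}$ only helps. Summing over the $\ll Cr^{d^2}$ subspaces and using \eqref{eq:condition1} (with $\varkappa$ small relative to $\ep$) to keep the $r^{d^2}$ factor harmless, and absorbing a residual $\ll r^{d^2}\d\rho^{d-1-2m+\a_{d-1}}$ into the last term of \eqref{BD:eq}, one obtains the first two terms of \eqref{BD:eq}.

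The main obstacle is this last step: $\CD(g;\rho,\d)$ is a resonant set and is not literally of the form $\CB(g';\rho,\d)$, so Theorem~\ref{volumeCB2a} cannot be quoted verbatim and its proof has to be rerun in the mixed $\tilde\CB$--$\CD$ setting. What makes this possible, and what I would check most carefully, is that $\CD(g;\rho,\d)$ inherits exactly the transversality properties that proof exploits: by Lemma~\ref{lem:new1} the labelling index $\ell$ is constant along the $\GV^{\perp}$--directions inside $\Xi(\GV)$, and by \eqref{radial2:eq} the restriction of $g$ to those directions is strictly monotone with derivative $\asymp\rho^{2m-1}$, so on each piece $\Xi(\GV)\cap\CA(\rho,\d)$ the set $\CD(g;\rho,\d)$ behaves, as far as the geometry of the intersections with lattice translates is concerned, just like $\tilde\CB(g;\rho,\d)$. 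Once this is established, the combinatorial bookkeeping over $\bn$ is identical to that in Theorem~\ref{volumeCB2a}, and the proof is complete.
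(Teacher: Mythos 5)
Your decomposition $\CB(g;\rho,\d)=\tilde\CB(g;\rho,\d)\cup\CB_0$ and the bound $\volume\CB_0\ll\d\rho^{d-1-2m+\a_d}$ are fine, but the central step of your argument has a genuine gap. You propose to re-run the proof of Theorem~\ref{volumeCB2a} with $\CD(g;\rho,\d)$ playing the role of one copy of $\CB(g;\rho,\d)$, and you justify this by pointing to Lemma~\ref{lem:new1} and \eqref{radial2:eq}. However, that proof goes through Theorem~\ref{volumeCB2}, which requires the functions $g_j$ to be globally of class $\plainC2$ and to obey the second-order bounds \eqref{cond3}--\eqref{cond4}: the key Lemmas~\ref{kiss:lem} and \ref{volumeCB2.2} both use second-order Taylor expansions around a point where the first derivatives have been normalized to zero (via the shift $\tilde g_j(\bxi)=g_j(\bxi-\bv_j)$, which itself requires $\nabla G_j$ to exist and to be Lipschitz). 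On the resonant set $\CD$, the function $g$ is defined as an ordered eigenvalue of the finite matrix $\CA(\bmu)$, and nothing in the paper gives it $\plainC2$ (or even $\plainC1$) regularity across eigenvalue crossings; what is available is only the one-sided radial monotonicity \eqref{radial2:eq} along $\GV^{\perp}$-directions, with no control on second derivatives and no smoothness in the transverse variables. Your remark that the ``transversality properties'' are inherited is not substantiated at the level needed, and in fact this is exactly the obstacle that the paper's proof is designed to avoid.

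The paper's actual proof is entirely different and purely combinatorial. It partitions $\CD(\rho,\d)$ into three pieces according to how many nonzero lattice vectors $\bn$ satisfy $\bxi+\bn\in\CB(\rho,\d)$: $\CD_0$ (no such $\bn$), $\CD_1$ (exactly one), and $\CD_2$ (at least two). Then $\CD_0$ contributes nothing; the translates of $\CD_2$ land inside $\bigcup_{\bn\ne\mathbf 0}(\CB(\rho,\d)+\bn)$, so their intersection with $\CB(\rho,\d)$ is absorbed into \eqref{sobranie:eq}; and for $\CD_1$ the sets $\CD_1\cap(\CB(\rho,\d)+\bn)$ are pairwise disjoint in $\bn$, so the total volume of the union is at most $\volume\CD(\rho,\d)$, which is controlled by \eqref{eq:volumeCD}. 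Adding these three contributions gives \eqref{BD:eq} directly, with no new analytic input. This route completely sidesteps the regularity of $g$ on $\CD$, which is precisely why it works; your approach, as written, would need to supply second-order control of $g$ on $\CD$ that the paper never establishes.
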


\begin{proof} Let us split $\CD(\rho, \d)$ in three
disjoint sets:
\begin{align*}
\CD_0(\rho, \d) = &\ \{\bxi\in\CD(\rho, \d): \bxi+\bn\notin\CB(\rho, \d), \
\textup{for all}\ \ \bn\in\SG^\dagger\setminus\{\mathbf 0\}\},\\[0.2cm]
\CD_1(\rho, \d) = &\ \{\bxi\in\CD(\rho, \d): \\
&\ \textup{there exists a \underline{unique}}\ \
\bn=\bn(\bxi)\in\SG^\dagger\setminus\{\mathbf 0\}
\ \textup{such that} \
 \bxi+\bn\in\CB(\rho, \d)\},\\[0.2cm]
 \CD_2(\rho, \d) = &\ \CD(\rho, \d)\setminus
 \Bigl(\CD_0(\rho, \d)\bigcup\CD_1(\rho, \d)\Bigr).
\end{align*}
The definition of $\CD_0(\rho, \d)$ immediately implies that
 \begin{equation}\label{CD0:eq}
\CB(\rho, \de)\bigcap\Bigl(\bigcup_{\bn\in\SG^\dagger\setminus\{\mathbf 0\}}
\bigl(\CD_0(\rho, \de)+\bn\bigr)\Bigr)=\varnothing.
\end{equation}
For the set $\CD_2(\rho, \d)$ we have the inclusion
\begin{equation}\label{CD2:eq}
 \bigcup_{\bn\in\SG^\dagger\setminus\{\mathbf 0\}}
\bigl(\CD_2(\rho, \de)+\bn\bigr) \subset
\bigcup_{\bn\in\SG^\dagger\setminus\{ \mathbf 0\}}
\bigl(\CB(\rho, \de)+\bn\bigr).
\end{equation}
Indeed,for each $\bxi\in\CD_2(\rho, \d)$
there are at least two distinct lattice vectors $\bn_1, \bn_2\not = 0$
such that $\bxi + \bn_1\in\CB(\rho, \d)$ and $\bxi+\bn_2\in\CB(\rho, \d)$, so that
any lattice vector $\bm\not = 0$ is distinct either from $\bn_1$
or from $\bn_2$. Thus,  assuming for definiteness that $\bm\not = \bn_1$, we get
\begin{equation*}
\bxi+\bm = \bxi + \bn_1 + (\bm - \bn_1) \in \bigl(\CB(\rho, \d)+ \bm-\bn_1\bigr)
\subset
\bigcup_{\bn\in\SG^\dagger\setminus\{\mathbf 0\}}
\bigl(\CB(\rho, \de)+\bn\bigr).
\end{equation*}
This proves \eqref{CD2:eq}.

Now observe that by definition of
$\CD_1(\rho, \d)$
the sets $\CD_1(\rho, \d)\cap\bigl(\CB(\rho, \d)+\bn\bigr)$ are disjoint  for different
$\bn\in\SG^\dagger\setminus\{\mathbf 0\}$. Therefore
\begin{align*}
\volume \bigcup_{\bn\in\SG^\dagger\setminus\{\mathbf 0\}}
\Bigl(\bigl(\CD_1(\rho, \de)+\bn\bigr)\cap\CB(\rho, \d)\Bigr)
= &\ \sum_{\bn\in\SG^{\dagger}\setminus\{\mathbf 0\}}
\volume \Bigl(\CD_1(\rho, \de)\cap\bigl(\CB(\rho, \d)+\bn\bigr)\Bigr)\\[0.2cm]
\le &\ \volume \CD_1(\rho, \d)\le \volume\CD(\rho, \d).
\end{align*}
Together with \eqref{CD0:eq} and \eqref{CD2:eq} this produces the bound
  \begin{equation*}
  \volume \bigcup_{\bn\in\SG^\dagger\setminus\{\mathbf 0\}}
\Bigl(\CB(\rho, \d)\cap \bigl(\CD(\rho, \d) + \bn\bigr)\Bigr)
\le  \volume \CD(\rho, \d) +
\volume \bigcup_{\bn\in\SG^\dagger\setminus\{\mathbf 0\}}
\Bigl(\CB(\rho, \d)\cap \bigl(\CB(\rho, \d) + \bn\bigr)\Bigr).
  \end{equation*}
The estimate \eqref{BD:eq} follows from \eqref{sobranie:eq} and \eqref{eq:volumeCD}.
\end{proof}

The next section is devoted to the proof of Theorem \ref{volumeCB2a}.


\section{Estimates of volumes: part two}

\subsection{Results and preliminary estimates for the intersection volume}
Consider two continuous functions $g_j: \R^d\to\R$ such that $g_j(\bxi)\to\infty$,
$|\bxi|\to\infty$, $j = 1, 2$.
Our objective is to establish
upper bounds for the measure of the set
\begin{equation*}
\CX(g_1, g_2;\rho; \d; \bb_1, \bb_2)
: = \bigl(\CA(g_1; \rho; \d) + \bb_1\bigr)
\cap \bigl(\CA(g_2;\rho; \d) + \bb_2\bigr),
\end{equation*}
for arbitrary vectors $\bb_1, \bb_2\in\R^d$ such that $|\bb_1-\bb_2|\gg 1$,
and $\d\in (0, \rho^{2m}/4]$.
Clearly,
\[
\volume\bigl( \CX(g_1, g_2;\rho; \d; \bb_1, \bb_2) \bigr)
 = \volume\bigl( \CX(g_1, g_2;\rho; \d; \mathbf 0, \bb_2-\bb_1) \bigr),
\]
so that it suffices to study the set
\begin{equation}\label{bigx:eq}
\CX: = \CX(g_1, g_2;\rho; \d; \mathbf 0, \bb)
\end{equation}
with some $\bb\in\R^d$, $1\ll |\bb|\ll \rho$. Note that the condition
$|\bb|\ll \rho$ does not restrict generality, since for $|\bb|\ge 3\rho$
the set $\CX$ is empty.

Let us make more precise assumptions about the functions $g_1, g_2$.
Suppose that
\begin{equation}\label{gsmall:eq}
g_j(\bxi) = |\bxi|^{2m} + G_j(\bxi), \ G_j\in\plainC2(\R^d), \ j = 1, 2.
\end{equation}
Further conditions are imposed for the following range of values of $\bxi$:
\begin{equation}\label{range:eq}
|\bxi|\asymp\rho,\ \ |\bxi-\bb|\asymp\rho.
\end{equation}
The functions $G_j$'s are  assumed to satisfy the following
conditions:
\begin{equation}\label{cond1}
|G_j(\bxi)|\ll \rho^{\g},
\end{equation}
\begin{equation}\label{cond2}
|\nabla G_j(\bxi)|\ll \rho^{\s},
\end{equation}
\begin{equation}\label{cond3}
|\nabla^2 G_j(\bxi)|\ll \rho^{\omega},
\end{equation}
\begin{equation}\label{cond5}
|G_1(\bxi)-G_2(\bxi-\bb)|\ll |\bb|\rho^{\s},
\end{equation}
and
\begin{equation}\label{cond4}
|\nabla G_1(\bxi)-\nabla G_2(\bxi-\bb)|\ll |\bb|\rho^{\omega},
\end{equation}
for all $\bb, 1\ll |\bb|\ll \rho$,
with some $\g< 2m$, $\s<2m-1$, $\omega<2m-2$, for all $\bxi$ satisfying
\eqref{range:eq}. The constants in these estimates are allowed
to depend on the constant $C$ in \eqref{range:eq}.

\begin{thm}\label{volumeCB2}
Let two functions $g_1, g_2$ be as in \eqref{gsmall:eq}, and suppose
that the conditions \eqref{cond1}, \eqref{cond2}, \eqref{cond3},
\eqref{cond5} and \eqref{cond4} are satisfied. Then for any $\ep>0$,
if $\d\rho^{2-2m+2\ep}\to 0$, $\rho\to\infty$, then
\begin{equation}\label{nokiss1:eq}
\volume \CX(g_1, g_2; \rho, \d; \mathbf 0, \bb)
\ll \d^2 \rho^{4-4m+d+6\ep} + \d \rho^{1-2m - \ep(d-1)},
\end{equation}
uniformly in $\bb, 1\ll |\bb|\ll \rho$.
\end{thm}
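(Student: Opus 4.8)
The plan is to realise $\CX$ as a small piece of the transverse intersection of two thin, nearly spherical shells, to estimate that intersection by an explicit change of variables, and to treat separately a thin exceptional set of directions in which the two shells fail to be transverse.

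\textbf{Step 1 (reduction to a shell and a level neighbourhood).} After a rotation we may assume $\bb$ is parallel to $\be_1$, and since $\CX=\varnothing$ for $|\bb|\ge 3\rho$ we keep $1\ll|\bb|\ll\rho$ and work only on the localization region \eqref{range:eq}, where \eqref{cond1}--\eqref{cond4} are available. Put $F(\bxi):=g_1(\bxi)-g_2(\bxi-\bb)$. Then $\bxi\in\CX$ forces $g_1(\bxi)\in[\la-\d,\la+\d]$ and $|F(\bxi)|\le 2\d$, so it suffices to bound the volume of $\{g_1\in[\la-\d,\la+\d]\}\cap\{|F|\le 2\d\}$. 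By \eqref{cond1}, \eqref{cond2} and $\s<2m-1$ the shell $\{g_1\in[\la-\d,\la+\d]\}$ has radial width $\asymp\d\rho^{1-2m}$; the hypothesis $\d\rho^{2-2m+2\ep}\to0$ says precisely that this width is small compared both to the curvature scale $\asymp\rho$ of the shell's boundary and to the transverse size $\rho^{-\ep}$ of the exceptional set introduced below.

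\textbf{Step 2 (transversality and the change of variables).} On \eqref{range:eq} one has $|\nabla g_j|\asymp\rho^{2m-1}$, with $\nabla g_1(\bxi)$ within angle $O(\rho^{\s-2m+1})$ of $\normal(\bxi)$ and $\nabla g_2(\bxi-\bb)$ within angle $O(\rho^{\s-2m+1})$ of $\normal(\bxi-\bb)$ (by \eqref{cond2}), and the level surfaces $\{g_j=\la\}$ have principal curvatures $\asymp\rho^{-1}$ (by \eqref{cond3} and $\om<2m-2$), so they are genuinely sphere‑like. An elementary computation for the two model spheres $|\bxi|=\rho$ and $|\bxi-\bb|=\rho$ shows that they meet at angle $\asymp|\bb|/\rho$, \emph{uniformly} along their $(d-2)$‑dimensional intersection (which lies near $\{\xi_1=|\bb|/2\}$); crucially, the contributions of $G_1,G_2$ alter this angle only by $O(|\bb|\rho^{\om-2m+1})+O(\rho^{\s-2m+1})=o(|\bb|/\rho)$, because $\om<2m-2$ and because \eqref{cond4} (respectively \eqref{cond5}) controls the \emph{difference} of the gradients (respectively of the functions) by $|\bb|\rho^\om$ rather than merely by $\rho^\s$. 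Away from a neighbourhood of the ``equators'' of the angular charts, we may therefore complete $(u,v):=(g_1(\bxi),g_2(\bxi-\bb))$ to a coordinate system $(u,v,\bw)$, $\bw$ ranging over a set of $(d-2)$‑measure $\ll\rho^{d-2}$, with Jacobian $|\det\partial(u,v,\bw)/\partial\bxi|=|\nabla g_1|\,|\nabla g_2|\sin\theta\asymp\rho^{4m-3}|\bb|$, up to the $\rho^{O(\ep)}$ loss coming from the choice of $\bw$ and the passage between charts. Integrating $du\,dv$ over $[\la-\d,\la+\d]^2$ then bounds this part of $\CX$ by $\ll \d^2\rho^{d-2}\bigl(\rho^{4m-3}|\bb|\bigr)^{-1}\rho^{O(\ep)}\ll\d^2\rho^{4-4m+d+6\ep}$, using $|\bb|\gg1$.

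\textbf{Step 3 (exceptional directions) and the main obstacle.} On the thin exceptional set of directions where the above coordinate system degenerates — rays nearly tangent to the intersection sphere, near the equators of the angular charts, of total solid angle $\ll\rho^{-\ep(d-1)}$ — one discards the $F$‑constraint and simply notes that $g_1(\bxi)\in[\la-\d,\la+\d]$ confines $|\bxi|$ to a shell of width $\asymp\d\rho^{1-2m}$, while being in the exceptional set confines the transverse coordinates to a ball of radius $\asymp\rho\cdot\rho^{-1-\ep}=\rho^{-\ep}$; the product has volume $\ll\d\rho^{1-2m}(\rho^{-\ep})^{d-1}=\d\rho^{1-2m-\ep(d-1)}$. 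Summing the two contributions gives \eqref{nokiss1:eq}. The genuinely delicate point is Step 2: one must upgrade the transversality of the model spheres to a legitimate change of variables on the perturbed, merely $C^2$ level sets — i.e. control the multiplicity of $\bxi\mapsto(u,v,\bw)$, choose the angular coordinates $\bw$ over the sphere‑like intersection and handle the chart transitions, and verify that the directions where this is impossible really are confined to the exceptional set of Step 3 (this is exactly where $\d\rho^{2-2m+2\ep}\to0$ enters, forcing a ray in the good cone that meets the first shell to leave it before it can meet the second except near the transverse intersection). The exceptional set itself is then disposed of by an angular‑measure estimate of the type already used in Lemma \ref{surface:lem}; all the remaining gradient, curvature and angle bounds are routine consequences of \eqref{cond1}--\eqref{cond4} together with the strict inequalities $\g<2m$, $\s<2m-1$, $\om<2m-2$, which are precisely what keeps the perturbations $G_1,G_2$ subordinate to $h_0$.
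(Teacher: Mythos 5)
Your geometric picture — interpret $\CX$ as the intersection of two thin nearly spherical shells, use a coarea/transversality bound for the main term, and treat a near-tangent exceptional case separately — is indeed the same outline the paper follows (via the case split on $\pi-\phi(\bnu,\bnu-\ba)$ in the proof of Theorem~\ref{volumeCB2} with Lemmas~\ref{kiss:lem} and~\ref{volumeCB2.2}). However, the proposal contains a genuine gap at precisely the point you flag as ``delicate,'' and the paper resolves it with a device that is absent from your argument.

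The obstruction in your Step~2 is quantitative. You claim the perturbations $G_1,G_2$ alter the intersection angle of the two level surfaces by $o(|\bb|/\rho)$, citing \eqref{cond4}--\eqref{cond5}. But \eqref{cond4} only controls the \emph{difference} $\nabla G_1(\bxi)-\nabla G_2(\bxi-\bb)$; the angle between $\nabla g_1(\bxi)$ and $\normal(\bxi)$ individually is controlled only by \eqref{cond2}, which gives an error $O(\rho^{\s-2m+1})$. Since the unperturbed transversality angle is $\asymp|\bb|/\rho$, your argument needs $\rho^{\s-2m+1}=o(|\bb|\rho^{-1})$, i.e.\ $|\bb|\gg\rho^{\s-2m+2}$. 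Because $\s<2m-1$ gives only $\s-2m+2<1$, this fails whenever $|\bb|$ is close to $1$ (and the theorem must hold uniformly down to $|\bb|\gg 1$). So for small $|\bb|$ the perturbation can swamp the transversality and your change of variables collapses. This is exactly why the paper first \emph{re-centres} the problem: it replaces $g_j$ by $\tilde g_j(\bxi)=g_j(\bxi-\bv_j)$ with $\bv_j=\tfrac12\nabla G_j(\bmu)$, which forces $\nabla\tilde G_j$ to vanish at the test point $\bnu=\bmu+\bv_1$, so that near $\bnu$ the level sets agree with spheres to \emph{second} order (controlled by $\om<2m-2$, which \emph{is} strong enough). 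Conditions \eqref{cond4}--\eqref{cond5} enter not to bound the angle error, but to guarantee $|\ba|=|\bb+\bv_1-\bv_2|\asymp|\bb|$, so the re-centred translation $\ba$ is comparable to the original one; that is their real purpose.

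Your Step~3 is also mischaracterised. The exceptional case is not a ``thin set of ray directions near the equators of the angular charts'' of small solid angle; for a fixed $\bb$ the angle $\phi(\bxi,\bxi-\bb)$ is essentially constant over $\CX$, so the dichotomy is a dichotomy on the pair $(\bb,\rho)$ (near-tangent spheres, $|\bb|$ close to $2\rho$, versus transverse spheres). In the near-tangent case the whole of $\CX$ is confined to a single small ball around the would-be kissing point, and deriving that confinement (Lemma~\ref{kiss:lem}, part~1) again uses the re-centring \eqref{nuli:eq} together with \eqref{cond3}: without $\nabla G_j(\bmu)=0$ one cannot compare $g_1(\bxi)-g_1(\bmu)$ to $|\bxi|^2-|\bmu|^2$ with error only $O(|\boldeta|^2\rho^\om)$. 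Discarding the $F$-constraint, as you propose, does not by itself confine the transverse coordinates at all. In short: the missing idea is the gauge-fixing translation by $\tfrac12\nabla G_j(\bmu)$, followed by a local-to-global covering by balls of radius $\rho^{-2\ep}$; without it the transversality estimate is false when $|\bb|$ is small and the kissing estimate is unavailable.
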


Let us show how to derive Theorem \ref{volumeCB2a} from
Theorem \ref{volumeCB2}:

\begin{proof}[Proof of Theorem \ref{volumeCB2a}]
Extend the function $g$ from
the non-resonant set $\CB$ to the entire space $\R^d$ by the formula \eqref{ginb:eq},
and denote the new function by $g_1$. By \eqref{ginb:eq} and \eqref{alm:eq},
the functions $g_1$ and $g_2 = g_1$
satisfy the conditions \eqref{cond1}, \eqref{cond2}, \eqref{cond3},
\eqref{cond5}, \eqref{cond4}
with $\g = \a\b < 2m, \s = (\a-1)\b < 2m-1, \om = (\a-2)\b<2m-2$
for $\bxi$ in the range \eqref{range:eq}.

Then, clearly,
\begin{equation*}
\Bigl(\CB(g; \rho; \d)\cap \bigl(\CB(g;\rho; \d) + \bb\bigr)\Bigr)
\subset \Bigl(\CA(g_1; \rho; \d) \cap \bigl(\CA(g_1;\rho; \d) + \bb\bigr)\Bigr).
\end{equation*}
It remains to use Theorem \ref{volumeCB2} with $g_1 = g_2$.
\end{proof}

Let us concentrate on proving Theorem \ref{volumeCB2}.
Our first observation is that it suffices to
do it for $m = 1$. Indeed,
introducing functions
\begin{equation*}
\check g_j(\bxi) = \bigl(g_j(\bxi)\bigr)^{\frac{1}{m}},
\check G_j(\bxi) = \check g_j(\bxi) - |\bxi|^2,
\end{equation*}
we note that under the condition \eqref{range:eq} the functions
$\check G_j$ satisfy the bounds \eqref{cond1}-\eqref{cond4} with
the parameters
\begin{gather*}
\check \g = \g + 2 - 2m < 2,\ \check \s =
\max(\s+2-2m, \g+1-2m)<1,\\
\check \om = \max(\om+2-2m, \s+1-2m, \g-2m)<0.
\end{gather*}
One checks directly that
\begin{equation*}
\CX(g_1, g_2; \rho, \d; \mathbf 0, \bb)\subset
\CX(\check g_1, \check g_2; \rho, \check\d; \mathbf 0, \bb), \
\check\d = 2m^{-1}\d\rho^{2-2m},
\end{equation*}
for sufficiently large $\rho$. Moreover, the condition $\d\rho^{2-2m+2\ep}\to 0$
becomes $\check\d\rho^{2\ep}\to 0$.

\vskip 0.2cm

\noindent
\underline{Thus, from now on until the end of this section we assume that $m=1$.}

\vskip 0.2cm

\noindent
Due to the condition $\d \rho^{2\ep}\to 0$, we may assume that
$\d \in (0, 1]$. Now,
in view of \eqref{cond1} we have
\begin{equation}\label{inter:eq}
c_1\rho \le |\bxi|\le C_1\rho,\ \
 c_1\rho\le |\bxi-\bb|\le C_1\rho.
\end{equation}
Here, the constants $c_1, C_1, c_1< 1 < C_1$
can be chosen arbitrarily close to $1$, assuming that $\rho$ is sufficiently large.

Below, we denote
by $\phi(\bxi, \boldeta)\in [0, \pi]$ the angle between arbitrary
non-zero vectors $\bxi, \boldeta\in\R^d$.
A central role in the study of the set $\CX$ (see \eqref{bigx:eq})
is played by the angle $\phi(\bmu, \bmu-\bb)$ for the points
$\bmu\in\CX$. Let us establish some general
facts about this angle.
It is convenient to introduce new orthogonal
coordinates in $\R^d$ in the following way:
$\bxi = (\xi_1, \hat\bxi)$ with $\xi_1 = \bxi\cdot\normal(\bb)$ and
$\hat\bxi = \bxi_{\bb^\perp}$, so that $\bxi = \xi_1\normal(\bb) + \hat\bxi$.

\begin{lem}\label{polozh:lem} Let $m=1$, and
suppose that the functions $G_j, j=1, 2$ satisfy
conditions \eqref{cond1}, \eqref{cond5}, and that
$\d\in (0, 1]$, $1\ll |\bb|\ll \rho$.
Then $\phi(\bxi, \bxi-\bb)\gg |\bb|\rho^{-1}$ uniformly in
$\bxi\in\CX$.
\end{lem}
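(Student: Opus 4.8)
The plan is to translate the membership $\bxi\in\CX$ into a pair of scalar equations and then extract from them a lower bound on the relevant angle. If $\bxi\in\CX$ then both $g_1(\bxi)$ and $g_2(\bxi-\bb)$ lie in $[\rho^2-\d,\rho^2+\d]$, so in particular $|g_1(\bxi)-g_2(\bxi-\bb)|\le 2\d$. Writing $g_1(\bxi)=|\bxi|^2+G_1(\bxi)$ and $g_2(\bxi-\bb)=|\bxi-\bb|^2+G_2(\bxi-\bb)$ and subtracting, one gets
\begin{equation*}
\bigl||\bxi|^2-|\bxi-\bb|^2\bigr|\le 2\d + |G_1(\bxi)-G_2(\bxi-\bb)|\ll \d + |\bb|\rho^{\s},
\end{equation*}
using condition \eqref{cond5}. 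Since $\s<1=2m-1$ and $\d\le 1$, the right-hand side is $\ll |\bb|\rho^{\s}$ with $\s<1$, i.e. it is $o(|\bb|\rho)$ as $\rho\to\infty$. On the other hand $|\bxi|^2-|\bxi-\bb|^2 = 2\bxi\cdot\bb - |\bb|^2 = 2|\bb|\,\xi_1 - |\bb|^2$ in the coordinates $\bxi=(\xi_1,\hx)$ introduced just before the lemma. Hence $|2|\bb|\xi_1-|\bb|^2|\ll |\bb|\rho^\s$, which gives $|\xi_1 - |\bb|/2|\ll \rho^{\s}$, i.e. $\xi_1 = |\bb|/2 + O(\rho^\s)$.

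Next I would use this to bound the component $\hx$ from below. From \eqref{inter:eq} we have $|\bxi|\asymp\rho$, so $|\hx|^2 = |\bxi|^2 - \xi_1^2 \gg \rho^2 - (|\bb|/2+O(\rho^\s))^2$. Since $|\bb|\ll\rho$ and $\rho^\s=o(\rho)$, the subtracted term is $\ll\rho^2$ with a constant that can be taken arbitrarily close to (at most) some fixed fraction, so $|\hx|\gg\rho$ — more carefully, one can arrange the constants in \eqref{inter:eq} close enough to $1$ that $|\hx|\asymp\rho$. Now the angle $\phi=\phi(\bxi,\bxi-\bb)$ satisfies
\begin{equation*}
\sin\phi = \frac{|\bxi\times(\bxi-\bb)|}{|\bxi|\,|\bxi-\bb|} = \frac{|\bxi\times\bb|}{|\bxi|\,|\bxi-\bb|},
\end{equation*}
where $|\bxi\times\bb| = |\bb|\,|\hx|$ (the component of $\bxi$ perpendicular to $\bb$, times $|\bb|$). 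Therefore $\sin\phi = |\bb|\,|\hx|/(|\bxi|\,|\bxi-\bb|)\gg |\bb|\rho/\rho^2 = |\bb|\rho^{-1}$, using $|\hx|\gg\rho$ and $|\bxi|,|\bxi-\bb|\ll\rho$. Since $|\bb|\rho^{-1}\ll 1$, the bound $\phi\gg\sin\phi\gg|\bb|\rho^{-1}$ follows, which is exactly the claim.

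The only slightly delicate point — which I would treat as the main thing to get right rather than a genuine obstacle — is making sure the error term $O(\rho^\s)$ in $\xi_1$ is genuinely negligible compared both to $|\bb|$ (so that the displacement of $\xi_1$ from $|\bb|/2$ does not swamp the geometry) and to $\rho$ (so that $|\hx|\asymp\rho$ survives). Both follow from $\s<2m-1=1$ together with $1\ll|\bb|\ll\rho$: one chooses $\rho$ large enough, depending only on the implied constants in \eqref{cond5} and \eqref{inter:eq}, that $\rho^\s\le \epsilon_0|\bb|$ and $\rho^\s\le\epsilon_0\rho$ for a small fixed $\epsilon_0$; all estimates are then uniform in $\bxi\in\CX$ and in $\bb$ with $1\ll|\bb|\ll\rho$, as required. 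Note that conditions \eqref{cond2}, \eqref{cond3}, \eqref{cond4} are not needed here; only \eqref{cond1} (through \eqref{inter:eq}) and \eqref{cond5} enter, consistent with the hypotheses of the lemma.
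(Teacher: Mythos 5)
Your reduction of the membership $\bxi\in\CX$ to the scalar relation $\xi_1=|\bb|/2+O(\rho^{\s})+O(\d/|\bb|)$ is correct, and in the regime $|\bb|\le\kappa\rho$ with a fixed $\kappa<2$ your cross-product computation $\sin\phi=|\bb|\,|\hat\bxi|/(|\bxi|\,|\bxi-\bb|)$ is a clean substitute for the paper's sine-rule step. But there is a genuine gap at the step ``$|\hat\bxi|\gg\rho$''. The hypothesis $|\bb|\ll\rho$ only gives $|\bb|\le C\rho$ with $C$ roughly $2$ (since $|\bb|\le|\bxi|+|\bxi-\bb|\le 2C_1\rho$); it is not a smallness assumption. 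When $|\bb|$ is close to $2\rho$, the point $\bxi$ can lie essentially on the segment from $\mathbf 0$ to $\bb$: for instance $\bxi=\rho\,\normal(\bb)$, $\bb=2\rho\,\normal(\bb)$, $G_1=G_2=0$ gives $\bxi\in\CX$ with $\hat\bxi=0$ and $\sin\phi=0$. In that configuration $\xi_1^2$ is as large as $|\bxi|^2$ itself, so $|\hat\bxi|^2=|\bxi|^2-\xi_1^2$ has no lower bound of order $\rho^2$, and tuning $c_1,C_1$ toward $1$ cannot repair this. Consequently the chain $\phi\ge\sin\phi\gg|\bb|\rho^{-1}$ fails precisely in the near-antipodal regime.

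The lemma is still true there, but for a different reason: $\phi$ is then close to $\pi$, hence bounded below by an absolute constant, which dominates $|\bb|\rho^{-1}\ll1$. This is exactly the paper's Case 2 ($|\xi_1|\ge(\sqrt3/2)c_1\rho$), where one first shows $0<\xi_1<|\bb|$ using \eqref{cond5}, deduces $|\bb|\gg\rho$, and then bounds $\sin(\phi/2)$ from below by a constant. To complete your argument you must add this case, e.g.\ by noting that when $|\hat\bxi|^2\le|\bb|^2/8$ and $\xi_1=|\bb|/2+o(\rho)$ one has $\bxi\cdot(\bxi-\bb)=\xi_1(\xi_1-|\bb|)+|\hat\bxi|^2\le-|\bb|^2/8+o(\rho^2)<0$, so $\phi\ge\pi/2\gg|\bb|\rho^{-1}$.
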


\begin{proof}
Denote for brevity $\phi = \phi(\bxi, \bxi-\bb)$ and
$\CX = \CX(\rho; \d, g_1, g_2; \mathbf 0, \bb)$.
For our purposes
we assume that the constants $c_1, C_1$ in \eqref{inter:eq} satisfy the
bound
\begin{equation}\label{c1:eq}
5c_1^2/4 \ge C_1^2.
\end{equation}
We consider separately two cases: $|\xi_1|<(\sqrt3/2) c_1\rho$ and
$|\xi_1|\ge (\sqrt3/2) c_1\rho$.

\underline{Case 1: $|\xi_1|<\sqrt3/2 c_1\rho$.} Denote
$\tilde\phi = \phi(\bb, \bxi)$.
Since $|\bxi|\ge c_1\rho $ and $|\xi_1|< \sqrt3/2 c_1\rho$, we have
$\tilde\phi\in (\pi/6, 5\pi/6)$, and hence $\sin\tilde\phi > 1/2$.
By the sine rule
\begin{equation*}
\frac{\sin\phi}{|\bb|} = \frac{\sin\tilde\phi}{|\bxi-\bb|},
\end{equation*}
which implies that
\begin{equation*}
\sin\phi = \frac{|\bb|}{|\bxi-\bb|} \sin\tilde\phi
\ge C_1^{-1}\frac{|\bb|}{2\rho}.
\end{equation*}
Thus $\phi\gg |\bb|\rho^{-1}$, as claimed.

\underline{Case 2: $|\xi_1|\ge (\sqrt3/2) c_1\rho$.}
Let us show first that
\bee\label{new*}
\xi_1 >0
\ene
 and
 \bee\label{new**}
 \xi_1 - |\bb|<0.
 \ene

Assume, on the contrary, that $\xi_1 \le 0$, so that
\begin{equation*}
|\bxi-\bb|^2  - |\bxi|^2 = (\xi_1-|\bb|)^2 - \xi_1^2 \ge  -2 \xi_1 |\bb|.
\end{equation*}
Then
\begin{equation*}
g_2(\bxi-\bb) - g_1(\bxi)
\ge  -2\xi_1 |\bb| - |G_1(\bxi) - G_2(\bxi-\bb)|
\end{equation*}
By condition \eqref{cond5},
\begin{equation*}
|G_1(\bxi) - G_2(\bxi-\bb)|\ll |\bb|\rho^{\s}.
\end{equation*}
Together with the assumption  $|\xi_1|\ge (\sqrt3/2) c_1 \rho$ this implies that
\begin{equation*}
g_2(\bxi-\bb) - g_1(\bxi)
\gg
\rho|\bb|.
\end{equation*}
This contradicts the condition $g_2(\bxi-\bb) - g_1(\bxi) < 2\d$, and hence
\eqref{new*} is satisfied.

Assume now that $\xi_1 - |\bb|\ge 0$, Then, similarly to the above argument,
\begin{equation*}
|\bxi|^2 - |\bxi-\bb|^2 = \xi_1^2 - (\xi_1-|\bb|)^2 = 2\xi_1 |\bb| - |\bb|^2
\ge \xi_1 |\bb|.
\end{equation*}
Thus, as above,
\begin{equation*}
g_1(\bxi) - g_2(\bxi-\bb)
\ge  \xi_1 |\bb| - |G_1(\bxi) - G_2(\bxi-\bb)|
\end{equation*}
By condition \eqref{cond5},
\begin{equation*}
|G_1(\bxi) - G_2(\bxi-\bb)|\ll |\bb|\rho^{\s}.
\end{equation*}
Together with the assumption  $|\xi_1|\ge (\sqrt3/2) c_1 \rho$ this implies that
\begin{equation*}
g_1(\bxi) - g_2(\bxi-\bb)
\gg
\rho |\bb|.
\end{equation*}
This contradicts the condition $g_1(\bxi) - g_2(\bxi-\bb) < 2\d$, and hence
\eqref{new**} is satisfied.

The next step is to show that $|\bb|\gg \rho$.
Indeed, it follows from \eqref{inter:eq} that
\begin{equation*}
|\hat\bxi|^2 = |\bxi|^2 - \xi_1^2\le \biggl( C_1^2  - \frac{3}{4} c_1^2\biggr)\rho^2,
\end{equation*}
which implies that
\begin{align*}
|\bb|^2 = &\ |\bxi|^2 +|\bxi-\bb|^2 - 2\bxi\cdot(\bxi-\bb)
\ge |\bxi|^2 +|\bxi-\bb|^2 - 2|\hat\bxi|^2\\[0.2cm]
\ge &\  2 \biggl(c_1^2 - C_1^2 + \frac{3}{4}c_1^2\biggr)\rho^2
= 2 \biggl(\frac{7}{4}c_1^2 - C_1^2\biggr)\rho^2\ge c_1^2\rho^2.
\end{align*}
Here we have used  \eqref{c1:eq} as well as \eqref{new*} and \eqref{new**}.
On the other hand,
\begin{align*}
|\bb|^2 = &\ |\bxi|^2 +|\bxi-\bb|^2 - 2\cos\phi |\bxi| \ |\bxi-\bb|\\[0.2cm]
= &\ \bigl(|\bxi|-|\bxi-\bb|\bigr)^2 + 2(1-\cos\phi) |\bxi| \ |\bxi-\bb|\\[0.2cm]
\le &\ (C_1-c_1)^2\rho^2 + 4 C_1^2\sin^2\bigl(\frac{\phi}{2}\bigr) \rho^2.
\end{align*}
Using this, together with the lower bound $|\bb|\ge c_1\rho$, we arrive at
\begin{equation*}
4C_1^2 \sin^2\bigl(\frac{\phi}{2}\bigr)\ge c_1^2 - (C_1-c_1)^2
= C_1(2c_1 - C_1)\ge \frac{1}{2} c_1 C_1.
\end{equation*}
This means that $\sin(\phi/2)\gg 1\gg |\bb|\rho^{-1}$, which means that
$\phi\gg |\bb|\rho^{-1}$, as claimed.

The proof of the Lemma is complete.
\end{proof}

The next result is proved for those $\bmu\in\CX$
which satisfy the relations
\begin{equation}\label{nuli:eq}
\nabla G_1(\bmu) = \nabla G_2(\bmu-\bb) = 0.
\end{equation}

\begin{lem}\label{kiss:lem}
Let $m=1$, and
let the functions $G_1, G_2\in\plainC2(\R^d)$ satisfy the conditions
\eqref{cond1},
\eqref{cond3}, and let $\d\in (0, 1]$, $1\ll |\bb|\ll \rho$.
Suppose that there exists a point
$\bmu\in \CX$
such that $\pi - \phi(\bmu, \bmu-\bb)\le  l \rho^{-1}$
with $ 0< l \le   1$,
and \eqref{nuli:eq} is satisfied.
Then under the condition $\d l^{-2}\to 0$, $\rho\to\infty$, we have
\begin{enumerate}
\item [1.]
$\CX \subset
\{\bxi\in\R^d: |\hat\bxi| < 4 C_1^2 c_1^{-1} l \}$, and
\item [2.]
$\volume \CX \ll \d l^{(d-1)} \rho^{-1}$.
\end{enumerate}
\end{lem}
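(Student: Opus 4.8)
Throughout $m=1$, $\d\in(0,1]$, and all points live in the region \eqref{inter:eq}, so $|\bxi|,|\bxi-\bb|\asymp\rho$ for $\bxi\in\CX$. I would organise the argument around the angle $\phi(\bxi,\bxi-\bb)$, using the elementary identity (Lagrange)
\begin{equation*}
|\hat\bxi|=\frac{|\bxi|\,|\bxi-\bb|}{|\bb|}\,\sin\phi(\bxi,\bxi-\bb),\qquad \bxi\in\R^d ,
\end{equation*}
which reduces Part~1 to an upper bound on $\sin\phi(\bxi,\bxi-\bb)$ valid on all of $\CX$. The plan for Part~1 is: (a) First show $|\bb|\gg\rho$. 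Since $\pi-\phi(\bmu,\bmu-\bb)\le l\rho^{-1}$ with $l\le1$, for large $\rho$ the angle $\phi(\bmu,\bmu-\bb)$ exceeds $\pi/2$, so $\bmu\cdot(\bmu-\bb)\le0$; hence $|\bb|^2=|\bmu|^2+|\bmu-\bb|^2-2\bmu\cdot(\bmu-\bb)\ge|\bmu|^2+|\bmu-\bb|^2\ge 2c_1^2\rho^2$, i.e. $|\bb|\ge\sqrt2\,c_1\rho$. (b) Consequently $|\hat\bmu|\le \frac{C_1^2\rho^2}{\sqrt2\,c_1\rho}\sin(\pi-\phi(\bmu,\bmu-\bb))\le \frac{C_1^2}{\sqrt2\,c_1}\,l$, since $\sin(\pi-\phi)\le\pi-\phi\le l\rho^{-1}$.

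The core of Part~1 is propagating smallness of $\sin\phi(\cdot,\cdot-\bb)$ from $\bmu$ to an arbitrary $\bxi\in\CX$. Write $\mathbf v=\bxi-\bmu$, $D=|\mathbf v|$, $v_1=\mathbf v\cdot\normal(\bb)$. Since $\nabla G_1(\bmu)=\nabla G_2(\bmu-\bb)=0$ by \eqref{nuli:eq}, and $|\nabla^2G_j|\ll\rho^{\omega}$ ($\omega<0$) by \eqref{cond3} on the segment $[\bmu,\bxi]$ (which stays in the range \eqref{range:eq} because the $\normal(\bb)$-components of both endpoints are $|\bb|/2+O(\rho^{\g-1})\asymp\rho$, using \eqref{cond1}), Taylor's formula with exact quadratic part gives
\begin{equation*}
g_1(\bxi)-g_1(\bmu)=2\bmu\cdot\mathbf v+|\mathbf v|^2+O(\rho^{\omega}D^2),\qquad
g_2(\bxi-\bb)-g_2(\bmu-\bb)=2(\bmu-\bb)\cdot\mathbf v+|\mathbf v|^2+O(\rho^{\omega}D^2).
\end{equation*}
Both left-hand sides lie in $[-2\d,2\d]$. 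Subtracting yields $|2\bb\cdot\mathbf v|\le 4\d+C\rho^{\omega}D^2$, so $|v_1|\ll(\d+\rho^{\omega}D^2)/\rho$; squaring and using the a priori bound $D\le 2C_1\rho$ to absorb the $\rho^{\omega}D^2$ term gives $v_1^2\ll \d^2\rho^{-2}+o(1)D^2$, whence
\begin{equation*}
D^2=v_1^2+|\hat\bxi-\hat\bmu|^2\ll |\hat\bxi|^2+l^2 .
\end{equation*}
On the other hand, comparing $\cos\phi(\bxi,\bxi-\bb)=\frac{\bxi\cdot(\bxi-\bb)}{|\bxi|\,|\bxi-\bb|}$ with the corresponding quantity at $\bmu$ by a standard quotient estimate (numerators and denominators differ by $O(\d+\rho^{\omega}D^2)$, denominators being $\asymp\rho^2$) gives $\cos\phi(\bxi,\bxi-\bb)\le\cos\phi(\bmu,\bmu-\bb)+C\rho^{-2}(\d+\rho^{\omega}D^2)\le -1+\tfrac{l^2}{2}\rho^{-2}+C\rho^{-2}(\d+\rho^{\omega}D^2)$, hence $\sin^2\phi(\bxi,\bxi-\bb)\le 2(1+\cos\phi(\bxi,\bxi-\bb))\ll\rho^{-2}(l^2+\d+\rho^{\omega}D^2)$. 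Plugging this into the displayed identity for $|\hat\bxi|$ and then inserting $D^2\ll|\hat\bxi|^2+l^2$, one gets $|\hat\bxi|^2\le \frac{C_1^4}{2c_1^2}\big(l^2+o(l^2)+C\rho^{\omega}(|\hat\bxi|^2+l^2)\big)$; since $\d l^{-2}\to0$ and $\rho^{\omega}\to0$, the $\rho^{\omega}|\hat\bxi|^2$ term can be absorbed into the left side, giving $|\hat\bxi|^2\le \frac{C_1^4}{c_1^2}l^2$ for large $\rho$, i.e. $|\hat\bxi|<4C_1^2c_1^{-1}l$. I expect the delicate point here to be exactly this last closing step — controlling the self-referential error $\rho^{\omega}D^2$ and keeping the constant under $4C_1^2c_1^{-1}$ — so I would be careful to bound $D$ only through $|\hat\bxi|$ and $l$ (avoiding any iteration whose constants might deteriorate).

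For Part~2 I would use Fubini in the coordinates $(\xi_1,\hat\bxi)$. By Part~1, $\CX$ is contained in the slab $\{|\hat\bxi|<4C_1^2c_1^{-1}l\}$, whose cross-section in $\bb^{\perp}$ has $(d-1)$-measure $\w_{d-1}(4C_1^2c_1^{-1}l)^{d-1}\asymp l^{d-1}$. For each fixed such $\hat\bxi$, the function $\xi_1\mapsto g_1(\bxi)-g_2(\bxi-\bb)$ has derivative $2|\bb|+\partial_{\xi_1}G_1(\bxi)-\partial_{\xi_1}G_2(\bxi-\bb)$; on the relevant range $\bxi$ stays within $O(\rho^{\g-1}+l)$ of $\bmu$ (its $\normal(\bb)$-component near $|\bb|/2$, its $\bb^{\perp}$-component small by Part~1), so by \eqref{nuli:eq} and \eqref{cond3} the gradient terms are $\ll\rho^{\omega}(\rho^{\g-1}+l)=o(\rho)$, and the derivative is $\asymp|\bb|\asymp\rho$ and of one sign. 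Hence $\{\xi_1:(\xi_1,\hat\bxi)\in\CX\}$, being contained in the preimage of $[-2\d,2\d]$ under this strictly monotone function, is an interval of length $\ll\d/\rho$. Integrating, $\volume\CX\ll (\d/\rho)\cdot l^{d-1}$, which is Part~2.
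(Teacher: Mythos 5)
Your proof is correct; Part 2 follows the paper's argument, but Part 1 takes a genuinely different route. The paper proves Part 1 by contradiction: assuming $|\hat\bxi|\ge 4c_2l$, $c_2=C_1^2c_1^{-1}$, and writing $\boldeta=\bxi-\bmu$, it first gets $|\hat\bxi|^2-|\hat\bmu|^2\ge |\hat\boldeta|^2/3$ and then splits on the sign of $\eta_1$: for $\eta_1\ge 0$ it adds the identity $\xi_1^2-\mu_1^2=\eta_1^2+2\mu_1\eta_1\ge\eta_1^2$ (using $\mu_1>0$) to conclude $|\bxi|^2-|\bmu|^2\ge|\boldeta|^2/3$, hence $g_1(\bxi)-g_1(\bmu)\gg l^2\gg\d$ by \eqref{nuli:eq} and \eqref{cond3}, contradicting $\bxi\in\CA(g_1;\rho;\d)$; for $\eta_1<0$ it runs the mirror argument with $g_2(\cdot-\bb)$, using $\mu_1-|\bb|<0$. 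You instead bound $\sin\phi(\bxi,\bxi-\bb)$ uniformly on $\CX$ by a perturbative comparison with $\phi(\bmu,\bmu-\bb)$ and convert this to a bound on $|\hat\bxi|$ via the triangle-area identity. Both arguments use exactly the same ingredients ($|\bb|\gg\rho$, \eqref{nuli:eq}, \eqref{cond3}); the paper's version is shorter and avoids the self-referential absorption of the $\rho^{\om}D^2$ error through $D^2\ll|\hat\bxi|^2+l^2$, which you do carry out correctly (and your bookkeeping even yields the slightly sharper constant $C_1^2c_1^{-1}$ in place of $4C_1^2c_1^{-1}$). In Part 2 the only cosmetic difference is that you differentiate $g_1(\bxi)-g_2(\bxi-\bb)$ in $\xi_1$ (derivative $2|\bb|+o(\rho)$), whereas the paper differentiates $g_1$ alone (derivative $2\xi_1+o(\rho)\asymp\rho$ since $\xi_1\asymp\rho$ on the relevant slab); either way each section is an interval of length $\ll\d\rho^{-1}$ and Fubini over $|\hat\bxi|\ll l$ finishes the proof.
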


\begin{proof} First note some useful inequalities for
$\bmu, \bb$.  Denote
$\phi_0 := \phi(\bmu, \bmu-\bb)$,\ $ \phi_1 := \phi(\bmu, \bb)$.
Since $\pi - \phi_0\le l\rho^{-1}$, we have $\phi_0 \ge \pi/2$,
so that $\cos\phi_0<0$ and $\cos\phi_1 >0$.
Recalling \eqref{inter:eq}, we conclude that
\begin{equation*}
|\bb|^2 = |\bmu|^2+ |\bmu-\bb|^2 - 2\cos\phi_0 |\bmu|\ |\bmu-\bb|\ge 2c_1^2\rho^2.
\end{equation*}
This also gives $|\bmu|< |\bb|$ and $|\bmu-\bb|<|\bb|$.
Furthermore, it follows from the sine rule that
\begin{equation*}
\sin\phi_1 = \sin\phi_0 \frac{|\bmu-\bb|}{|\bb|}
\le \frac{l}{\rho} \frac{C_1\rho}{ c_1 \rho}= \frac{C_1}{ c_1}
\frac{l}{\rho}.
\end{equation*}
This leads to the bounds
\begin{equation*}
|\hat\bmu|= |\bmu|\sin\phi_1 \le c_2l, \  c_2 = C_1^2 c_1^{-1},\ \
\mu_1 = |\bmu|\cos\phi_1> 0.
\end{equation*}
Similarly, by considering $\phi_2:=\phi(\bmu-\bb,\bb)$ we can prove
that $|\bb|-\mu_1>0$.

Now let $\bxi$ be an arbitrary element of $\CX$ and let us prove
that  $|\hat\bxi|< 4 c_2l$. Suppose that, on the contrary,
$|\hat\bxi|\ge 4 c_2l$,
and let $\boldeta = \bxi-\bmu$.  Clearly,
$|\hat\boldeta|\ge 3 c_2 l\ge 3|\hat\bmu|$, and hence
\begin{equation}\label{newtilde}
|\hat\bxi|^2 - |\hat\bmu|^2
= |\hat\boldeta|^2 + 2\hat\bmu\cdot \hat\boldeta
\ge |\hat\boldeta|(|\hat\boldeta|-2|\hat\bmu|)\ge\frac{|\hat\boldeta|^2}{3}.
\end{equation}
Let us now assume that $\eta_1\ge 0$. Then, 
combining \eqref{newtilde} with the identity
\begin{equation}\label{+:eq}
\xi_1^2 - \mu_1^2 = \eta_1^2 + 2\mu_1 \eta_1,
\end{equation}
we obtain
\begin{equation*}
|\bxi|^2 - |\bmu|^2\ge \frac{1}{3}|\boldeta|^2.
\end{equation*}
At the same time, due to \eqref{nuli:eq} and \eqref{cond3},
\begin{equation*}
|G_1(\bxi) - G_1(\bmu)|\ll |\boldeta|^2 \rho^{\om},
\end{equation*}
so that
\begin{equation*}
g_1(\bxi) - g_1(\bmu)\ge |\bxi|^2 - |\bmu|^2 - C|\boldeta|^2 \rho^{\om}
\ge \biggl(\frac{1}{3} - C\rho^{\om}\biggr)|\boldeta|^2
\gg  l^2,
\end{equation*}
and hence $g_1(\bxi)\ge g_1(\bmu)+ Cl^2 \ge \rho^2 - \d + Cl^2$. Since $\d l^{-2}\to 0 $
as $\rho\to\infty$, it follows that
$g_1(\bxi)> \rho^2+ \d$ for large $\rho$.
This means that $\bxi\notin \CA(g_1; \rho; \d)$, so that, by contradiction,
$|\hat\bxi|< 4 c_2l$.

Consider now the case $\eta_1<0$. Then instead of \eqref{+:eq} we use
\begin{equation*}
|\xi_1 - |\bb||^2 - |\mu_1 - |\bb||^2 = |\eta_1|^2 +
2(\mu_1 - |\bb|)\eta_1.
\end{equation*}
Since $\mu_1 - |\bb|<0$, combining this with \eqref{newtilde}, we obtain
\begin{equation*}
|\bxi-\bb|^2 - |\bmu-\bb|^2\ge \frac{1}{3}|\boldeta|^2.
\end{equation*}
At the same time, due to \eqref{nuli:eq} and \eqref{cond3},
\begin{equation*}
|G_2(\bxi-\bb) - G_2(\bmu-\bb)|\ll |\boldeta|^2 \rho^{\om},
\end{equation*}
so that
\begin{equation*}
g_2(\bxi-\bb) - g_2(\bmu-\bb)\ge |\bxi-\bb|^2 - |\bmu-\bb|^2 - C|\boldeta|^2 \rho^{\om}
\gg  l^2,
\end{equation*}
and hence $g_2(\bxi-\bb)\ge g_2(\bmu-\bb)+ Cl^2
\ge \rho^2 - \d + Cl^2$. Since $\d l^{-2}\to 0 $
as $\rho\to\infty$, it follows that
$g_2(\bxi-\bb)> \rho^2+ \d$ for large $\rho$.
This means that $\bxi\notin (\CA(g_2; \rho; \d)+\bb)$, so that, by contradiction,
$|\hat\bxi|< 4 c_2l$.
This completes the proof of Part 1.

Let us fix $\hat\bxi: |\hat\bxi|<c_1\rho/2$.
For all $\bxi=(\xi_1,\hat\bxi), |\bxi| > c_1\rho$
we have
\[
\xi_1^2 = |\bxi|^2 - |\hat\bxi|^2 \ge c_1^2 \rho^2
- \frac{1}{4}c_1^2 \rho^2= \frac{3}{4}c_1^2 \rho^2.
\]
By conditions \eqref{cond3} and \eqref{nuli:eq},
\[
|\nabla G_1(\bxi)| = |\nabla G_1(\bxi) - \nabla G_1(\bmu)|\ll
|\boldeta| \rho^{\om}\ll \rho^{1+\om},
\]
so that
\begin{equation*}
|\partial_{\xi_1} g_1(\xi_1, \hat\bxi)|
= |2\xi_1 + \partial_{\xi_1} G_1(\xi_1, \hat\bxi)| \gg \rho.
\end{equation*}
In particular, the function $g(\ \cdot\ , \hat\bxi)$ is strictly monotone,
and the set
\begin{equation*}
I_{\hat\bxi} = \{\xi_1: |g_1(\xi_1, \hat\bxi) - \rho^2|\le \d\}
\end{equation*}
is a closed interval of length $|I_{\hat\bxi}|\ll \d\rho^{-1}$. By Part 1,
\begin{equation*}
\volume\bigl(\CX(g_1, g_2; \rho, \d, \bb)\bigr)
\le \int_{|\hat\bxi|< 4c_2l} |I_{\hat\bxi}| d\hat\bxi\ll \d l^{d-1}\rho^{-1},
\end{equation*}
as claimed.
\end{proof}

Let us now consider the case when
$\phi_0 = \phi(\bmu, \bmu-\bb)$ is separated away from $\pi$.
The following elementary observation will be useful:

\begin{lem}\label{lem:angle1}
Let $\bn_1, \bn_2\in\R^2$ be two unit vectors. Then for any other unit vector $\bn\in\R^2$
one has
\begin{equation*}
|\bn\cdot\bn_1|^2 + |\bn\cdot\bn_2|^2\ge 1 - |\bn_1\cdot\bn_2|.
\end{equation*}
\end{lem}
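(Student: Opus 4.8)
\textbf{Plan for the proof of Lemma \ref{lem:angle1}.} The statement is purely two-dimensional and elementary, so the plan is to reduce it to a trigonometric identity. Write $\bn_1, \bn_2$ in terms of angles, and let $\bn$ be an arbitrary unit vector making angle $\vartheta$ with $\bn_1$. The key algebraic fact is that for any unit vector $\bn$ and any unit vector $\bn_j$ one has $|\bn\cdot\bn_j|^2 = \cos^2(\angle(\bn,\bn_j))$, and $\cos^2 = \tfrac12(1+\cos 2(\cdot))$. So the left-hand side equals $1 + \tfrac12\bigl(\cos 2\vartheta + \cos 2(\vartheta+\psi)\bigr)$, where $\psi$ is the angle between $\bn_1$ and $\bn_2$ (the sign of $\psi$ depends on orientation, but this is irrelevant since only $\cos$ appears).

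Applying the sum-to-product formula, $\cos 2\vartheta + \cos 2(\vartheta+\psi) = 2\cos(2\vartheta+\psi)\cos\psi$. Therefore the left-hand side equals $1 + \cos\psi\,\cos(2\vartheta+\psi)$. Since $\cos(2\vartheta+\psi)\in[-1,1]$, this is bounded below by $1 - |\cos\psi|$. Finally, $|\cos\psi| = |\bn_1\cdot\bn_2|$, which gives exactly the claimed inequality.

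There is essentially no obstacle here; the only point requiring a modicum of care is the bookkeeping of signs/orientations of the angles, which I would sidestep entirely by noting that all quantities entering the final bound ($|\bn\cdot\bn_j|$ and $|\bn_1\cdot\bn_2|$) are invariant under replacing any of the vectors by its negative, so one may fix a convenient orientation and coordinate system (say $\bn_1 = (1,0)$, $\bn_2 = (\cos\psi,\sin\psi)$, $\bn = (\cos\vartheta,\sin\vartheta)$) without loss of generality. Then the computation above is a direct substitution.

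\begin{proof}
Since both sides of the inequality are unchanged if any of the three vectors is replaced by its negative, we may choose coordinates so that $\bn_1 = (1, 0)$, $\bn_2 = (\cos\psi, \sin\psi)$ for some $\psi$, and $\bn = (\cos\vart, \sin\vart)$ for some $\vart$. Then
\begin{equation*}
|\bn\cdot\bn_1|^2 + |\bn\cdot\bn_2|^2
= \cos^2\vart + \cos^2(\vart - \psi)
= 1 + \frac12\bigl(\cos 2\vart + \cos 2(\vart-\psi)\bigr).
\end{equation*}
By the sum-to-product identity,
\begin{equation*}
\cos 2\vart + \cos 2(\vart - \psi) = 2\cos(2\vart - \psi)\cos\psi,
\end{equation*}
so that
\begin{equation*}
|\bn\cdot\bn_1|^2 + |\bn\cdot\bn_2|^2
= 1 + \cos\psi\,\cos(2\vart - \psi)\ge 1 - |\cos\psi| = 1 - |\bn_1\cdot\bn_2|,
\end{equation*}
where in the last step we used $|\cos(2\vart-\psi)|\le 1$ and $|\bn_1\cdot\bn_2| = |\cos\psi|$.
\end{proof}
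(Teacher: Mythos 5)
Your proof is correct and is essentially the same as the paper's: both reduce to the identity $\cos^2\vartheta + \cos^2(\vartheta-\psi) = 1 + \cos(2\vartheta-\psi)\cos\psi$ and then bound $\cos(2\vartheta-\psi)$ by $-1$. The only cosmetic difference is that you spell out the choice of coordinates and the invariance under sign changes, which the paper leaves implicit.
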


\begin{proof} The result follows from the following elementary trigonometric
calculation for arbitrary $\psi, \phi\in\R$:
\begin{align*}
\cos^2\psi + \cos^2(\psi - \phi)
= &\ 1 + \frac{1}{2}\biggl(\cos(2\psi) + \cos(2(\psi-\phi))\biggr)\\[0.3cm]
= &\ 1 + \cos(2\psi - \phi)\cos\phi\ge 1 - |\cos\phi|.
\end{align*}
\end{proof}

\bel\label{volumeCB2.2} Let $m=1$ and $\d\in (0, 1], 1\ll |\bb|\ll \rho$.
Assume that two functions $G_1, G_2\in\plainC2(\R^d)$
satisfy the conditions \eqref{cond1}, \eqref{cond3}, \eqref{cond5}.
Suppose that there exists a point $\bmu\in\CX$
such that  \eqref{nuli:eq} holds and
$\pi - \phi_0\ge l \rho^{-1}, \phi_0 := \phi(\bmu, \bmu-\bb)$, with some
$0<l\le 1$. Then for any $\ep_2 >0$
\bee\label{area1}
\area( \CX\cap B(\bmu, l\rho^{-\ep_2}))\ll l^{d-2} \rho^{-2-\ep_2(d-2)}
\frac{\d^2}{\sin^2\phi_0}.
\ene
\enl

\begin{proof}
First of all, notice that assumptions of this Lemma together with Lemma \ref{polozh:lem}
imply
\bee\label{newhat}
\sin\phi_0\gg l\rho^{-1}.
\ene
Let $\bxi\in B(\bmu, l\rho^{-\ep_2})$, so that $|\bxi|\gg \rho$.
Due to \eqref{nuli:eq} and \eqref{cond3}
we have
\begin{gather*}
|\nabla G_1(\bxi)|
= |\nabla G_1(\bxi) - \nabla G_1(\bmu)|\ll l \rho^{\om-\ep_2},\\[0.2cm]
|\nabla G_2(\bxi-\bb)|
= |\nabla G_2(\bxi-\bb) - \nabla G_2(\bmu-\bb)|\ll l \rho^{\om-\ep_2},
\end{gather*}
for all $\bxi\in B(\bmu, l \rho^{-\ep_2})$, and hence,
by elementary trigonometric argument,
we have the following upper bounds:
\begin{equation}\label{eq:angle2}
\begin{cases}
\phi(\bxi, \bmu) = O(l \rho^{-1-\ep_2 }),\\[0.2cm]
\phi(\bxi-\bb, \bmu-\bb) = O(l \rho^{-1-\ep_2 }),\\[0.2cm]
\phi(\bxi,\nabla g_1(\bxi))=\phi(\bxi,2\bxi+\nabla G_1(\bxi))=
O(l \rho^{-1+\om-\ep_2}),\\[0.2cm]
\phi(\bxi-\bb,\nabla g_2(\bxi-\bb)) =
\phi(\bxi-\bb,2(\bxi - \bb)+\nabla G_2(\bxi-\bb))=
O(l \rho^{-1+\om-\ep_2}).
\end{cases}
\end{equation}
Since $\nabla g_1(\bmu) = 2\bmu, \nabla g_2(\bmu-\bb) = 2(\bmu - \bb)$ and
$\om < 0$, it follows that
\begin{equation*}
\phi(\nabla  g_1(\bmu), \nabla g_1(\bxi)) = O(l\rho^{-1-\ep_2}),\ \
\phi(\nabla  g_2(\bmu-\bb), \nabla g_2(\bxi-\bb)) = O(l\rho^{-1-\ep_2}).
\end{equation*}
The above bounds imply that
\[
|\phi(\bxi, \bxi-\bb) - \phi_0|\ll l\rho^{-1-\ep_2}, \forall
\bxi\in B(\bmu, l\rho^{-\ep_2}).
\]
Since $l\le 1$, together with \eqref{newhat} this means that
\[
\sin \phi(\bxi, \bxi-\bb)\gg \sin\phi_0 \gg l\rho^{-1}.
\]
Thus,  the vectors $\bxi, \bb$
span a two-dimensional space.
>From now on we represent every vector $\bxi\in B(\bmu, l\rho^{-\ep_2})$ as
follows: $\bxi = (z, \Theta_1, \hat{\boldsymbol\Theta})$, where
$z= |\bxi|$, $\Theta_1\in[0, \pi]$ is the angle between $\bxi$ and $\bb$, and
$\hat{\boldsymbol\Theta} = \hat\bxi|\hat\bxi|^{-1}\in\mathbb S^{d-2}$.
We denote the plane spanned by $\bxi$ and $\bb$ by $\GV_{\hat{\boldsymbol\Theta}}$.

Let $\bxi \in \GV_{\hat{\boldsymbol\Theta}}$ with some
$\hat{\boldsymbol\Theta}\in\mathbb S^{d-2}$.
By Lemma \ref{lem:angle1}, for any unit vector $\be\in\GV_{\hat{\boldsymbol\Theta}}$
 we have
\begin{equation*}
|\normal(\bxi)\cdot\be|^2 + |\normal(\bxi-\bb)\cdot\be|^2\ge
1-|\cos\phi(\bxi, \bxi-\bb)|
\ge \frac{1}{2}\sin^2\phi(\bxi, \bxi-\bb)\gg \sin^2\phi_0,
\end{equation*}
which implies that at least one of the following estimates
hold:
\begin{enumerate}
\item[(i)] $|\be\cdot\normal(\bxi)|\gg \sin\phi_0$,\\
or
\item[(ii)] $|\be\cdot\normal(\bxi-\bb)|\gg \sin\phi_0$.
\end{enumerate}
Since $\sin\phi_0\gg l\rho^{-1}$, in view of \eqref{eq:angle2},
we also have that at least one of the following estimates
hold for all $\bxi\in B(\bmu, l\rho^{-\ep_2})\cap\GV_{\hat{\boldsymbol\Theta}}$:
\begin{equation*}
\begin{cases}
| \be\cdot\normal(\nabla g_1(\bxi))|\gg \sin\phi_0,\  \   \textup{for case (i)};\\[0.2cm]
|\be\cdot\normal(\nabla g_2(\bxi-\bb))|\gg \sin\phi_0,\  \   \textup{for case (ii)}.
\end{cases}
 \end{equation*}
Let us fix another vector $\boldeta\in B(\bmu, l\rho^{-\ep_2})
\cap\GV_{\hat{\boldsymbol\Theta}}$ and use (i) or (ii) for
$$
\be = \normal(\boldeta - \bxi) = \normal((\boldeta-\bb) - (\bxi-\bb) ).
$$
If the condition (i) holds, then
\begin{equation}\label{cond(i)}
|g_1(\boldeta) -  g_1(\bxi)|
\gg |\boldeta - \bxi|\ \inf_{\boldsymbol\chi} |
\be \cdot\nabla g_1(\boldsymbol\chi)|\gg
|\boldeta - \bxi|\rho \sin\phi_0,
\end{equation}
where the infimum is taken
over $\boldsymbol\chi \in B(\bmu, l\rho^{-\ep_2})
\cap\GV_{\hat{\boldsymbol\Theta}}$.
Here we have used that $|\nabla g_1(\boldsymbol\chi)|\gg\rho$.
Analogously, if the condition (ii) holds, then
\begin{equation}\label{cond(ii)}
| g_2(\boldeta-\bb) - g_2(\bxi-\bb)|
\gg   |\boldeta - \bxi|\rho \sin\phi_0.
\end{equation}
Suppose in addition that $\bxi,\boldeta\in \CX\cap B(\bmu, l\rho^{-\ep_2})$.
Then, if condition (i) holds, by definition of $ \CX$ we get from
\eqref{cond(i)}:
\begin{equation*}
|\boldeta - \bxi|\ll |g_1(\boldeta)-  g_1(\bxi)| \frac{1}{\rho\sin\phi_0}
\ll \de \frac{1}{\rho\sin\phi_0}.
\end{equation*}
Similarly, if
condition (ii) holds, then \eqref{cond(ii)} implies
\begin{equation*}
|\boldeta - \bxi|\ll | g_2(\boldeta-\bb)- g_2(\bxi-\bb)|  \frac{1}{\rho\sin\phi_0}
\ll \de \frac{1}{\rho\sin\phi_0}.
\end{equation*}
Therefore, for any $\bxi\in  \CX\cap B(\bmu, l\rho^{-\ep_2})
\cap\GV_{\hat{\boldsymbol\Theta}} $ we have $ \CX\cap B(\bmu, l\rho^{-\ep_2})
\cap\GV_{\hat{\boldsymbol\Theta}}
\subset B(\bxi, C\d(\rho\sin\phi_0)^{-1})$, and hence
\begin{equation}\label{2dimarea:eq}
\volume_2( \CX\cap B(\bmu, l \rho^{-\ep_2})
\cap\GV_{\hat{\boldsymbol\Theta}})\ll\de^2\frac{1}{\rho^2\sin^2\phi_0},
\end{equation}
where $\volume_2$ denotes the area on the plane $\GV_{\hat{\boldsymbol\Theta}}$.

Integrating in the coordinates
$(z, \Theta_1,\hat{\boldsymbol\Theta})$, introduced previously,
we can estimate:
\begin{equation*}
\area( \CX\cap B(\bmu, l \rho^{-\ep_2}))
\ll l^{d-2} \rho^{-\ep_2(d-2)} \sup_{
 \hat{\boldsymbol\Theta}}\area_2( \CX\cap B(\bmu, l \rho^{-\ep_2})
\cap\GV_{\hat{\boldsymbol\Theta}}),
\end{equation*}
which completes the proof upon using \eqref{2dimarea:eq}.
\end{proof}

\subsection{Proof of Theorem \ref{volumeCB2} for $m=1$}
In the previous Lemmas the volume of $\CX$ was estimated under the assumptions
that \eqref{nuli:eq} holds.
Although this condition cannot be expected to hold for arbitrary functions $g_1, g_2$,
one can always satisfy \eqref{nuli:eq} locally, by "adjusting" the functions $g_1, g_2$
appropriately.  Afterwards, one can use the above Lemmas. This strategy is implemented
in the proof of Theorem \ref{volumeCB2}.

\begin{proof}[Proof of Theorem \ref{volumeCB2} for $m=1$]
Pick a vector $\bmu\in\CX$, and denote
$\bv_1:=\frac{1}{2}(\nabla G_1(\bmu))$
and $\bv_2:=\frac{1}{2}(\nabla G_2(\bmu-\bb))$.
In view of \eqref{cond4},
\begin{equation}\label{v1v2:eq}
|\bv_1 - \bv_2|\ll |\bb|\rho^{\om}.
\end{equation}
Define $\tilde g_j(\bxi): = g_j(\bxi-\bv_j)$
and $\tilde G_j(\bxi): = \tilde g_j(\bxi) - |\bxi|^2$, so that
\begin{align*}
\tilde G_j(\bxi)= &\ |\bxi-\bv_j|^2-|\bxi|^2
+G_j(\bxi-\bv_j)=-2\bxi\cdot\bv_j +|\bv_j|^2+G_j(\bxi-\bv_j),\\
\nabla \tilde G_j(\bxi) = &\ -2\bv_j + \nabla G_j(\bxi-\bv_j).
\end{align*}
It is easily checked that the functions $\tilde G_j$ satisfy the conditions
\eqref{cond1},
\eqref{cond2} and
\eqref{cond3}
with new parameters $\tilde\g = \max(\g, 1+\s)$,
$\tilde\s = \max(1+\om, \s)$, $\tilde\om = \om$.
We also introduce
\begin{equation*}
\ba := \bb + \bv_1 - \bv_2
\end{equation*}
and notice that
in view of \eqref{v1v2:eq}, we have $ |\ba|\asymp|\bb|$.
Now, writing
\begin{equation*}
\tilde G_1(\bxi) - \tilde G_2(\bxi-\ba)
= -2\bxi\cdot(\bv_1-\bv_2) - 2\ba\cdot\bv_2 + |\bv_1|^2-|\bv_2|^2
+ G_1(\bxi-\bv_1) - G_2(\bxi-\bv_1-\bb),
\end{equation*}
and using \eqref{cond5} and \eqref{cond4}, \eqref{v1v2:eq}, we make the following estimate:
\begin{equation*}
|\tilde G_1(\bxi) - \tilde G_2(\bxi-\ba)|
\ll |\bb| \rho^{1+\om} + |\ba|\rho^\s + |\bb|\rho^{\s+\om} + |\bb|\rho^{\s}
\ll |\ba| \rho^{\tilde\s}.
\end{equation*}
Thus, the condition \eqref{cond5}
is also satisfied with $\bb$ replaced by $\ba$.
Moreover, by definition of $\tilde G_j$,
\begin{equation*}
\nabla\tilde G_1(\bnu) = 0,\ \ \nabla\tilde G_2(\bnu - \ba)=0, \
\bnu = \bmu + \bv_1,
\end{equation*}
so that \eqref{nuli:eq} is fulfilled.
By definition of $\tilde g_1, \tilde g_2$,
\begin{equation*}
\CX(\rho; \d;, g_1, g_2; \mathbf 0,  \bb)
= \CX(\rho; \d; \tilde g_1, \tilde g_2; -\bv_1, -\bv_2+\bb),
\end{equation*}
and consequently,
\begin{equation*}
\volume \bigl( \CX(\rho; \d; g_1, g_2; \mathbf 0,  \bb)\bigr)
=
\volume\bigl(
\CX(\rho; \d; \tilde g_1, \tilde g_2; \mathbf 0,  \ba)
\bigr).
\end{equation*}
Denote $\tilde \CX = \CX(\rho; \d, \tilde g_1, \tilde g_2; \mathbf 0, \ba)$.

Now, depending on the value of $\phi(\bnu, \bnu-\ba)$ we use Lemma
\ref{kiss:lem} or Lemma \ref{volumeCB2.2} with $l = \rho^{-\ep}, \ep >0$
and $\ep_1 = \ep_2 = \ep$.
Note that Lemma \ref{kiss:lem} can be used since
$\d l^{-2} = \d\rho^{2\ep}\to 0$ as $\rho\to\infty$.

If  $\pi-\phi(\bnu, \bnu-\ba)\le \rho^{-\ep-1}$, then by Lemma \ref{kiss:lem}
\begin{equation}\label{kiss:eq}
\volume \tilde\CX\ll \d\rho^{-1-\ep(d-1)}.
\end{equation}
Assume now that for all points $\bmu\in\CX$ we have the bound
$\pi-\phi(\bnu, \bnu-\ba)\ge \rho^{-\ep-1}$.
It follows again from definition of $\tilde g_j$ that
\begin{equation*}
\volume \bigl( \CX(\rho; \d; g_1, g_2; \mathbf 0,  \bb)
\cap B(\bmu, \rho^{-2\ep})\bigr)
=
\volume\bigl(
\CX(\rho; \d; \tilde g_1, \tilde g_2; \mathbf 0,  \ba)
\cap B(\bnu, \rho^{-2\ep})
\bigr).
\end{equation*}
Since $\pi-\phi(\bnu, \bnu-\ba)\ge \rho^{-\ep-1}$,
according to Lemma \ref{volumeCB2.2},
\begin{equation*}
\volume\bigl(\CX\cap B(\bmu, \rho^{-2\ep})\bigr)
= \volume\bigl(\tilde\CX\cap B(\bnu, \rho^{-2\ep})\bigr)
\ll  \d^2 \rho^{-2\ep(d-3)}.
\end{equation*}
As $\CX\subset B(0, 3\rho)$,
one needs $\ll \rho^{d(1+2\ep)}$  balls of radius $\rho^{-2\ep}$
to cover $\CX$. Thus,
\[
\volume\CX\ll \d^2 \rho^{-2\ep(d-3)} \rho^{d(1+2\ep)} = \d^2\rho^{d+6\ep}.
\]
Adding this bound with \eqref{kiss:eq} produces \eqref{nokiss1:eq}.
\end{proof}

As explained earlier, Theorem \ref{volumeCB2} for $m=1$ implies itself for
all $m >0$.


\section{Proof of the Bethe-Sommerfeld Conjecture}

In this section, we prove the Main Theorem \ref{main:thm}.
We do it in a few steps. First we prove it for the model operator
$A$ defined \eqref{model:eq} with conditions \eqref{alm:eq} satisfied.
After that we invoke Theorem \ref{reduction:thm}, which states that the original operator
$H$ can be reduced to the model operator up to controllable error terms.
At the second step we show that these errors do not destroy the spectral
band overlap, obtained for the
model operator.

\subsection{Theorem \ref{main:thm} for the model operator \eqref{model:eq}}
Our proof of the spectral band overlap for the operator $A$ relies on the
following elementary Intermediate Value
Theorem type result for the function $g(\bxi)$ defined
in Section \ref{global:sect}. As before we assume that $\l = \rho^{2m}$.

\begin{lem}\label{lem:simple1}
Let $\bxi=\bxi(t)\subset\CB, t\in [t_1, t_2], t_1 < t_2$, be a continuous path.
Suppose that
$g(\bxi(t_1))\le \la - \d$,
$g(\bxi(t_2))\ge \la+\d$ with some $\d\in (0, \l/4)$,
and for each $t\in [t_1, t_2]$ the number $g(\bxi(t))$ is
a simple eigenvalue
of $A(\bk), \bk = \{\bxi(t)\}$. Then there exists
a $t_0\in (t_1, t_2)$ such that $\l = g(\bxi(t_0))$,
so that $\l\in\s(A)$. Moreover, $\z(\l; A) \ge \d$.
\end{lem}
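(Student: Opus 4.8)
The plan is to prove this as a direct consequence of continuity of $g$ along the path together with the characterization \eqref{zeta1:eq} of the overlap function. First I would observe that by hypothesis the path $\bxi(t)$ stays in the non-resonant set $\CB$, so along it $g$ is given by the explicit formula $g(\bxi) = h_0(\bxi) + b^o(\bxi)$ from \eqref{ginb:eq}, which is continuous; hence $t\mapsto g(\bxi(t))$ is a continuous real-valued function with $g(\bxi(t_1))\le\la-\d$ and $g(\bxi(t_2))\ge\la+\d$. By the classical Intermediate Value Theorem there exists $t_0\in(t_1,t_2)$ with $g(\bxi(t_0))=\la$. Since for every $t$ the value $g(\bxi(t))$ is by assumption a (simple) eigenvalue of $A(\bk)$ with $\bk=\{\bxi(t)\}$, in particular $\la=g(\bxi(t_0))$ is an eigenvalue of $A(\{\bxi(t_0)\})$, so $\la\in\s(A)$.

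Next I would upgrade this to the quantitative statement $\z(\la;A)\ge\d$. The idea is to show that the whole interval $[\la-\d,\la+\d]$ is covered by the image of a single branch of eigenvalues traced out along the path. Fix the integer label: for each $t$, since $g(\bxi(t))$ is a simple eigenvalue of $A(\bk(t))$, it equals $\l_{j(t)}(A(\bk(t)))$ for a uniquely determined index $j(t)=J(\bxi(t))$, where $J$ is the bijection from $\SG^\dagger+\bk$ to $\mathbb N$ constructed in Section \ref{global:sect}. Using \eqref{zeta1:eq}, to conclude $\z(\la;A)\ge\d$ it suffices to show that for every $t\in(0,\d)$ we have $\min_{\bk}N(\la+t,A(\bk))<\max_{\bk}N(\la-t,A(\bk))$; this follows once we exhibit values of the quasi-momentum realizing the strict inequality, which is exactly what the endpoints of the path provide. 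Concretely, $N(\la-t, A(\bk(t_2)))$ counts at least one eigenvalue (namely $g(\bxi(t_2))\ge\la+\d>\la-t$ is NOT counted, so I must be careful with directions here) — so instead I would track the single continuous eigenvalue branch: since $g$ is continuous along the path and $g(\bxi(t_1))\le\la-\d\le\la-t$ while $g(\bxi(t_2))\ge\la+\d\ge\la+t$, continuity gives points $s_-,s_+$ with $g(\bxi(s_-))=\la-t$ and $g(\bxi(s_+))=\la+t$, both eigenvalues of the respective fibres. Hence $\la+t$ and $\la-t$ both lie in the same spectral band (the one swept by this branch), which by definition \eqref{zeta:eq} of $\z$ gives $[\la-t,\la+t]\subset\s_j$ for the relevant band index, i.e. $\z(\la;A)\ge t$ for all $t<\d$, and therefore $\z(\la;A)\ge\d$.

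The one point requiring a little care — and the main (mild) obstacle — is verifying that the set of $t$ for which $g(\bxi(t))$ lies in a \emph{fixed} spectral band is exactly what is needed: I would note that the band $\s_j$ containing $g(\bxi(t))$ is the closure of $\bigcup_{\bk}\l_j(H(\bk))$ with a fixed index $j$, but the label $j(t)=J(\bxi(t))$ may a priori jump as $t$ varies. However, this is harmless: by definition $\s_j\ni g(\bxi(t))$ means $g(\bxi(t))=\l_{j(t)}(A(\bk(t)))\in\s_{j(t)}$, and since the image of the continuous function $g\circ\bxi$ on the connected interval $[s_-,s_+]$ is the connected set $[\la-t,\la+t]$, and each value in it is an eigenvalue of some fibre, the interval $[\la-t,\la+t]$ is contained in $\s(A)$; the stronger statement that it lies in one band follows because the bands are closed intervals and $\z$ as defined in \eqref{zeta:eq} only requires containment of $[\la-t,\la+t]$ in \emph{some} single $\s_j$, which is guaranteed by connectedness of $[\la-t,\la+t]$ together with the fact that it is a subset of the spectrum having nonempty intersection with only finitely many bands. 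Thus $\z(\la;A)\ge\d$, completing the proof.
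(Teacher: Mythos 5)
The first half of your argument (continuity of $g$ along the path, the Intermediate Value Theorem, hence $\la\in\s(A)$) is fine and coincides with the paper's. The gap is in your derivation of $\z(\la;A)\ge\d$. You correctly flag that the label $j(t)$ with $g(\bxi(t))=\l_{j(t)}(A(\bk(t)))$ could a priori jump, but your way of dismissing the problem is wrong: you claim that a connected subset of the spectrum which meets only finitely many bands must lie in a \emph{single} band. That is false. If two bands merely touch, say $\s_1=[0,1]$ and $\s_2=[1,2]$, then $[1-t,1+t]$ is a connected subset of the spectrum meeting only two bands, yet it is contained in no single band and $\z(1)=0$. Distinguishing genuine overlap from mere touching is exactly what the overlap function $\z$ (and its counting-function characterization \eqref{zeta1:eq}, needed later to survive the perturbation $R_{M+1}$) is designed to capture, so your argument only re-proves $[\la-\d,\la+\d]\subset\s(A)$, not $\z(\la;A)\ge\d$.

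The missing ingredient is precisely the simplicity hypothesis, which you never actually use. Since $g(\bxi(t))$ is a \emph{simple} eigenvalue of $A(\bk(t))$ for every $t$, and both $t\mapsto g(\bxi(t))$ and the ordered eigenvalues $t\mapsto\l_i(A(\bk(t)))$ are continuous, the index $j(t)$ is locally constant: a jump at some $t^*$ would force two consecutive eigenvalues to coincide with $g(\bxi(t^*))$, contradicting simplicity. Hence $j$ is constant on $[t_1,t_2]$, the image of $g\circ\bxi$ is a connected set containing $[\la-\d,\la+\d]$ and lying entirely in the single band $\s_j$, and $\z(\la;A)\ge\d$ follows directly from \eqref{zeta:eq}. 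This is the paper's (one-line) argument; with this repair your proof becomes the same as the paper's.
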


\begin{proof}
Since $g(\bxi(t))$
is a simple eigenvalue of $A(\bk), \{\bxi(t)\} = \bk$
for each $t\in [t_1, t_2]$, we have $g(\bxi(t))=\l_j(A(\bk))$ with $j$
independent of the choice of $t$.
Since $g$ is continuous on $\CB$, the function $ g(\bxi(t))$ is a continuous
function of $t\in [t_1, t_2]$, and hence
the intermediate value theorem implies that
there is a $t_0\in (t_1, t_2)$ such that $\l_j(A(\{ \bxi(t_0)\})) = \l$.
The bound $\z(\l; A)\ge \d$ follows from the definition \eqref{zeta:eq} of $\z(\l; A)$.
\end{proof}

Our next step is to prove that there is a path
with the properties required in Lemma \ref{lem:simple1}.
In fact we shall prove that the required properties will hold for
an interval
$I(\boldsymbol\Om; \rho, \d)\subset (0, \infty)$ (see \eqref{iomd:eq})
with some $\boldsymbol\Om\in T(\rho)$.

\begin{lem}\label{lem:simple3}
There exists a constant $Z\ge 1$ with
the following property.
Suppose that for some $\boldsymbol\Om\in T(\rho)$ and some
$t\in I(\boldsymbol\Om; \rho,  \d), \d\in (0, \rho^{2m}/4]$,
the number $g(\boldeta)$, $\boldeta = t \boldsymbol\Om$
is a multiple eigenvalue
of $A(\bk), \bk = \{\boldeta\}$. Then for any
$\tau\in  I(\boldsymbol\Om; \rho, \d)$
there exists a vector $\bn\in\SG^\dagger\setminus \{\mathbf 0\}$ such that
$\tau\boldsymbol\Om+\bn\in\CA(\rho, Z \d)$.
\end{lem}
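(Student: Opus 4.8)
The plan is to extract from the multiplicity hypothesis a second lattice point carrying the same eigenvalue as $g(\boldeta)$, where $\boldeta:=t\boldsymbol\Om$, and then to transport this coincidence along the whole segment $\{\tau\boldsymbol\Om:\tau\in I(\boldsymbol\Om;\rho,\d)\}$ by means of the Lipschitz-type bound of Lemma~\ref{partialofs2}. First I would unravel the hypothesis: by the construction in Section~\ref{global:sect}, for $\bk=\{\boldeta\}$ the function $g$ induces a bijection $J\colon\SG^\dagger+\bk\to\mathbb N$ with $g(\bxi)=\l_{J(\bxi)}(A(\bk))$. If $g(\boldeta)$ is a multiple eigenvalue of $A(\bk)$, there is an index $j'\ne J(\boldeta)$ with $\l_{j'}(A(\bk))=g(\boldeta)$; setting $\boldeta':=J^{-1}(j')$ we get $\{\boldeta'\}=\bk$, so $\bn_0:=\boldeta'-\boldeta\in\SG^\dagger\setminus\{\mathbf 0\}$, and $g(\boldeta')=g(\boldeta)\in[\l-\d,\l+\d]$ since $t\in I(\boldsymbol\Om;\rho,\d)$. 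Moreover $|\boldeta|\asymp\rho$ (because $\rho/2<t<2\rho$ for $t\in I(\boldsymbol\Om;\rho,\d)$) and $|\boldeta'|\asymp\rho$ — the latter because the operator inequalities $cH_0-\tilde C\le A\le CH_0+\tilde C$ used in Lemma~\ref{twosided:lem} yield the two-sided bound $c|\bxi|^{2m}-\tilde C\le g(\bxi)\le C|\bxi|^{2m}+\tilde C$ for every $\bxi$, which together with $g(\boldeta')=g(\boldeta)\asymp\rho^{2m}$ (using $\d\le\rho^{2m}/4$) forces $|\boldeta'|\asymp\rho$ for large $\rho$.

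Then I would apply Lemma~\ref{partialofs2} with $\ba=\boldeta$, $\bm=\bn_0$ (so $\ba+\bm=\boldeta'$, and $|\ba|\asymp\rho$, $|\ba+\bm|\asymp\rho$) and $\bb=\tau\boldsymbol\Om$. Since $\tau$ and $t$ both lie in the interval $I(\boldsymbol\Om;\rho,\d)$, the length estimate \eqref{iomd1:eq} gives $|\bb-\ba|_{\Torus}\le|\tau-t|\le|I(\boldsymbol\Om;\rho,\d)|\ll\d\rho^{1-2m}$. Hence the lemma produces $\bn,\tilde\bn\in\SG^\dagger$ with $\bn\ne\tilde\bn$ and
\[
|g(\tau\boldsymbol\Om+\bn)-g(\boldeta)|+|g(\tau\boldsymbol\Om+\tilde\bn)-g(\boldeta')|\ll\rho^{2m-1}|\bb-\ba|_{\Torus}\ll\d .
\]
Combined with $g(\boldeta)=g(\boldeta')\in[\l-\d,\l+\d]$, this shows that there is a constant $Z\ge1$, independent of $\boldsymbol\Om$, $t$, $\tau$, $\rho$ and $b$, such that both $\tau\boldsymbol\Om+\bn$ and $\tau\boldsymbol\Om+\tilde\bn$ belong to $\CA(\rho,Z\d)=g^{-1}([\l-Z\d,\l+Z\d])$. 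Because $\bn\ne\tilde\bn$, at least one of these two vectors is nonzero, and choosing it gives the vector required by the statement.

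I do not expect a genuine obstacle here; the one point needing care is the uniformity of constants. One must check that the implied constants in $|\ba|\asymp\rho$ and $|\ba+\bm|\asymp\rho$ are universal — they come from $\rho/2<t<2\rho$ and from the uniform two-sided bound on $g$ — so that the resulting $Z$ depends only on $d$, the lattice and the bound on $\1 b\1^{(\a)}$, and one must take $\rho$ large enough for Lemma~\ref{partialofs2} and the earlier estimates to apply. All the real content is already contained in Lemma~\ref{partialofs2}, hence ultimately in Theorem~\ref{unperturbed:thm}.
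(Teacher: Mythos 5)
Your proposal is correct and follows essentially the same route as the paper's proof: extract a second lattice point $\boldeta+\bp$ carrying the same eigenvalue (your $\boldeta'=\boldeta+\bn_0$), invoke Lemma~\ref{partialofs2} with $\ba=\boldeta$, $\bm=\bp$, $\bb=\tau\boldsymbol\Om$ together with the interval-length bound \eqref{iomd1:eq}, and conclude that at least one of the two resulting lattice vectors is nonzero. The only cosmetic difference is that you spell out explicitly, via the operator inequalities behind Lemma~\ref{twosided:lem}, why $|\boldeta'|\asymp\rho$, whereas the paper simply cites \eqref{twosided:eq}.
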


\begin{proof}
Since the number $g(\boldeta), \boldeta = t\boldsymbol\Om,$
is a multiple eigenvalue, by definition of
the function $g(\ \cdot\ )$,
there is a vector $\bp\in\SG^\dagger \setminus\{\mathbf 0\}$ such that
$g(\boldeta) = g(\boldeta + \bp)$. In view of \eqref{twosided:eq},
$ |\boldeta+\bp|\asymp \rho$. Thus by Lemma \ref{partialofs2}, for any
$\tau\in I(\boldsymbol\Om; \rho, \d)$
there exist two vectors $\bm_1, \bm_2\in\SG^\dagger$, $\bm_1\not = \bm_2$
such that, with $\bxi = \tau\boldsymbol\Om$,
\begin{equation}\label{inthezone:eq}
\begin{cases}
|g(\boldeta) - g(\bxi+\bm_1)|\ll \rho^{2m-1}|\boldeta-\bxi|\ll \d,\\[0.2cm]
|g(\boldeta +\bp) - g(\bxi+\bm_2)|\ll \rho^{2m-1}|\boldeta-\bxi|\ll \d.
\end{cases}
\end{equation}
Here we have used the bound $|t-\tau|\ll \d \rho^{1-2m}$, which follows
from \eqref{iomd1:eq}. As $\bm_1\not =\bm_2$,
one of these  vectors is not zero. Denote this vector by $\bn$.
Since $g(\boldeta) = g(\boldeta+\bp)$, it follows from \eqref{inthezone:eq} that
\begin{equation*}
|g(\bxi+\bn) - g(\boldeta)|\ll \d,
\end{equation*}
 so that $\bxi+\bn\in\CA(\rho, Z \d)$ with some constant $Z$ independent
 of $\bxi$ and $\rho$, as required.
\end{proof}

The next Lemma is the cornerstone of our argument: it shows that at least for
one $\boldsymbol\Om\in T(\rho)$ the interval $I(\boldsymbol\Om; \rho, \d)$
consists entirely of the points $t$ such that $g(t\boldsymbol\Om)$ is a
simple eigenvalue.

\begin{lem}\label{lem:simple4}
There exists a vector $\boldsymbol\Om\in T(\rho)$ and a number $c_3>0$ such
that for $\d = c_3 \rho^{2m-4-d-12(d-1)^{-1}}$ and
each $t\in I(\boldsymbol\Om; \rho, \d) $
the number $g(\bxi)$, $\bxi = t\boldsymbol\Om$ is a simple eigenvalue of $A(\{\bxi\})$.
Moreover, $\z(\rho^{2m}; A)\ge \d$.
\end{lem}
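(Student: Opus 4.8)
The plan is a contradiction argument fed by the volume estimates of Sections~8--9. Suppose the first assertion fails: then for \emph{every} $\boldsymbol\Om\in T(\rho)$ there is some $t_{\boldsymbol\Om}\in I(\boldsymbol\Om;\rho,\d)$ for which $g(t_{\boldsymbol\Om}\boldsymbol\Om)$ is a multiple eigenvalue of $A(\{t_{\boldsymbol\Om}\boldsymbol\Om\})$. Lemma~\ref{lem:simple3} then applies to each such $\boldsymbol\Om$, so for every $\boldsymbol\Om\in T(\rho)$ and every $\tau\in I(\boldsymbol\Om;\rho,\d)$ there is a lattice vector $\bn\ne\mathbf 0$ with $\tau\boldsymbol\Om+\bn\in\CA(\rho,Z\d)$. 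Since by \eqref{tildecb:eq} we have $\tilde\CB(\rho,\d)=\bigcup_{\boldsymbol\Om\in T(\rho)}I(\boldsymbol\Om;\rho,\d)\boldsymbol\Om$, this yields the inclusion $\tilde\CB(\rho,\d)\subseteq\bigcup_{\bn\ne\mathbf 0}\bigl(\CA(\rho,Z\d)-\bn\bigr)$.

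Next I would refine this inclusion. Recalling $Z\ge1$ and $\tilde\CB(\rho)\subseteq\CB(\rho)$, we have $\tilde\CB(\rho,\d)\subseteq\CB(\rho,\d)\subseteq\CB(\rho,Z\d)$; combining this with $\CA(\rho,Z\d)=\CB(\rho,Z\d)\cup\CD(\rho,Z\d)$ and the symmetry $\SG^\dagger=-\SG^\dagger$ gives
\begin{equation*}
\tilde\CB(\rho,\d)\subseteq\Bigl(\bigcup_{\bn\ne\mathbf 0}\CB(\rho,Z\d)\cap\bigl(\CB(\rho,Z\d)+\bn\bigr)\Bigr)\cup\Bigl(\bigcup_{\bn\ne\mathbf 0}\CB(\rho,Z\d)\cap\bigl(\CD(\rho,Z\d)+\bn\bigr)\Bigr).
\end{equation*}
By \eqref{volume_tildeCB:eq} the left-hand side has volume $\gg\d\rho^{d-2m}$. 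I would bound the right-hand side by applying \eqref{sobranie:eq} and \eqref{BD:eq} with $Z\d$ in place of $\d$; fixing the auxiliary parameter $\ep:=\tfrac{3}{2(d-1)}$ (so that $(d-1)^{-1}<\ep<2(d-1)^{-1}$), the choice $\d=c_3\rho^{2m-4-d-12(d-1)^{-1}}$ satisfies $Z\d\,\rho^{2-2m+2\ep}\to0$, whence those estimates bound the right-hand side by $\ll Z^2\d^2\rho^{4-4m+2d+6\ep}+Z\d\,\rho^{1-2m+d-\ep(d-1)}+Z\d\,\rho^{d-1-2m+\a_d}$.

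It then remains to check that this upper bound is $o(\d\rho^{d-2m})$ as $\rho\to\infty$; dividing by $\d\rho^{d-2m}$ this amounts to the three elementary facts $Z^2\d\,\rho^{4-2m+d+6\ep}\to0$ (true because $6\ep=9(d-1)^{-1}<12(d-1)^{-1}$, so the exponent equals $-3(d-1)^{-1}<0$ for any fixed $c_3$), $Z\rho^{1-\ep(d-1)}\to0$ (since $\ep(d-1)=\tfrac32>1$), and $Z\rho^{\a_d-1}\to0$ (since $\a_d\in(0,1)$). For all large $\rho$ this contradicts $\volume\tilde\CB(\rho,\d)\gg\d\rho^{d-2m}$, so after all there is a $\boldsymbol\Om\in T(\rho)$ such that $g(t\boldsymbol\Om)$ is a simple eigenvalue of $A(\{t\boldsymbol\Om\})$ for every $t\in I(\boldsymbol\Om;\rho,\d)$. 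For the ``moreover" part, note that $\d=c_3\rho^{2m-4-d-12(d-1)^{-1}}<\rho^{2m}/4=\la/4$ for large $\rho$; for the above $\boldsymbol\Om$ the interval $I(\boldsymbol\Om;\rho,\d)=[t_1,t_2]$ is closed with $g(t_1\boldsymbol\Om)=\la-\d$, $g(t_2\boldsymbol\Om)=\la+\d$ (by the monotonicity recorded in the discussion preceding \eqref{tildecb:eq}), and the path $t\mapsto t\boldsymbol\Om$, $t\in[t_1,t_2]$, lies in $\tilde\CB(\rho,\d)\subseteq\CB$. Hence Lemma~\ref{lem:simple1} applies and gives $\la=\rho^{2m}\in\s(A)$ together with $\z(\rho^{2m};A)\ge\d$.

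The only real obstacle is the bookkeeping that forces the exponent in $\d$ to look the way it does: the quadratic-in-$\d$ contribution coming from \eqref{sobranie:eq}/\eqref{BD:eq} demands $\ep<2(d-1)^{-1}$ (so that $\d^2\rho^{4-4m+2d+6\ep}$ stays negligible against the main term $\d\rho^{d-2m}$ for the stated $\d$), while the term $\d\rho^{1-2m+d-\ep(d-1)}$ demands $\ep>(d-1)^{-1}$; one must verify that this window is non-empty and that $2m-4-d-12(d-1)^{-1}$ places $\d$ inside the admissible range. The genuinely delicate estimate underlying all of this --- the bound on the self-intersection volume --- was already proved as Theorem~\ref{volumeCB2a}.
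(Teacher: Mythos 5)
Your proposal is correct and follows essentially the same contradiction-plus-volume-comparison argument as the paper; the only (harmless) difference is that you take $\ep=\tfrac{3}{2(d-1)}$ rather than the paper's $\ep=2(d-1)^{-1}$, so your margin comes from a negative power of $\rho$ instead of from choosing $c_3$ small.
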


\begin{proof}
Suppose the contrary, i.e. if $\rho$ is sufficiently large,
then for any $\boldsymbol\Om\in T(\rho)$ there is a
$t\in I(\boldsymbol\Om; \rho, \d)$ such that $g(t\boldsymbol\Om)$ is a
multiple eigenvalue of $A({t\boldsymbol\Om})$.
Then due to formula \eqref{tildecb:eq}, Lemma \ref{lem:simple3} implies that
\begin{equation}\label{cover0}
\tilde\CB(\rho, \d)\subset
\bigcup_{\bn\in\SG^\dagger\setminus\{0\}}(\CA(\rho, \de_1)+\bn)
\end{equation}
with $\de_1:=Z\de$. Since $\tilde\CB(\rho, \d)\subset\CB(\rho, \d_1)$, we can
re-write \eqref{cover0} as
\begin{equation}\label{cover1}
\begin{split}
\tilde\CB(\rho, \de) &\subset \bigcup_{\bn\in\SG^\dagger\setminus\{\mathbf 0\}}
\Bigl(\bigl(\CA(\rho, \de_1)+\bn\bigr)\bigcap\CB(\rho, \de_1)\Bigr)\\
&=\bigcup_{\bn\in\SG^\dagger\setminus\{\mathbf 0\}}
\Bigl(\bigl(\CB(\rho, \de_1)+\bn\bigr)\bigcap\CB(\rho, \de_1)\Bigr)\bigcup
\bigcup_{\bn\in\SG^\dagger\setminus\{\mathbf 0\}}
\Bigl(\bigl(\CD(\rho, \de_1)+\bn\bigr)\bigcap\CB(\rho, \de_1)\Bigr).
\end{split}
\end{equation}
Let us estimate the volumes of sets on both sides of this inclusion.
For a fixed $\ep>0$, whose value is chosen a few lines down,
assume that $\d \rho^{2-2m+2\ep}\to 0$, $\rho\to\infty$,
we can use \eqref{sobranie:eq} and \eqref{BD:eq} for the volume of the right hand side.
For the left hand side we use \eqref{volume_tildeCB:eq}, so that \eqref{cover1}
results in the estimate
\begin{equation*}
\d\rho^{d-2m}\ll \d^2 \rho^{4-4m + 2d+6\ep} + \d \rho^{1-2m+d - \ep(d-1)}
 + \d\rho^{d-1-2m+\a_{d}},
\end{equation*}
which simplifies to
\begin{equation*}
1 \ll \d \rho^{4-2m + d + 6\ep} + \rho^{1-\ep(d-1)} + \rho^{-1+\a_{d}}.
\end{equation*}
Choose $\ep= 2(d-1)^{-1}$ and $\d = c_3 \rho^{2m-4-d-6\ep}$ with a
suitably small $c_3$. Then for large $\rho$ the right hand side
is less than the left hand side, which produces a contradiction, thus
proving the Lemma.
\end{proof}

\subsection{Proof of the Main Theorem}
We assume that the conditions of Theorem \ref{main:thm} are satisfied.
The proof uses the reduction of the operator $H$ to $A_1$, established in Theorem
\ref{reduction:thm}. The first step is to show that the spectrum of $A_1$ is
well approximated by that of the model operator \eqref{model:eq} with $B$ replaced with $X$,
i.e.
\[
A = H_0+ X^o + X^\flat.
\]
Let numbers $\a_j<1, j = 1, 2, \dots, d$ be as defined in Subsection \ref{resonant:subsect}.

\begin{lem}\label{atoa1:lem}
Suppose that the conditions of Theorem \ref{main:thm} are satisfied.
Let $A_1$ be the operator \eqref{a1:eq}, and let $r = \rho^{\vark}$
with a number $\vark>0$, satisfying \eqref{eq:condition1} and the inequality
\begin{equation}\label{vark:eq}
d^2\vark < (2m-\a\b)\a_d.
\end{equation}
Then for any $L>0$ there exists an $M$ (i.e.
the number of steps in Theorem \ref{reduction:thm}) such that
\begin{equation}\label{loca:eq}
N(\mu- \rho^{-L}, A(\bk))\le N(\mu, A_1(\bk))\le N(\mu + \rho^{-L}, A(\bk))
\end{equation}
for all $\mu\in \bigl((1-c_4)^{2m}\rho^{2m}, (1+c_4)^{2m}\rho^{2m}\bigr)$ with
any $c_4 < 1/32$.
\end{lem}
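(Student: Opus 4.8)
The plan is to exploit Theorem~\ref{reduction:thm}, which gives $A_1 = A + X^{\downarrow} + X^{\sharp} + X^{\uparrow} + R_{M+1}$, where $A = H_0 + X^o + X^{\flat}$ is the model operator \eqref{model:eq} built from $x = x_M$ (and this $x$ satisfies \eqref{alm:eq} with the same $\a,\b$). One then splits the perturbation into a genuinely negligible part and a part which, although not small in norm, is supported — in the precise sense of Lemma~\ref{smallorthog:lem} — in regions of phase space that cannot "see" the eigenvalues of $A(\bk)$ of size $\asymp\rho^{2m}$. First, since \eqref{alm1:eq} is equivalent to $\s < 1$, Theorem~\ref{reduction:thm} (via Lemma~\ref{gauge:lem}) gives $\|R_{M+1}\| \ll (\1 b\1^{(\a)})^{M+1}\rho^{\b\e_{M+1}}$ with $\e_{M+1}\to-\infty$ as $M\to\infty$; so, given $L$, I fix $M = M(L)$ so large that $\|R_{M+1}\| \ll \rho^{-L-1}$ uniformly in $\rho\ge 1$. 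Second, by Lemma~\ref{smallorthog:lem}(iv) applied to $x_M$ with $r = \rho^{\vark}$ and a sufficiently large interior index $l$, $\|\CP(\bk; B(2\rho))X^{\uparrow}\| + \|X^{\uparrow}\CP(\bk;B(2\rho))\| \ll \rho^{-L-1}$, so one writes $X^{\uparrow} = X^{\uparrow}_{\rm s} + X^{\uparrow}_{\rm h}$ with $\|X^{\uparrow}_{\rm s}\| \ll \rho^{-L-1}$ and $X^{\uparrow}_{\rm h} = (I - \CP(\bk;B(2\rho)))X^{\uparrow}(I - \CP(\bk;B(2\rho)))$. Finally, by Lemma~\ref{smallorthog:lem}(i),(iii) (fiberwise) $X^{\downarrow} = \CP(\bk;B(7\rho/8))X^{\downarrow}\CP(\bk;B(7\rho/8))$ and $X^{\sharp} = (I-\CP(\bk;B(9\rho/8)))X^{\sharp}(I-\CP(\bk;B(9\rho/8)))$; here $X^{\downarrow}$ is bounded with $\|X^{\downarrow}\|\ll\rho^{\b\max(\a,0)} = o(\rho^{2m})$, while $X^{\sharp}$ and $X^{\uparrow}_{\rm h}$ are infinitesimally $H_0$-bounded by Lemma~\ref{formbound:lem}.

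The heart of the argument is to use the \emph{exact} orthogonal decomposition \eqref{orhogonal:eq} of $A(\bk)$ into the finite blocks $A_{\GV}(\bk;\BUps(\bmu))$ over the $\lra$-classes $[\bmu]$ with $\{\bmu\}=\bk$. Since $\diam\BUps(\bmu) \le 2\rho^{(\a_1+\a_2)/2} = o(\rho)$ by \eqref{card:eq}, each class $C = \BUps(\bmu)$ satisfies $\max_{\boldeta\in C}|\boldeta| - \min_{\boldeta\in C}|\boldeta| = o(\rho)$, so I sort the classes into three groups — \emph{low} ($\max_{C}|\cdot| < 30\rho/32$), \emph{high} ($\min_{C}|\cdot| > 34\rho/32$) and \emph{window} (the rest, which for large $\rho$ lie in the annulus $29\rho/32 \le |\cdot| \le 35\rho/32$) — and let $\CP_{\rm l}, \CP_{\rm w}, \CP_{\rm h}$ be the corresponding mutually orthogonal projections; each is $A(\bk)$-invariant and commutes with $H_0(\bk)$, and $\CP_{\rm l} + \CP_{\rm w} + \CP_{\rm h} = I$. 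The key combinatorial observation is that $X^{\downarrow}$ acts inside $\GH(\bk; B(7\rho/8))$ and hence only within the low part; $X^{\sharp}$ acts inside $\GH(\bk;\R^d\setminus B(9\rho/8))$ and $X^{\uparrow}_{\rm h}$ inside $\GH(\bk;\R^d\setminus B(2\rho))$, hence only within the high part; and none of them touches $\CP_{\rm w}\GH$. Consequently $\tilde A := A + X^{\downarrow} + X^{\sharp} + X^{\uparrow}_{\rm h}$ is again block-diagonal: $\tilde A(\bk) = \tilde A_{\rm l}\oplus\tilde A_{\rm w}\oplus\tilde A_{\rm h}$ with $\tilde A_{\rm w} = A_{\rm w}$, $\tilde A_{\rm l} = A_{\rm l} + X^{\downarrow}$, $\tilde A_{\rm h} = A_{\rm h} + X^{\sharp} + X^{\uparrow}_{\rm h}$.

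Then I would count eigenvalues below $\mu$. Because $\a\b < 2m$, Lemma~\ref{formbound:lem} yields $(1-\e)H_0 - C_{\e} \le A \le (1+\e)H_0 + C_{\e}$ for every $\e > 0$, and this two-sided bound survives (with $\e$ replaced by $2\e$) for $\tilde A$ after adding the bounded $X^{\downarrow}$ and the infinitesimally $H_0$-bounded $X^{\sharp}, X^{\uparrow}_{\rm h}$. Restricting to the invariant subspaces, using $\CP_{\rm l}H_0\CP_{\rm l} \le (30\rho/32)^{2m}(1 + o(1))$ and $\CP_{\rm h}H_0\CP_{\rm h} \ge (34\rho/32)^{2m}(1 - o(1))$, and choosing $\e$ small depending on the fixed margin $c_4 < 1/32$, one gets that for large $\rho$ every eigenvalue of $\tilde A_{\rm l}$ and of $A_{\rm l}$ is $< (1-c_4)^{2m}\rho^{2m}$ and every eigenvalue of $\tilde A_{\rm h}$ and of $A_{\rm h}$ is $> (1+c_4)^{2m}\rho^{2m}$. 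Hence, for every $\mu\in\bigl((1-c_4)^{2m}\rho^{2m},(1+c_4)^{2m}\rho^{2m}\bigr)$, both $N(\mu,\tilde A(\bk))$ and $N(\mu, A(\bk))$ equal $\dim(\CP_{\rm l}\GH) + N(\mu, A_{\rm w})$, so $N(\mu,\tilde A(\bk)) = N(\mu, A(\bk))$; since $c_4 < 1/32$ strictly, the same holds with $\mu$ replaced by $\mu\pm\rho^{-L}$. Writing $A_1 = \tilde A + (X^{\uparrow}_{\rm s} + R_{M+1})$ with $\|X^{\uparrow}_{\rm s} + R_{M+1}\| \ll \rho^{-L-1}$ and applying the Weyl monotonicity $N(\mu - \|\cdot\|, \tilde A(\bk)) \le N(\mu, A_1(\bk)) \le N(\mu + \|\cdot\|, \tilde A(\bk))$ then gives \eqref{loca:eq} for all sufficiently large $\rho$.

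The main obstacle is structural rather than computational: the term $X^{\flat}$ present in \emph{both} $A$ and $A_1$ couples Fourier modes across spherical shells by $O(\rho^{\vark})$, so the spherical cut-offs $\CP(\bk;B(R))$ are \emph{not} invariant for $A(\bk)$, and a naive norm comparison of $A_1$ with $A$ is hopeless because $\|X^{\downarrow}\|$ and $\|X^{\sharp}\|$ are of order $\rho^{\b\a}$. The resolution is to replace spherical cut-offs by the genuinely $A(\bk)$-invariant subspaces $\GH(\bk;\BUps(\bmu))$ produced by the resonant-set machinery of Sections~\ref{ezhiki:sect}--\ref{global:sect} — this is where the standing assumptions \eqref{eq:condition1} and \eqref{vark:eq} on $\vark$ enter, guaranteeing that construction and keeping $\diam\BUps(\bmu)$ small compared with the fixed window width — together with the \emph{infinitesimal} (not merely relative-bound-less-than-one) $H_0$-boundedness of $X^{o},X^{\flat},X^{\sharp},X^{\uparrow}$, which is what keeps the low and high blocks of $\tilde A$ strictly off the window after the perturbation.
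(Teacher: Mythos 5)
Your proof is correct, and while it follows the same overall strategy as the paper (absorb $R_{M+1}$ by Weyl's inequality after choosing $M$ large, then show that $X^{\downarrow,\sharp,\uparrow}$ cannot move the counting function in the energy window around $\rho^{2m}$), the implementation of the key step is genuinely different. The paper writes $\tilde A_1 = A + X^{\downarrow,\sharp,\uparrow}$ as a block operator with respect to the decomposition over the resonant sets $\Xi(\GV)$, and since $X^{\downarrow,\sharp,\uparrow}$ has nonzero off-diagonal blocks between distinct $\Xi(\GV)$ and $\Xi(\GW)$, it brackets $\tilde A_1$ between operators $\tilde A_\pm$ by moving these off-diagonal blocks onto the diagonal at the cost of a factor $Cr^{d^2}$ (the number of lattice subspaces); it is precisely this factor that forces the hypothesis \eqref{vark:eq}, and the radial splitting is then performed separately inside each $\Xi(\GV)$ using $|\bxi_{\GV^\perp}|$. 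You instead group the $\BUps$-classes globally into low/window/high shells and exploit the \emph{exact} two-sided ball localizations of Lemma \ref{smallorthog:lem}(i),(iii) (plus your splitting of $X^{\uparrow}$ into a small piece and a piece localized outside $B(2\rho)$) together with the $o(\rho)$ diameter of the classes, so that $X^{\downarrow}$, $X^{\sharp}$ and $X^{\uparrow}_{\rm h}$ are exactly block-diagonal with zero window block; this avoids the bracketing and the $r^{d^2}$ loss altogether, and in fact your argument never really uses \eqref{vark:eq} beyond $\a\b<2m$ (which is harmless, since \eqref{vark:eq} is a hypothesis you are free not to exploit). Two small points to make explicit if you write this up: (i) passing from the infinitesimal operator bound $\|Tu\|\le\e\|H_0u\|+C_\e\|u\|$ to the two-sided inequality $(1-\e)H_0-C_\e\le H_0+T\le(1+\e)H_0+C_\e$ needed for the high block requires the standard symmetrization argument $\|(H_0+\l)^{-1/2}T(H_0+\l)^{-1/2}\|\le\|T(H_0+\l)^{-1}\|$ (the paper uses the same fact silently in Lemma \ref{twosided:lem}); (ii) the boundary radii $7\rho/8$, $9\rho/8$, $2\rho$ must be checked to be separated from your shell thresholds $30\rho/32$, $34\rho/32$ by a fixed multiple of $\rho$, which they are, so that no class straddles a localization boundary for large $\rho$.
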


\begin{proof}
By Theorem \ref{reduction:thm}, $\|R_{M+1}\|\ll \rho^{\b\e_{M+1}}$,
uniformly in $b: \1 b\1^{(\a)}\ll 1$ (see \eqref{sigmaeps:eq} for definition of $\e_{M+1}$).
The condition \eqref{alm1:eq} is equivalent to $\s <1$,
so that $\e_j\to -\infty$ as $j\to\infty$.  Thus
for sufficiently large $M=M(L)$ we have $\|R_{M+1}\|\ll \rho^{-L}/2$. As a consequence,
\begin{gather}
N\bigl(\mu - \rho^{-L}/2, \tilde A_1(\bk)\bigr)
\le N(\mu, A_1(\bk))
\le N\bigl(\mu + \rho^{-L}/2, \tilde A_1(\bk)\bigr),\notag \\
\tilde A_1 = A + X^{\downarrow, \sharp, \uparrow},\label{loca2:eq}
\end{gather}
for all $\mu\in\R$.
Due to \eqref{orhogonal:eq}, the operator
$\tilde A_1$ can be represented in the block-matrix form:
\begin{equation*}
\tilde A_1 = \underset{\GV\in\mathcal W(r)}\bigoplus \CP(\Xi(\GV))
A_{\GV}\CP(\Xi(\GV))
+ \underset{\GV, \GW\in\mathcal W(r),}\bigoplus \CP(\Xi(\GV))
X^{\downarrow, \sharp, \uparrow}\CP(\Xi(\GW)).
\end{equation*}
Since the number of distinct subspaces $\GW\in\mathcal W(r)$  is bounded above by
$ C r^{d^2}$ with some universal constant $C>0$,
the second term satisfies the two-sided estimate
\begin{align*}
- Cr^{d^2} \underset{\GV\in\mathcal W(r)}\bigoplus \CP(\Xi(\GV))
|X|^{\downarrow, \sharp, \uparrow} \CP(\Xi(\GV))
\le &\ \underset{\GV, \GW\in\mathcal W(r),}\bigoplus \CP(\Xi(\GV))
X^{\downarrow, \sharp, \uparrow}\CP(\Xi(\GW))\\[0.2cm]
\le &\
Cr^{d^2} \underset{\GV\in\mathcal W(r)}\bigoplus \CP(\Xi(\GV))
|X|^{\downarrow, \sharp, \uparrow} \CP(\Xi(\GV)).
\end{align*}
Here we have denoted $|X|^{\downarrow, \sharp, \uparrow}
= |X^{\downarrow}| + |X^{\sharp}| + |X^{\uparrow}|$.
Consequently,
\[
\tilde A_-\le \tilde A_1\le \tilde A_{+}
\]
with
\[
\tilde A_\pm = \bigoplus_{\GV\in\mathcal W(r)}\CP(\Xi(\GV))
\bigl(A_{\GV} \pm  Cr^{d^2} |X|^{\downarrow, \sharp, \uparrow}\bigr)\CP(\Xi(\GV)).
\]
Since $\tilde A_{\pm}$ are orthogonal sums,
the problem is reduced to estimating the counting functions of $\tilde A_{\pm}(\bk)$
on each invariant subspace $\GH\bigl(\bk; \Xi(\GV)\bigr)$. From now on we assume that
$\GV$ is fixed and omit it from the notation.

If $\GV\in\CV(r, d)$, i.e. $\GV = \R^d$, then
$\Xi = \Xi(\GV)\subset B(0, 2\rho^{\a_d})$, see Lemma \ref{lem:properties3}.
Clearly, $\| H_0 \CP(\Xi)\|\le \rho^{2m\a_d}$. Also, by \eqref{subord:eq},
\[
\1 x^o\1^{(\a)} + \1 x^{\downarrow}\1^{(\a)}
+ \1 x^{\sharp)}\1^{(\a)}
+ \1 x^{\uparrow}\1^{(\a)}\ll \1 b\1^{(\a)},
\]
and hence, by Lemma \ref{formbound:lem},
\[
\| X^o \CP(\Xi)\| + r^{d^2}\|\CP(\Xi) |X|^{\downarrow, \sharp, \uparrow}
\CP(\Xi)\|\ll r^{d^2}\rho^{\a\b\a_d}.
\]
In view of \eqref{vark:eq}, the right hand side does not exceed $\rho^{2m\a_d}$.
Consequently, $\|\tilde A_{\pm}\CP(\Xi)\|\ll \rho^{2m\a_d}$,
which implies that $N(\mu, \tilde A_\pm(\bk); \Xi) = 0$ for all
$\mu\ge (\rho/2)^{2m}$.

Now, let us fix $\GV\in\CV(r, n), n\le d-1,$ and prove the bounds
\begin{equation}\label{loca1:eq}
N(\mu- \rho^{-L}/2, A_{\GV}(\bk); \Xi)
\le N(\mu, \tilde A_\pm(\bk); \Xi)\le N(\mu + \rho^{-L}/2, A_{\GV}(\bk); \Xi),
\end{equation}
for sufficiently large $\rho$.
Split $\Xi$ into three disjoint sets:
\begin{align*}
\Xi = & \CC_<\cup \CC_0\cup \CC_>, \\[0.2cm]
\CC_0 = & \{\bxi\in\Xi:  7\rho/8\le |\bxi_{\GV^\perp}| \le 17\rho/16\},\\[0.2cm]
\CC_< = & \{\bxi\in\Xi:   |\bxi_{\GV^\perp}| < 7\rho/8\}, \ \
\CC_> = \{\bxi\in\Xi:  17\rho/16 < |\bxi_{\GV^\perp}|\}.
\end{align*}
Note that by definition of the operator $A_{\GV}$ (see \eqref{bflatv:eq})
all three subspaces $\CH(\CC_0), \CH(\CC_<), \CH(\CC_>)$
(see Subsection \ref{fibre:subsect}) are invariant for $A_{\GV}$.
Since $|\bxi_{\GV}|<2\rho^{\a_{d-1}}$ (see Lemma \ref{lem:properties3}),
we have
\begin{gather*}
\Xi\cap B(0, 7\rho/8)\subset\CC_<\subset B(0, 29\rho/32), \\[0.2cm]
\Xi\cap B(17\rho/16)\subset (\CC_<\cup \CC_0)\subset B(0, 9\rho/8).
\end{gather*}
Therefore, by Lemma \ref{smallorthog:lem},
\begin{equation*}
\CP(\Xi)|X^{\downarrow}| \CP(\Xi) = \CP(\CC_<)|X^{\downarrow}| \CP(\CC_<),\
\CP(\Xi)|X^{\sharp}| \CP(\Xi) = \CP(\CC_>)|X^{\sharp}| \CP(\CC_>).
\end{equation*}
Thus $\tilde A_{\pm}\CP(\Xi)$ can be rewritten as
\begin{equation*}
\tilde A_{\pm}\CP(\Xi) = F_{\pm}  \pm Cr^{d^2}
\bigl(\CP(\Xi)|X|^{\uparrow}\CP(\Xi)
- \CP(\CC_>) |X|^{\uparrow}\CP(\CC_>)\bigr),
\end{equation*}
with
\begin{align*}
F_{\pm}
= &\ \CP(\CC_<) \bigl(A_{\GV} \pm Cr^{d^2} |X^{\downarrow}|\bigr) \CP(\CC_<)
 \oplus
\CP(\CC_0 )A_{\GV} \CP(\CC_0)\\
&\ \ \ \ \ \ \ \ \ \ \ \ \ \ \ \ \ \ \oplus
\CP(\CC_> )
\bigl(A_{\GV} \pm Cr^{d^2} |X|^{\uparrow, \sharp}\bigr) \CP(\CC_>).
\end{align*}
By \eqref{shar:eq},
\begin{equation*}
r^{d^2}\| \CP(\CC_<\cup \CC_0)|X^{\uparrow}|\| +
r^{d^2}\| |X^{\uparrow}|\CP(\CC_<\cup \CC_0) \|\ll r^{d^2 + p-l} \rho^{\b\max(\a, 0)},
\end{equation*}
for any $p>d$ and $l \ge p$ uniformly in $b$ satisfying $\1 b\1^{(\a)} \ll 1$.
As $r = \rho^\vark$, $\vark >0$, by choosing a sufficiently large
$l$, we can guarantee that the right hand side is bounded by $\rho^{-L}/2$. This leads
to the bounds
\begin{equation}\label{loca4:eq}
N(\mu- \rho^{-L}/2, F_{\pm}(\bk); \Xi)
\le N(\mu, \tilde A_\pm(\bk); \Xi)\le N(\mu + \rho^{-L}/2,
F_{\pm}(\bk); \Xi),
\end{equation}
for all $\mu\in\R$.
Consequently,
\eqref{loca1:eq} will be proved if we show that
 \begin{equation}\label{loca3:eq}
N(\mu, F_{\pm}; \Xi) = N(\mu, A_{\GV}; \Xi),\
\biggl(\frac{15\rho}{16}\biggr)^{2m}\le \mu \le \biggl(\frac{33\rho}{32}\biggr)^{2m}.
\end{equation}
To this end note first that the definition of $\CC_<$ and $\CC_>$ implies
\begin{equation}\label{partial:eq}
H_0 \CP(\CC_<)\le (29\rho/32)^{2m}\CP(\CC_<),\ \
H_0\CP(\CC_>)\ge (17\rho/16)^{2m}\CP(\CC_>).
\end{equation}
Also, by Lemma \ref{smallorthog:lem},
\[
\|X^o\|+\| X^{\flat}_{\GV}\| + Cr^{d^2}\| X^{\downarrow}\|\ll r^{d^2}\rho^{\b\max(\a, 0)}.
\]
Under the condition \eqref{vark:eq} the right hand side of this estimate is bounded by
$ o(\rho^{2m})$, $\rho\to\infty$ uniformly in $b$.
Together with \eqref{partial:eq}, this entails that
\begin{equation}\label{c<:eq}
 N(\mu, A_{\GV} \pm Cr^{d^2} \CP(\CC_<)|X^{\downarrow}|\CP(\CC_<); \CC_<)
 = N(\mu, A_{\GV}; \CC_<),\ \ \mu\ge \biggl(\frac{15\rho}{16}\biggr)^{2m}.
\end{equation}
Furthermore, in view of \eqref{subord:eq} and \eqref{formbound1:eq},
\begin{equation*}
\CP(\CC_{>})(|X^o| + |X^{\flat}_{\GV}| + C r^{d^2}|X|^{\uparrow, \sharp})\CP(\CC_>)
\ll r^{d^2} (H_0+I)^{\g}\CP(\CC_>),\ \g = \frac{\a\b}{2m}.
\end{equation*}
Using again \eqref{vark:eq} and remembering \eqref{partial:eq},
we conclude that the right hand side
is estimated above by $o(1) H_0\CP(\CC_>)$, $\rho\to\infty$, uniformly in $b$.
Together with \eqref{partial:eq} this implies that
\begin{equation}\label{c>:eq}
 N(\mu, A_{\GV} \pm Cr^{d^2} \CP(\CC_>)|X|^{\uparrow, \sharp}\CP(\CC_>); \CC_>)
 = 0,\ \ \mu\le \biggl(\frac{33\rho}{32}\biggr)^{2m}.
\end{equation}
Putting together \eqref{c<:eq} and \eqref{c>:eq}, we arrive at \eqref{loca3:eq}.
In combination with \eqref{loca4:eq} this leads to \eqref{loca1:eq}.
Together with \eqref{loca2:eq} they yield
\eqref{loca:eq}.
\end{proof}

\begin{proof} [Proof of the Main Theorem]
By Theorem \ref{reduction:thm}, it suffices to prove that
$\z(\rho^{2m}, A_1)> c \rho^{S}$ with some $S$ for sufficiently large $\rho$.
It follows from Lemma \ref{lem:simple4} that $\z(\rho^{2m}; A)\ge c \rho^{S}$
with $S = 2m-4-d-12(d-1)^{-1}$.
Using the bounds \eqref{loca:eq} with $L>-S$, we get the required estimate
$\z(\rho^{2m}, A_1)\gg \rho^S$ from the definition \eqref{zeta1:eq}.
 This completes the proof of Theorem \ref{main:thm}.
\end{proof}


\bibliographystyle{amsplain}

\end{document}